\documentclass[reqno]{amsart}

\usepackage[centering]{geometry}

\usepackage{mathtools}
\usepackage{mathrsfs}
\usepackage{amsmath}
\usepackage{bm}
\usepackage[dvipsnames]{xcolor}
\usepackage
[colorlinks=true,linkcolor=Maroon,citecolor=OliveGreen,backref]
{hyperref}

\usepackage[capitalize]{cleveref} %, autonum} \renewcommand{\qedhere}{\GenericError{}{qedhere plays up with autonum}{}{}}
\crefname{equation}{}{}

\usepackage[norefs, nocites]{refcheck}

\makeatletter
\newcommand{\refcheckize}[1]{%
	\expandafter\let\csname @@\string#1\endcsname#1%
	\expandafter\DeclareRobustCommand\csname relax\string#1\endcsname[1]{%
		\csname @@\string#1\endcsname{##1}\wrtusdrf{##1}}%
	\expandafter\let\expandafter#1\csname relax\string#1\endcsname
}
\makeatother

\newtheorem{theorem}{Theorem}[section]
\newtheorem{lemma}[theorem]{Lemma}
\newtheorem{proposition}[theorem]{Proposition}
\newtheorem{corollary}[theorem]{Corollary}
\newtheorem{conjecture}[theorem]{Conjecture}

\theoremstyle{definition}
\newtheorem{remark}[theorem]{Remark}

\newtheorem*{collection_1}{Collection $\ca H_1$}
\newtheorem*{collection_2}{Collection $\ca H_2$}

\Crefname{lemma}{Lemma}{Lemmas}
\Crefname{lemma}{Lemma}{Lemmas}

\crefname{proposition}{proposition}{propositions}
\Crefname{proposition}{Proposition}{Propositions}

\crefname{corollary}{corollary}{corollaries}
\Crefname{corollary}{Corollary}{Corollaries}

\usepackage{amssymb}

\newcommand{\eps}{\epsilon}
\newcommand{\ca}{\mathcal}

\newcommand{\sm}{\smallsetminus}

\def\R{\mathbb R}
\def\Q{\mathbb Q}
\def\End{\mathrm{End}}
\def\F{\mathbb F}

\def\GL{\mathrm{GL}}
\def\PGL{\mathrm{PGL}}
\def\AGL{\mathrm{AGL}}
\def\Gal{\mathrm{Gal}}
\def\SU{\mathrm{SU}}
\def\PSU{\mathrm{PSU}}
\def\GU{\mathrm{GU}}
\def\Sp{\mathrm{Sp}}

\def\PSp{\mathrm{PSp}}
\def\SL{\mathrm{SL}}
\def\PSL{\mathrm{PSL}}
\def\Or{\mathrm{O}}
\def\CO{\mathrm{CO}}
\def\SO{\mathrm{SO}}

\def\Om{\Omega}
\def\POm{\mathrm P \Omega}

\def\det{\operatorname{det}}

\def\min{\mathrm{min}}

\def\Out{\mathrm{Out}}
\def\Aut{\mathrm{Aut}}

\def\End{\mathrm{End}}
\def\fpr{\mathrm{fpr}}
\def\fp{\mathrm{fix}}
\def\l{\lambda}

\newcommand\floor[1]{\left\lfloor{#1}\right\rfloor}

\newcommand{\gen}[1]{\ensuremath{\langle #1\rangle}}

\numberwithin{table}{section}
\numberwithin{equation}{section}

\usepackage{todonotes}

\begin{document}
	
	\author{Daniele Garzoni}
	\address{Daniele Garzoni, Department of Mathematics, University of Southern California, Los Angeles, CA 90089-2532,
		USA}
	\email{garzoni@usc.edu}
	
	\author{Robert M. Guralnick}
	\address{Robert M. Guralnick, Department of Mathematics, University of Southern California, Los Angeles, CA 90089-2532,
		USA}
	\email{guralnic@usc.edu}
	
	\author{Martin W. Liebeck}
	\address{Martin W. Liebeck, Department of Mathematics, Imperial College, London SW7 2AZ, UK}
	\email{m.liebeck@imperial.ac.uk}
	
	\title[On a conjecture of Peter Neumann]{On a conjecture of Peter Neumann on fixed points in permutation groups}
	\maketitle

\begin{center} {\it Dedicated to the memory of Peter Neumann} \end{center}
    
	\begin{abstract}
		We prove a conjecture of Peter Neumann from 1966, predicting that every finite non-regular primitive permutation group of degree $n$ contains an element fixing at least one point and at most $n^{1/2}$ points. In fact, we prove a stronger version, where $n^{1/2}$ is replaced by $n^{1/3}$, and this is best possible. The case where $G$ is affine was proved by Guralnick and Malle;  in this paper we address the case where $G$ is non-affine. 
	\end{abstract}

	\section{Introduction}

The study of fixed points of elements in finite primitive permutation groups has a long history. An early theme was the {\it minimal degree}: for $G$ primitive of degree $n$, this is defined to be the smallest number of points moved by any non-identity element of $G$, denoted by $\mu(G)$. Nineteenth century results of Bochert and Jordan show that for $G \ne A_n,S_n$, the minimal degree $\mu(G)$ tends to infinity as $n\to \infty$, and further work of Jordan, Manning and others provides explicit lower bounds, culminating in Babai's bound $\mu(G) > \frac{1}{2}\sqrt{n}$ (see \cite{Babai} and \cite[Thms. 5.3A,5.4A]{DM}), which is not far from best possible. Babai's proof does not use the classification of finite simple groups (CFSG). Using CFSG, the result has been much extended: for example in \cite{GM} it has been shown that $\mu(G) \ge \frac{1}{2}n$, provided certain explicit families of primitive groups $G$ are excluded. To put it another way, in all primitive groups apart from these families, we have $\fp(g) \le \frac{1}{2}n$ for all $1\ne g \in G$ (where $\fp(g)$ is the number of fixed points of $g$). 

Given these bounds for fixed point numbers covering all non-identity elements, it is natural to ask whether there exist individual elements that fix many fewer points than the overall bound. 
This was the topic of Peter Neumann's 1966 DPhil thesis \cite{neumann1966thesis}. By a well-known lemma of Jordan, any transitive permutation group of degree $n \ge 2$ contains a derangement -- that is, an element $g$ with $\fp(g)=0$. Neumann investigated the existence of elements fixing few points (but at least one) in non-regular transitive groups, and proved that any such group $G$ contains an element $g$ with $1\le \fp(g)\le \frac 12 n$. This is sharp, as can be seen from the group $G = C_m \wr S_2$ in an imprimitive action of degree $n=2m$. For primitive groups, Neumann conjectured a much stronger result:
	
	\begin{conjecture} [P. Neumann, 1966]
		\label{conj:neumann}
		Let $G$ be a finite primitive non-regular permutation group of degree $n$. Then, there exists $g\in G$ with $1\le \fp(g)\le n^{1/2}$.
	\end{conjecture}
	
	Most of the work in the literature on this conjecture has concerned the case of primitive groups of affine type. For these we have $G = V \rtimes G_0 \le \AGL(V)$, where $V$ is a finite vector space, and $G_0$ is an irreducible subgroup of $\GL(V)$; the conjecture says that such a group $G_0$ should possess an element $g$ such that $\dim C_V(g) \le \frac{1}{2}\dim V$. For $G$ solvable, Neumann proved this, with the stronger upper bound $\frac{7}{18}\dim V$ instead of $\frac{1}{2}\dim V$; and he conjectured that the true bound in this case should be $\frac{1}{3}\dim V$, noting that equality is achieved by the group $G_0 = A_4 < \GL_3(p)$ for odd primes $p$. Of course Neumann did not have the CFSG available to use at the time. Using CFSG, Segal and Shalev \cite[Lemma 2.3]{SS} proved the original Neumann conjecture for affine groups; following a further improvement in \cite{IKMM}, the affine conjecture with the bound $\frac{1}{3}\dim V$ was finally established by Guralnick and Malle in \cite{guralnick2012malle}. Generalizing Neumann's example, one can see that the bound is sharp for the groups $G_0 = \SO_3(q) < \GL_3(q)$ for $q$ an odd prime power.
	
	In this paper, we complete the proof of the strong version of Neumann's conjecture, by addressing the case where $G$ is non-affine. Combining with affine case \cite{guralnick2012malle}, we prove the following theorem.
	
	\begin{theorem} 
		\label{t:main}
		Let $G$ be a finite primitive non-regular permutation group of degree $n$. Then there exists $g\in G$ with $1\le \fp(g)\le  n^{1/3}$.
	\end{theorem}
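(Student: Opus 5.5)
The plan is to reduce \cref{t:main} via the O'Nan--Scott theorem to the almost simple case, and then treat almost simple groups using known fixed point ratio bounds together with explicit constructions. The affine case is exactly the theorem of Guralnick and Malle \cite{guralnick2012malle}: an element $g\in G_0$ with $\dim C_V(g)\le \frac13\dim V$ fixes $0$ and at most $n^{1/3}$ points of $V$. So we may assume $G$ is non-affine, with socle $T^k$ for a non-abelian simple group $T$.

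\textbf{Product and diagonal types.} In product action type, $G\le H\wr S_k$ acts on $\Delta^k$ with $H$ primitive of degree $m$ and of almost simple or diagonal type, and $n=m^k$. Given $h\in H$ with $1\le \fp(h)\le m^{1/3}$, the element $(h,\dots,h)$, or $(h_1,\dots,h_k)$ lying in the socle when that is more convenient, fixes between $1$ and $(m^{1/3})^k=n^{1/3}$ points. The point to check is that such an element actually lies in $G$. Elements of the socle $T^k\le G$ are always available. So I will aim to prove the almost simple case in the stronger form that the element can be taken in $T$, or at least in $G\cap \Aut(T)$ acting componentwise. Where that fails, I will use elements of $G$ that permute the coordinates (for instance a $k$-cycle times suitable components), which have few fixed points on $\Delta^k$. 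For diagonal type, the point stabilizer contains $\mathrm{Inn}(T)$ acting diagonally on $T^{k-1}$-cosets, with $n=|T|^{k-1}$. A diagonal element $(t,\dots,t)$ fixes $|C_T(t)|^{k-1}$ points. Taking $t$ of large prime order (for example a regular semisimple element or an $\ell$-element with $|C_T(t)|\le |T|^{1/3}$, which exists for every nonabelian simple $T$ by inspection) gives the bound. Twisted wreath groups are regular on nothing problematic; there the stabilizer is a subgroup of $\Aut(T)\wr S_k$ acting by conjugation, and a similar centralizer estimate applies.

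\textbf{Almost simple case.} This is the core. Let $H=G_\alpha$. For $x\in H$, $\fp(x)=n\cdot |x^G\cap H|/|x^G|$, so we need $x\in H$ with $|x^G\cap H|\le n^{-2/3}|x^G|$. My first attempt is to take $x$ of prime order $r$ lying in $H$ and to bound $\fpr(x)$ using the fixed point ratio bounds of Liebeck--Saxl and Burness, such as $\fpr(x)\le |x^G|^{-1/2+o(1)}$ for non-subspace actions. The aim is to choose $x$ so that $|x^G|$ is large compared to $n$; for instance $x$ regular semisimple, or a Singer-type element inside $H$. For alternating and symmetric groups, I will treat the action on $k$-sets separately: an element consisting of one fixed point together with long cycles, or a product of cycles, can be chosen explicitly to fix only one $k$-set or a few. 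Other actions of $S_m$ will be handled by counting. For sporadic groups I will do a direct character-table or computer check with fixed point counts. For classical groups acting on subspaces, I will choose an element acting irreducibly on a $k$-space and on a complement (two Singer cycles), so that it fixes very few subspaces. Exceptional groups and remaining classical actions I will handle via the $|x^G|$-based bounds, with small-rank, small-$q$ cases done by computation.

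\textbf{Main obstacle.} I expect the main obstacle to be the almost simple groups where $H$ is large, namely maximal parabolics and $n$ small relative to $|G|$, together with small exceptional cases where generic fixed point ratio bounds are too weak to beat $n^{1/3}$. Requiring the element to lie in the socle for the product action step adds further difficulty. There I would first try explicit elements of the form semisimple $\times$ unipotent in a Levi factor, and otherwise fall back on direct computation.
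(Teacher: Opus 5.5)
Your reductions for the affine, diagonal and product action types agree with the paper. This includes the important choice to prove the almost simple case with the element lying in the socle, which is exactly \Cref{t:main_simple} for the class $\ca A(G)$.

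\textbf{Gap 1: twisted wreath type.} ``A similar centralizer estimate applies'' is not an argument, and nothing from the diagonal case transfers. Here the socle $E=T^k$ is regular and the point stabilizer $H$ satisfies $H\cap E=1$. For $h\in H$ one has $\fp(h)=|C_E(h)|\le |T|^{s(h)}$, where $s(h)$ is the number of orbits of $\langle h\rangle$ on the $k$ components. So you must produce $h\in H$ with $s(h)\le k/3$. $H$ contains no diagonal copy of $T$, and transitivity of $H$ on the components does not help: in a regular elementary abelian $2$-group every element has at least $k/2$ orbits.

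The missing idea (\Cref{lemtwrs}, \Cref{lemtwr}, \Cref{twr}) has three steps. Take a minimal normal subgroup $A=S^e$ of $H$; it is nonabelian by Aschbacher--Scott and moves every component. Choose $h=(x,\dots,x)\in A$ with $x$ of prime order $p\ge 5$. Then use the Burness--Guralnick bound $\fp(x)\le m/(p+1)$ on primitive $S$-sets of size $m$, treating the listed exceptions separately, to show that $\langle h\rangle$ has at most $k/3$ orbits on every fixed-point-free $S^e$-set. For the non-strict inequality of \Cref{t:main} this already suffices.

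\textbf{Gap 2: the almost simple case.} Here you also misplace the difficulty. Maximal parabolics are easy: a regular unipotent element lies in a unique Borel subgroup and so fixes exactly one point (\Cref{l:parabolic}). The hard cases are $H$ in Aschbacher's class $\ca S$ of a classical group, and the almost simple subgroups of exceptional groups, which are not fully known for $E_7$, $E_8$.

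In these cases the fixed point ratio bounds you cite are too weak. Since $|x^G|\le |G|=|G:H|\,|H|$, a bound $\fpr(x)\le |x^G|^{-1/2}$ yields at best $\fp(x)\le (|G:H|/|H|)^{1/2}$. This exceeds $|G:H|^{1/3}$ as soon as $|H|<|G:H|^{1/3}$, which is the typical situation in class $\ca S$. What is needed instead is $x\in H$ with $|C_G(x)|<|G:H|^{1/3}$. Since $H$ is embedded via an arbitrary absolutely irreducible representation, you cannot simply ``take a regular semisimple or Singer-type element inside $H$'': $H$ need not contain one, and you have no control over how a given $x\in H$ acts on the natural module $V$.

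The paper obtains this control from representation theory:
\begin{itemize}
\item $\dim C_V(x)\le \frac13\dim V$ or $\frac12\dim V$, via generation of $L$ by conjugates of $x$ (\Cref{l:scott} with Guralnick--Malle, or invariable generation plus Gow's theorem, or generation by two conjugates);
\item bounds on the dimensions of the nontrivial eigenspaces, from the semiregular action of $N_L(\langle x\rangle)/C_L(x)$ on them;
\item control of Jordan blocks via \Cref{l:green_correspondence} when $p$ divides $|x|$;
\item Liebeck's order bound $|H|<q^{2\dim V+4}$ outside an explicit list;
\item L\"ubeck, Hiss--Malle and GAP for small dimensions.
\end{itemize}
Without an argument of this kind, the almost simple case, and hence the theorem, is not proved.
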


    If $G$ is not affine, we will in fact find $g\in G$ with $1\le \fp(g) <  n^{1/3}$; see Theorem \ref{t:main_simple}, Proposition \ref{twr} and the proofs of Propositions \ref{diag} and \ref{prod}.
        
         As we have remarked already, the $n^{1/3}$ bound is sharp for affine groups. For non-affine groups, the strict inequality is close to best possible. For example, if $S=\PSL_2(q)$ with $q$ even, and if we consider $G=S\times S$  acting on $S$ via $s^{(x_1,x_2)}=x_1^{-1}sx_2$, then the number of fixed points of an element of $G$ is either $0$ or at least $q-1=n^{1/3}(1-o(1))$.  A similar example is given by the action of $\PSL_2(q^2)$ on the cosets of $\PSL_2(q)$ for $q$ even.

	The following  is an immediate consequence of \Cref{t:main} and Frobenius density theorem. For completeness, we give a proof of the deduction in \Cref{sec:polynomials}. In the statement,  a \textit{minimal field extension} refers to an extension having no nontrivial subextensions.
	
	\begin{corollary}
		\label{cor:polynomials}
		Let $f(X)\in \mathbb Z[X]$ be of degree $n$ and irreducible in $\Q[X]$. Letting $\alpha \in \overline{\Q}$ be a root of $f(X)$, assume that $\Q(\alpha)/\Q$ is a minimal field extension and that it is not Galois. Then, there is a set $\ca P$ of primes of positive density such that for $p\in \ca P$, the reduction of $f(X)$ modulo $p$ has at least one root and at most $n^{1/3}$ roots in $\F_p$.
	\end{corollary}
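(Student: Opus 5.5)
The plan is to deduce the corollary from \Cref{t:main} through the Galois correspondence and the Frobenius density theorem. Let $K$ be the splitting field of $f$ over $\Q$, let $G=\Gal(K/\Q)$, and let $\Omega=\{\alpha_1=\alpha,\dots,\alpha_n\}$ be the roots of $f$ in $K$. Since $f$ is irreducible, $G$ acts transitively and faithfully on $\Omega$, and the point stabilizer of $\alpha$ is $H=\Gal(K/\Q(\alpha))$.

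The first step translates the hypotheses into group-theoretic ones.
\begin{itemize}
\item Under the Galois correspondence, fields $\Q\subseteq L\subseteq \Q(\alpha)$ correspond to subgroups $H\le M\le G$. So the minimality of $\Q(\alpha)/\Q$ says that $H$ is maximal in $G$, i.e.\ $G$ is primitive on $\Omega$. (If $n=1$ the extension is trivial, hence Galois, which is excluded; so $n\ge 2$.)
\item The extension $\Q(\alpha)/\Q$ is Galois iff $H$ is normal in $G$. Because $G$ acts faithfully and transitively, this happens iff $H=1$, i.e.\ iff $G$ is regular. So $G$ is non-regular.
\end{itemize}
By \Cref{t:main}, there is $g\in G$ with $1\le \fp(g)\le n^{1/3}$. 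Let $C$ be the conjugacy class of $g$ in $G$. Every element of $C$ has the same number of fixed points on $\Omega$.

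The second step is the arithmetic input. Let $d$ be the discriminant of $f$ and let $c$ be its leading coefficient. Take $\ca P$ to be the set of primes $p\nmid cd$ that are unramified in $K$ and whose Frobenius class $\mathrm{Frob}_p\subseteq G$ equals $C$. By the Frobenius (or Chebotarev) density theorem, $\ca P$ has density $|C|/|G|>0$. (Frobenius's version gives density for the division $\langle g\rangle$-class, which is also positive and still consists of elements with $\fp=\fp(g)$, since generators of $\langle g\rangle$ have the same fixed points.)

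For $p\in\ca P$ we then need the standard fact that the number of roots of $f \bmod p$ in $\F_p$ equals the number of fixed points of $\mathrm{Frob}_p$ on $\Omega$. To see this, choose a prime $\mathfrak P$ of $K$ above $p$. Since $p\nmid cd$, reduction modulo $\mathfrak P$ maps the roots $\alpha_i$ (which are $\mathfrak P$-integral once we scale by $c$, or directly, using that $c$ is a unit at $p$) bijectively onto the roots of $\bar f$ in the residue field $\F_{\mathfrak P}$. The bijectivity holds because $p\nmid d$ makes $\bar f$ separable of degree $n$. This bijection is compatible with the Frobenius element $\sigma\in G$ and the map $x\mapsto x^p$. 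Hence the roots of $\bar f$ lying in $\F_p$, which are exactly those fixed by $x\mapsto x^p$, correspond to the $\alpha_i$ fixed by $\sigma$.

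It follows that $\bar f$ has between $1$ and $n^{1/3}$ roots in $\F_p$. The only mild care needed is in this last reduction step: handling a non-monic $f$, and restricting to primes avoiding the leading coefficient and the discriminant. Discarding those finitely many primes does not affect density, so none of this is a real obstacle.
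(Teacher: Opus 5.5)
Your proposal is correct and follows essentially the same route as the paper. Both arguments translate ``minimal, non-Galois'' into ``primitive, non-regular,'' apply Theorem~\ref{t:main}, and invoke the Frobenius density theorem to obtain a positive-density set of primes. The paper does this tersely, leaving implicit the standard identification of roots of $f \bmod p$ with fixed points of Frobenius. You spell that identification out, including the handling of the leading coefficient and the discriminant.
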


	Let us briefly discuss the proof of the theorem and the layout of the paper. In Section \ref{reduction}, using the O'Nan-Scott theorem, the proof is reduced to the case where $G$ is almost simple. This reduction is not entirely straightforward, and in particular primitive groups of twisted wreath type require some effort (see \Cref{twr}). The rest of the paper covers the case where $G$ is almost simple. Let $H$ be a point-stabilizer, a maximal subgroup of $G$, and write $G/H$ for the set of right cosets of $H$ in $G$, and $\fp(g,G/H)$ for the number of fixed points of $g$ in the action on $G/H$. For $g \in H$ we have 
	\begin{equation}\label{fpteq}
		\fp(g,G/H) = |C_G(g)|.\frac{|g^G \cap H|}{|H|} \le |C_G(g)|
	\end{equation}
	(see Lemma \ref{l:fpr_enough}), and so the main aim is to find an element $g \in H$ for which $|C_G(g)|$ is small (ideally, less than $|G:H|^{1/3}$). This is achieved in Sections \ref{sec:alternating_groups} and \ref{sporadpf} for alternating and sporadic groups, respectively. Exceptional groups of Lie type are dealt with in Section \ref{exceppf}. The classical groups require the most work, and are handled in the last three sections \ref{class1}, \ref{sec:geometric_classes} and \ref{sec:class_S}. 
	
	As far as the methods of proof are concerned, the starting point is the substantial literature on the maximal subgroups of almost simple groups. For sporadic groups the information is complete (the maximal subgroups have been classified), and it is a fairly routine matter to pick an element $g$ such that the right hand side of (\ref{fpteq}) is less than $|G:H|^{1/3}$, as required. For exceptional groups of Lie type the maximal subgroups have been classified except when $G$ is of type $E_7$ or $E_8$, but it is much less routine to find suitable elements $g$. When $G$ is alternating or classical, there are well-known theorems (\cite{AS} and \cite{aschbacher1984maximal_subgroups}) that partition the maximal subgroups into classes $\ca C$ of known subgroups, and $\ca S$ of unknown subgroups; the class $\ca S$ consists of almost simple primitive subgroups in alternating groups, and almost simple irreducible subgroups in classical groups. While for alternating groups, the class $\ca S$ does not cause much difficulty (see Lemma \ref{asprim}), this is not the case for classical groups, and the longest and most difficult section of the paper (Section \ref{sec:class_S}) is devoted to the proof of Theorem \ref{t:main} for $G$ classical and $H$ in the class $\ca S$. We use a full range of methods and results on the structure, generation and modular representation theory of simple groups to achieve the proof in this case (see \Cref{subsec:strategy} for a brief outline of the argument).
	
	Let us offer a final word about our use of computation in the proofs, using the computer software GAP (\cite{GAP_character_table}). If a group $G$ and a maximal subgroup $H$ are available in GAP, and also the character table of $G$, then for $g\in G$ it is a routine matter to compute the number of fixed points $\fp(g,G/H)$, using the equation (\ref{fpteq}). We shall make frequent use of such computations for small simple groups $G$, and we usually suppress details, using phrases such as ``we check that $\fp(g,G/H) < |G:H|^{1/3}$ using GAP".

	\section{Preliminaries}
	
	We prove a few elementary results on fixed points, and conclude with two lemmas concerning simple groups and a summary of some notation used throughout the paper.
	Recall that for a group $G$ with a subgroup $H$, we write $G/H$ for the set of right cosets of $H$ in $G$, and $\fp(g,G/H)$ for the number of fixed points of an element $g$ in its action on $G/H$.
	
	\begin{lemma}
		\label{l:fpr_enough}
		Let $G$ be a finite group, let $H$ be a subgroup of $G$, and let $g \in H$. 
		\begin{itemize}
			\item[{\rm (i)}] Then
			\[
			\fp(g,G/H) = |C_G(g)|.\frac{|g^G \cap H|}{|H|} \le |C_G(g)|.
			\]
			\item[{\rm (ii)}] Let $g_1, \ldots, g_t$ be representatives of the $H$-classes in $g^G\cap H$. Then
			\[
			\fp(g,G/H) = \sum_{i=1}^t \frac{|C_G(g)|}{|C_H(g_i)|}.
			\]
		\end{itemize}
	\end{lemma}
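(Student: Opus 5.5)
We count fixed cosets directly. A right coset $Hx$ is fixed by $g$ exactly when $Hxg = Hx$, i.e. when $xgx^{-1}\in H$. So $\fp(g,G/H)$ equals $|\{x\in G : xgx^{-1}\in H\}|/|H|$, because each coset $Hx$ contains $|H|$ elements $x$, and the condition depends only on the coset (replacing $x$ by $hx$ conjugates $xgx^{-1}$ by $h\in H$).

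For (i), look at the map $x\mapsto xgx^{-1}$ from $G$ onto $g^G$. Every fibre is a coset of $C_G(g)$, so it has size $|C_G(g)|$. Hence
\[
|\{x\in G: xgx^{-1}\in H\}| = |C_G(g)|\cdot|g^G\cap H|,
\]
and dividing by $|H|$ gives the equality in (i). For the inequality, it suffices to show $|g^G\cap H|\le |H|$. This is immediate, since $g^G\cap H\subseteq H$. (The hypothesis $g\in H$ is only needed to ensure $\fp(g,G/H)\ge 1$; the formula itself holds for any $g$.)

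For (ii), split $g^G\cap H$, which is a union of $H$-conjugacy classes, as $\bigsqcup_{i=1}^t g_i^H$. Each class has size $|g_i^H| = |H|/|C_H(g_i)|$. Substituting into (i) gives
\[
\fp(g,G/H) = \frac{|C_G(g)|}{|H|}\sum_{i=1}^t \frac{|H|}{|C_H(g_i)|} = \sum_{i=1}^t \frac{|C_G(g)|}{|C_H(g_i)|}.
\]

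There is no real obstacle here. The only point needing care is the right-coset convention: the condition for $Hx$ to be fixed is $xgx^{-1}\in H$, not $x^{-1}gx\in H$. Either choice leads to the same count, because $x\mapsto x^{-1}$ is a bijection.
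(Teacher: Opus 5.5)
Your proof is correct and is essentially the paper's argument: both are elementary double counts, with the paper counting pairs $(Hx,y)$ with $y\in g^G$ and $Hxy=Hx$ to get $\fp(g,G/H)=|G:H|\,|g^G\cap H|/|g^G|$, while you count the elements $x$ with $xgx^{-1}\in H$ through their cosets and through the fibres of $x\mapsto xgx^{-1}$, which gives the centralizer form directly. Part (ii) is then obtained exactly as in the paper, by splitting $g^G\cap H$ into $H$-classes.
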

	
	\begin{proof}
		(i)	It is well-known that 
		\[
		\fp(g,G/H)=\frac{|g^G\cap H|}{|g^G|}|G:H|.
		\]
		This can be seen by counting the set $X = \{(Hx,y) : x \in G,\,y\in g^G,\,Hxy=Hx\}$ in two ways; we see that $|X| = |g^G|.\fp(g,G/H) = |G:H|.|g^G\cap H|$. Part (i) follows.
		
		(ii) We have
		\[
		\fp(g,G/H)=\frac{|g^G\cap H|}{|g^G|}|G:H|=\sum_{i=1}^t \frac{|g_i^H||G|}{|g^G||H|} = \sum_{i=1}^t \frac{|C_G(g)|}{|C_H(g_i)|},
		\]
		as required.
	\end{proof}

	The following variant will sometimes be useful.
	
	\begin{lemma}
		\label{l:fixed_points_precise_normal_subgroup}
		Let $G$ be a finite group, let $H$ be a subgroup of $G$, let $H_0\triangleleft H$ and let $g\in H$. 
		\begin{itemize}
			\item[(i)] Let $g_1, \ldots, g_t$ be representatives of the $H_0$-classes in $g^G\cap H$. Then
			\[
			\fp(g,G/H) = \frac{1}{|H:H_0|}\sum_{i=1}^t \frac{|C_G(g)|}{|C_{H_0}(g_i)|}.
			\]
			\item[(ii)] If for each coset $C$ of $H_0$ in $H$, $g^G\cap C$ is either empty or equal to an $H_0$-class, and if $g\in g^G\cap H$ is chosen so that $|C_{H_0}(g)|$ is minimal, then
			\[
			\fp(g,G/H) \le   \frac{|C_G(g)|}{|C_{H_0}(g)|}.
			\]
		\end{itemize}
	\end{lemma}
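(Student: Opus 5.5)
The plan is to start from the formula already established in the proof of \Cref{l:fpr_enough}(i), namely
\[
\fp(g,G/H)=\frac{|g^G\cap H|}{|g^G|}\,|G:H| = \frac{|C_G(g)|\,|g^G\cap H|}{|H|},
\]
and to rewrite $|g^G\cap H|$ by decomposing $g^G\cap H$ into $H_0$-classes rather than $H$-classes. Since $H_0\triangleleft H$ and $g^G$ is closed under conjugation, $g^G\cap H$ is a union of $H_0$-conjugacy classes. Let $g_1,\dots,g_t$ be representatives. Then
\[
|g^G\cap H|=\sum_{i=1}^t |g_i^{H_0}| = \sum_{i=1}^t \frac{|H_0|}{|C_{H_0}(g_i)|}.
\]
Substituting and using $|H|=|H:H_0|\,|H_0|$ gives
\[
\fp(g,G/H)=\frac{1}{|H:H_0|}\sum_{i=1}^t\frac{|C_G(g)|}{|C_{H_0}(g_i)|}.
\]
We have $|C_G(g_i)|=|C_G(g)|$ since each $g_i$ is $G$-conjugate to $g$. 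This proves (i).

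For (ii), group the $H_0$-classes according to which coset of $H_0$ in $H$ they lie in. Each $H_0$-class is contained in a single coset $C$: for $x\in C$ and $h\in H_0$ we have $x^h = x[x,h]$ with $[x,h]\in H_0$ by normality. The hypothesis says each coset meets $g^G$ in at most one $H_0$-class. Hence $t$ is at most the number of cosets, so $t\le |H:H_0|$.

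Now choose $g$ within $g^G\cap H$ so that $|C_{H_0}(g)|$ is minimal. This does not change $\fp(g,G/H)$, because the number of fixed points is constant on $G$-classes. Each term in (i) is then at most $|C_G(g)|/|C_{H_0}(g)|$, and there are at most $|H:H_0|$ terms. Therefore
\[
\fp(g,G/H)\le \frac{1}{|H:H_0|}\cdot |H:H_0|\cdot\frac{|C_G(g)|}{|C_{H_0}(g)|},
\]
which is the claimed bound.

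There is no real obstacle here. The only points needing care are that $H_0$-classes respect cosets of $H_0$, and the observation that replacing $g$ by a $G$-conjugate in $H$ leaves $\fp(g,G/H)$ unchanged, which justifies the minimal choice of $g$.
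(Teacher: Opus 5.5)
Your proof is correct and is the paper's argument: part (i) is the $H_0$-class analogue of the computation in \Cref{l:fpr_enough}(ii), and part (ii) follows from $t\le |H:H_0|$ together with the minimality of $|C_{H_0}(g)|$. The paper leaves implicit, and you spell out, that each $H_0$-class lies in a single coset of $H_0$ and that replacing $g$ by a $G$-conjugate inside $H$ leaves $\fp(g,G/H)$ unchanged.
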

	
	\begin{proof}
		Part (i) follows by a similar argument to the previous lemma. As for (ii), the assumption that $g^G\cap C$ is either empty or equal to an $H_0$-class implies $t\le |H:H_0|$, and so the bound follows immediately from the minimality of $|C_{H_0}(g)|$.
	\end{proof}
	
	We shall need the following result on centralizer orders in simple groups.
	
	\begin{lemma}
		\label{l:centralizer_simplegroup_13}
		Let $S$ be finite non-abelian simple group. Then $S$ contains an element $g$ such that $|C_S(g)|< |S|^{1/3}$.
	\end{lemma}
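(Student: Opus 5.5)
Since $S$ is a non-abelian simple group, I will use CFSG and check each family separately. In every case I will take $g$ to be a regular semisimple element whose centralizer is a maximal torus of small order, or an analogous element for alternating and sporadic groups. I will then compare $|C_S(g)|$ with $|S|^{1/3}$.

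\emph{Alternating groups.} Let $S=A_m$ with $m\ge 5$. If $m$ is odd, I take $g$ to be an $m$-cycle, so that $|C_S(g)|=m$. If $m$ is even, I take $g$ to be the product of an $(m-1)$-cycle and a fixed point is impossible by parity, so instead I choose $g$ of cycle type $(a,b)$ with $a+b=m$, $a\neq b$ both odd, for instance $(m-1-\epsilon, 1+\epsilon)$ with odd parts. Then $|C_S(g)|\le ab$. It remains to check $m^2 < (m!/2)^{1/3}$. This holds for $m\ge 8$ or so, and the few small cases $m=5,6,7$ I will verify directly. For example, in $A_5$ a $5$-cycle has centralizer of order $5<60^{1/3}\approx 3.9$?, which fails, so here a different element is needed: a $3$-cycle has centralizer of order $3<3.91$. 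Small cases will therefore be handled by inspection.

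\emph{Sporadic groups.} For the $26$ sporadic groups (and the Tits group, if it is counted here) I read off the ATLAS an element of maximal order or a suitable large prime order, whose centralizer is cyclic of that order. I then check that this order is less than $|S|^{1/3}$, which is easy since $|S|$ is huge compared with element orders. For instance, in $M_{11}$ an element of order $11$ has $|C|=11<7920^{1/3}\approx 19.9$.

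\emph{Groups of Lie type.} This is the main case. Let $S$ have rank $r$ over $\F_q$ with $|S|\approx q^{\dim}$. I take $g$ to be a regular semisimple element in a maximal torus $T$ of $S$ of Coxeter type, or another torus chosen so that a generator is regular; the existence of such elements comes from Zsigmondy primes. The centralizer of such an element is $T$, with $|T|\le (q+1)^r/d$ roughly, and sometimes much smaller (e.g.\ $(q^{r+1}-1)/((q-1)d)$ for $\PSL_{r+1}(q)$). Since $\dim\ge r^2+2r$ for the simple algebraic group, we have $|S|^{1/3}\gtrsim q^{(r^2+2r)/3}$, which exceeds $|T|\approx q^r$ as soon as $r\ge 2$ or $q$ is moderately large. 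For rank one, $\PSL_2(q)$ has a torus of order $(q+1)/\gcd(2,q-1)$, while $|S|^{1/3}\approx q$, so the inequality holds for $q\ge 4$ or so. ${}^2B_2(q)$, ${}^2G_2(q)$, and $\PSU_3(q)$ are handled similarly, using tori of order $q\pm\sqrt{2q}+1$, $q\pm\sqrt{3q}+1$, and $(q^2-q+1)/d$ respectively.

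\emph{Main obstacle.} The main obstacle is ensuring that the chosen torus contains a regular element for small $q$, and handling small exceptional cases. In particular, Zsigmondy exceptions such as $q=2$ with $r=6$, and groups like $\PSL_2(q)$, $\PSL_3(2)$, $\PSU_4(2)$, $\Sp_6(2)$, and $\Om_8^+(2)$ must be checked explicitly using GAP character tables. For these I will simply list an element with a small centralizer, typically one of maximal order.
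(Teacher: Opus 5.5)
\textbf{The gap: $\PSL_2(q)$ with $q$ even.} Your rank-one step fails as written. For $S=\PSL_2(q)$ you take $g$ in the torus of order $(q+1)/\gcd(2,q-1)$ and argue from $|S|^{1/3}\approx q$. In rank one, $|C_S(g)|$ and $|S|^{1/3}$ are both of order $q$, so the comparison is decided by lower-order terms. For $q$ even it goes the wrong way: $|S|=q^3-q<q^3$, hence $|S|^{1/3}<q<q+1=|C_S(g)|$. For example, $q=8$ gives $504^{1/3}\approx 7.96<9$, and $q=4$ is exactly your failing $5$-cycle in $A_5$.

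Since $\PSL_2(2^a)$ is an infinite family, this cannot be absorbed into a list of computer checks. For $q$ even a nontrivial unipotent element also fails, because its centralizer is a Sylow $2$-subgroup of order $q$. So the only elements that work are those of the split torus, with $|C_S(g)|=q-1$ and $(q-1)^3<q^3-q$. For $q$ odd your choice is fine, since $(q+1)^2<4q(q-1)$. This is why the paper treats $\PSL_2(q)$ separately and takes $g$ of order $(q-1)/(2,q-1)$ for $q\ge 7$, with $q=4,5,9$ covered by $A_5$ and $A_6$.

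\textbf{Rank at least $2$: a different route.} The paper takes $g$ regular unipotent and reads off $|C_S(g)|\le (2,p)q^r$ (classical) or $|C_S(g)|\le (60,p^2)q^r$ (exceptional) from Liebeck--Seitz. This bound is uniform in $q$ and needs no existence argument for regular elements. Your semisimple route also works, since the dimension of the algebraic group exceeds $3r$ once $r\ge 2$. To make it a proof, though, you must do two things your sketch leaves at the level of ``roughly''. First, exhibit regular elements in your chosen tori for all $q$; ppd-order elements do this outside the Zsigmondy exceptions. Second, bound the centralizer of the image of $g$ in the simple group, which may exceed the image of $T$ by elements of $N(T)$ mapping $g$ to a scalar multiple.

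\textbf{Minor slips in the alternating case.} For $m$ even, an $(m-1)$-cycle is an even permutation, so it does lie in $A_m$; your fallback with $\epsilon=0$ is that same element. Also, the crude bound $m^2<(m!/2)^{1/3}$ only holds from $m=10$, so $m=8,9$ also need the direct check, which succeeds.
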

	
	\begin{proof}
		For $S = A_n$ with $n\ge 6$, we take $g$ to be either an $n$-cycle or an $(n-1)$-cycle; and for $S=A_5$ we take $g$ to be a $3$-cycle. For $S$ sporadic we use \cite{atlas}.
		
		Now suppose $S \in {\rm Lie}(p)$, and let $g\in S$ be a regular unipotent element. The order of $C_S(g)$ can be read off from results in \cite{liebeck_seitz_2012unipotent}. For $S$ a classical group, \cite[Chapters 3,4]{liebeck_seitz_2012unipotent} gives $|C_S(g)| \le (2,p)q^r$, where $r$ is the untwisted Lie rank of $S$; and for $S$ of exceptional Lie type, \cite[Tables 22.2.1-6]{liebeck_seitz_2012unipotent} gives $|C_S(g)| \le (60,p^2)q^r$. It follows that $|C_S(g)|< |S|^{1/3}$ in all cases except for $S = \PSL_2(q)$. Finally, for $S=\PSL_2(q)$ with $q\ge 7$, we take $g$ of order $(q-1)/(2,q-1)$. 
	\end{proof}
	
	The final lemma of this section concerns the orders of simple groups. In the proof, and throughout the rest of the paper, we make use of Zsigmondy's theorem on primitive prime divisors \cite{Zs}: this states that if $q,n$ are integers with $q \ge 2$, $n\ge 3$ and $(q,n) \ne (2,6)$, then there is a prime number that divides $q^n-1$ and does not divide $q^i-1$ for $1 \le i<n$. Such a prime is called a {\it primitive prime divisor} of $q^n-1$, abbreviated as {\it ppd}, and denoted by $q_n$. Observe that $q_n \equiv 1 \hbox{ mod }n$.
	
	\begin{lemma}
		\label{l:order_divisible_by_7}
		If $G$ is a finite simple $\{2,3,5\}$-group, then $G$ is $A_5$, $A_6$ or $\PSU_4(2)$.
	\end{lemma}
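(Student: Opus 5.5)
We go through the classification of finite simple groups and use Zsigmondy's theorem to rule out almost every family. Let $G$ be a nonabelian finite simple group with $\pi(G)\subseteq\{2,3,5\}$. First, $G$ cannot be sporadic: by the ATLAS, every sporadic group has order divisible by $7$ or $11$. Second, if $G=A_m$ then $m\ge 5$, and since $7\mid |A_m|$ for $m\ge 7$, we get $G\in\{A_5,A_6\}$.

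Now let $G$ be of Lie type over $\F_q$ with $q=p^f$. Since $p$ divides $|G|$, we have $p\in\{2,3,5\}$. The order $|G|$ is divisible by the cyclotomic-type factors $q^i-1$ (or $q^i\pm1$) for the degrees $i$ of the group. Whenever $q^i-1$ divides $|G|$ with $i\ge 3$ and $(q,i)\ne(2,6)$, a ppd $q_i$ divides $|G|$. Since $q_i\equiv 1 \pmod i$ and $q_i\neq p$, the ppd is a prime at least $i+1$. Write $q=p^f$ and apply Zsigmondy with base $p$ and exponent $fi$; this gives a prime $r\equiv 1\pmod{fi}$ dividing $|G|$.

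For $r\in\{2,3,5\}$ with $r\ne p$ we would need $fi\le 4$, and more precisely $fi\in\{1,2,4\}$ with $r\in\{3,5\}$. This is the key restriction. In each family, take $i$ to be the largest relevant exponent: $q^n-1$ for $\PSL_n$, $q^{2n}-1$ for $\PSp_{2n}$ and $\Omega_{2n+1}$, $q^{2n-2}-1$ for $\POm^{\pm}_{2n}$ (or $q^n+1$, which corresponds to exponent $2n$), and $q^{2n}-1$ or $q^{n}\pm 1$ for $\PSU_n$, together with the standard large exponents for the exceptional groups ($12$ for $G_2$ and ${}^3D_4$, $12$ or more for $F_4$ and the $E$-types, $4f$ for Suzuki, and so on). Requiring $p^{fi}-1$ to have no ppd outside $\{2,3,5\}$ then forces $fi\le 4$, or $(p,fi)=(2,6)$. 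The exceptional exponent $6$ with $p=2$ must be tracked separately, since $2^6-1=63$ contains the prime $7$ anyway. One should also check the smaller exponents used, for example $q^3-1$ when $fi$ is small.

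This leaves a short finite list of candidates:
\begin{itemize}
\item $\PSL_2(q)$ with $q\in\{4,5,8,9,16,25,27,81,\dots\}$ restricted by $f\le 2$ on the exponent-$2$ factor. Using $q^2-1$ and $fi\le 4$ gives $q\in\{4,5,9,16,25,81\}$, with $q=p^f$ and $2f\le 4$ or $q=8$. Checking the orders directly: $\PSL_2(4)\cong\PSL_2(5)\cong A_5$ and $\PSL_2(9)\cong A_6$, while $\PSL_2(8)$ has $7$, $\PSL_2(16)$ has $17$, $\PSL_2(25)$ has $13$, and $\PSL_2(81)$ has $41$.
\item $\PSL_3(2)$, which has $7$, and $\PSL_3(3)$, which has $13$.
\item $\PSL_4(2)\cong A_8$, which has $7$.
\item $\PSp_4(2)'\cong A_6$, $\PSp_4(3)\cong\PSU_4(2)$, and $\PSp_4(5)$, which has $13$.
\item $\PSU_3(3)$, which has $7$, and $\PSU_3(4)$, which has $13$.
\item $\PSU_4(3)$, which has $7$, and $\PSU_5(2)$, which has $11$.
\item $G_2(2)'\cong\PSU_3(3)$, ${}^2F_4(2)'$, which has $13$, and $\mathrm{Sz}(8)$, which has $7$ and $13$.
\end{itemize}
The only survivors are $A_5$, $A_6$ and $\PSU_4(2)$.

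The main obstacle is the bookkeeping in the Lie-type case. We must choose, for each family and each small rank, an exponent $e=fi$ such that $p^e-1$ divides $|G|$, and then verify that the finitely many cases with $e\le 4$ or $(p,e)=(2,6)$ are all covered. The twisted groups need the most care, because their order factors are of the form $q^i+1$; for these we apply Zsigmondy to $p^{2fi}-1$. The Suzuki and Ree groups are handled the same way, using $2^{4f}-1$ and $3^{6f}-1$ with $f$ odd, which quickly produce primes other than $2,3,5$.
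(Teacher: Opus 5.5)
Your strategy is the paper's: dispose of alternating and sporadic groups by inspection, apply Zsigmondy to the Lie-type order formulas, and check a finite residue by hand. The one difference is that you take ppds of $p^{fi}-1$ throughout, after noting $p\in\{2,3,5\}$. The paper instead uses ppds of $q^i-1$ with $i\ge 3$, $i\ne 4$ for everything except $\PSL_2(q)$, ${}^2B_2(q)$, $\PSp_4(q)'$, $\PSU_4(2)$, $\PSp_6(2)$, $\Omega^+_8(2)$, and switches to base $p$ (or to $p\mid |G|$) only for those. Your route treats field extensions uniformly but puts all the weight on bookkeeping, and several steps of that bookkeeping are false as written.

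\emph{Sporadic groups.} It is not true that every sporadic order is divisible by $7$ or $11$: $|J_3|=2^7\cdot 3^5\cdot 5\cdot 17\cdot 19$. What you need, and what the ATLAS gives, is a prime divisor $\ge 7$.

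\emph{$G_2$.} The factor $q^{12}-1$ does not divide $|G_2(q)|=q^6(q^6-1)(q^2-1)$; for example $13\nmid |G_2(2)|$. The correct exponent is $6f$, and its only Zsigmondy exception is $G_2(2)'\cong\PSU_3(3)$, which is handled by $7$.

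\emph{The case $(p,e)=(2,6)$.} Your remark that $7\mid 2^6-1$ helps only when $2^6-1$ itself divides $|G|$. For a twisted factor $q^3+1$ with $q=2$ you get only $9$. This is exactly $\PSU_4(2)$, so applying your remark inside the unitary family would wrongly eliminate the genuine exception.

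\emph{The residual list.} It is not what your criterion outputs. It contains groups whose Zsigmondy exponent is at least $8$ and which are therefore already eliminated: $\PSL_2(16)$, $\PSL_2(81)$, $\PSU_3(4)$, $\PSU_5(2)$, ${}^2B_2(8)$. It also omits $\PSL_4(3)$, which passes the top-exponent test (the ppd of $3^4-1$ is $5$) and is killed only by $q^3-1=26$.

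With the order factors done correctly family by family, your argument proves the lemma.
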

	
	\begin{proof}
		If $G$ is alternating this is clear, and we can verify the result for $G$ sporadic by checking the orders of these groups (for example in \cite{atlas}).  
		Assume now that $G$ is of Lie type over $\F_q$. If $G$ is not isomorphic to one of
		\[
		\PSL_2(q), \,^2\!B_2(q),\, \PSp_4(q)',\,\PSU_4(2),\, \PSp_6(2),\,\Omega^+_8(2)
		\]
		then by considering at the order formula for $|G|$, we see that $|G|$ is divisible by a primitive prime divisor (ppd) of $q^i-1$ for some $i\ge 3$ with $i\ne 4$. Such a ppd is congruent to $1\pmod i$, and hence is at least $7$. 
		
		Let us address the remaining groups in the above list. The case $\PSU_4(2)$ is in the statement, as are $\PSL_2(q), q=4,5,9$ and $\PSp_4(q)', q=2,3$. The groups $\PSp_6(2),\,\Omega^+_8(2)$, have order divisible by 7. For $G = \,^2\!B_2(q)$ we have $q = 2^{2a+1}\ge 8$ and $|G|$ is divisible by a ppd of $2^{2a+1}-1$. 
		
		Assume next $G=\PSp_4(q)$ with $q \ge 4$.  If $q=p^a$ with $a>1$, then $|G|$ is divisible by a ppd of $p^{4a}-1$; if $q=p > 5$, observe that $p$ divides $|G|$; and $|\PSp_4(5)|$ is divisible by 13. 
		
		Assume finally $G=\PSL_2(q)$ with $q=p^a$. We can take $p = 2,3$ or 5 and $a \ge 3,3$ or 2 respectively. Then $|G|$ is divisible by a ppd of $p^{2a}-1$, or by 7 if $(p,a) = (2,3)$, or by 13 if $(p,a) = (5,2)$. This completes the proof.
	\end{proof}

\subsection*{Notation}  For a prime $p$, we denote by ${\rm Lie}(p)$ the set of simple groups of Lie type over a field of characteristic $p$; and ${\rm Lie}(p')$ is the set $\bigcup_{r\ne p}{\rm Lie}(r)$.
	
	For $\eps = \pm$, we use $\PSL_n^\eps(q)$ to denote $\PSL_n(q)$ if $\eps = +$, and $\PSU_n(q)$ if $\eps = -$. Similarly, $E_6^\eps(q)$ is $E_6(q)$ for $\eps = +$, and $^2\!E_6(q)$ for $\eps = -$. 
	
	Finally, we remind the reader of our notation for primitive prime divisors (ppds):  for $q \ge 2$, $n\ge 3$ and $(q,n) \ne (2,6)$, we denote by $q_n$ a ppd of $q^n-1$.

	%%%%%%%%%%%%

	\section{Reduction to simple groups}\label{reduction}
	
	The heart of the proof of Theorem \ref{t:main} is the case of almost simple groups, for which we shall prove the following slightly stronger result. For $G$ a non-abelian finite simple group, denote by 
	\begin{equation}
		\label{eq:ca A}
		\ca A=\ca A(G)
	\end{equation}
	the set of subgroups of $G$ that extend to a maximal subgroup of some almost simple group with socle $G$.
	
		\begin{theorem} \label{t:main_simple}
		Let $G$ be a non-abelian finite simple group and let $M\in \ca A$. Then, there exists $g\in M$ such that $\fp(g,G/M)<|G:M|^{1/3}$.
	\end{theorem}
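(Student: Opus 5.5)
The plan is to go through the classification of finite simple groups, treating alternating, sporadic, exceptional and classical $G$ in turn. In every case the tool is \Cref{l:fpr_enough}(i): it suffices to exhibit $g\in M$ with
\[
|C_G(g)|\cdot\frac{|g^G\cap M|}{|M|}<|G:M|^{1/3}.
\]
When $|M|$ is small relative to $|G|$, say $|M|\le |G|^{1/2}$ roughly, the crude bound $\fp(g,G/M)\le |C_G(g)|$ is often enough. In that case we choose $g\in M$ with $|C_G(g)|$ as small as possible, for instance a regular semisimple element or an element of ppd order $q_n$, using Zsigmondy's theorem. When $M$ is large, for example a parabolic subgroup or a subsystem subgroup, the crude bound fails. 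Then we must estimate $|g^G\cap M|/|M|$, using \Cref{l:fpr_enough}(ii) or \Cref{l:fixed_points_precise_normal_subgroup} to count the $M$-classes fusing into $g^G$. In such cases we pick $g$ to be a regular element of a Levi factor or of a large torus of $M$, so that $C_G(g)$ is a torus or close to it.

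\textbf{Alternating and sporadic cases.} For $G=A_m$ we use the O'Nan--Scott/Aschbacher--Scott classes of \cite{AS}.

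For intransitive $M=(S_k\times S_{m-k})\cap A_m$ we take $g$ to be a product of a $k$-cycle and an $(m-k)$-cycle, with parity adjustments. Then $\fp(g)$ counts the $k$-sets that are unions of cycles, which is bounded.

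For imprimitive and wreath-product $M$ we take $g$ to act with very few cycles, so that $|C_G(g)|$ is polynomial in $m$ while $|G:M|$ is exponential.

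For almost simple primitive $M$ we use the bound $|M|<4^m$ from Praeger--Saxl (or Mar\'oti), together with an element $g\in M$ whose centralizer in $A_m$ is small; this is \Cref{asprim}.

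Sporadic $G$ are handled by a finite check with the ATLAS and GAP character tables, using \eqref{fpteq}.

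\textbf{Lie type.} For exceptional groups we use the known lists of maximal subgroups, namely parabolic, maximal rank, subfield, and the small almost simple ones. For parabolics we choose $g$ regular semisimple in a Levi subgroup that lies in a unique conjugate of the parabolic, so that $\fp(g)$ is at most the number of parabolics containing a maximal torus, namely $|W:W_J|$. That quantity is far below $|G:P|^{1/3}\approx q^{\dim U_P/3}$ except in tiny cases, which we check directly. For the remaining $M$ the order bounds of Liebeck--Saxl--Seitz give $|M|$ much smaller than $|G|$, and the crude bound with $g$ regular semisimple of large order (ppd order) suffices.

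For classical groups we go through the Aschbacher classes $\ca C_1$--$\ca C_8$, in each case choosing $g$ preserving the structure (for example $g$ acting irreducibly on each summand of a decomposition, or a Singer-type element in an extension-field subgroup). We then count its fixed points via the eigenspace structure; subfield and small-rank cases are handled by explicit centralizer bounds.

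\textbf{Main obstacle.} I expect the hardest part to be class $\ca S$ for classical $G$. There $M$ is unknown and only a bound such as $|M|<q^{3n}$ (Liebeck) is available, whereas $|G|\approx q^{n^2/2}$. The intended route is to take $g\in M$ semisimple of ppd order, or regular in a suitable sense, and bound $|C_G(g)|$ using its eigenvalue multiplicities on the natural module. This works for large $n$ but is tight for small $n$, where $|M|$ can be comparable to $|G|^{1/3}$. For small dimensions I would use the L\"ubeck and Hiss--Malle tables of low-dimensional representations, together with case-by-case choice of elements and GAP computations.
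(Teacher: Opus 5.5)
Your case division (CFSG, then the Aschbacher classes, then the crude bound $\fp(g,G/M)\le|C_G(g)|$) is the paper's. The argument becomes empty, however, exactly where $M$ is not explicitly known: class $\ca S$ for classical $G$, and class $\mathcal U$ for exceptional $G$. There you propose to take $g\in M$ regular semisimple, or of ppd order, and apply the crude bound. But whether an element of $M$ is regular in $G$, and what its eigenspace dimensions on $V$ (or on $L(\bar G)$) are, depends on the unknown embedding. So this is an assumption, not a step.

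For example, let $M\le E_8(q)$ have socle $\PSL_2(q_0)$ with $q_0\le q$. You need $|C_G(g)|$ below about $q^{81}$, yet nothing in your plan prevents the element you pick from having $C_{\bar G}(g)$ of type $E_7T_1$ (dimension $134$), $D_7T_1$ or $E_6A_1T_1$. The paper excludes this by showing that such a $g$ stabilizes exactly the same subspaces of $L(E_8)$ as a $1$-dimensional torus, which would put $M$ inside a positive-dimensional subgroup. It needs a separate composition-factor count for $q_0=7$ and a Brauer-character computation for $\mathrm{Alt}_6$, $p=3$. In the remaining cases it computes $\dim C_{\bar G}(g)$ from the known or constrained restrictions $L(\bar G)\downarrow M_0$ in \cite{craven2023maximal,Crmem1,Litterick}.

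The same problem arises in class $\ca S$. When $S\in\mathrm{Lie}(p')$, a ppd for the defining field of $S$ says nothing about the multiplicities of the eigenvalues of $g$ on $V$. Nothing prevents $g$ from having two eigenspaces of dimension $n/2$, which gives $|C_G(g)|\approx q^{n^2/2}$ and destroys the estimate.

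What makes class $\ca S$ work in the paper is a mechanism for controlling the eigenspaces of $g$ in an unknown representation, in four steps.
(i) Bound the fixed space by generation. If $L$ is generated by two conjugates of $g$, then $\dim C_V(g)\le n/2$ by irreducibility. If $L$ is generated by three conjugates with product $1$, then $\dim C_V(g)\le n/3$ by Scott's lemma (\Cref{l:scott}). This uses \cite{guralnick2012malle} for simple $L$, and invariable generation plus Gow's theorem for the covers.
(ii) $N_L(\gen g)/C_L(g)$ acts semiregularly on $\gen g\sm\{1\}$, hence on the nontrivial eigenspaces (\Cref{l:semiregular_eigenvalues}). So each nontrivial eigenspace dimension occurs at least $4$ times.
(iii) When $p$ divides $|g|$, the Green correspondence (\Cref{l:green_correspondence}) shows that all but at most $|N_L(\gen g):C_L(g)|$ Jordan blocks have size $|P|\ge 5$.
(iv) An optimization step turns (i)--(iii) into $\dim C_{\GL_n(K)}(g)\lesssim 5n^2/16$. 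This beats Liebeck's bound $|M|<q^{2n+4}$ \cite{liebeck1985orders} away from its explicit exceptions, and small $n$ is handled through L\"ubeck, Hiss--Malle and GAP.

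There are two lesser points. For parabolics, a Levi factor over a small field may contain no regular semisimple element of $G$; for instance the Borel subgroup of $\Sp_4(4)$, a member of $\ca N$, has none. A regular unipotent element, by contrast, lies in a unique Borel subgroup and gives $\fp(g,G/P)=1$ for every $q$ (\Cref{l:parabolic}). For almost simple primitive $M<A_m$, the bound $|M|<4^m$ alone does not give a suitable $g$. An element fixing $m/2$ points already has $|C_{A_m}(g)|^3\ge((m/2)!)^3/8>m!$ for large $m$. You need an element of prime order $r\ge 7$ fixing at most $m/(r+1)$ points, as supplied by \cite{burness2022guralnick}.
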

    
	 In this section we deduce Theorem \ref{t:main} from this result. After this, the rest of the paper is devoted to proving Theorem \ref{t:main_simple}.

Assume then that Theorem \ref{t:main_simple} holds, and let $G$ be a primitive non-regular permutation group of degree $n$ on a set $\Omega$. According to the O'Nan-Scott theorem (see for example \cite{LPS}), $G$ is of one of the following types:
\begin{itemize}
    \item[(1)] affine,
    \item[(2)] almost simple,
    \item[(3)] simple diagonal type,
    \item[(4)] product action,
    \item[(5)] twisted wreath.
\end{itemize}

Theorem \ref{t:main} was proved for affine groups in \cite{guralnick2012malle}, and follows from Theorem \ref{t:main_simple} for almost simple groups. The remaining types (3),(4) and (5) are handled in the next three propositions.

\begin{proposition}\label{diag} Theorem \ref{t:main} holds for $G$ of simple diagonal type.
\end{proposition}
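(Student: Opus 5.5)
Let $G$ be of simple diagonal type with socle $N = T^k$, where $T$ is non-abelian simple and $k\ge 2$. The set $\Omega$ is identified with the cosets of the diagonal subgroup $D=\{(t,\dots,t)\}\cong T$ in $N$, so $n=|T|^{k-1}$. The point stabilizer $G_\omega$ lies in $\Aut(T)\times S_k$ and contains $D$. It suffices to find $g\in D\le G_\omega$ with $1\le \fp(g)<n^{1/3}$; this works since $g$ fixes $\omega$.

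We take $g=(t,\dots,t)\in D$, where $t\in T$ is chosen by Lemma \ref{l:centralizer_simplegroup_13} so that $|C_T(t)|<|T|^{1/3}$. Write the points of $\Omega$ as cosets $D(x_1,\dots,x_k)$, normalized so that $x_1=1$. The element $g$ fixes such a point if and only if $(x_1^{-1}tx_1,\dots,x_k^{-1}tx_k)\in D$, i.e.\ $x_i^{-1}tx_i=t$ for all $i$. With the normalization $x_1=1$, the fixed points are exactly the tuples with $x_2,\dots,x_k\in C_T(t)$. Equivalently, by Lemma \ref{l:fpr_enough}(i) applied in $N$ to $g\in D$, we have $\fp(g,N/D)=|C_N(g)|\,|g^N\cap D|/|D|$. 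Here $|C_N(g)|=|C_T(t)|^k$, and $g^N\cap D$ consists of the elements $(s,\dots,s)$ with $s\in t^T$. This gives
\[
\fp(g)=|C_T(t)|^k\cdot\frac{|T|/|C_T(t)|}{|T|}=|C_T(t)|^{k-1}.
\]
The $N$-action is the $G$-action restricted to $N$, so this count is exact.

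It follows that
\[
1\le \fp(g)=|C_T(t)|^{k-1}<|T|^{(k-1)/3}=n^{1/3},
\]
which gives the strict inequality noted after Theorem \ref{t:main}.

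The main point to be careful about is the normalization of the action: we need to confirm that in the O'Nan--Scott description the point stabilizer in $N$ is exactly the diagonal $D$, and that $G$ acts on $\Omega$ extending the $N$-action on $N/D$. Both are standard. The only real input is Lemma \ref{l:centralizer_simplegroup_13}, and that is already proved, so no obstacle remains beyond this bookkeeping.
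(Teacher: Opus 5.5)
Your proof is correct and is the same as the paper's: take the diagonal element $(t,\dots,t)$ with $|C_T(t)|<|T|^{1/3}$ supplied by Lemma~\ref{l:centralizer_simplegroup_13}, and observe that it fixes exactly $|C_T(t)|^{k-1}<n^{1/3}$ points. One harmless slip: with right cosets, $g$ fixes $Dx$ iff $xgx^{-1}\in D$, so the condition should read $x_itx_i^{-1}=t$ rather than $x_i^{-1}tx_i=t$, but this changes nothing since in both cases it says $x_i\in C_T(t)$.
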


\begin{proof}
		Assume $G$ is of simple diagonal type, and let $H$ be a point-stabilizer. Then $G$ has socle $N = S^{r}$, where $S$ is non-abelian simple, $r\ge 2$, $n=|S|^{r-1}$ and $H\cap N$ is a diagonal subgroup of $S^r$. Then for any $x\in S$, there is an element 
        $g\in N\cap H$ fixing exactly $|C_S(x)|^{r-1}$ points.  
		By \Cref{l:centralizer_simplegroup_13}, we can find $x\in S$ such that $|C_S(x)|<|S|^{1/3}$, and the corresponding element $g$ satisfies $1\le \fp(g)< n^{1/3}$.
	\end{proof}

\begin{proposition}\label{prod} Theorem \ref{t:main} holds for $G$ in product action.
\end{proposition}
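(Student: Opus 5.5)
Write $G \le H_1 \wr S_k$ acting in product action on $\Omega = \Delta^k$, with $k\ge 2$ and $n=|\Delta|^k$. Here $H_1 \le \mathrm{Sym}(\Delta)$ is primitive and almost simple, or of diagonal type, with socle $T$; the socle of $G$ is $N=T^k$, and $G$ acts transitively on the $k$ factors. The plan is to work entirely inside $N$, using an element of the form $g=(x_1,\dots,x_k)\in T^k$. Such an element acts coordinatewise, so
\[
\fp(g,\Omega)=\prod_{i=1}^k \fp(x_i,\Delta).
\]
It therefore suffices to find one $x\in T$ with $1\le \fp(x,\Delta)<|\Delta|^{1/3}$ and then take $g=(x,\dots,x)$. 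This gives $1\le \fp(g)<|\Delta|^{k/3}=n^{1/3}$, with strict inequality as claimed after Theorem~\ref{t:main}.

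To find $x$, let $\delta\in\Delta$ and let $M=T_\delta$ be the point stabilizer in $T$. Since $T\trianglelefteq H_1$ with $H_1$ primitive, $T$ is transitive on $\Delta$, so $\Delta \cong T/M$ as a $T$-set. I split into cases according to the type of $H_1$.

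\emph{Almost simple case.} Here $T$ is non-abelian simple and $M=T\cap (H_1)_\delta$, where $(H_1)_\delta$ is maximal in $H_1$. Thus $M\in\ca A(T)$ in the sense of \eqref{eq:ca A}. Theorem~\ref{t:main_simple} then gives $x\in M$ with $\fp(x,T/M)<|T:M|^{1/3}=|\Delta|^{1/3}$, and $x\in M$ ensures $\fp(x,\Delta)\ge 1$.

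\emph{Diagonal case.} Here $T=S^r$ and $M$ is a diagonal subgroup, exactly as in the proof of Proposition~\ref{diag}. That argument supplies $x\in M$ fixing $|C_S(s)|^{r-1}$ points, where $s\in S$ is chosen via Lemma~\ref{l:centralizer_simplegroup_13} so that $|C_S(s)|<|S|^{1/3}$. Hence $1\le\fp(x,\Delta)<|\Delta|^{1/3}$.

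The main point needing care is the almost simple case: we must check that the point stabilizer of the socle in a primitive almost simple group really lies in $\ca A(T)$. This is exactly why Theorem~\ref{t:main_simple} was phrased for the class $\ca A$ rather than for maximal subgroups of $T$. We also need to confirm that $H_1$ is non-regular, which holds because it is almost simple or diagonal. Finally, the component groups $H_1$ in product action are of only these two types (the twisted wreath case being handled separately in Proposition~\ref{twr}), so no further cases arise.
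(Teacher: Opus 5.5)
Your proof is correct and is essentially the paper's argument. You pick $x$ in the socle of the component group with $1\le \fp(x,\Delta)<|\Delta|^{1/3}$, using Theorem~\ref{t:main_simple} in the almost simple case and the computation from Proposition~\ref{diag} in the diagonal case, and take $g=(x,\dots,x)$. Your explicit check that $T\cap (H_1)_\delta\in\ca A(T)$ fills in a detail the paper leaves implicit.
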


\begin{proof}
In this case we have $G\le A\wr S_t$ acting on $\Omega = \Delta^t$, where $A$ is an almost simple or simple diagonal type primitive group on $\Delta$, and ${\rm Soc}(G) = {\rm Soc}(A)^t$. Set $m=|\Delta|$. By the proof of Proposition \ref{diag} for the simple diagonal case, and by Theorem \ref{t:main_simple} for the almost simple case, there exists $x\in {\rm Soc}(A)$ such that $1\le \fp(x,\Delta)<m^{1/3}$. Letting $g=(x, \ldots, x)\in {\rm Soc}(G)$, we then have $1\le \fp(g,\Omega)<m^{t/3} = n^{1/3}$, as required.		
	\end{proof}
        
The twisted wreath case (5) requires more effort, and we need two preliminary lemmas for this.

%%%%%%%%%

\begin{lemma} \label{lemtwrs}  
Let $S$ be a finite non-abelian simple group.  There exists an element $x \in S$ of prime order $p \ge 5$ such that for any $S$-set of size $n$ on which $S$ has no fixed points,
the number of orbits of $\langle x \rangle$ is at most $n/3$.  Moreover, we can choose $x \in S$ of prime power order such that the number of orbits of $\langle x \rangle$ is less than $n/3$ unless either 
$S=A_5$ and every orbit of has size $6$ or $12$, or $S=A_6$ and every orbit has size $6$.
\end{lemma}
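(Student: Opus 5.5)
The plan is to reduce the orbit count to a bound on fixed point ratios over maximal subgroups, and then treat the simple groups family by family.

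\emph{Reduction.} Decompose the $S$-set into $S$-orbits. Since $S$ has no fixed points, each orbit is $S/H$ with $H<S$ proper, and it suffices to prove the bound orbit by orbit. For $x$ of prime order $p$, Burnside's lemma applied to $\langle x\rangle$ on an orbit of size $m=|S:H|$ gives
\[
\#\{\langle x\rangle\text{-orbits}\} = \frac{m+(p-1)\fp(x,S/H)}{p}.
\]
This is at most $m/3$ if and only if
\[
\fpr(x,S/H) \le \frac{p-3}{3(p-1)},
\]
with equality in one exactly when it holds in the other. The right-hand side is $1/6$ for $p=5$, $2/9$ for $p=7$, and increases towards $1/3$.

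By Lemma \ref{l:fpr_enough}, $\fpr(x,S/H)=|x^S\cap H|/|x^S|$. This is monotone in $H$: it only increases when $H$ is replaced by a maximal subgroup $M\ge H$. So it suffices to find $x$ of prime order $p\ge 5$ with
\[
\fpr(x,S/M)\le \frac{p-3}{3(p-1)}\quad\text{for every maximal } M<S,
\]
and with strict inequality outside the stated exceptions. For the prime power refinement, the same Burnside count applies with $\langle x\rangle$ cyclic of order $p^a$. Each $\langle x\rangle$-orbit then has size a power of $p$, and it is enough to bound $\fpr(x)$ and $\fpr(x^{p})$ appropriately.

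\emph{Case analysis.} By Lemma \ref{l:order_divisible_by_7}, apart from $A_5$, $A_6$ and $\PSU_4(2)$, $|S|$ has a prime divisor $p\ge 7$. So in the generic case we only need $\fpr(x)\le 2/9$ for all maximal $M$, or $<1/3-\delta$ for large $p$.
\begin{itemize}
\item For $S=A_m$, take $x$ a product of as many $p$-cycles as possible with $p$ large, for instance $p$ the largest prime at most $m$. Then use the standard bounds on fixed point ratios of such elements in the intransitive, imprimitive and primitive actions; the primitive case uses the Bochert/Babai-type bounds recalled in the Introduction.
\item For $S$ sporadic, the statement is a finite check from \cite{atlas} and GAP character tables, computing $\fpr$ via (\ref{fpteq}).
\item For $S$ of Lie type over $\F_q$, take $x$ of order a ppd $q_e$, with $e$ as large as possible, so that $x$ lies in few maximal subgroups. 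Then invoke the Liebeck--Saxl bound $\fpr(x)\le 4/(3q)$, valid outside a short explicit list. This gives the result for $q\ge 7$. For small $q$ and the listed exceptions, argue directly that a large-ppd element has small fixed point ratio (e.g. $\fpr\le 1/(q+1)$ type bounds in subspace actions), falling back on GAP for the small groups.
\end{itemize}

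\emph{Exceptional groups.} The groups $A_5$, $A_6$ and $\PSU_4(2)$ are checked by hand or GAP with $p=5$, where $1/6$ is needed. For $A_5$ with $x$ a $5$-cycle, the actions on $5$ and $10$ points give $\fpr=0$, while the actions on $6$ points and on $12=|A_5:C_5|$ points give $\fpr=1/6$ exactly. This is exactly the equality case in the statement, and no element of $A_5$ of prime power order does better on these orbits. For $A_6$, an element of order $5$ has $\fpr=1/6$ only on the two $6$-point actions. So the $A_6$ exception should arise because the $6$-point orbits are the only ones on which the prime power choice of $x$ gives equality, presumably after choosing between order $4$ and order $5$. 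This needs to be confirmed by GAP. For $\PSU_4(2)$, an element of order $5$ should give strict inequality on all maximal cosets ($27$, $36$, $40$, $45$ points); this is to be checked in GAP.

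The main obstacle I expect is the Lie type small-$q$ range, especially $q=2,3$, and the small-dimensional classical groups. There the generic $4/(3q)$ bound is too weak, and one must show that a ppd element lies in a small proportion of the conjugates of each maximal subgroup. I would first try Guralnick--Kantor style information on maximal overgroups of ppd elements. That information restricts $M$ to a few classes, such as field extension subgroups and reducible subgroups, where the fixed point ratio can be computed directly.
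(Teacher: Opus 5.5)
Your reduction is sound. For $x$ of prime order $p$, Burnside's count shows that $\langle x\rangle$ has at most $m/3$ orbits on an orbit $S/H$ of size $m$ exactly when $\fpr(x,S/H)\le (p-3)/(3(p-1))$. Fixed point ratios only increase on passing to a maximal overgroup, and your treatment of $A_5$, $A_6$ and $\PSU_4(2)$ with $p=5$ is consistent with the statement.

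The genuine gap is the Lie type case over small fields, which you flag but do not resolve. The Liebeck--Saxl bound $4/(3q)$ is at least $1/3$ when $q\le 4$. The bound you need, $(p-3)/(3(p-1))$, is below $1/3$ for every $p$. So this route gives nothing for classical groups of arbitrary rank over $\F_2,\F_3,\F_4$, nor for exceptional groups such as $E_8(2)$, $E_7(2)$, $F_4(2)$, ${}^2E_6(2)$. A GAP fallback is impossible there: the groups are far too large, and the maximal subgroups of $E_8(q)$ are not even completely classified. Your suggested use of overgroup results for ppd elements would still require explicit fixed point ratio estimates in every surviving subspace action and for every class $\ca S$ overgroup, uniformly in the rank. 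None of this is supplied.

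The missing idea is a fixed point ratio bound controlled by the order of the element rather than by $q$. By \cite[Thm.~1]{burness2022guralnick}, an element $x$ of prime order $p$ in a primitive action of degree $n$ satisfies $\fp(x)\le n/(p+1)$, except for $A_m$ on $\ell$-subsets and explicitly listed classical subspace actions. This gives at most $2n/(p+1)\le n/3$ orbits for every $p\ge 5$, with strict inequality for $p\ge 7$. So by \Cref{l:order_divisible_by_7} only $A_5$, $A_6$ and $\PSU_4(2)$ need $p=5$. The exceptions are then handled as follows:
\begin{itemize}
\item[(a)] The listed subspace actions are avoided by switching to another element of order $p$.
\item[(b)] The one case where no switch is possible is $\PSL_2(p+1)$ with $p$ a Mersenne prime. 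There $\fp(x)\le 2n/(p+2)$, and an element of order $9$ gives strictness for $p=7$.
\item[(c)] The $\ell$-subset actions of $A_m$ are handled by induction on $m$, using a prime $p>2m/3+1$, with explicit element choices for $m\le 10$.
\end{itemize}
Your alternating-group sketch needs this care too. For $m=10$ your sample choice, a $7$-cycle, has $4>10/3$ orbits on the natural action, and for $m=9$ it has exactly $3$. Also, minimal-degree bounds of Bochert/Babai type do not control fixed point ratios on cosets of primitive subgroups; Jordan's theorem or order bounds for primitive groups are what is needed there.
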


\begin{proof}  Note that the result reduces to the case of transitive $S$-sets, and then to the case of primitive $S$-sets. So consider a primitive $S$-set $\Omega$ of size $n$, and let $H$ be a point-stabilizer.
Let $p\ge 5$ be a prime dividing $|S|$. By \cite[Thm. 1]{burness2022guralnick}, for any element $x \in S$ of order $p$, one of the following holds:
\begin{itemize}
\item[(i)] $\fp(x) \le n/(p+1)$;
\item[(ii)] $S=A_m$ and the action $(S,\Om)$ is on $\ell$-element subsets of $[m]$ for some $\ell < m/2$;
\item[(iii)] $S$ is a classical group, the action $(S,\Om)$ is a subspace action, and $(S,H,x)$  are as in \cite[Table 6]{burness2022guralnick}. 
\end{itemize}

In case (i),  $\langle x \rangle$ has at most  $2n/(p+1)$ orbits, and since $p \ge 5$, this is at most $n/3$, giving the first assertion of the lemma.  
Moreover, we get the strict inequality
as we long as we can choose $p > 5$, and so by Lemma \ref{l:order_divisible_by_7} we only need to consider $S = A_5$, $A_6$ or $U_4(2)$.     In the last case, strict inequality holds for an element of order $5$  and in the first two cases, the strict inequality holds for an element of order $5$ apart from the case where $n=6$ and  $S=A_5$ or $A_6$; for $S=A_5$, equality also holds for the transitive action of degree 12.

Consider case (iii). Inspection shows that in all but one case, we can choose a different element $x \in S$ of order $p$ that does not lie in \cite[Table 6]{burness2022guralnick}, hence satisfies (i). The exception is $S=\PSL_2(p+1)$ with $p$ a Mersenne prime. In this case, for all actions $(S,\Om)$ we have $\fp(x,\Om) \le 2n/(p+2)$ (with equality for the action on the cosets of a Borel subgroup); for $p \ge 7$, this is at most $2/9$ and again, $\langle x \rangle$ has at most $n/3$ orbits.  If $p > 7$, strict inequality holds and if $p=7$ we choose an element of order $9$ and strict inequality  holds (for all actions of $S$).  

Finally consider case (ii).  For $m=5$ (resp. 6, 7, 8, 9, 10), we choose an element $x$ of order 5 (resp. 5, 7, 7, 9, 9) and check that strict inequality holds.
So assume that $m \ge 11$.  We induct on $m$. Let $p$ be the largest prime at most $m$; then 
$p > \frac{2}{3}m + 1$ (see for example \cite{el_bach}).  
If $p=m$, then an element of order $p$ has no fixed points on $k$-sets for any $k$ and the result is clear.   If $p < m$, then consider $A_{m-1}$ acting.
If $A_{m-1}$ has no fixed points the result holds by induction.  If $A_{m-1}$ does have a fixed point, then the action is the usual action on $n=m$ points and 
$x$ has $1 + (n-p) < n/3$ orbits and the result follows. 
\end{proof}  

\begin{lemma} \label{lemtwr}   Let $S$ be a nonabelian simple group and let $T=S^k$ for some positive integer $k$.   Let $T$ act on a set $\Om$ of size $n$ without fixed points.  
\begin{itemize}
\item[(i)] If $S \ne A_5$ or $A_6$, there exists an element $g \in T$ such that the number of orbits of $\langle g \rangle$ is less than $n/3$.
\item[(ii)] If $S=A_5$ or $A_6$, then there exists an element $g \in T$ of order $5$ such that 
one of the following holds:
\begin{itemize}
\item[{\rm (a)}] the number of orbits of $\langle g \rangle$ is less than $n/3$;
\item[{\rm (b)}] each $T$-orbit has size $6$ or $12$ for $n=5$, or $6$ for $n=6$; in particular, the kernel of $T$ on each orbit contains all but one component of $T$.
\end{itemize}
\end{itemize}
\end{lemma}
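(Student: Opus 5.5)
The plan is to reduce to transitive $T$-sets and then apply \Cref{lemtwrs} to a diagonal-type element. The number of $\langle g\rangle$-orbits is additive over the $T$-orbits, and so is $n$. So it suffices to find a single $g\in T$, independent of the orbit, such that on every $T$-orbit $\Delta$ (of size $>1$, since $T$ has no fixed points) the number of $\langle g\rangle$-orbits is at most $|\Delta|/3$, with strict inequality on at least one orbit. Take $x\in S$ as in \Cref{lemtwrs} (the prime-power-order version giving strict inequality where possible), and set $g=(x,x,\ldots,x)\in T=S^k$.

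Fix a transitive $T$-orbit $\Delta$ with kernel $K$. Since $K$ is normal in $S^k$, it is a product of some of the components. Hence $T/K\cong S^j$ with $j\ge 1$, as $T$ acts nontrivially on $\Delta$. Choose one component $S_1$ not contained in $K$. Then $S_1$ acts on $\Delta$ without fixed points: the fixed points of the normal subgroup $S_1$ form a $T$-invariant set, so this set is empty or all of $\Delta$, and the latter would force $S_1\le K$. Now write $g=g_1h$, where $g_1=(x,1,\ldots,1)\in S_1$ and $h$ lies in the product of the other components, which commutes with $S_1$. I will compare orbits of $\langle g\rangle$ with orbits of $\langle g_1\rangle$. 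The cleanest route is to bound fixed points of powers via Burnside's count: the number of $\langle g\rangle$-orbits is $\frac{1}{o(g)}\sum_{i}\fp(g^i)$. Since $x$ has prime (power) order $p^a$, every nontrivial power $g^i$ has the form $(x^i,\ldots,x^i)$ with $x^i\ne 1$.

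The key step, and the one I expect to be the main obstacle, is to show $\fp(g^i,\Delta)\le \fp(g_1^i,\Delta)$, or at least to get the final orbit bound directly. A cleaner alternative is to avoid this comparison altogether. Here I would use that $\langle g\rangle$-orbits on $\Delta$ refine into $\langle g_1\rangle$-orbits in a controlled way: $g_1$ and $h$ commute and have the same order $p^a$, so $\langle g,g_1\rangle$ is an abelian $p$-group, and each $\langle g\rangle$-orbit has size at least the size of a $\langle g_1\rangle$-orbit it meets. Cleanest of all is to choose the element to lie in a single component from the start. On each $T$-orbit, however, different components may act, so a single-component element will not work uniformly. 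That is exactly why the diagonal element is used. So I will commit to the fixed-point comparison: a point fixed by $g^i$ is fixed by the group generated by the $S_1$-part and the complement part only if... This needs care. If it fails, I will instead apply \Cref{lemtwrs} to the group $S$ embedded diagonally in $T$, acting on $\Delta$. Diagonal $S$ acts on $\Delta$ without fixed points, since a fixed point would be fixed by $S_1$ via the projection argument applied to the normal closure. Then \Cref{lemtwrs}, applied to diagonal $S$ with element $g$, directly gives at most $|\Delta|/3$ orbits, with strictness except in the listed $A_5$, $A_6$ situations. This diagonal-action approach is what I will actually use, and the crux is verifying that diagonal $S$ has no fixed points on $\Delta$.

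For the exceptional cases, equality can only occur when, on every $T$-orbit, the diagonal $S$-orbits all have size $6$ or $12$ ($S=A_5$), or size $6$ ($S=A_6$). Then I must show that each $T$-orbit itself has this size, so that $T$ acts through one component, giving (b). Otherwise I must pick $g$ with different entries, say $x$ and $x^{-1}$, or $x$ in some components and a non-conjugate element of order $5$ in others, to break equality and obtain (a). Checking this small case analysis for $A_5$ and $A_6$ acting on $S^2$-sets is routine but fiddly.
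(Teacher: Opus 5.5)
Your argument breaks down at the step you called the crux. The diagonal copy $D=\{(s,\ldots,s)\}$ of $S$ need \emph{not} act without fixed points on a transitive $T$-set.

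Take $k=2$ and $\Delta=S$ with $s^{(a,b)}=a^{-1}sb$, so the point stabilizer is $D$ itself. Then $T$ has no fixed points, but $D$ acts on $\Delta$ by conjugation and fixes $1$. Hence \Cref{lemtwrs} cannot be applied to $D$ on $\Delta$.

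Your normal-closure justification does not work either. $D$ is not normal in $T$, so its fixed-point set is not $T$-invariant, and the normal closure of $D$ is all of $T$. In general, $D$ fixes a point of $T/H$ exactly when $H$ contains a $T$-conjugate of $D$. This happens only if every coordinate projection of $H$ is surjective, and it does happen, as the example shows.

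You also cannot patch this by applying \Cref{lemtwrs} to the complement of the $D$-fixed points. In the example that gives at most $1+(|S|-1)/3$ orbits, which is not below $|S|/3$.

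What your approach does establish is the case where some projection of the point stabilizer $H$ is a proper subgroup of $S$. There $D$ is fixed-point-free and \Cref{lemtwrs} applies directly. This is in fact a cleaner route than the paper's product-action computation for that case.

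The case where all projections are surjective needs a separate argument, which is what the paper supplies. First replace $H$ by a maximal overgroup. This can only increase the ratio of $\langle g\rangle$-orbits to points, because each $\langle g\rangle$-orbit upstairs maps onto an orbit downstairs. This reduces to $k=2$ with $H$ a diagonal subgroup, where $\fp(g^i,\Delta)=|C_S(x^i)|$. One then needs an estimate such as $|C_S(x)|\le |S|/12$. For $x$ of prime order $p\ge 5$ this gives at most $|S|/12+11|S|/(12p)\le \tfrac{4}{15}|S|$ orbits. The cases where $x$ has order $9$ ($S=A_9,A_{10},\PSL_2(8)$) are checked directly.

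Your treatment of the $A_5,A_6$ exceptions is also not carried out. You still need to show that if every $D$-orbit has size $6$ or $12$ (resp.\ $6$), then each $T$-orbit is one on which a single component acts. For example, show that two non-surjective projections already force strict inequality, as the paper does via the product action. The suggestion of changing the entries of $g$ is unnecessary, and part (ii) asks for an element of order $5$ anyway.
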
 

\begin{proof}  We take the element $g = (x, \ldots, x)$ where $x\in S$ is the element given 
in the previous lemma.   It suffices to consider the case that $T$ acts transitively (and faithfully).   Let $H$ be a point stabilizer.  

Suppose first that the projection of 
$H$ into at least two of the components of $T$ is non-surjective; say these projections are $M_1<S$ and $M_2<S$. We can replace $H$ by its overgroup $M_1\times M_2 \times S^{k-2}$, so that the action is that of $S^2$ on the cosets of $M_1\times M_2$. For $i=1,2$ write $n_i = |S:M_i|$. Let $f_i$ be proportion of fixed points of $x$ on $S/M_i$. If $f_1f_2=0$ then $g$ is a derangement of odd prime power order at least $5$, so the number of orbits of $\gen g$ is less than $n/3$. Assume then $f_1f_2\neq 0$, so $f_1f_2<f_1$. 
If $|x|$ is prime, it follows at once that the number of orbits of $\gen g$ divided by $n$ is strictly less than the number of orbits of $\gen x$ on $S/M_1$ divided by $n_1$, and we are done by \Cref{lemtwrs}. Assume now $|x|$ is not prime; by inspecting the proof  of \Cref{lemtwrs}, we have $|x|=9$ (and $S=A_9,A_{10}, \PSL_2(8)$) and the number of orbits of $\gen x$ on $S/M_1$ is strictly less than $n_1/3$. In particular, we see that the number of orbits of $\gen g$ is strictly less than $(n_1/3)n_2=n/3$, and we are done.

If the projection of $H$ is not surjective for exactly one component of $T$, the result follows from Lemma \ref{lemtwrs} (and this is where equality can  hold). 

Finally, suppose all projections are surjective.  We show strict inequality in this case.  Replacing $H$ by a maximal subgroup containing it, we are reduced to the case $k=2$ and $H \cong S$ is a diagonal subgroup.  In this case, we have $\fp(g,\Om) = |C_S(x)|$. We claim that $|C_S(x)| \le |S|/12 = n/12$. This is clear if $S$ has no subgroup of index at most $12$. Otherwise, $S$ embeds in $S_{12}$ and one can compute the list of such simple groups and check the claim directly. (Note that equality can occur for $x$ of order $5$ in $A_5$).   Exclude the cases where $x$ is an element
of order $9$  (i.e.  $S = A_9, A_{10}$ or $\PSL_2(8)$ -- see the proof of \Cref{lemtwrs}).  
Thus $\fp(g,\Om) \le n/12$, and so the number of orbits of $\langle g \rangle$ is at most 
$n/12 + 11n/12p \le (4/15)n$ and the result follows.   In the three cases remaining, one computes directly that the result holds.  
\end{proof}

\begin{proposition}\label{twr} Let $G$ be a primitive group of twisted wreath type on a set of size $n$.  Then there exists an element $g \in G$ such that $1 \le \fp(g) < n^{1/3}$. In particular, Theorem \ref{t:main} holds for $G$ of twisted wreath type.
\end{proposition}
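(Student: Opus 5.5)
The plan is to use the standard description of a twisted wreath group. We have $G=T\rtimes P$ with $T=S^k$ acting regularly on $\Omega$, so $n=|S|^k$. The point stabilizer is the complement $P$, which acts on $T$ by conjugation, permutes the $k$ factors transitively, and has the property that the stabilizer $Q=N_P(S_1)$ induces on $S_1$ a group of automorphisms containing $\mathrm{Inn}(S_1)$. Identifying $\Omega$ with $T$, an element $h\in P$ fixes the point $1$ and fixes exactly $|C_T(h)|$ points. Hence $\fp(h)\ge 1$ automatically, and it suffices to find $h\in P$ with $|C_T(h)|<|S|^{k/3}$.

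The basic estimate is as follows. Let $h$ induce the permutation $\pi$ of $\{1,\dots,k\}$. Then $C_T(h)$ is a direct product over the $\langle\pi\rangle$-orbits on factors. An orbit of length $\ell$ containing factor $i$ contributes a group isomorphic to $C_{S_i}(h^\ell)$, where $h^\ell$ acts on $S_i$ as an automorphism. Hence
\[
|C_T(h)|\le \prod_{\text{orbits } O}|C_{S_{i_O}}(h^{|O|})|\le |S|^{r(h)},
\]
where $r(h)$ is the number of $\langle\pi\rangle$-orbits on factors. So it suffices to find $h\in P$ with $r(h)<k/3$. Failing that, we want $h$ whose orbits contribute small centralizers in enough factors.

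To produce such $h$, we exploit the fact that $P$ contains a normal subgroup of the shape required by Lemma \ref{lemtwr}. By primitivity, $Q$ has a section isomorphic to $S$, so $P$ is non-solvable. Take $L$ to be a minimal normal subgroup of $P$ (or, if needed, a suitable non-abelian chief factor lifted to $P$), with $L\cong U^m$, $U$ non-abelian simple. We then split into two cases.

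In the first case, $L$ acts nontrivially on the set of factors. By transitivity of $P$, $L$ then has no fixed factor, and Lemma \ref{lemtwr}(i) gives $g\in L$ with fewer than $k/3$ orbits on factors, unless $U\in\{A_5,A_6\}$.

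In the second case, $L$ fixes every factor. Then $L$ embeds in $\prod_i\mathrm{Aut}(S_i)$, acting on each $S_i$ through a normal subgroup of the image of $Q$, and so $L$ contains $\mathrm{Inn}(S_i)$ components. We then choose $h=(x,\dots,x)$ with $x$ as in Lemma \ref{l:centralizer_simplegroup_13}, which gives $|C_T(h)|<|S|^{k/3}$ directly.

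The main obstacle is the exceptional situation in the first case, namely Lemma \ref{lemtwr}(ii)(b) with $U=A_5$ or $A_6$. There every $L$-orbit on factors has size $6$ (or $12$), and the orbit count equals exactly $k/3$, so the crude bound $|S|^{r(h)}$ gives only $\le n^{1/3}$. Here I would first exploit the structure described in (b): each $L$-orbit is a natural $A_6$- or $A_5$-set through a single component. For the element $g$ of order $5$, each $\langle g\rangle$-orbit of length $5$ contributes $|C_S(g^5)|$ at most $|S|$, while the fixed factors contribute $|C_{S_i}(g)|$. I would then try to modify $g$, multiplying by a suitable element of $T$ or of the kernel of $P$ on factors, so that $g$ induces on at least one fixed factor an automorphism with $|C_{S_i}(g)|<|S|$. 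This gives a strict inequality. If that fails, I would fall back on a direct calculation in the small groups $A_5\wr\cdots$ involved.
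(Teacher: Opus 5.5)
Your reduction is the paper's. You identify $\Omega$ with $T$, so that $h\in P$ has exactly $|C_T(h)|\ge 1$ fixed points. You bound $|C_T(h)|\le |S|^{r(h)}$ and apply Lemma~\ref{lemtwr} to a minimal normal subgroup $L\cong U^m$ of $P$. Your Case~1 with $U\notin\{A_5,A_6\}$ is exactly the paper's argument. Your Case~2 never occurs, which makes it harmless: your own reasoning shows that such an $L$ induces inner automorphisms on $T$, so $L\le TC_G(T)=T$, i.e.\ $P$ acts faithfully on the factors.

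One step needs a citation. Non-solvability of $P$ does not give a non-abelian minimal normal subgroup. A ``lifted chief factor'' is not a subgroup of the form $U^m$ to which Lemma~\ref{lemtwr} applies. The paper uses the Aschbacher--Scott fact that $F^*(P)$ is a direct product of non-abelian simple groups.

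The genuine gap is the equality case $U\in\{A_5,A_6\}$, which is where most of the paper's work lies. The repairs you suggest cannot work.
\begin{itemize}
\item Multiplying by $T$ does nothing. If $tg$ (with $t\in T$) fixes the point $x\in T$, then $tg\in P^x$ and maps to $g$ in $G/T\cong P$, so $tg=x^{-1}gx$. Thus every element of $Tg$ is either a derangement or has exactly $\fp(g)$ fixed points.
\item The kernel of $P$ on the factors is trivial, as just noted.
\item There is no finite computation to fall back on, since $k$ and $P$ are unbounded.
\item Worse, suppose $N_L(D)=C_L(D)$ for one factor $D$, hence for all. Then every $h\in L$ centralizes each factor it normalizes. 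Each $L$-orbit on factors has size $6$ or $12$ with $L$ acting through one simple factor, and every element of $A_5$ or $A_6$ has at least a third as many orbits there as points. So $|C_T(h)|\ge n^{1/3}$ for every $h\in L$, and you are forced outside $L$.
\end{itemize}

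What is missing is the paper's two-step argument.
\begin{itemize}
\item[(a)] If $N_L(D)\ne C_L(D)$, then $N_L(D)/C_L(D)$ is normal in $N_P(D)/C_P(D)\ge\mathrm{Inn}(D)$, so it induces $\mathrm{Inn}(D)$. Writing $N_L(D)=N_{U_1}(D)\times C$, one sees this action factors through a single simple factor. Hence an order-$5$ diagonal element acts nontrivially on a factor it normalizes, which gives strict inequality.
\item[(b)] If $N_L(D)=C_L(D)$, then $N_P(D)L/L$ has a section isomorphic to $D$, so $P/L$ is non-solvable. Take an $\ell$-element $w\in P\setminus L$ with $\ell$ odd and $\ell\ne 5$. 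Correct it by an element of $L$ so that it commutes with a diagonal $g'\in L$ of order $5$; this is possible because $\Out(U)$ is a $2$-group. Then $g'\in\langle wg'\rangle$, so $C_T(wg')\le C_T(g')$. The inclusion is strict: if $\langle wg'\rangle$ had the same orbits on factors as $\langle g'\rangle$, the $\ell$-element $w\ne 1$ would normalize every factor, contradicting faithfulness. Hence $\langle wg'\rangle$ has fewer than $k/3$ orbits.
\end{itemize}
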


\begin{proof}
Let $G$ be a primitive group of twisted wreath type on a set $\Om$ of degree $n$. 
We use information on the structure of these groups from \cite[Section 7]{AS} (see also \cite{Baddeley}). The socle $E$ of $G$ is a regular normal subgroup and is the unique minimal normal subgroup of $G$. 
Moreover,  $E \cong D^t$, where $D$ is a non-abelian simple group and $t \ge 6$.   If $H$ is a point-stabilizer, then the following hold \cite[Theorem C, (7.1)]{AS}: 
\begin{itemize}
\item[(i)] $G=EH$, $H \cap E=1$ and $H$ acts transitively and faithfully on the set $\Delta$ of components of $E$;
%%  This is because  the centralizer of $E$ is trivial and by AS, any minimal normal subgroup of H fixing all components 
%%   must induce inner automorphisms on $E$ and so is contained in $EC(E)=E$.  
\item[(ii)]   $F^*(H)$ is a direct product of isomorphic nonabelian simple groups.
\end{itemize}
We have $n=[G:H]=|E|$.  Identifying $G/H$ with $E$, we need to produce
an element $g \in H$ with $|C_E(g)| < n^{1/3}$.

 Let $A$ be a minimal normal subgroup of $H$ with $A=S^e$ for some nonabelian simple group $S$ and positive integer $e$.  
 No orbit of $A$ on $\Delta$ is trivial and so by Lemma \ref{lemtwr}, there
exists $g \in A$ (namely $g=(x, \ldots, x)$ with $x$ as in Lemma \ref{lemtwrs})  such that the number of orbits of $\langle g \rangle$ on $\Delta$ is at most $t/3$. Then $|C_E(g)| \le |E|^{1/3}$.   Moreover, unless $S \cong A_5$ or $A_6$, the inequality is strict.

It remains to consider the cases where $S=A_5$ or $A_6$ and equality holds.  Let $g$ be as above. Every $A$-orbit has size $6$, or possibly $12$ for $S=A_5$.
We see that $\Delta$ is the disjoint union of the nontrivial orbits for each of the components of $A$.   Since $A$ has no fixed points, it follows that $A$ is the unique minimal normal subgroup of $H$, and so $H$ acts faithfully on $A$ by conjugation.  

Let $D$ be a component of $E$. In this paragraph we will show that either the statement of the proposition holds or $N_A(D)=C_A(D)$. By \cite[Theorem C(1)]{AS}, $N_H(D)/C_H(D)$ is isomorphic to a subgroup of $\Aut(D)$ containing all inner automorphisms. Since $N_A(D)/C_A(D)$
is a normal subgroup of $N_H(D)/C_H(D)$, it follows that if $N_A(D)\neq C_A(D)$ then $N_A(D)/C_A(D)$ contains a subgroup inducing the full group of inner automorphisms of $D$. Assume this is the case. 
 By the choice of $g$, we have $|C_E(g)| = |E|^{1/3}$ and $\langle g \rangle$ has precisely $t/3$  orbits on the components of $E$. In particular, $g$ centralizes each component it normalizes. Let $S_1$ be the unique component of $A$ not  normalizing $D$. Then $N_A(D) = M \times C$, where $M=N_{S_1}(D)$ and $C$ is the product of all components of $A$ different from $S_1$. 
Note that if $S=A_6$ then $M\cong A_5$, and if $S=A_5$ then $M\cong D_{10}$ or $C_5$. We claim that the action of $N_A(D)$ on $D$ factors through one component of $N_A(D)$. If this is the case, then a diagonal element $g$ of $A$ of order $5$ acts nontrivially on $D$, against our assumption. If $M\cong A_5$ then $N_A(D)$ is perfect, so $N_A(D)/C_A(D)\cong D$, which implies the claim (since a normal subgroup of $N_A(D)$ is a product of components). If $M\cong D_{10}$ or $C_5$, then the image of $M$ in $\Aut(D)$ is a solvable subgroup normalized by $D$. Such image must be trivial, that is, $M$ centralizes $D$. In particular, also in this case $N_A(D)/C_A(D)\cong D$ and the claim holds.

Assume therefore that $N_A(D)=C_A(D)$.
 It follows that a section of $N_H(D)/C_A(D)\cong N_H(D)A/A$ induces all inner automorphisms on $D$, and in particular 
$H/A$ is  not solvable.  Thus, we can choose an $\ell$-element $w \in H\sm A$, where
 $\ell$ is an odd prime and $\ell \ne 5$.

Since the outer automorphism group of $S$ is a $2$-group, by viewing $w$ as an element of $\Aut(A)\cong \Aut(S)\wr S_e$ we see that there exists $a\in A$ such that $wa$ centralizes a diagonal element $g'$ of $A$ of order $5$, and such that $wa$ is an $\ell$-element. Now replace $w$ by $wa$ and $g$ by $g'$, and look at $z:=wg$, so $|C_E(z)|\le |C_E(g)|\le |E|^{1/3}$. It remains to prove that the inclusion $C_E(z)< C_E(g)$ is strict. The inclusion is strict if $z$ does not have the same orbits as $g$ on components of $E$, so assume that the orbits of $z$ and $g$ are the same. This implies that $w$ normalizes each component of $E$, and so $w$ does not centralize some component of $E$, say $L$. If $g$ normalizes $L$, then by the above argument $g$ centralizes $L$, and so $C_E(z)< C_E(g)$. If $g$ does not normalize $L$, the $g$-orbit of $L$ (in the action on $\Delta$) has size $5$, so $B:=\prod_i L^{g^i}\cong L^5$. We have that $C_B(g)$ is a diagonal subgroup isomorphic to $L$. Then $w$ does not centralize $C_B(g)$, so  $C_E(z)< C_E(g)$ and this concludes the proof. 
\end{proof}

	%%%%%%%%%%%%

	\section{Alternating groups} \label{sec:alternating_groups}
	
	The rest of the paper is devoted to the proof of Theorem \ref{t:main_simple}. Recall that $G$ is a non-abelian simple group and $\ca A = \ca A(G)$ is the class of subgroups of $G$ defined in (\ref{eq:ca A}). In this section we handle the case where $G$ is an alternating group.
	
	The cases where $G = A_5$ or $A_6$ are easily handled by computations using the information on these groups in \cite{atlas}. So assume $G=A_n$ with $n\ge 7$. Denote by $[n]$ the set $\{1,\ldots, n\}$. According to the theorem in \cite[Appendix]{AS}, the subgroups $M\in \ca A$ can be divided into six classes, as follows:
	\begin{itemize}
		\item[(i)] intransitive subgroups: here $M = (S_k \times S_{n-k})\cap G$, where $1 \le k < n/2$;
		\item[(ii)] transitive and imprimitive subgroups: here $M = (S_k \wr S_t) \cap G$, where $n = kt$ and $1<t<n$;
		\item[(iii)] affine subgroups: $M = \AGL_d(p) \cap G$, where $n=p^d$, $p$ prime;
		\item[(iv)] product action subgroups: $M = (S_k \wr S_t)\cap G$, where $n=k^t$ with $k \ge 5$, $t\ge 2$;
		\item[(v)] diagonal action subgroups: $M = (S^t.(\Out(S)\times S_t))\cap G$, where $S$ is non-abelian simple, $t\ge 2$ and $n = |S|^{t-1}$;
		\item[(vi)] almost simple primitive subgroups $M$.
	\end{itemize}
	
	\begin{lemma}
		\Cref{t:main_simple} holds if $M$ is intransitive.
	\end{lemma}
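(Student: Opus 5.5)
Here $G=A_n$ with $n\ge 7$ and $M=(S_k\times S_{n-k})\cap G$ with $1\le k<n/2$. The action of $G$ on $G/M$ is the action on $k$-subsets of $[n]$, so $|G:M|=\binom nk$. We will exhibit an explicit permutation $g\in M$ whose cycle type makes the number of fixed $k$-sets tiny. A $k$-set is fixed by $g$ precisely when it is a union of cycles of $g$. So we choose $g$ with few cycles, arranged so that the only unions of cycles of total size $k$ are forced. Since $g\in M$, it preserves $[k]$, so $g$ fixes at least one $k$-set.

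\textbf{Choice of $g$.} Let $g$ be a $k$-cycle on $[k]$ times an $(n-k)$-cycle on the complement. The parity must be even. If $n$ is even, then $k$ and $n-k$ have the same parity and the product of two cycles is even. If $n$ is odd, one of $k,n-k$ is odd and the other even, and the product is odd. In that case we instead split one part into two cycles of coprime or distinct lengths. We choose the split so that no union of cycles other than $[k]$ and its forced alternatives has size $k$: for instance, a $k$-cycle times an $(a,b)$ split of $n-k$ with $a\ne b$ and $a,b\ne k$, $a+b=n-k$, or a split of the $k$ part when $n-k$ is small. For $k=1$ we take an $(n-1)$-cycle when $n$ is even; for $n$ odd we take cycle type $(1,a,b)$ with $a\ne b$, $a+b=n-1$. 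Then the number of fixed $k$-sets is at most the number of subsets of at most three cycles with sum $k$, which is at most $2$ or $3$ after the generic choice.

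\textbf{The inequality.} We then need $\fp(g)\le 3<\binom nk^{1/3}$, which holds as soon as $\binom nk>27$. This is true for all $n\ge 7$ except $k=1$ and $n\le 27$. For $k=1$ the fixed points are just the fixed points of $g$ on $[n]$, which number exactly $1$ with our choice, and $1<n^{1/3}$. So the case $k=1$ is immediate. For $k\ge 2$, $\binom n2\ge 21$, so we need the count of fixed $k$-sets to be at most $2$ for $n=7$ (with $k=2,3$). We can check these directly: for $n=7$, $k=2$ take $(12)(34)(567)$-type choices giving $2$ fixed sets, or just compute.

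\textbf{Main obstacle.} The main difficulty is the parity and coincidence bookkeeping: we must guarantee, uniformly in $n$ and $k$, a cycle type in $M$ that is even and has no accidental subset sums equal to $k$. The most awkward case is $n$ odd with $n-k=k+1$. There we would first try splitting the larger part as $(n-k-2)+2$, or use cycle lengths $k-1,1$ on the small part, and then finish any residual small $n$ with a direct check.
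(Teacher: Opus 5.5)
Your approach is the paper's: choose an explicit even permutation in $M$ and count the $k$-sets that are unions of its cycles. Once you commit to a choice it goes through, but the odd-$n$ bookkeeping you call the main obstacle is not actually an obstacle.

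For $n$ odd, any cycle type $(k,a,b)$ with $a+b=n-k$ and $a,b\neq k$ gives exactly one fixed $k$-set. A single $a$- or $b$-cycle has size $\neq k$, the union of those two has size $n-k>k$, and anything containing the $k$-cycle together with another cycle has size $>k$. So the count is exactly $1$, not ``at most $2$ or $3$'', and such a split always exists for $n\ge 7$. The paper makes the uniform choices $(k,n-k)$ for $n$ even, $(1,k-1,n-k)$ for $n$ odd and $k\ge 2$, and $(1,2,n-3)$ for $n$ odd and $k=1$. This gives $\fp(g,G/M)=1<\binom nk^{1/3}$ in every case, with no inequality analysis and no small-case computation.

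Some of your side conditions are also off:
\begin{itemize}
\item The requirement $a\neq b$ is unnecessary, and it cannot be met at $(n,k)=(7,3)$, where $(3,2,2)$ works fine.
\item For $k=1$ the condition you need is $a,b\neq 1$, not $a\neq b$; for example, $(1,1,n-2)$ has two fixed points.
\item The claim that $\binom nk>27$ fails only for $k=1$ overlooks $(n,k)=(7,2)$, although you do treat $n=7$ afterwards.
\end{itemize}
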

	
	\begin{proof}
		Let $M = (S_k \times S_{n-k})\cap G$, where $1 \le k < n/2$. If $n$ is even, we choose $g\in M$ with cycle type $(k,n-k)$; if $n$ is odd and $k\neq 1$, we choose
		$g\in M$ with cycle type $(1,k-1,n-k)$; and if $n$ is odd and $k= 1$, we choose
		$g\in M$ with cycle type $(1,2,n-3)$. In all cases $\fp(g,G/M)=1$ and the conclusion holds.
	\end{proof}
	
	\begin{lemma}
		\Cref{t:main_simple} holds if $M$ is transitive and imprimitive.
	\end{lemma}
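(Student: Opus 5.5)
Throughout, $n=kt$ with $k,t\ge 2$, $n\ge 7$, and $M=(S_k\wr S_t)\cap G$ is the stabilizer in $G=A_n$ of a partition of $[n]$ into $t$ blocks of size $k$. Since $A_n$ is transitive on such partitions, $G/M$ can be identified with the set $\ca P$ of these partitions. Then $\fp(g,G/M)$ is the number of $g$-invariant partitions in $\ca P$, and
\[
|G:M|=\frac{n!}{(k!)^t\,t!}.
\]
The plan is to pick an element $g\in M$ with few cycles, so that the $g$-invariant partitions are very restricted. We show $\fp(g,G/M)$ is at least $1$ and at most polynomial in $n$, while $|G:M|^{1/3}$ grows superpolynomially. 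The finitely many small cases are then checked directly, for example with GAP.

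\emph{Case $n$ odd.} Take $g$ an $n$-cycle. It is even, and it lies in $M$ because the residue classes mod $t$ along the cycle form a partition in $\ca P$ permuted cyclically by $g$. A partition invariant under $\langle g\rangle\cong C_n$, acting regularly, is a block system for this regular action. Such block systems correspond to the cosets of subgroups of $C_n$, and there is exactly one subgroup of order $k$. Hence $\fp(g,G/M)=1<|G:M|^{1/3}$.

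\emph{Case $n$ even.} Take $g$ of cycle type $(k,n-k)$. Its sign is $(-1)^{(k-1)+(n-k-1)}=(-1)^{n-2}=1$, so $g\in G$. It lies in $M$: one block is the support of the $k$-cycle, and the remaining $t-1$ blocks are the residue classes mod $t-1$ along the $(n-k)$-cycle. Now bound the number of $g$-invariant partitions $P\in\ca P$. Since $\langle g\rangle$ permutes the blocks of $P$, the blocks inside a single $g$-cycle of length $m$ form a block system for the regular $C_m$-action, so they are determined by a subgroup of $C_m$. A block meeting both cycles is determined by its intersections with them and by a coset choice. Counting these possibilities gives a bound of the form $\fp(g,G/M)\le c\,n^2$ for an absolute constant $c$, and in fact probably $O(n)$.

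For the comparison, note that $|G:M|\ge \frac{1}{2}\binom{n}{n/2}$ when $t=2$, and $|G:M|$ is larger still for $t\ge 3$; in all cases it is exponential in $n$. So $c\,n^2<|G:M|^{1/3}$ for all $n$ beyond an explicit bound, and the remaining small pairs $(k,t)$ are verified computationally.

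The main obstacle is the careful count of $g$-invariant partitions when $n$ is even. The worst case is $t=2$, $k=n/2$, where the two cycles have equal length, so that a swap of the two cycle supports, or mixed blocks, can occur. Here I would count directly: each of the two blocks is either a full cycle support or a union of cosets in both cycles, which yields at most about $k+1$ invariant partitions. This is still far below $\binom{n}{n/2}^{1/3}$.
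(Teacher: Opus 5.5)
Your odd case coincides with the paper's. For $n$ even you take a genuinely different route.

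\textbf{The paper's route.} The paper picks $g$ of cycle type $(1,1,k-2,n-k)$ for $k\neq 2$, and $(n-2,2)$ for $k=2$ (which is your element in that case). A block containing a fixed point is $g$-invariant, and comparing cycle lengths forces it to consist of both fixed points together with the $(k-2)$-cycle. The remaining blocks are then forced, so $\fp(g,G/M)=1$ exactly and $|G:M|$ never enters.

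\textbf{Your route.} Your uniform choice of type $(k,n-k)$ can fix several partitions, which is why you need an index comparison. The count you leave as ``$cn^2$, probably $O(n)$'' is in fact short and exact.
\begin{itemize}
\item Suppose a block $B$ meets both cycles. The union of its $\langle g\rangle$-orbit is $g$-invariant and meets both cycles, so it is all of $[n]$. Hence the partition is a single orbit of $t$ blocks, each fixed by $g^t$. So $t\mid k$, and $B$ is one $g^t$-orbit on each cycle, matched $g$-equivariantly by one of $t$ translations of $\mathbb Z/t$.
\item Otherwise the partition is the support of the $k$-cycle plus the unique block system on the $(n-k)$-cycle.
\end{itemize}
Thus $\fp(g,G/M)=1$ if $t\nmid k$, and $\fp(g,G/M)=t+1$ if $t\mid k$. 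In your ``worst case'' $t=2$ this gives $3$, not about $k+1$. Also, no swap of cycle supports can occur, since $g$ fixes each of its cycles setwise.

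\textbf{Finishing the comparison.} With the exact count, $(t+1)^3<|G:M|$ is immediate. For $t=2$ it reads $27<\tfrac12\binom{n}{n/2}$, which holds for $n\ge 8$. For $t\ge 3$ with $t\mid k$ and $n$ even, one has $n\ge 16$, where the index dwarfs $(t+1)^3$.

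So your approach works without any computer check, and it makes your unproved remark that the index only grows for $t\ge 3$ unnecessary. It still costs an index estimate, whereas the paper's choice of element gives $\fp=1$ outright.
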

	
	\begin{proof}
		Assume $M = (S_k \wr S_t) \cap G$, where $n = kt$ and $1<t<n$. If $n$ is odd, we choose $g$ an $n$-cycle, and then $\fp(g,G/M)=1$ (since $g$ must permute the blocks transitively and has a unique cycle). If $n$ is even and $k\neq 2$, we choose $g\in M$ with cycle type $(1,1,k-2,n-k)$, and we have $\fp(g,G/M)=1$: indeed, since $k-2\neq n-k$, the two fixed points must necessarily belong to the same block of any $k^t$-partition fixed by $g$, and then the $(k-2)$-cycle must preserve that block, and the other blocks are determined as $g$ acts as a cycle on their points. Finally, if $n$ is even and $k=2$, we choose $g\in M$ with cycle type $(n-2,2)$. Since $t>2$, the $2$-cycle must necessarily stabilize a block, and the remaining blocks are determined since $g$ acts as a cycle on their points, so $\fp(g,G/M)=1$.
	\end{proof}
	
	\begin{lemma}
		\Cref{t:main_simple} holds if $M$ is an affine subgroup.
	\end{lemma}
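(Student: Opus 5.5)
Throughout, $G=A_n$ with $n=p^d\ge 7$ and $M=\AGL_d(p)\cap G$. The plan is to bound the number of fixed points $\fp(g,G/M)$ by counting affine structures on $[n]$ that $g$ preserves, rather than by working with centralizers in $G$. The action of $G$ on $G/M$ can be identified with its action on the set of affine structures on $[n]$ (regular elementary abelian subgroups $V\cong\F_p^d$ together with the corresponding affine group), taken up to $G$-conjugacy. So for $g\in M$, $\fp(g,G/M)$ is at most the number of conjugates $V^x$, $x\in G$, such that $g$ normalizes $V^x$. By \Cref{l:fpr_enough}(i) this count is also bounded by $|C_G(g)|\cdot|g^G\cap M|/|M|$. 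The degree $|G:M|$ is roughly $n!/(2p^{d}|\GL_d(p)|)$, which is enormous: its cube root is still superexponential in $n$. It therefore suffices to find $g\in M$ with $|C_G(g)|$, or even $|C_{S_n}(g)|$, at most something like $n^{O(1)}$ or $2^{O(n)}$.

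The choice of $g$ depends on $d$.
\begin{itemize}
\item If $d=1$, so $n=p\ge 7$, take $g$ a $p$-cycle, that is, a translation. Then $|C_G(g)|=p$, and since $p<|G:M|^{1/3}=((p-2)!/ \text{small})^{1/3}$ for $p\ge 7$, we are done. Here $|G:M|=(p-2)!\cdot 2/|M\cap G|\cdot$ constants; the exact value is routine.
\item If $d\ge 2$, take $g\in \GL_d(p)\le M$ a Singer cycle, of order $p^d-1$, fixing $0$ and acting as a single $(n-1)$-cycle on the remaining points. If $p$ is odd, or more generally if this element is an odd permutation, replace it by its square, or by a Singer cycle composed with a suitable translation. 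Alternatively, take $g$ the translation composed with a Singer cycle, which is an affine map with a unique fixed point. In either case the cycle type has at most two or three cycles, so $|C_{S_n}(g)|\le 2n^2$.
\end{itemize}
Then $\fp(g,G/M)\le |C_G(g)|\le 2n^2$, while $|G:M|\ge n!/(2n^{d+1}\cdot n^{d})$ with $d\le\log_2 n$. So $|G:M|^{1/3}>2n^2$ once $n\ge 8$, by a crude Stirling estimate. The remaining small cases, $n\in\{7,8,9,11,13,16\}$, are checked directly, for instance in GAP, computing $\fp(g,G/M)$ via \eqref{fpteq}.

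The main obstacle is parity. We need $g\in G=A_n$, and an $(n-1)$-cycle lies in $A_n$ only when $n$ is even, i.e.\ $p=2$. When $p$ is odd, the Singer cycle is odd, so instead use its square. This has two cycles of length $(n-1)/2$ together with one fixed point, so the centralizer in $S_n$ has order $2((n-1)/2)^2$, and it lies in $M\cap A_n$. For $d=1$ with $p$ odd, the $p$-cycle is already even. Finally, one must make sure $M$ really contains the chosen element, since $M=\AGL_d(p)\cap A_n$. The cases $\AGL_3(2)$, which is contained in $A_8$, and $n=7$ need care; for $n=8$, $M=\AGL_3(2)$ and a $7$-cycle works, with centralizer of order $7<(8!/2/1344)^{1/3}=15^{1/3}$? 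This fails ($7>2.47$), so for $n=8$ (and possibly $n=7,9$) I expect to need the exact formula from \Cref{l:fpr_enough} and a GAP computation. This small-case verification is where I expect the real difficulty to lie.
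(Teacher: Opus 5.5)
Your argument works, and for $d\ge 2$ you use exactly the paper's elements: a Singer cycle when $p=2$, its square when $p$ is odd. The difference is in how you finish.

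\textbf{How the paper finishes.} The paper does not use the crude bound $\fp(g,G/M)\le |C_G(g)|$. It evaluates $\fp$ exactly via \Cref{l:fpr_enough}, using three facts:
\begin{itemize}
\item every element of $g^G\cap M$ fixes a point, hence is a Singer cycle (resp.\ the square of one) in a conjugate of $\GL_d(p)$;
\item $C_M(x)=\langle x\rangle$;
\item $g^G\cap M$ consists of $\phi(2^d-1)/(2d)$ (resp.\ $\phi((p^d-1)/2)/d$) $M$-classes.
\end{itemize}
This gives a closed formula for $\fp(g,G/M)$ and needs no computer check.

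\textbf{How you finish.} You bound by $|C_G(g)|$ and defer small degrees to direct computation. Two of your intermediate claims are false as written.
\begin{itemize}
\item For $d=1$ one has exactly $|G:M|=(p-2)!$, so $p<|G:M|^{1/3}$ fails at $p=7$.
\item $2n^2<|G:M|^{1/3}$ does not hold for all $n\ge 8$. With your lower bound for $|G:M|$ it fails precisely for $n\in\{7,8,9,11,13,16\}$.
\end{itemize}
That set is exactly your exceptional list, so the logic survives. The checks there do succeed.
\begin{itemize}
\item For $n=11,13,16$ the bound $|C_G(g)|<|G:M|^{1/3}$ already holds with the true index; for example $15<32432400^{1/3}\approx 319$ when $n=16$.
\item For $n=7,8,9$ you need the exact formula. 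It gives $\fp(g,G/M)=7\cdot 3/21=1<120^{1/3}$ at $n=7$, $\fp=1<15^{1/3}$ at $n=8$, and $\fp=16\cdot 54/216=4<840^{1/3}$ at $n=9$.
\end{itemize}
For prime powers $n\ge 17$ your crude estimate is fine. Also, composing a Singer cycle with a translation does not fix the parity: the result is conjugate to the Singer cycle by a translation, so only the square helps.

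\textbf{What each route buys.} For $d=1$ the paper also takes the square of a Singer cycle. At $n=7$ this element has cycle type $(3,3,1)$ and $|C_G(g)|=9$, and $g^G\cap M$ consists of all $14$ elements of order $3$ in $M=7{:}3$. Hence $\fp(g,G/M)=9\cdot 14/21=6>120^{1/3}$, so the paper's displayed inequality fails there. Your $7$-cycle, with a unique fixed point, is what is actually needed in that case. Conversely, the paper's exact count handles all $d\ge 2$ uniformly, with no small-case computation.
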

	
	\begin{proof}
		Let $M = \AGL_d(p) \cap G$, where $n=p^d$, $p$ prime. Suppose $p=2$, and choose 
		$g\in \GL_d(2)<M$ a Singer cycle, which acts $[n]$ as an $(n-1)$-cycle. The number of $M$-classes contained in $g^G\cap M$ is half the number of classes of Singer cycles in $\GL_d(2)$, that is, $\phi(2^d-1)/(2d)$, where $\phi$ is Euler's totient function. (We have half the classes since $M$ contains representatives for both $G$-classes of $(n-1)$-cycles.)
		Moreover, $C_G(g)=C_M(g)=\gen g$, and so by \Cref{l:fpr_enough}
		\[
		\fp(g,G/M) = \frac{\phi(2^d-1)}{2d} < \left(\frac{(2^d)!}{2^d |\GL_d(2)|}\right)^{1/3} = |G:M|^{1/3}.
		\] 
		Assume now that $p>2$, and recall that $n\ge 7$. 
		Let $g\in \GL_d(p)$ be the square of a Singer cycle, so $g$ acts on $n$ points as the product of two disjoint $(n-1)/2$-cycles. Then the number of $M$-classes contained in $g^G\cap M$ is equal to the number of classes of $\GL_d(p)$ containing squares of Singer cycles, which is $\phi((p^d-1)/2)/d$. Moreover, for each $x\in g^G\cap M$ we have $|C_G(x)|=(n-1)^2/4$ and $C_M(x)=\gen x$, so by \Cref{l:fpr_enough}
		\[
		\fp(g,G/M)=\frac{\phi((p^d-1)/2)\cdot(p^d-1)}{2d} < \left(\frac{(p^d)!}{p^d |\GL_d(p)|}\right)^{1/3} = |G:M|^{1/3}.
		\]
		This concludes the proof.
	\end{proof}

	\begin{lemma}
		\Cref{t:main_simple} holds if $M$ is a product action subgroup.
	\end{lemma}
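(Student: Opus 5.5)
We must show that when $M=(S_k\wr S_t)\cap G$ with $n=k^t$, $k\ge 5$, $t\ge 2$, some $g\in M$ satisfies $\fp(g,G/M)<|G:M|^{1/3}$. By \Cref{l:fpr_enough}(i), $\fp(g,G/M)\le |C_G(g)|$, and $|G:M|=n!/(2(k!)^t t!)$ is enormous. So it suffices to find $g\in M$ with a very small centralizer in $A_n$. We will use an element that acts on $[k]^t$ with few cycles, all of distinct lengths, so that $C_{S_n}(g)$ is just the product of the cyclic groups generated by its cycles.

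The natural choice is built from a cycle in the base group together with a $t$-cycle on coordinates. Take $\sigma=(c,1,\dots,1)\cdot\pi\in S_k\wr S_t$, where $\pi$ is the $t$-cycle permuting coordinates cyclically and $c\in S_k$. Then $\sigma^t=(c,c,\dots,c)$ up to conjugation. Hence the cycle structure of $\sigma$ on $[k]^t$ is determined by that of the diagonal element $(c,\dots,c)$ acting on $[k]^t$, with each orbit length multiplied by a factor dividing $t$.

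If $c$ is a $k$-cycle, then $(c,\dots,c)$ has $k^{t-1}$ orbits, all of length $k$. Thus $\sigma$ has at most $k^{t-1}$ cycles, each of length between $k$ and $kt$. The key estimate is then
\[
|C_{S_n}(\sigma)|\le \prod_{\ell}\ell^{m_\ell}\,m_\ell!,
\]
where $m_\ell$ is the number of cycles of length $\ell$. With at most $k^{t-1}$ cycles of length at most $kt$, this gives $|C_{S_n}(\sigma)|\le (kt)^{k^{t-1}}(k^{t-1})!$. This is roughly $\exp\!\big(O(n\log n/k)\big)$.

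On the other side, Stirling gives $\log|G:M|\ge n\log n-O(n)-t\log(k!)-\log t!$, so $|G:M|^{1/3}\ge \exp\!\big((1/3-o(1))\,n\log n\big)$. Since $k\ge 5$, the bound $(kt)^{k^{t-1}}(k^{t-1})!$ has logarithm at most about $(n/k)\log n+(n/k)\log(kt)$. This is at most roughly $\frac{2}{5}n\log n$, which is too large when $k=5$. So the crude approach needs refinement for small $k$.

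The refinement is to sharpen the centralizer bound by using many distinct cycle lengths. Working in the $\log$ scale, what matters is the factorial $\prod m_\ell!$ when many cycles share a length.

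Better choice. Take $g=(c,\dots,c)\in M$ diagonal, with $c\in S_k$ having cycle type $(k-1,1)$ or $(k-2,2)$, chosen so that $g$ is even and hence lies in $A_n$. Then $g$ has only the single fixed point $(x,\dots,x)$, and its orbits on $[k]^t$ have lengths dividing $\mathrm{lcm}$ of the cycle lengths of $c$. Counting shows that the number of cycles of length $\ell$ is about $n/\ell$, and the centralizer has order about $\ell^{n/\ell}(n/\ell)!$. This is still of size $\exp(n\log n/\ell)$, with $\ell\approx k(k-1)$ or $k-1$.

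I would therefore rather combine both ideas: take $\sigma=(c,1,\dots,1)\pi$ with $c$ a $k$-cycle, so that the cycles of $\sigma$ have length $kt$ generically. The count of such cycles is about $n/(kt)$, and
\[
|C(\sigma)|\approx (kt)^{n/(kt)}\,\big(n/(kt)\big)!,
\]
whose logarithm is about $n\log n/(kt)\le n\log n/10$. This is below $\frac13 n\log n$ with room to spare, and it requires verifying that most orbits of $\sigma$ indeed have full length $kt$. Concretely, the points whose $\sigma$-orbit is shorter correspond to fixed points of powers $\sigma^{kd}$ with $d\mid t$, $d<t$, and there are only $O(k^{t/2+1})$ of them.

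Finally, I must ensure $\sigma\in A_n$. If not, I replace $c$ by a $(k-1)$-cycle, or multiply by a suitable transposition-type factor in one coordinate; this changes the estimates only by lower-order terms.

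The main obstacle is the bookkeeping for small parameters, such as $(k,t)=(5,2),(5,3),(6,2)$. There the asymptotic margin must be replaced by explicit computation of the cycle type of $\sigma$ and of $|G:M|$. These finitely many cases I would check directly, by hand or in GAP.
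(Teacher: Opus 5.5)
\textbf{Gap: the parity repair fails when $k\equiv 2\pmod 4$ and $t=2$.} Your element $\sigma=(c,1,\dots,1)\pi$ does have a very small centralizer whenever it lies in $G=A_n$. The orbit count is also fine: the points in short $\sigma$-orbits number at most $\sum_{d\mid t,\,d<t}k^d$.

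The problem is your fallback when $\sigma$ is odd, which breaks down for an infinite family. Take $k\equiv 2\pmod 4$ and $t=2$:
\begin{itemize}
\item A transposition in one factor of the base group acts on $[k]^2$ as $k$ disjoint transpositions. Since $k$ is even, the whole base group $S_k\times S_k$ consists of even permutations.
\item The coordinate swap fixes the $k$ diagonal points and is a product of $\binom{k}{2}$ transpositions, which is odd for $k\equiv 2\pmod 4$.
\end{itemize}
Hence $M=(S_k\wr S_2)\cap A_{k^2}=S_k\times S_k$, and no element of $M$ permutes the coordinates nontrivially. Neither replacing $c$ by a $(k-1)$-cycle nor multiplying by a factor in one coordinate can put an element of the form $(c,1)\pi$ into $G$. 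This $M$ does lie in $\mathcal{A}(A_{k^2})$, since $S_k\wr S_2$ is maximal in $S_{k^2}$, so the case cannot be discarded.

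Here you must use a base-group element. For example, take $(c_1,c_2)$ with $c_1$ a $k$-cycle and $c_2$ a $(k-1)$-cycle. It lies in $A_{k^2}$ and has just two cycles, of lengths $k$ and $k(k-1)$.

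\textbf{Your dismissal of diagonal elements rests on a miscalculation.} The extra term $(n/k)\log(kt)$ that pushed you to ``$\frac25 n\log n$'' comes only from bounding cycle lengths crudely by $kt$; a diagonal element has no such term.

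The paper's proof uses exactly such an element. It takes $g=(x,\dots,x)\in A_k^t\le M$ with $x$ a $(k-\delta)$-cycle, where $\delta=0$ for $k$ odd and $\delta=1$ for $k$ even. Then:
\begin{itemize}
\item $g\in G$ automatically, so there is no parity case analysis.
\item All nontrivial cycles of $g$ on $[n]$ have the common length $\ell=k-\delta\ge 5$.
\item $|C_G(g)|\le \ell^{n/\ell}(n/\ell)!$, whose logarithm is at most about $(n/\ell)\log n\le\frac15 n\log n$, whereas $\log|G:M|=n\log n-O(n)$.
\end{itemize}
One then checks directly that $|M|\,|C_G(g)|^3<|G|$.
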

	
	\begin{proof}
		In this case we have $n=k^t$ where $k\ge 5$, $t>1$, and $M=(S_k\wr S_t)\cap G$. 
		Let $\delta=0$ (resp. 1) if $k$ is odd (resp. even), let $x\in A_k$ be a $(k-\delta)$-cycle, and let $g=(x, x,\ldots, x)\in A_k^t<M$. Then $x$ fixes $\delta$ points on $[n]$ and has all other cycles of length $k-\delta$. If $k$ is odd, then $|C_G(g)|= (n/k)!k^{n/k}/2=(k^{t-1})!k^{k^{t-1}}/2$, and one checks that $|M||C_G(g)|^3 < |G|$.  
		The case $k$ even is similar.
	\end{proof}
	
	\begin{lemma}
		\Cref{t:main_simple} holds if $M$ is a diagonal action subgroup.
	\end{lemma}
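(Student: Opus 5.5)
Here $G=A_n$ with $n=|S|^{t-1}$, where $S$ is non-abelian simple and $t\ge 2$, and $M=(S^t.(\Out(S)\times S_t))\cap G$. The index $|G:M|$ is enormous compared with $|M|$. So the plan is to mimic the product action case: choose an element $g\in S^t\le M$ whose cycle type on $[n]$ is very uniform. Then $|C_G(g)|$ is small compared with $(n!)^{1/3}$, and \Cref{l:fpr_enough}(i) gives $\fp(g,G/M)\le |C_G(g)|$.

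\textbf{Choice of $g$.} I identify $[n]$ with the coset space $S^t/D$, with $D$ the diagonal subgroup; equivalently, with $S^{t-1}$ via $(s_1,\dots,s_t)\mapsto$ the tuple $(s_1s_t^{-1},\dots,s_{t-1}s_t^{-1})$, up to conventions. Take $x\in S$ of prime order $p\ge 3$; for instance $p$ the largest prime divisor of $|S|$, so $p\ge 5$ by \Cref{l:order_divisible_by_7}. Set $g=(x,1,\dots,1)\in S^t$. Then $g$ acts on $S^{t-1}$ by left multiplication in the first coordinate only, which is fixed point free. So $g$ is a product of $n/p$ disjoint $p$-cycles, and since $p$ is odd, $g\in A_n$. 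Then
\[
|C_{S_n}(g)|=(n/p)!\,p^{n/p}, \qquad |C_G(g)|\le (n/p)!\,p^{n/p}.
\]

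\textbf{The estimate.} We need $(n/p)!^3p^{3n/p}<|G:M|$, where $|M|\le |S|^t|\Out(S)|t!$ and $|G:M|\ge n!/(2|S|^t|\Out(S)|t!)$. Since $|S|^t\le n^2$ (as $t\le 2(t-1)$), $|\Out(S)|\le |S|\le n$ and $t!\le n$ (as $t\le \log_2 n+1$), we have $|M|\le n^4$. So it suffices to show
\[
2n^4\,(n/p)!^3\,p^{3n/p}<n!.
\]
Using $m!\approx (m/e)^m$ with $m=n/p$, the left side is about $(n/(ep))^{3n/p}p^{3n/p}=(n/e)^{3n/p}$, while $n!\approx (n/e)^n$. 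So for $p\ge 5$ the exponent $3n/p\le 3n/5$ is well below $n$, with plenty of room to absorb the polynomial factor $2n^4$ once $n$ is moderately large. Since $n\ge |S|\ge 60$, this is automatic: $(n/e)^{2n/5}$ dwarfs $n^4$ already at $n=60$.

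\textbf{Main obstacle.} The only real work is making the Stirling-type estimate rigorous and uniform. I would use $(n/e)^n\le n!\le e\sqrt n\,(n/e)^n$ and check the smallest case $S=A_5$, $t=2$, $n=60$ directly. That case is comfortable: $|C_G(g)|\le 12!\,5^{12}\approx 10^{17}$, while $(60!/(2\cdot 60^2\cdot 2\cdot 2))^{1/3}\approx 10^{26}$. There are no exceptional small cases, since $n\ge 60$ always.
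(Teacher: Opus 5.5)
Your argument is essentially the paper's. The paper also takes an element of the regular subgroup $\{(g_1,\dots,g_t): g_t=1\}$ of $S^t$ all of whose cycles on $[n]$ have the same length $r\ge 5$, bounds $|C_G(g)|\le r^{n/r}(n/r)!$, and checks $|M|\,|C_G(g)|^3<|G|$; the paper uses $(g_1,\dots,g_{t-1},1)$ with every $g_i$ of order $r$, and your $(x,1,\dots,1)$ makes no difference.

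There is one small slip to fix. The bound $t!\le n$ does not follow from $t\le \log_2 n+1$, and it is in fact false for large $t$ (e.g.\ $S=A_5$, $t=160$). Replace it by $t!\le t^t$ with $t\le 1+\log_{60} n$. This gives $\log|M|=O(\log n\,\log\log n)$, which is still negligible against your $(n/e)^{2n/5}$ margin, so the rest of your estimate goes through unchanged.
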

	
	\begin{proof}
		In this case we have $M=(S^t.(\Out(S)\times S_t))\cap G$, where $S$ is a non-abelian simple group, $t\ge 2$, and $n=|S|^{t-1}$. If $g=(g_1, \ldots, g_t)\in S^t$, then the action of $g$ is given by the formula 
		\[
		(x_1, \ldots, x_{t-1})^g = (g_t^{-1}x_1g_1, \ldots, g_t^{-1}x_{t-1}g_{t-1})
		\]
		for $x_i \in S$.
		In particular, the subgroup of $S^t$ given by $g_t=1$ acts regularly on  $[n]$. For $i<t$, choose $g_i\in S$ of order $r\ge 5$ and put $g=(g_1, \ldots, g_{t-1},1)$ , so $|C_G(g)|= r^{n/r}(n/r)!/2\le n^{n/r}/2\le n^{n/5}/2$. We then check that $|M||C_G(g)|^3 < |G|$.
	\end{proof}

	\begin{lemma}\label{asprim}
		\Cref{t:main_simple} holds if $M$ is an almost simple primitive subgroup.
	\end{lemma}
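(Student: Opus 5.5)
The idea is to use the crude bound of Lemma \ref{l:fpr_enough}(i), $\fp(g,G/M)\le |C_G(g)|$, together with the fact that almost simple primitive subgroups of $A_n$ are small. It is enough to find $g\in M$ with
\[
|M|\,|C_G(g)|^3<|G| = n!/2 .
\]

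The first ingredient is an upper bound on $|M|$. Since $M$ is a primitive subgroup of $S_n$ not containing $A_n$, and $M$ is almost simple, hence not affine, product type or diagonal, I would invoke Mar\'oti's bound $|M|<50 n^{\sqrt n}$. Alternatively, for almost simple primitive groups one can use the sharper Liebeck--Saxl style bounds, which give $|M|\le 4^n$, and more strongly $|M|\le n^{c\log n}$ outside the actions of $S_m$ or $A_m$ on $\ell$-subsets, which are treated separately.

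The second ingredient is an element of $M$ with small centralizer in $A_n$. Take $g\in \mathrm{Soc}(M)$ of prime order $r$, chosen with $r$ as large as possible. By the minimal degree result of \cite{GM}, or simply because $M$ is primitive, the fixed points of $g$ satisfy $\fp(g)\le n/2$ outside an explicit list of families. Then
\[
|C_{S_n}(g)| = r^{c}\,c!\,f!,
\]
where $c$ is the number of $r$-cycles and $f$ the number of fixed points. For $r\ge 5$ and $f\le n/2$ this is of order roughly $(n/2)!\,(n/r)!\,r^{n/r}$. Cubing gives about $((n/2)!)^3$, which exceeds $n!$, so the crude bound is not good enough in this generality. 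I therefore plan to use better fixed point information: by Guralnick--Magaard and Burness type results, for almost simple primitive groups outside the subset actions, most elements of prime order satisfy $\fp(g)\le n/3$, or even $\fp(g)=o(n)$. I would choose $g$ to be a derangement of prime order when one exists, and more generally an element of order at least $5$ with $f\le n/4$. Then
\[
|C_G(g)|^3\le \bigl((n/4)!\,(n/5)!\,5^{n/5}\bigr)^3 ,
\]
which is much smaller than $n!/(2|M|)$ once $n$ is moderately large.

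For the remaining cases I would argue as follows.

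\textbf{Subset actions.} If $\mathrm{Soc}(M)=A_m$ acting on $\ell$-subsets with $n=\binom m\ell$, $\ell\ge2$, take $g$ an $m$- or $(m-1)$-cycle of $A_m$. Its cycles on $\ell$-subsets all have length about $m$, with few fixed points, so $|C_{S_n}(g)|\le m^{n/m+1}(n/m+1)!$, and the inequality follows directly.

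\textbf{Other families with many fixed points.} For the remaining families (essentially subspace actions of classical groups), choose $g$ a Singer-type element or a large ppd element, which acts with few fixed points.

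\textbf{Small $n$.} For small degrees, say $n\le 30$ or so, I would check the inequality directly, or compute $\fp(g,G/M)$ via Lemma \ref{l:fpr_enough}(i) from the known lists of primitive groups, using GAP.

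The main obstacle is the middle range: degrees large enough that enumeration is impractical, but where general fixed point bounds do not give an $f$ small enough for the crude centralizer bound to beat $|G|^{1/3}$. There I would rely on the explicit classification of elements with large fixed point ratio, which identifies the exceptional families, and handle those families by choosing specific elements of large order.
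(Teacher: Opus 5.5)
Your framework matches the paper's. You reduce to $|M|\,|C_G(g)|^3<|G|$ via Lemma~\ref{l:fpr_enough}, bound $|M|$ by Mar\'oti, treat $\ell$-subset actions with an $m$- or $(m-1)$-cycle, and check small degrees by computer. The gap is in the step that carries the argument: the choice of $g$ and the resulting centralizer estimate. The inequality you claim there is false.

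If $g$ has prime order $r$ and $\fp(g)=\phi n$, then
\[
\log|C_{S_n}(g)| = \Bigl(\phi+\frac{1-\phi}{r}\Bigr)n\log n+O(n),
\]
while $\log|G:M| = n\log n+O(n)$. So you need $3\bigl(\phi+(1-\phi)/r\bigr)\le 1$ at leading order.
\begin{itemize}
\item For $r=5$ and $\phi=1/4$ the left side is $6/5$. Your displayed bound, which uses $(n/5)!\,5^{n/5}$ for the cycle part, even gives $27/20$. So $|C_G(g)|^3$ is vastly larger than $n!$, not smaller.
\item A bound $\fp(g)\le n/3$ can never work with the crude estimate, since $3\phi=1$ already.
\item Taking a prime-order derangement ``when one exists'' is not a usable strategy: such elements need not exist, and order $2$ or $3$ would be too small anyway.
\end{itemize}

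What you are missing is the sharp fixed point bound \cite[Thm.~1]{burness2022guralnick}. An element of prime order $r$ fixes at most $n/(r+1)$ points, except for $\ell$-subset actions of $A_m$ and the subspace actions in \cite[Table~6]{burness2022guralnick}. The paper avoids that table by specific choices of $g$:
\begin{itemize}
\item semisimple with $r\nmid q-1$ in $\PSL_n(q)$;
\item semisimple in $\PSp_n(q)$ for $n\ge 6$;
\item regular unipotent in $\PSp_4(q)$ with $q$ prime.
\end{itemize}
Even this bound gives leading exponent $6/(r+1)$, which has no margin at $r=5$. So one also needs a prime $r\ge 7$ dividing $|\mathrm{Soc}(M)|$, which is exactly \Cref{l:order_divisible_by_7}. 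The exceptional socles $A_5$, $A_6$, $\PSU_4(2)$ occur for $n\ge 27$ only in degrees $36,40,45$, and these are checked directly.

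With $r\ge 7$ and $c=\fp(g)\le n/8$, the bound $c^c(n-c)^{(n-c)/r}$ is maximized at an endpoint and is at most $n^{n/4}(7/64)^{n/8}$. Combined with Stirling and $|M|<n^{1+\log_2 n}$, this gives the result uniformly for $n\ge 27$. So the ``middle range'' you flag as the main obstacle does not arise once you use the right fixed point theorem and the right prime.
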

	
	\begin{proof}
		The almost simple primitive groups of degree $n\le 26$ are all in the GAP library, and we can check these cases computationally (for example, for $n\ge 10$ we can find $g\in M$ with $|C_G(g)|<|G:M|^{1/3}$). 
		
		Assume then $n\ge 27$, and let $S$ be the socle of $M$. 
		
		If $S$ is one of $A_5$, $A_6$, or $\PSU_4(2)$, we again use GAP; note that in these cases, since $n\ge 27$ we have $n\in \{36,40,45\}$, and as above we find $g\in M$ with $|C_G(g)|<|G:M|^{1/3}$. 
		
		Assume that $S$ is not one of $A_5$, $A_6$, or $\PSU_4(2)$, and assume furthermore $S$ is not $A_m$ with $n=\binom{m}{\ell}$ in the action on $\ell$-subsets; we will consider this case at the end of the proof.
		
		By \Cref{l:order_divisible_by_7}, there exists $g\in S$ of prime order $r\ge 7$. In some cases, we make a specific choice, as follows. If $S=\PSL_n(q)$ with $n\ge 3$, take $g$ semisimple with $r$ not dividing $q-1$, and if $S=\PSp_n(q)$ with $n\ge 6$, take $g$ semisimple (see the proof of Lemma \ref{l:order_divisible_by_7}); 
		if $S=\PSp_4(q)$ with $q\ge 7$ prime, take $g$ regular unipotent. We then apply  \cite[Theorem 1]{burness2022guralnick}. The specific choices make sure we are not in one of the exceptions in \cite[Table 6]{burness2022guralnick}, and so we deduce that $g$ fixes at most $n/(r+1)$ points on $[n]$. Letting $c = \fp(g,[n])$, we have
		\[
		|C_{S_n}(g)| = c! ((n-c)/r)!r^{(n-c)/r} \le c^c(n-c)^{(n-c)/r}=:f(c),
		\]
		with the convention $0^0=1$.
		Now looking at the derivative of $\ln(f(c))$
		we see that, for fixed $r$, $f(c)$ is decreasing up to a certain value, and then it is increasing. In particular, the maximum $m$ of $f(c)$ is attained either when $c$ is as small as possible, or when $c$ is as large as possible. Assume first $c$ is as large as possible; since $c\le n/8$ we have
		\[
		m\le f(n/8) = (n/8)^{n/8} (7n/8)^{7n/(8r)}\le n^{n/4}(7/64)^{n/8},
		\]
		since $r\ge 7$. 
		Next, by \cite{robbins1955remark} we have
		\[
		n! \ge \sqrt{2\pi}n^{n+1/2}e^{-n},
		\]
		and for $n\ge 25$ we have $|M|<n^{1+\log_2(n)}$ by \cite[Theorem 1.1]{maroti2002orders}. Therefore
		\[
		|G:M|> \sqrt{\pi/2}n^{n+1/2}e^{-n}n^{-1-\log_2(n)}.
		\]
		Now we have
		\begin{align*}
			&n^{3n/4}(7/64)^{3n/8} < \sqrt{\pi/2}n^{n+1/2}e^{-n}n^{-1-\log_2(n)} \\
			&\quad \iff  \sqrt{\pi/2}\cdot  n^{n/4} > (7e^{8/3}/64)^{3n/8} n^{1/2+\log_2(n)}\\
			&\quad\quad\Longleftarrow 1.2\cdot n^{n/4} > (1.58)^{3n/8} n^{1/2+\log_2(n)},
		\end{align*}
		which holds since we assumed $n\ge 27$. Assume now the maximum $m$ of $f(c)$ is attained when $c$ is as small as possible. Since $c\ge 0$, we have 
		\[
		m\le f(0) = n^{n/r}\le n^{n/7},  
		\]
		since $r\ge 7$. But for $n\ge 14$ we have $n^{n/7}<n^{n/4}(7/64)^{n/8}$, hence the conclusion holds by the work done already.
		
		The only remaining case is $n=\binom{m}{\ell}$ and $S=A_m$ acting on $\ell$-subsets with $1<\ell\le m/2$. Then choose $g\in S$ an $m$-cycle or an $(m-1)$-cycle, so $g$ fixes no point on $[n]$, and we conclude with a similar calculation as above.
	\end{proof}
	
	%%%%%%%%%%%%%%%
	
	\section{Sporadic groups}\label{sporadpf}
	
	In this section we prove Theorem \ref{t:main_simple} for primitive groups with socle a sporadic simple group. Recall the definition of $\ca A(G)$ in (\ref{eq:ca A}).
	
	\begin{theorem}\label{spor} Let $G$ be a sporadic simple group and let $M\in \ca A(G)$. Then there exists $g\in M$ such that $\fp(g,G/M)<|G:M|^{1/3}$.
	\end{theorem}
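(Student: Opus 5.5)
The plan is essentially computational, based on Lemma \ref{l:fpr_enough}. For each sporadic group $G$ and each $M\in\ca A(G)$, the set $\ca A(G)$ consists of the maximal subgroups of $G$ together with the intersections $M=H\cap G$, where $H$ is maximal in $\Aut(G)$ (this is relevant only when $|\Out(G)|=2$, and gives the novelty subgroups). All of these are listed in \cite{atlas} together with its published corrections and the more recent classification of the maximal subgroups of the Monster. By Lemma \ref{l:fpr_enough}(i), for $g\in M$ we have $\fp(g,G/M)\le |C_G(g)|$. So it suffices to exhibit $g\in M$ with $|C_G(g)|^3<|G:M|$; only when this crude bound fails will we compute $\fp(g,G/M)$ exactly.

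\textbf{Step 1: the crude bound.} For each pair $(G,M)$, choose $g\in M$ whose $G$-class has a small centralizer. Typically $g$ will have the largest prime order $r$ dividing $|M|$, or be an element of $M$ of large order, which is recognizable from the structure of $M$ given in \cite{atlas}. Then read off $|C_G(g)|$ from the character table of $G$ and check $|C_G(g)|^3<|G:M|$. Since centralizers of elements of large prime order in sporadic groups are tiny (often just $\gen g$ or $\gen g\times$ small), this should handle the great majority of cases, in particular every $M$ of small order relative to $|G|$.

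\textbf{Step 2: exact computation for the large subgroups.} The crude bound fails only when $|G:M|$ is small, for example $M$ a point stabilizer in a small-degree action ($M_{11}$ of degree $11$ or $12$, $M_{12}$ of degree $12$, $M_{24}$ of degree $24$, $Co_1$ or $Co_2$ acting on small cosets, $Fi_{22}$ on $3510$ points, $B$ on $2.{}^2E_6(2).2$, $\mathbb M$ on $2.B$, and so on). In these cases, use the permutation character $1_M^G$, which is available in GAP's character table library (\cite{GAP_character_table}) whenever the table of $M$ and its class fusion into $G$ are stored. Then $\fp(g,G/M)=1_M^G(g)$, and we search over the classes of $G$ meeting $M$, i.e.\ with $1_M^G(g)>0$, for one with $1_M^G(g)<|G:M|^{1/3}$. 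Equivalently, apply Lemma \ref{l:fpr_enough}(ii) summing over the $M$-classes fusing to $g^G$. For small actions one can often argue by hand: for example, an element of $M_{24}$ of cycle type $1\,23$ fixes exactly one point, and $24^{1/3}>1$.

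\textbf{Main obstacle.} I expect the difficulty to lie in the few cases where the fusion of $M$ into $G$ is not stored in GAP, mainly certain maximal subgroups of the Monster and the Baby Monster, and possibly novelty subgroups. For these, $|G:M|$ is astronomically large, and I would rely on Step 1 alone. It suffices to know the orders of some elements of $M$: for instance, an element of order $r\in\{41,59,71\}$ in $\mathbb M$ has centralizer of order $r$, and more generally for any element order $r$ visible in $M$, the centralizer orders of the classes of order $r$ in $G$ are known from \cite{atlas}. So we only need to take the maximum over the possible fusions and verify that the cube of this maximum is below $|G:M|$. This gives a fusion-independent bound in every remaining case.
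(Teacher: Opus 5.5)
Your plan matches the paper's proof. Both are a case-by-case inspection of the maximal subgroups and novelties of each sporadic group: one chooses $g\in M$ with $|C_G(g)|^3<|G:M|$ in most cases, and for the few small-index cases computes $\fp(g,G/M)$ exactly from the permutation character $1_M^G$ given in the ATLAS. The one difference is in the intermediate cases where $1_M^G$ is not tabulated: the paper avoids class fusions by using the estimate $\fp(g,G/M)\le |C_G(g)|\, i_r(M)/|M|$, where $i_r(M)$ is the number of elements of order $r=|g|$ in $M$, whereas you would use stored fusions via Lemma \ref{l:fpr_enough}(ii).
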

	
	The proof consists largely of routine inspection of tables of maximal subgroups of sporadic groups, found in \cite{atlas} together with a few other references for some of the larger groups. 
	Let $G$ be as in Theorem \ref{spor}, and let $M \in \ca A(G)$. 
	
	In the proof, for each such subgroup $M$ of $G$, we shall specify an element $g \in M$ by its class name in \cite{atlas}. For some subgroups, the permutation character $1_M^G$ is given in \cite{atlas}, so we can directly compute the value of $\fp(g,G/M)$; for the other cases, we use the following consequence of Lemma \ref{l:fpr_enough}:
	\begin{equation}\label{fpbd}
		\fp(g,G/M) = \frac{|C_G(g)|\cdot |g^G\cap M|}{|M|} \le \frac{|C_G(g)|\cdot i_r(M)}{|M|},
	\end{equation}
	where $r = |g|$ and $i_r(M)$ denotes the number of elements of order $r$ in $M$.
	
	\subsection{$G=M_{11}$ } In this case the permutation character $1_M^G$ is given in \cite{atlas} for all maximal subgroups, and our choices for $g$ are as follows.
	\[
	\begin{array}{l|lllll}
		M & M_{10} & L_2(11) & M_9.2 & S_5 & M_8.S_3 \\
		\hline
		g & 5A & 11A & 3A & 5A & 4A \\
		\hline
		\fp(g) & 1&1&1&1&1
	\end{array}
	\]
	
	\subsection{$G=M_{12}$ } Our choices for $g$ are as follows; the permutation character $1_M^G$ is given in \cite{atlas} for all but the last four maximal subgroups, in which cases (\ref{fpbd}) is used.
	\[
	\begin{array}{l|lllllllllll}
		M & M_{11} & M_{10}.2 & L_2(11)^{(1)} & M_9.S_3 & 2\times S_5 & M_8.S_4  & 4^2.D_{12} & A_4\times S_3 & L_2(11)^{(2)} & A_5 & 3^{1+2}.4  \\
		\hline
		g & 11A & 5A & 11A & 3B & 5A & 4A & 4A & 6A & 11A & 5A & 3B \\
		\hline
		\fp(g) & 1&1&1&4&1&2&3&\le 5 & 1 & 4 & \le 9
	\end{array}
	\]
	
	\subsection{$G=M_{22}$ } The permutation character $1_M^G$ is given in \cite{atlas} for all maximal subgroups:
	\[
	\begin{array}{l|lllllll}
		M & L_3(4) & 2^4.A_6 & A_7 & 2^4.S_5 & 2^3.L_3(2) & M_{10} & L_2(11)  \\
		\hline
		g & 7A & 5A & 5A & 5A & 7A & 5A & 11A \\
		\hline
		\fp(g) & 1&2&1&1&1&1&1
	\end{array}
	\]
	
	\subsection{$G=M_{23}$ } 
	\[
	\begin{array}{l|lllllll}
		M & M_{22} & L_3(4).2 & 2^4.A_7 & A_8 & M_{11} & 2^6.(3\times A_5).2 & 23.11  \\
		\hline
		g & 11A & 7A & 7A & 7A & 11A & 15A & 23A \\
		\hline
		\fp(g) & 1&1&1&2&1&1&1
	\end{array}
	\]
	
	\subsection{$G=M_{24}$ } 
	\[
	\begin{array}{l|lllllllll}
		M & M_{23} & M_{22}.2 & 2^4.A_8 & M_{12}.2 & 2^6.3S_6 & L_3(4).S_3 & 2^6.(L_3(2) \times S_3) & L_2(23) & L_2(7)  \\
		\hline
		g & 23A & 11A & 7A & 11A & 5A & 7A & 7A & 23A & 7A \\
		\hline
		\fp(g) & 1&1&3&1&1&1&1&1&6
	\end{array}
	\]
	
	\subsection{$G=J_1$ } 
	\[
	\begin{array}{l|lllllll}
		M & L_2(11) & 2^3.7.3 & 2\times A_5 & 19.6 & 11.10 & D_6\times D_{10} & 7.6  \\
		\hline
		g & 11A & 7A & 5A & 19A & 11A & 5A & 7A \\
		\hline
		\fp(g) & 2&2&3&1&1&1&1
	\end{array}
	\]
	
	\subsection{$G=J_2$ } 
	\[
	\begin{array}{l|lllllllll}
		M & U_3(3) & 3.A_6.2 & 2^{1+4}.A_5  & [2^6].3.S_3 & A_4\times A_5 & A_5\times D_{10} & L_3(2).2 & 5^2.D_{12} & A_5  \\
		\hline
		g & 7A & 3A & 5C & 6A & 15A & 15A & 7A & 5C & 5C\\
		\hline
		\fp(g) & 2&1&5&6&2&1&1& \le 4 & \le 20
	\end{array}
	\]
	
	\subsection{$G=HS$ } 
	\[
	\begin{array}{l|lllllllllll}
		M  & M_{22} & U_3(5).2 & L_3(4).2  & S_8 & [5^3.2^4] & 2^4.S_6 & 4^3.L_3(2) & M_{11} & [2^6].S_5 & A_6.[2^3] &  5.4\times A_5 \\
		\hline
		g & 11A & 5C & 7A & 7A & 5C & 6B & 7A & 11A & 10A & 10B & 10B\\
		\hline
		\fp(g) & 1&1&1&1 & 1 & \le 10 & 2 & 1 & \le 8 & \le 20 & \le 20 
	\end{array}
	\]
	
	\subsection{$G=J_3$ } 
	\[
	\begin{array}{l|lllllllll}
		M  & L_2(16).2 & L_2(19) & 19.9 & [2^4.3].A_5  & L_2(17) & (3\times A_6).2 & [3^5].8 & 2^{1+4}.A_5 &  [2^6].(3\times S_3) \\
		\hline
		g & 17A & 19A & 19A & 15A & 17A & 15A & 9A & 10A & 4A \\
		\hline
		\fp(g) & 2 & 1 & 1 & \le 6 & 1 & \le 6 & \le 3 & \le 4 & \le 21
	\end{array}
	\]
	
	\subsection{$G=McL$ } 
	\[
	\begin{array}{l|llllllllll}
		M  & U_4(3) & M_{22} & U_3(5)  & 3^{1+4}.2S_5 & 3^4.M_{10} & L_3(4).2 & 2A_8 & 2^4.A_7 & M_{11} & 5^{1+2}.3.8 \\
		\hline
		g & 7A & 11A & 7A & 10A & 9A & 7A & 7A & 7A & 11A & 8A \\
		\hline
		\fp(g) & 1&1&2& \le 12 & \le 3 & 1 & 1 & 1 & 1 & \le 4
	\end{array}
	\]
	
	\vspace{2mm}
	
	From now on, the lists of maximal subgroups become longer than the ones seen already, and rather than give a full table, we just summarise our conclusions. 
	
	\subsection{$G=He$ } Here we need to use the permutation character or the bound (\ref{fpbd}) for the following four maximal subgroups:
	\[
	\begin{array}{l|llll}
		M  & Sp_4(4).2 & 2^2.L_3(4).S_3 & 7^2.SL_2(7) & 7^{1+2}.(S_3 \times 3) \\
		\hline
		g & 17A & 15A & 7D & 7D \\
		\hline
		\fp(g) & 1 & 2 & \le 14 & 1
	\end{array}
	\]
	For the remaining eight classes of maximal subgroups, namely $(S_5\times S_5).2$, $2^6.3S_6$, $[2^8].3^2.2^2$, $2^{1+6}.L_3(2)$, $3S_7$, $S_4\times L_3(2)$, $7.3\times L_3(2)$, $5^2.4A_4$, we can choose elements $g \in M$ of orders 10, 15, 6, 14, 21, 28, 21, 12 respectively, such that $|C_G(g)|^3 < |G:M|$.
	
	\subsection{$G=Ru$ } For the maximal subgroup $M = \,^2\!F_4(2)$, the permutation character is given in \cite{atlas}, and an element $g$ of order 13 has $\fp(g) = 4$. For the remaining maximal subgroups, we can choose an element $g \in M$ of order 5, 7, 10, 13, 15 or 29, such that $|C_G(g)|^3 < |G:M|$.
	
	\subsection{$G=Suz$ } For the maximal subgroup $M = G_2(4)$, the permutation character is given in \cite{atlas}, and an element $g$ of order 13 has $\fp(g) = 1$. For the remaining maximal subgroups, we can choose an element $g \in M$ of order 7, 9, 10, 11, 13, 15 or 21, such that $|C_G(g)|^3 < |S:M|$.
	
	\subsection{$G=ON$ } For each maximal subgroup, we can choose an element $g \in M$ of order 5, 6, 7, 10, 11, 12, 19, 20, 28 or 31, such that $|C_G(g)|^3 < |G:M|$.
	
	\subsection{$G=Co_3,\,Co_2$ } For the maximal subgroups $McL.2$ of $Co_3$, and $U_6(2).2$ of $Co_2$,  elements $g$ of orders 5 or 11 satisfy $\fp(g)=1$. For the remaining maximal subgroups, we can choose an element $g \in M$ of order 5, 8, 9, 11, 14, 15, 20, 21 or 23, such that $|C_G(g)|^3 < |G:M|$.
	
	\subsection{$G=Fi_{22}$ } The full list of maximal subgroups can be found in \cite{KW22}. For the maximal subgroups $2.U_6(2)$, $\Omega_7(3)$ and $\Omega_8^+(2).S_3$, elements $g$ of orders 11, 7, 7 respectively,  satisfy $\fp(g)=1,3,1$. For the remaining maximal subgroups, we can choose an element $g \in M$ of order 7, 9, 10, 11 or 13, such that $|C_G(g)|^3 < |G:M|$.
	
	\subsection{$G=HN$ } For each maximal subgroup, we can choose an element $g\in M$ of order 9, 11, 15, 19, 20 or 21, such that $|C_G(g)|^3 < |G:M|$.
	
	\subsection{$G=Ly$ } For each maximal subgroup, we can choose an element $g \in M$ of order 9, 11, 20, 31, 37 or 67, such that $|C_G(g)|^3 < |G:M|$.
	
	\subsection{$G=Th$ } The full list of maximal subgroups can be found in \cite{Linton}. 
	For each maximal subgroup, we can choose an element $g \in M$ of order 5, 8, 10, 12, 13, 14, 19, 27 or 31, such that $|C_G(g)|^3 < |G:M|$.
	
	\subsection{$G=Fi_{23}$ } The full list of maximal subgroups can be found in \cite{KW23}. For the maximal subgroups $2.Fi_{22}$, $P\Omega_8^+(3).S_3$, an element $g$ of order 13  satisfies $\fp(g)=3$ or 1. For the remaining maximal subgroups, we can choose an element $g \in M\cap S$ of order 11, 13, 15, 17, 21, 23 or 27, such that $|C_G(g)|^3 < |G:M|$.
	
	\subsection{$G=Co_{1}$ } The full list of maximal subgroups can be found in \cite{atlas} (with a correction in \cite{WilCo1}). For all of these, it is immediate from the information given there that there is an element $g \in M$ of order 7, 9, 10, 11, 14, 15, 16, 21, 23 or 39, such that $|C_G(g)|^3 < |G:M|$. 
	
	\subsection{$G=J_4$ } The full list of maximal subgroups can be found in \cite{KWJ4}. 
	For each maximal subgroup, we can choose an element $g \in M$ of order 7, 20, 21, 22, 23, 29, 31, 37 or 43, such that $|C_G(g)|^3 < |G:M|$.
	
	\subsection{$G=Fi_{24}'$ } The full list of maximal subgroups can be found in \cite{LW24}. 
	For each maximal subgroup, we can choose an element $g \in M$ of order 8, 11, 13, 17, 18, 20, 21, 22, 23, 29, 30, 35 or 39, such that $|C_G(g)|^3 < |G:M|$.
	
	\subsection{$G=BM$ } The full list of maximal subgroups can be found in \cite{WilBM}. 
	For each maximal subgroup, we can choose an element $g \in M$ of order 11, 13, 15, 17, 19, 23, 25, 27, 28, 31, 33, 35, 39, 47 or 55, such that $|C_G(g)|^3 < |G:M|$.
	
	\subsection{$S=M$, the Monster }  The full list of maximal subgroups can be found in \cite{DLP}. 
	For each maximal subgroup, we can choose an element $g \in M$ of order 7, 11, 13, 15, 17, 19, 21, 23, 29, 30, 31, 33, 35, 39, 41, 47, 48, 52, 55, 59 or 71, such that $|C_G(g)|^3 < |G:M|$.
	
	\vspace{2mm} This completes the proof of Theorem \ref{spor}. 
	
	%%%%%%%%%%%%%%
	
	\section{Exceptional groups}\label{exceppf}
	
	In this section we prove Theorem \ref{t:main_simple} for primitive groups with socle an exceptional group of Lie type.
	
	\begin{theorem}\label{excep} Let $G$ be a simple group of exceptional Lie type and let $M\in \ca A(G)$. Then there exists $g\in M$ such that $\fp(g,G/M)<|G:M|^{1/3}$.
	\end{theorem}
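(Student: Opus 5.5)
We will prove Theorem \ref{excep} by combining the reduction $\fp(g,G/M)\le |C_G(g)|$ for $g\in M$ from Lemma \ref{l:fpr_enough} with the known structure of maximal subgroups of exceptional groups. The basic aim is to exhibit, for each $M\in\ca A(G)$, an element $g\in M$ with $|C_G(g)|^3<|G:M|$. When this crude bound fails, we will use the sharper count $\fp(g,G/M)=|C_G(g)||g^G\cap M|/|M|$ of Lemma \ref{l:fpr_enough}(ii) instead. The natural candidates for $g$ are regular semisimple elements generating a maximal torus $T$ of $G$ with small normalizer quotient. For such $g$ we have $C_G(g)=T$, so $|C_G(g)|\approx q^r$ with $r$ the rank. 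In particular we take $g$ of ppd order, e.g. in a Coxeter torus (cyclic of order $\Phi_h(q)$-ish). For such $g$ the fixed point count equals the number of $G$-conjugates of $T$ (or of $\langle g\rangle$) contained in $M$. This number is at most roughly $|N_M(T)|^{-1}|M|$-type counts, and it is typically $1$ or small when $M$ contains a Singer-type torus. Since every $M\in\ca A$ has index at least roughly $q^{\dim}$ of a minimal parabolic, and $|G:M|\gg q^{3r}$ in essentially all cases, the inequality should follow.

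We split the argument according to the type of $M$, following the Liebeck--Seitz reduction theorem for maximal subgroups of exceptional groups. \emph{Parabolic subgroups}: here $M=QL$, and we take $g$ a regular semisimple element of a Levi $L$ lying in a torus of $L$ which is ``anisotropic modulo the center''. Its centralizer in $G$ is a maximal torus, and the fixed points of $g$ on $G/M$ are the parabolics containing that torus. These number at most $|W|/|W_L|$, and more precisely only the parabolics conjugate under $N_G(T)$, so this is tiny compared with $|G:M|^{1/3}\ge q^{\dim Q/3}$. \emph{Maximal rank subgroups} (including torus normalizers): choose $g$ regular in a maximal torus of $M^\circ$. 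Then $\fp(g)$ counts conjugates of $M$ containing $T$, which is bounded by $|N_G(T):N_M(T)|\le |W|$. \emph{Subfield subgroups, exotic locals, $(\mathrm{Alt}_5\times \mathrm{Alt}_6)$-type, and the Borovik subgroup}: $|M|$ is small, roughly at most $|G|^{1/2}$ for subfield groups. Thus $|C_G(g)|\le (q+1)^r\cdot$const for a regular semisimple $g\in M$ suffices, because $|G:M|^{1/3}\ge |G|^{1/6}$ is much larger than $q^r$ once $\dim G\ge 6r+$const, which holds for all exceptional types; for $G_2$ ($\dim 14$, $r=2$) this requires slightly more care. \emph{Almost simple subgroups} $M$ (class $\ca S$, including the not-yet-classified ones in $E_7,E_8$): here we use the bound $|M|<4|\Aut(G)|^{1/2}$-type results (Liebeck--Shalev / Liebeck--Seitz: either $M$ is of Lie type in the same characteristic of small rank, or $|M|$ is bounded by an absolute constant). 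In this case we take $g\in M$ of maximal order. If $g$ is regular semisimple in $G$, then $|C_G(g)|\le(q+1)^r$, and we get $|C_G(g)|^3\le (q+1)^{3r}<|G|^{1/2}/4\le|G:M|$. If $g$ is not regular, we instead take $g$ unipotent regular in $M$; in same characteristic this lies in a distinguished class of $G$ with centralizer order at most $(60,p^2)q^{r}\cdot q^{c}$ for small $c$, using \cite{liebeck_seitz_2012unipotent}.

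The main obstacle is the small-$q$ cases and the small-rank groups ${}^2B_2,{}^2G_2,{}^2F_4,G_2,{}^3D_4$, together with large maximal rank or parabolic subgroups where $|G:M|^{1/3}$ is only about $q^{\dim Q/3}$ while $|C_G(g)|$ could be of order $q^r$. Our first attempt will be to avoid bounding by $|C_G(g)|$ altogether and count exactly via Lemma \ref{l:fpr_enough}(ii), choosing $g$ in a torus $T$ such that $g^G\cap M$ is a single $M$-class, which gives $\fp(g)=|N_G(T)|/|N_M(T)|$. The genuinely small groups ($q\le 3$, and the groups ${}^2F_4(2)'$, $G_2(3)$, $G_2(4)$, ${}^3D_4(2)$) will be checked with GAP character tables as in Section \ref{sporadpf}.
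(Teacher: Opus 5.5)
Your reduction to finding $g\in M$ with $|C_G(g)|^3<|G:M|$, organised by the Liebeck--Seitz types, is the right frame. However, your choices of $g$ fail in two essential places.

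\textbf{Almost simple subgroups in class $\mathcal U$ (Theorem \ref{maxexcep}(6)).} Taking $g\in M$ of maximal order and hoping it is regular semisimple cannot work in general. A regular semisimple element of $\bar G$ has order at least the Coxeter number $h$. To see this, write $g=\exp(2\pi i\lambda/m)$ with $\lambda$ in $m$ times the closed fundamental alcove. Regularity forces $\langle\alpha_i,\lambda\rangle\ge 1$ for every simple root and $\langle\tilde\alpha,\lambda\rangle\le m-1$, whence $m\ge h$. So candidates such as $\PSL_2(7)$, $\PSL_2(8)$, $Alt_6$, $Alt_7$ or $M_{11}$ in $E_8(q)$, where $h=30$, contain no regular elements at all.

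Your fallback does not rescue this. In cross characteristic $M$ has no regular unipotent elements to use. In defining characteristic, the $\bar G$-class of a regular unipotent element of $M_0$ is dictated by the unknown embedding and need not be regular or distinguished; for $M_0=\PSL_2(p^a)$ with $p<h$ it has order $p$, so it is certainly not regular in $\bar G$. Nothing in your argument prevents every element of $M$ from having centralizer of type $E_7T_1$, $D_7T_1$, $A_1E_7$, $D_8$, or a large unipotent centralizer. All of these are far above the threshold $\dim C_{\bar G}(g)\approx\frac13\dim\bar G$.

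The missing ingredient is control of $L(\bar G)\downarrow M_0$. The paper restricts the possible $M_0$ via Craven, Litterick and Liebeck--Seitz, and reads $\dim C_{L(\bar G)}(g)$ off the known Brauer characters. Where these are unavailable it gives genuinely new arguments:
\begin{itemize}
\item For $\mathcal U_{p,2}$: if an element of order $(q_0+1)/(2,p-1)$ has centralizer $E_7T_1$, $D_7T_1$ or $E_6A_1T_1$, then $M_0$ stabilizes the same subspaces of $L(\bar G)$ as a positive-dimensional group, contradicting $M\in\mathcal U$. When $q_0=7$ this is replaced by an $H^1$/composition-factor count.
\item For $Alt_6$ with $p=3$: a Brauer character contradiction.
\end{itemize}

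\textbf{Small $q$ for geometric subgroups.} The heuristic ``take a regular semisimple element of a torus of $M$'' also fails for many geometric subgroups at small $q$.
\begin{itemize}
\item By the same order bound, the torus of $(q\mp1)^8.W(E_8)$ contains no regular element whenever $q\mp1<30$.
\item For $E_8(2)$ with $M$ the parabolic whose Levi factor is $E_7(2)$, every semisimple element of $M$ is conjugate into $E_7(2)$. It therefore centralizes the $A_1=C_{\bar G}(E_7)$, so $M$ contains no regular semisimple element whatsoever.
\end{itemize}
These cannot be delegated to GAP. Cases such as $E_8(2)$, or $E_8(q)$ with $q\le 29$ for the torus normalizers, are far beyond such a check: the character tables are not available, and the maximal subgroups of $E_8(q)$ are not even completely known.

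The paper avoids this by using different elements:
\begin{itemize}
\item a regular unipotent element, which lies in exactly one conjugate of any parabolic (so $\fp(g,G/M)=1$ for every $q$, Lemma \ref{l:parabolic}) and lies in every subfield or twisted subgroup;
\item elements of $N_G(T)$ mapping to elements of order $5$ or $7$ in the Weyl group, whose centralizer dimension is bounded by counting their cycles on the root spaces of $L(\bar G)$;
\item unipotent elements of $\bar M^F$ whose $\bar G$-classes are taken from Lawther's tables \cite{Lawunip}.
\end{itemize}

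Finally, your case split omits the reductive subgroups of non-maximal rank in Theorem \ref{maxexcep}(3) that are not almost simple (e.g.\ $G_2F_4<E_8$, $A_2G_2<E_6$). Also, the bound $|M|<4|G|^{1/2}$ you invoke fails for $F_4(q)<E_6(q)$.
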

	
	The proof is divided into several subsections.
	
	\subsection{Maximal subgroups of exceptional groups}
	
	Let $G$ be a simple group of exceptional Lie type over $\F_q$, where $q=p^a$ and $p$ is prime. There is a simple adjoint adjoint algebraic group $\bar G$ over $\bar \F_p$, and a Frobenius endomorphism $F$ of $\bar G$, such that $G = (\bar G^F)'$. 
	
	The next result, taken from \cite{LS3}, together with the remarks following it, summarizes the current state of knowledge of the maximal subgroups of finite exceptional groups. In part (4) of the statement, by a subgroup of the same type we mean the normalizer of a subfield subgroup $G(q_0)$ (where $\F_{q_0} \subset \F_q$), or of a twisted version (explicitly: $^2\!E_6(q^{1/2}) < E_6(q)$, $^2\!F_4(q) < F_4(q)$, $^2\!G_2(q) < G_2(q)$); such subgroups are unique up to conjugacy in $\bar G^F$, by \cite[Thm. 5.1]{LSroot}.
	
	\begin{theorem} (\cite[Thm. 8]{LS3})
		\label{maxexcep}
		Let $G$ be a simple group of exceptional Lie type over $\F_q$, $q=p^a$, and let $M \in \ca A(G)$. Then one of the following holds.
		\begin{itemize}
			\item[(1)] $M$ is a parabolic subgroup.
			\item[(2)] $M=N_G(\bar M^F \cap G)$, where $\bar M< \bar G$ is connected reductive of maximal rank: the possibilities are listed in \cite[Tables 5.1,5.2]{LSS}.
			\item[(3)] $M=N_G(\bar M^F \cap G)$, where $\bar M$ is connected reductive of non-maximal rank: these are listed in \cite[Table 3]{LS3}, together with the following subgroups:
			\[
			\begin{array}{l}
				G_2(q),\,\PGL_3^{\pm}(q) < \,^3\!D_4(q),\\
				(2^2\times \POm_8^+(q).2^2).S_3,\;^3\!D_4(q) < E_7(q)\,(p\ne 2),\\
				\PGL_2(q) \times S_5 < E_8(q)\,(p>5), \\
				F_4(q) < E_8(q) \,(p=3).
			\end{array}
			\]
			\item[(4)] $M$ is of the same type as $G$.
			\item[(5)] $M$ is an `exotic local' subgroup, or the `Borovik' subgroup $(Alt_5\times Alt_6).2^2 < E_8(q)$; the exotic locals are as follows:
			\[
			\begin{array}{l}
				2^3.\SL_3(2) < G_2(p)\;(p>2),\\
				3^3.\SL_3(3) < F_4(p) \;(p>5),\\
				3^{3+3}.\SL_3(3) < E_6^\epsilon(p)\;(p\equiv \epsilon \hbox{ mod }3),\\
				5^3.\SL_3(5) < E_8(q)\; (q = p \hbox{ or }p^2),\\
				2^{5+10}.\SL_5(2) < E_8(p)\; (p>2).
			\end{array}
			\]
			\item[(6)] $M$ is in a class ${\mathcal U}$ of almost simple subgroups, not occurring in items (1)-(5); we divide these into subclasses, according to the socle $M_0$ of $M$:
			\begin{itemize}
				\item[(a)] ${\mathcal U}_{p,1}$: $M_0 \in Lie(p),\,M_0 \ne \PSL_2(p^a)$
				\item[(b)] ${\mathcal U}_{p,2}$: $M_0 \in Lie(p),\,M_0 = \PSL_2(p^a)$
				\item[(c)] ${\mathcal U}_{ASp'}$: $M_0$ alternating, sporadic or in $Lie(p')$.
				
			\end{itemize}
		\end{itemize}
	\end{theorem}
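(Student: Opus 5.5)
Theorem~\ref{maxexcep} is a summary of the literature rather than a new result, so I would not prove it from first principles. The plan is to assemble it from the standard reduction theorem for maximal subgroups of finite exceptional groups, as in \cite{LS3}, and to check that each case of that reduction lands in one of items (1)--(6).

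\textbf{Setup.} Let $M\in\ca A(G)$, so $M=H\cap G$ with $H$ maximal in an almost simple group $X$ with socle $G$. Work with $\bar G$ and a Steinberg endomorphism $\sigma$ (a power of $F$ twisted by the field and graph automorphisms induced by $X$). Then $X$ normalizes $\bar G_\sigma$, and $H$ corresponds to a maximal $\sigma$-compatible subgroup.

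\textbf{The reduction theorem.} Invoke the reduction theorem of Liebeck--Seitz, which is based on Borovik's work. It says that one of the following holds:
\begin{itemize}
\item[(a)] $H=N_X(\bar M_\sigma)$ for some maximal closed $\sigma$-stable positive-dimensional subgroup $\bar M$ of $\bar G$;
\item[(b)] $H$ is of the same type as $G$, that is, a subfield or twisted subgroup;
\item[(c)] $H$ is an exotic local subgroup, or the Borovik subgroup;
\item[(d)] $H$ is almost simple.
\end{itemize}

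\textbf{Case (a).} Split according to the structure of $\bar M$.
\begin{itemize}
\item If $R_u(\bar M)\neq 1$, the Borel--Tits theorem makes $\bar M$ a parabolic subgroup, giving item (1).
\item If $\bar M$ is reductive of maximal rank, the classification in \cite{LSS} gives item (2).
\item If $\bar M$ is reductive of non-maximal rank, the Liebeck--Seitz classification of maximal positive-dimensional subgroups gives \cite[Table 3]{LS3}. One then adds the few small-field or small-characteristic exceptions recorded in item (3), where $\bar M_\sigma$ gives a maximal subgroup of $G$ even though $\bar M$ is not maximal among connected subgroups of $\bar G$. For example, $F_4(q)<E_8(q)$ with $p=3$, or $\PGL_2(q)\times S_5<E_8(q)$ with $p>5$.
\end{itemize}

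\textbf{Case (b).} Conjugacy of these subgroups in $\bar G^F$ follows from \cite[Thm.~5.1]{LSroot}, which gives item (4).

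\textbf{Case (c).} The finite local subgroups not normalizing a positive-dimensional subgroup are classified by Cohen--Liebeck--Saxl--Seitz, giving the exotic locals listed in item (5). The Borovik subgroup $(Alt_5\times Alt_6).2^2<E_8(q)$ arises here as well.

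\textbf{Case (d).} Whatever almost simple $H$ is not already covered by (1)--(5) is, by definition, placed in the class $\ca U$. The splitting into $\ca U_{p,1}$, $\ca U_{p,2}$ and $\ca U_{ASp'}$ is simply according to the socle $M_0$.

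\textbf{Main obstacle.} The substantive step is the reduction theorem itself, especially for almost simple $H$ with socle in $\mathrm{Lie}(p)$. There one needs the Liebeck--Seitz results that a subgroup of $\mathrm{Lie}(p)$ type of sufficiently large field size lies in a $\sigma$-stable positive-dimensional subgroup. Since that work is external, the plan is to cite \cite{LS3} for it and to verify only the bookkeeping above: that each outcome of the reduction matches one of items (1)--(6).
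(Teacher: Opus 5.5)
Your route is the paper's route. The paper gives no argument of its own: it quotes \cite[Thm.~8]{LS3}, uses \cite[Thm.~5.1]{LSroot} for the conjugacy remark in (4), and adds one correction. You likewise quote the Borovik--Liebeck--Seitz reduction and match its outcomes to (1)--(6).

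There is a genuine gap in your bookkeeping for item (3), at exactly the point where the paper has to go beyond \cite{LS3}: the entry $F_4(q)<E_8(q)$ with $p=3$. You present it as a small-characteristic case in which $\bar M$ is not maximal among connected subgroups of $\bar G$ but still yields a maximal subgroup of $G$. That description fits the other additions, which come from triality-stable subgroups of $D_4$ or from the non-connected maximal subgroups $(2^2\times D_4).S_3<E_7$ and $A_1\times S_5<E_8$. It does not fit $F_4$. Here $F_4$ is a maximal closed connected subgroup of the algebraic group $E_8$ in characteristic $3$. It was omitted in error from the classification on which \cite[Table 3]{LS3} rests, and so it is absent from \cite[Thm.~8]{LS3} itself. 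It was found by Craven--Stewart--Thomas \cite{CST}, and the paper says so in the remark immediately after the statement. Your plan to ``cite \cite{LS3} and verify only the bookkeeping'' therefore does not establish item (3) as stated: it would produce a list missing a genuine maximal subgroup. You need \cite{CST} as a separate input for that entry.

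Two smaller points.
\begin{itemize}
\item The Borovik subgroup is not local. Its generalized Fitting subgroup is $Alt_5\times Alt_6$, so it has no nontrivial normal $r$-subgroup. It therefore does not come from the Cohen--Liebeck--Saxl--Seitz classification of local subgroups; it is a separate outcome of Borovik's reduction.
\item The ``main obstacle'' you identify is not needed for this statement. Since $\mathcal U$ is defined as the residual class of almost simple subgroups, no results about $\mathrm{Lie}(p)$ subgroups over large fields enter here. Those results are used only later, to restrict $\mathcal U$.
\end{itemize}
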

	
	Note that in part (3), the last subgroup $F_4(q) < E_8(q)$ with $q=3^a$ was found in \cite{CST} (having been omitted in error from \cite[Table 3]{LS3} and the references cited for it).
	
	For $G \ne E_7(q), E_8(q)$, the class ${\mathcal U}$ is known (see \cite{craven2023maximal, kleidman1988maximal_3D4, kleidman1988maximalG_2, malle1991maximal2F4, suzuki1962class}); and for $E_7(q)$, $E_8(q)$, while the class is not known completely, considerable restrictions on the groups in ${\mathcal U}_{p,1}$,  ${\mathcal U}_{p,2}$ are obtained in \cite{Crmem1, Crmem2}, and on the groups in ${\mathcal U}_{ASp'}$ in \cite{Cralt, Litterick}.
	
	\subsection{Parabolic subgroups}
	
	Here we prove Theorem \ref{excep} in the case where a point-stabilizer $M$ is a parabolic subgroup. This follows from the next lemma, using the existence of regular unipotent elements.

	\begin{lemma}
		\label{l:parabolic}
		Let $G$ be a simple group of Lie type in characteristic $p$, and let $P$ be a parabolic subgroup of $G$. Let $g\in G$ be a regular unipotent element. Then $\fp(g,G/P)=1$.
	\end{lemma}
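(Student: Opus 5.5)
The plan is to use the standard fact that a regular unipotent element lies in a unique Borel subgroup, and then count conjugates of $P$ containing $g$. A fixed point of $g$ on $G/P$ is a coset $Px$ with $Pxg=Px$, i.e.\ $g\in P^x$. So $\fp(g,G/P)$ is the number of conjugates of $P$ containing $g$, since $N_G(P)=P$ for parabolic subgroups. We therefore have to show that exactly one conjugate of $P$ contains $g$.

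\textbf{Existence.} Replacing $g$ by a $G$-conjugate does not change $\fp(g,G/P)$, so we may take $g\in U$, where $U$ is the unipotent radical of a Borel subgroup $B=UT\le P$ (with $P$ standard with respect to $B$). This is possible because every unipotent element lies in a Sylow $p$-subgroup, and the Sylow $p$-subgroups are the conjugates of $U$. Since $g\in B\le P$, we get $\fp(g,G/P)\ge 1$.

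\textbf{Uniqueness.} Suppose $g\in P^x$. Then $g$ lies in a Sylow $p$-subgroup of $P^x$, and these are unipotent radicals of Borel subgroups $B^y\le P^x$. So $g\in B^y\le P^x$. Now we use the key fact: a regular unipotent element of $G$ lies in a unique Borel subgroup of $G$. This is Steinberg's result for the algebraic group $\bar G$, where $g$ lies in a unique Borel subgroup. Every Borel subgroup of the finite group is $\bar B^F\cap G$ for an $F$-stable Borel $\bar B$, and $\bar B$ is determined by its finite points, so uniqueness descends to the finite group; twisted groups are handled the same way via $F$-stable Borels. Hence $B^y=B$, so $B\le P^x$. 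Parabolic subgroups containing $B$ are conjugate only if equal (Tits' theorem, from the $BN$-pair structure), so $P^x=P$. Therefore $\fp(g,G/P)=1$.

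\textbf{Main obstacle.} The delicate point is making precise "regular unipotent" in the finite simple group $G=(\bar G^F)'$ and the descent of Steinberg's uniqueness from $\bar G$ to $G$. I would handle it as follows. Regular unipotent elements of $\bar G^F$ exist and are all $p$-elements, so they lie in $(\bar G^F)'$ whenever $p$ divides the relevant indices, except in small cases where the paper's definition is applied inside $G$ directly. The uniqueness statement itself requires no index computation: it follows from the fact that the fixed-point set of $g$ on the flag variety $\bar G/\bar B$ is a single point, together with the identification of parabolics of $G$ with stabilizers of $F$-stable flags of the corresponding type.
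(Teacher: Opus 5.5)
Your proof is correct and follows essentially the same route as the paper. Both arguments rest on the fact that a regular unipotent element lies in a unique Borel subgroup (equivalently, a unique Sylow $p$-subgroup $U$); the paper finishes with $yx\in N_G(U)=B\le P$, whereas you use that conjugate parabolics containing $B$ coincide, which is a cosmetic difference. Your closing remark about $g$ lying in $(\bar G^F)'$ is unnecessary, since $g\in G$ is a hypothesis of the lemma; it also states the wrong condition, because the correct one is $p\nmid|\bar G^F:(\bar G^F)'|$, not $p$ dividing the index.
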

	
	\begin{proof}
		Let $B$ be a Borel subgroup of $P$ and $U = O_p(B)$, so that $U$ is a Sylow $p$-subgroup of $G$. We may take $g \in U$. If $g \in P^x$ with $x \in G$, then there exists $y \in P$ such that $g \in U^{yx}$. Since $g$ is regular unipotent, it lies in precisely one conjugate of $U$, so $U=U^{yx}$. Therefore $yx\in N_S(U)= B \le P$, so $P^x = P$. Thus $g$ lies in precisely one conjugate of $P$, and the conclusion follows.
	\end{proof}
	
	\subsection{Subgroups of maximal rank}
	
	In this subsection we prove Theorem \ref{excep} in the case where a point-stabilizer $M$ is a subgroup of maximal rank -- that is, a maximal subgroup as in part (2) of Theorem \ref{maxexcep}. Recall from Lemma \ref{l:fpr_enough} that for $g \in G$, we have 
	\begin{equation}\label{fixform}
		\fp(g,G/M) = \frac{|C_G(g)|\cdot |g^G \cap M|}{|M|}.
	\end{equation}
	In almost all cases, we shall find an element $g \in M$ such that 
	\begin{equation}\label{csgbd}
		|C_G(g)|  < |G: M|^{1/3},
	\end{equation}
	which of course suffices to give the conclusion of Theorem \ref{excep}.
	
	We first handle the families of rank at most 4.
	
	\begin{lemma}\label{maxrksmall} Theorem \ref{excep} holds in the cases where $M$ is a subgroup of maximal rank and $G$ is of type $^2\!B_2$, $^2\!G_2$, $^3\!D_4$, $^2\!F_4$, $G_2$ or $F_4$.
	\end{lemma}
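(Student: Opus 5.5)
The plan is a case-by-case inspection of the known lists of maximal-rank maximal subgroups of the small-rank exceptional groups. For $^2\!B_2(q)$, $^2\!G_2(q)$, $^2\!F_4(q)$, $G_2(q)$ and $^3\!D_4(q)$ these lists are complete: \cite{suzuki1962class, kleidman1988maximalG_2, kleidman1988maximal_3D4, malle1991maximal2F4}, and \cite[Table 5.1]{LSS} for $F_4(q)$. For each $M$ I will exhibit a semisimple element $g\in M$ lying in a maximal torus $T$ of $\bar G^F$. The element $g$ will be chosen \emph{regular} in $T$, i.e.\ a generator of a cyclic part of order divisible by a suitable ppd $q_i$ (or $\Phi_i(q)$-part), so that $C_G(g)=T\cap G$. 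Then (\ref{csgbd}) reduces to $|T|^3<|G:M|$. This holds comfortably whenever $M$ is a proper subgroup of maximal rank other than the torus normalizers themselves, because $|T|\approx q^r$ while $|G:M|$ is at least about $q^{\dim \bar G/\bar M}$.

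Concretely, the key steps are as follows. First, list $M$ by family. For subgroups of the form $N_G(\bar M^F)$ with $\bar M$ a reductive subsystem subgroup (e.g.\ $\SL_3^\pm(q).2$ in $G_2$, $\mathrm{Spin}_9(q)$, $(\SL_2\times \Sp_6)$, $\SL_3^\pm\times\SL_3^\pm$ in $F_4$, the $\SU_3$, $\SL_2\times\SL_2$ type subgroups in $^3\!D_4$, and so on), pick $g$ to be a regular element of a Coxeter-type torus of $\bar M^F$. Its order should be divisible by a ppd that forces its centralizer in $G$ to be a maximal torus, for example $q_8$ or $q_{12}$ in $F_4$ and $q_6$ or $q_3$ in $G_2$. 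Then bound $|C_G(g)|\le (q+1)^r$ and compare with $|G:M|$ using the order formulas. Second, handle the torus normalizers $N_G(T)$ (e.g.\ $(q^2\pm q+1).6$ in $G_2$, $(q\pm\sqrt{2q}+1).4$ in $^2\!B_2$, $(q^4-q^2+1).4$ in $^3\!D_4$, $(q^4-q^2+1).12$ in $F_4$). Here I take $g\in T$ regular, so $C_G(g)=T$ and $g^G\cap M=g^G\cap T$ is a single $N_G(T)$-orbit of size $|W_T|$. Then (\ref{fixform}) gives $\fp(g,G/M)=|T|\cdot|W_T|/|M|=1$, which is trivially less than $|G:M|^{1/3}$.

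The main obstacle will be the small fields, where the ppd choice fails or $|T|^3<|G:M|$ is too tight. Examples are $G_2(3),G_2(4)$, $^2\!F_4(2)'$, $^3\!D_4(2)$, $F_4(2)$, $^2\!G_2(27)$ and $^2\!B_2(8)$, together with subgroups like $\SL_3(q).2$ in $G_2(q)$, whose index is only about $q^6$. For $\SL_3(q).2<G_2(q)$ the bound $|C_G(g)|^3<q^6$ cannot hold for a torus of order about $q^2$. I would instead use the exact formula (\ref{fixform}) with $g$ regular of order $q^2+q+1$: the torus meets $M$ with $g^G\cap M$ a single $M$-class, giving $\fp(g)=|C_G(g)|/|C_M(g)|=1$. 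In general, when the crude bound fails I fall back on Lemma \ref{l:fpr_enough}(ii), computing the $M$-classes in $g^G\cap M$. For the finitely many small groups I check in GAP from the character tables in \cite{atlas}.
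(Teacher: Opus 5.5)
\textbf{Verdict.} Your strategy is essentially the paper's, and most of it goes through. The paper also picks a regular semisimple $g\in M$ and bounds $|C_G(g)|$ by the largest maximal torus. It uses $\fp(g,G/M)=1$ for the torus normalizers in $^2\!B_2$, $^2\!G_2$ and for $\SL_3^\epsilon(q).2<G_2(q)$, and reads the tiny groups ($^3\!D_4(2)$, $^2\!F_4(2)'$, $F_4(2)$) off \cite{atlas}. There is, however, one step that fails, in the torus-normalizer case over small fields.

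\textbf{The gap.} For $M=N_G(T)$ you take $g\in T$ regular, so that $C_G(g)=T$. Then $C_{N_G(T)}(g)=T$, so the $N_G(T)$-orbit of $g$ inside $T$ has size $|N_G(T):T|$. Hence regular elements of $T$ can exist only if $|T|\ge |N_G(T):T|$. This fails in two cases:
\begin{itemize}
\item $M=(q+1)^4.W(F_4)<F_4(4)$, where $|T|=5^4=625<1152=|W(F_4)|$;
\item $M=(q-\sqrt{2q}+1)^2.[96]<{}^2\!F_4(8)$, where $|T|=25<96$.
\end{itemize}
The paper also cannot guarantee regular elements in $T$ for $(q\pm1)^4.W(F_4)<F_4(8)$. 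None of these groups is in \cite{atlas}, and none is on your list of problem cases, so your GAP/ATLAS fallback does not reach them.

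\textbf{How to close it.} Regularity is unnecessary for torus normalizers. Their index is enormous (about $q^{48}/1152$ in $F_4(q)$), so any element of $T$ with moderate centralizer satisfies (\ref{csgbd}). The paper takes, from \cite{shinoda1974conjugacy}:
\begin{itemize}
\item $h_7$ in the torus $7^4$ of $F_4(8)$, with $|C_G(h_7)|=|\SL_2(q)|^2(q-1)^2$;
\item $h_{17}$ in the torus $(q+1)^4$ ($q=4,8$), with $|C_G(h_{17})|=|\SL_2(q)\SU_3(q)|(q+1)$.
\end{itemize}
For $^2\!F_4(8)$, the class tables in \cite{shinoda1974conjugacy_ree} show that $M$ still contains a regular semisimple element; it necessarily lies outside $T$.

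\textbf{Smaller points.}
\begin{itemize}
\item Your assertion $g^G\cap M=g^G\cap T$ for every torus normalizer is not automatic in higher rank, since elements of $N_G(T)\setminus T$ can be $G$-conjugate into $T$. You do not need it: once $C_G(g)=T$, the crude bound $|T|^3<|G:M|$ holds with huge room. This is how the paper treats the torus normalizers in $G_2$, $^3\!D_4$, $^2\!F_4$ and $F_4$.
\item Your dichotomy is therefore inverted. Torus normalizers are the easy case for (\ref{csgbd}); the tight cases are the large subsystem subgroups such as $\SL_3^\epsilon(q).2<G_2(q)$.
\item The ppds you propose are unavailable in several subgroups. $\SL_2(q)\circ \SL_2(q)<G_2(q)$ has order prime to $q_3q_6$, and the $A_1C_3$, $A_2\tilde A_2$ and $D_4$ subgroups of $F_4(q)$ have order prime to $q_8q_{12}$. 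There the existence of regular elements must come from the class tables \cite{kleidman1988maximal_3D4,shinoda1974conjugacy,shoji1974conjugacy}, as in the paper. For example, the paper uses an element of order $q^2-1$ in $\SL_2(q)\circ\SL_2(q)$.
\end{itemize}
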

	
	\begin{proof}
		Recall that $M$ is as in \cite[Tables 5.1,5.2]{LSS}. 
		
		Assume first that $G=\,^2\!B_2(q)$, with $q=2^{2a+1}$ and $a\ge 1$. Then $M$ is a torus normalizer $(q-1).2$ or $(q\pm\sqrt{2q}+1).4$. Taking $g \in M$ of order $q-1$ or $q\pm\sqrt{2q}+1$ respectively, we have $C_G(g) \le M$ and $g^G \cap M = g^M$, so $\fp(g,G/M)=1$ by (\ref{fixform}).
		
		Next assume that $G=\,^2\!G_2(q)$ with $q=3^{2a+1}$ and $a\ge 1$. Then $M$ is either $2\times L_2(q)$ or a torus normalizer $(q+1).6$ or $(q\pm \sqrt{3q}+1).6$. In the first case, an element $g\in M$ of order $(q-1)/2$ is regular semisimple, and $|C_G(g)| = q-1 < |G:M|^{1/3}$, giving the conclusion. In the other cases, we choose $g \in M$ of order $q+1$ or $q\pm \sqrt{3q}+1$,  and then $\fp(g,G/M)=1$ as in the previous paragraph.
		
		Now assume that $G=G_2(q)$ with $q>2$. Then $M$ is $(\SL_2(q)\circ \SL_2(q)).(2,q-1)$, $\SL_3^\epsilon(q).2$ (with $\eps = \pm$) or a torus normalizer. In the first case, we choose $g\in M$ of order $q^2-1$, and then 
		$|C_G(g)| = q^2-1 < |G:M|^{1/3}|$. In the second case, choose $g \in M$ of order $q^2+\epsilon q+1$; 
		then $C_G(g) \le M$ and $g^G \cap M = g^M$, so $\fp(g,G/M)=1$.
		Finally, suppose $M$ is a torus normalizer. Then $q = 3^a \ge 9$ and $M$ is $(q\pm 1)^2.D_{12}$ or $(q^2 \pm q+1).6$, and we can choose $g \in M$ of order $q\pm 1$ or $q^2\pm q+1$ such that $g$ is regular semisimple; then $C_G(g)| = (q \pm 1)^2$ or $q^2\pm q+1$, which is less than $|G: M|^{1/3}$.
		
		Next let $G=\,^3\!D_4(q)$.  Inspecting the semisimple classes in \cite[Table II]{kleidman1988maximal_3D4}, we see that $M$ contains a regular semisimple element unless $q=2$, and 
		$M =\SU_3(2).3.2$ or $3^2.\SL_2(3)$. When $M$ contains a regular semisimple element $g$, we have $|C_G(g)| \le (q^2+q+1)^2$ (the size of the largest maximal torus), and this is less than 
		$|G: M|^{1/3}$ in all cases. 
		In the exceptional cases with $q=2$ and $M =\SU_3(2).3.2$ or $3^2.\SL_2(3)$, we pick $g \in M$ of order 9 or 6 respectively, and then $|C_G(g)| \le 54$ or 72 (see \cite[p.90]{atlas}), and again 
		this is less than $|G: M|^{1/3}$.
		
		Next, consider the case $G=\,^2\!F_4(q)'$, where $q=2^{2a+1}$. For $a\ge 1$, the semisimple classes of $G$ are listed in \cite[Table IV]{shinoda1974conjugacy_ree}, and we see that each choice of $M$ contains a regular semisimple element $g$. (Indeed, for $q>2$ the only maximal tori not containing a regular semisimple element are $(q-1)^2$ and $(q-\sqrt{2q}+1)^2$, only for $q=8$.) Hence $|C_G(g)| \le (q + \sqrt{2q}+1)^2$ 
		(the size of the largest maximal torus), and again this is less than $|G: M|^{1/3}$ in all cases. 
		To conclude, suppose $a=0$ (so $G = \,^2\!F_4(2)'$). The maximal subgroups are listed in \cite[p.74]{atlas}, and those of maximal rank are just $A_6.2^2$ and $5^2.4A_4$. Both of these have an element $g$ of order 6, and $|C_G(g)| = 12$, which is less than $|G: M|^{1/3}$.
		
		Finally, assume that $G = F_4(q)$. 
		Suppose first that $q>2$. Inspecting the semisimple classes of $G$ in \cite{shinoda1974conjugacy,shoji1974conjugacy}, we see that $M$ contains a regular semisimple element $g$, unless possibly we are in one of the following cases: 
		\begin{itemize}
			\item[(i)] $M =(q-1)^4.W(F_4)$ with $q=8$; 
			\item[(ii)] $M =(q+1)^4.W(F_4)$ with $q=4,8$.  
		\end{itemize}
		If $M$ contains a regular semisimple element $g$, then $|C_G(g)| \le (q+1)^4$ (the size of the largest maximal torus), and this is less than $|G: M|^{1/3}$ for all possibilities for $M$ in \cite[Tables 5.1,5.2]{LSS}. 
		In case (i) above, there is an element $g$ in the maximal torus $7^4$ (namely, the element $h_7$ in \cite[Table II]{shinoda1974conjugacy}), such that $|C_G(g)| = |\SL_2(q)|^2(q-1)^2$, and this is less than $|G: M|^{1/3}$. Likewise, in case (ii) there is an element $g = h_{17}$ in the maximal torus $(q+1)^4$ with centralizer order $|\SL_2(q)\SU_3(q)|(q+1)$, and this is less than $|G: M|^{1/3}$.
		
		Now suppose $q=2$, so $G = F_4(2)$. The conjugacy classes and centralizer orders in $G$ can be found in \cite[p.167-8]{atlas}. The maximal rank subgroups $B_4(2)$, $B_2(4)$, $^3\!D_4(2)$ possess elements of orders 17,17,21 respectively, with the same centralizer orders; the subgroups $D_4(2)$, $B_2(2)^2$ have an element of order 15 with centralizer order 90; the subgroup $3.(^2\!A_2(2))^2$ has an element of order 9 with centralizer order 54; and the subgroup $7^2.(3\times \SL_2(3)$ has an element of order 7 with centralizer order 1176. In all cases the centralizer order is less than $|G: M|^{1/3}$, completing the proof.
	\end{proof}
	
	In the proof of the next lemma, we use standard notation $\l_i$ for fundamental dominant weights, and denote by $V(\l)$ (or just $\l$) the irreducible module over $\bar \F_p$ of highest weight $\l$ for the simply connected cover of $G$.
	
	\begin{lemma}\label{maxrke6} Theorem \ref{excep} holds in the cases where $M$ is a subgroup of maximal rank and $G= E_6^\eps(q)$.
	\end{lemma}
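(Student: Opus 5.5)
The plan follows the proof of Lemma \ref{maxrksmall}. For each maximal rank subgroup $M$ of $G=E_6^\eps(q)$ listed in \cite[Tables 5.1,5.2]{LSS}, I will find an element $g\in M$ satisfying (\ref{csgbd}), namely $|C_G(g)|<|G:M|^{1/3}$. When this fails I will instead use (\ref{fixform}) with $C_G(g)\le M$ and $g^G\cap M=g^M$, which gives $\fp(g,G/M)=1$. Recall that $|G|\approx q^{78}$.

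\textbf{Large subgroups with semisimple connected part.} These are $M$ of types $A_1A_5$, $A_2^3$, $D_4T_2$, $D_5T_1$ (the last is non-parabolic only for $\eps=-$), ${}^3\!D_4(q)\times(q^2+\eps q+1)$, and $F_4$-type subgroups if they arise here. For each I take $g$ to be a regular semisimple element of a maximal torus $T$ of $M$ of Coxeter-like type in each simple factor. Good choices are tori of order $(q^5\pm1)(q\pm1)$, $(q^2+\eps q+1)^3$, $(q^4+1)(q^2-1)$, or $\Phi_{12}\Phi_3$-type tori. The element $g$ is then regular in $\bar G$, so $|C_G(g)|=|T|\le (q+1)^6$ roughly, or at most $q^6$ times a small constant. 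Meanwhile $|G:M|\ge q^{78-38}=q^{40}$ or so, giving $|G:M|^{1/3}\ge q^{13}$, which is much larger. The only input needed is that such $T$ contains an element regular in $\bar G$, i.e.\ not lying on any root kernel. For $q$ not too small this follows by counting: the number of elements of $T$ on root subtori is at most about $72|T|/q$. For small $q$ (say $q\le 4$) I would instead pick an element of $T$ whose order is a ppd $q_i$ with $i\in\{5,8,9,12\}$ such that $q_i$ divides $|T|$. Its centralizer in $\bar G$ is then a torus, or a torus times a small group, which can be read from the semisimple class data.

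\textbf{Torus normalizers $M=N_G(T)$.} These occur with $T$ of orders $(q-\eps)^6$, $(q^2+\eps q+1)^3$, $(q^3-\eps)^2$-type, $\Phi_9$, $\Phi_{12}\Phi_3$, and so on. For the cyclic-type tori (for example $q^6+\eps q^3+1$ and $(q^4-q^2+1)(q^2+\eps q+1)$) I take $g$ to be a generator of order a ppd, or of order $|T|$. Then $C_G(g)=T\le M$, and $g^G\cap M=g^M$ because $N_G(\langle g\rangle)\le N_G(T)$, which gives $\fp(g,G/M)=1$. For the split-type tori, such as $(q-\eps)^6.W(E_6)$ with $|M|\approx q^6\cdot 51840$, a regular semisimple $g\in T$ gives $|C_G(g)|=|T|\le (q+1)^6$. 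This is at most $|G:M|^{1/3}\approx q^{24}$ for all admissible $q$, since maximality requires $q\ge 5$ or so.

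\textbf{Main obstacle.} The hardest part is small $q$ (for example $q=2,3$), where tori may contain no regular elements, and the twisted case $\eps=-$, where the torus types differ. There I would use the explicit centralizer orders of semisimple or mixed elements, from \cite{atlas} for $E_6(2)$ and ${}^2\!E_6(2)$ and from the semisimple class tables in general. If no regular element exists, I would choose an element whose centralizer has a small Levi component, such as $\SL_2(q)\times$torus, and verify (\ref{csgbd}) numerically, since the index bound leaves a great deal of room.
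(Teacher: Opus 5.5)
You have a genuine gap at small $q$, and it is sharpest for the torus normalizer $M=((q-\eps)^6/d).W(E_6)$. You take $g\in T$ regular semisimple ``for all admissible $q$''. But every element of $T$ has order dividing $q-\eps$, and a regular semisimple element of the adjoint group $\bar G$ has order at least the Coxeter number $12$, since all its Kac coordinates must be positive. So $T$ contains no regular element when $\eps=+$ and $q\in\{5,7,8,9,11\}$, or when $\eps=-$ and $q\le 9$. Your fallback, an element of $T$ with centralizer $\SL_2(q)$ times a torus, also does not exist when $q-\eps\le 8$, because it would need all but one Kac coordinate positive. At $(q,\eps)=(2,-)$ no element of $T$ can satisfy (\ref{csgbd}) at all. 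Here $T\cong 3^5$, and every nontrivial element has $\bar G$-centralizer of dimension at least $24$; the minimum, type $A_2^3$, gives ${}^2\!A_2(2)^3$. Hence $|C_G(g)|>3\cdot 10^6$, whereas $|G:M|^{1/3}\approx 1.8\cdot 10^5$.

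The missing idea is to use an element of $M$ outside $T$. The paper takes $x\in M$ mapping to an element of order $5$ in $W(E_6)$. It permutes the $72$ root spaces of $L(\bar G)$ with cycle shape $(5^{14},1^2)$ and fixes at most a $2$-dimensional subspace of the Cartan subalgebra. Therefore $\dim C_{\bar G}(x)\le \dim C_{L(\bar G)}(x)\le 18$ uniformly in $q$, and only $(q,\eps)=(2,-)$ needs the ATLAS value $|C_G(x)|=100800$.

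The subsystem cases have the same weakness. Being regular in each simple factor of $\bar M$ does not make $g$ regular in $\bar G$, because roots of $\bar G$ outside $\bar M$ can be trivial on $g$. Your count settles only large $q$. The ppd fallback with $i\in\{5,8,9,12\}$ is unavailable for $M_0=A_2^\eps(q)^3$ and for $D_4(q)(q-\eps)^2/d$, whose orders involve only the factors $\Phi_m(q)$ with $m\in\{1,2,3,4,6\}$. In fact $A_2^-(2)^3$ contains no regular semisimple element of $\bar G$ at all, since its maximal tori have exponent $3$; the paper instead uses an element of order $9$ in $M$ with centralizer of order $162$.

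The paper controls these cases in three ways:
\begin{itemize}
\item it computes $\dim C_{L(E_6)}(g)$ from the explicit restrictions of $L(E_6)$ to $A_1A_5$, $A_2^3$, $D_4$ and $A_4$;
\item for $D_4(q)(q-\eps)^2/d$ with $q\ge 3$, it takes $g=x\tau$, where $\tau\in M$ is a triality automorphism centralizing $G_2(q)$ and $x\in G_2(q)$ has order $q^2+q+1$;
\item it uses ATLAS data when $q=2$.
\end{itemize}

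Minor points: $|G:M|\ge q^{40}$ is false for $D_5T_1$, which has dimension $46$. $F_4(q)$ is not of maximal rank. Some extra torus normalizers you list, such as the $\Phi_9$ one lying in $N_G(A_2^\eps(q^3))$, are not maximal; including them is harmless.
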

	
	\begin{proof}
		The semisimple element centralizers in $G$ are given in \cite{mizuno, DL}, and the unipotent element centralizers in \cite[Table 22.2.3]{liebeck_seitz_2012unipotent}. 
		
		Suppose first that $M$ is a torus normalizer in \cite[Table 5.2]{LSS}, 
		so that $M = ((q-\eps)^6/d).W(E_6)$, where $d = (3,q-\eps)$ and also $q\ge 5$ if $\eps = +$. Choose an element $x \in M$ projecting to an element of order 5 in $W(E_6)$. In the root space decomposition 
		\[
		L(\bar G) = H \oplus \bigoplus_{\alpha \in \Phi} L_{\alpha}
		\]
		of the Lie algebra of $\bar G$, the element $x$ permutes the 72 root spaces $L_\alpha$ as a permutation of cycle-shape $(5^{14},1^2)$, and fixes at most a 2-dimensional subspace of the Cartan subspace $H$. Hence 
		$\dim C_{L(\bar G)}(x) \le 18$, and it follows that $\dim C_{\bar G}(x) \le 18$ also. Inspecting the list of possible centralizers of such dimension in \cite{mizuno, DL, liebeck_seitz_2012unipotent}, we check that $|C_G(x)| < |G:M|^{1/3}$ except possibly when $(q,\eps) = (2,-)$. However in this case we see from \cite[p.192]{atlas} that $|C_G(x)| = 100800$, which again is less than $|G:M|^{1/3}$.
		
		The other subgroups of maximal rank are in \cite[Table 5.1]{LSS}, and are the normalizers of subgroups 
		$M_0 = \bar M^F$, where the possibilities for $\bar M, M_0$ are as follows:
		\[
		\begin{array}{ll}
			\bar M & M_0 \\
			\hline
			A_1A_5 & A_1(q)A_5^\eps(q) \\
			A_2^3 & A_2^\eps(q)^3,\,A_2(q^2)A_2^{-\eps}(q),\,A_2^\eps(q^3) \\
			D_4T_2 & D_4(q)(q-\eps)^2/d,\,^3\!D_4(q) (q^2+\eps q+1)/d \\
			D_5T_1 & D_5^\eps(q) (q-\eps)/d \\
			\hline
		\end{array}
		\]
		For $\bar M = A_1A_5$, the subgroup $M_0$ has a cyclic torus $\langle g \rangle$ of order $(q+\eps)(q^5-\eps)/d$. From the restriction $L(E_6) \downarrow A_1A_5 = L(A_1A_5) \oplus 1 \otimes \lambda_3$ (see \cite[11.10]{liebeck_seitz_2012unipotent}), we see that $\dim C_{L(E_6)}(g) = 6$ or 8 (the latter only when $q-\eps \le 2$), and hence $|C_G(g)| < |G:M|^{1/3}$.
		
		Next let $\bar M = A_2^3$. Here 
		\begin{equation}\label{rest}
			L(E_6) \downarrow A_2^3 = L(A_2^3) \oplus (\l_1\otimes \l_1\otimes \l_1) \oplus (\l_2\otimes \l_2\otimes \l_2)
		\end{equation}
		(see \cite[2.1]{LSmem96}). The subgroups $M_0 = A_2(q^2)A_2^{-\eps}(q)$ and $A_2^\eps(q^3)$ contain regular semisimple elements -- for example, the first contains an element $g = g_1g_2$ with $g_1 \in A_2(q^2)$ of order
		$q^4+q^2+1$ and $g_2 \in A_2^{-\eps}(q)$ of order $q^2-1$, and (\ref{rest}) gives $\dim C_{L(E_6)}(g) = 6$. Similarly, $M_0 = A_2^\eps(q)^3$ contains a regular semisimple element, except when $(q,\eps) = (2,-)$, in which case we see from \cite[p.191]{atlas} that it contains an element $g$ of order 9 with centralizer order 162. In all cases we have $|C_G(g)| < |G:M|^{1/3}$.
		
		Now suppose $\bar M = D_4T_2$. From \cite[Chap.11]{liebeck_seitz_2012unipotent} we have $L(E_6)\downarrow D_4 = L(D_4) \oplus L(T_2) \oplus (\l_1\oplus \l_3\oplus \l_4)^2$. From this we see that the subgroup $M_0 =\, ^3\!D_4(q) (q^2+\eps q+1)/d$ has a regular semisimple element. Now consider $M_0 = D_4(q)(q-\eps)^2/d$. For $q=2$ we have $\eps = -$ (see \cite[Table 5.1]{LSS}), and $M$ has an element $g$ of order 21 with centralizer order at most 63 (see \cite[p.191]{atlas}), which is less than $|G:M|^{1/3}$. Suppose $q\ge 3$. The subgroup $M$ has an $S_3$ inducing graph automorphisms on $D_4$, so we may pick $\tau \in M$ inducing a triality and centralizing $G_2(q)<D_4(q)$. Let $g = x\tau$, where $x$ is an element of order $q^2+q+1$ in this $G_2(q)$. From the restriction $L(E_6)\downarrow D_4$, we see that $\dim C_{L(E_6)}(\tau) = 30$. If $p\ne 3$, then $C_{\bar S}(\tau) = D_4T_2$ and $x$ is regular in the $D_4$ factor, hence $g$ is regular in $E_6$. And if $p=3$, then $C_{E_6}(x) = T_2A_2A_2$ with $\tau$ regular unipotent in each $A_2$ factor, so $|C_S(g)| \le (q^2+q+1)q^4$. In all cases $|C_G(g)| < |G:M|^{1/3}$.
		
		Finally consider $\bar M = D_5T_1$. Choose an element $g$ of order $(q^5-\eps)/(q-\eps)$ in a subgroup $^2\!A_4(q)$ of $M_0$. From the restriction
		\[
		L(E_6) \downarrow A_4 = L(A_4) \oplus \l_1 \oplus \l_4\oplus (\l_2\oplus \l_3)^2 \oplus 0^4
		\]
		(see \cite[Chap.11]{liebeck_seitz_2012unipotent}), we see that $\dim C_{L(E_6)}(g) = 8$, and so $|C_G(g)| < |G:M|^{1/3}$ as usual.
	\end{proof}
	
	\begin{lemma}\label{maxrke78} Theorem \ref{excep} holds in the cases where $M$ is a subgroup of maximal rank and $S= E_7(q)$ or $E_8(q)$.
	\end{lemma}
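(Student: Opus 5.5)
We will follow the same pattern as in Lemma \ref{maxrke6}. In each case we aim for (\ref{csgbd}), namely $|C_G(g)|<|G:M|^{1/3}$ for a suitable $g\in M$, and we fall back on (\ref{fixform}) only where this fails. We treat separately the torus normalizers in \cite[Table 5.2]{LSS} and the normalizers of $\bar M^F$ with $\bar M$ semisimple (or reductive with a central torus) of maximal rank in \cite[Table 5.1]{LSS}. For $E_7$ these subgroups are of types
\[
A_1D_6,\ A_7,\ A_2A_5,\ A_1^3D_4,\ A_1^7,\ E_6T_1,\ T_7;
\]
for $E_8$ they are of types
\[
D_8,\ A_1E_7,\ A_8,\ A_2E_6,\ A_4^2,\ D_4^2,\ A_2^4,\ A_1^8,\ T_8,
\]
together with the various twisted forms $M_0$.

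For each $M_0$ we exhibit an element $g$, usually semisimple and generating a cyclic maximal torus of $M_0$ of large order. Typical choices are a Coxeter-type torus of a factor, or a product of Singer-type elements in distinct simple factors. We then bound $\dim C_{L(\bar G)}(g)$ using the known restrictions of the adjoint module, taken from \cite[Chap.~11]{liebeck_seitz_2012unipotent} and \cite{LSmem96}. Examples are
\[
L(E_7)\downarrow A_1D_6 = L(A_1D_6)\oplus (1\otimes \l_5),
\]
and
\[
L(E_8)\downarrow A_1E_7 = L(A_1E_7)\oplus(1\otimes\l_7),
\]
and similarly for the other types. For generic $q$ we choose $g$ so that it has no eigenvalue $1$ on the non-adjoint summands. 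Then $g$ is regular semisimple, and $|C_G(g)|\le (q+1)^r$ with $r=7$ or $8$. This is far below $|G:M|^{1/3}$, because $\dim\bar G/\bar M$ is large: at least $54$ for $E_6T_1<E_7$ and at least $112$ for $E_8$. When $g$ cannot be regular, for small $q$, we use a dimension bound $\dim C_{\bar G}(g)\le d$ together with the list of centralizers from \cite{DL, liebeck_seitz_2012unipotent}, and compare orders.

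For the torus normalizers $M=(q-\eps)^r.W$ (including the twisted $^3\!D_4$, $^2\!E_6$ and $q^2\pm q+1$ forms in \cite[Table 5.2]{LSS}) we repeat the Weyl-group trick from Lemma \ref{maxrke6}. We take $x$ mapping to an element $w\in W$ of large order whose fixed space on the Cartan subalgebra is small. Counting the cycles of $w$ on the root spaces then bounds $\dim C_{L(\bar G)}(x)$: for $E_8$, taking $w$ of order $30$ in $W(E_8)$ gives $\dim C\le 8$, and for $E_7$ taking $w$ of order $18$ gives a similar bound. This yields $|C_G(x)|<|G:M|^{1/3}$ once we invoke the lower bound
\[
|G:M|\ge q^{\dim \bar G-r}/|W|\cdot c.
\]

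The main obstacle will be the small fields, chiefly $q=2,3$. There tori of order $q-1$ or $q+1$ degenerate, regular semisimple elements may not exist in $M_0$ (for instance $A_1(2)^7$, $A_1(q)^8$ or $A_2(2)^4$), and the case $T_8$ requires $q\ge 5$. For these we will instead take $g=su$ with $s$ semisimple and $u$ a regular unipotent element of $C_{M_0}(s)$, or else an element permuting the simple factors of $M_0$. The latter is available because $M$ contains the full permutation group on the factors, for example $\mathrm{PSL}_3(2)$ on the seven $A_1$'s or $\AGL_1(8)$ on the eight $A_1$'s. We then bound $|C_G(g)|$ by $|C_{C_G(s)}(u)|$ using \cite[Tables 22.2.x]{liebeck_seitz_2012unipotent}. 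Only if this is still insufficient will we compute $|g^G\cap M|$ and apply (\ref{fixform}) exactly.
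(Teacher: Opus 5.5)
Your plan is viable, but it takes a genuinely different route from the paper's, mainly for the subgroups in \cite[Table 5.1]{LSS}.

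\textbf{Table 5.1 subgroups.} The paper does not use semisimple elements there. It takes a regular unipotent element $u\in\bar M^F$, reads off its $\bar G$-class from \cite{Lawunip}, and uses the centralizer orders $cq^k$ (with $c\le 6$) from \cite[Tables 22.2.1, 22.2.2]{liebeck_seitz_2012unipotent}. So one inequality covers every $q$, and only $A_2^4$ ($p=3$), $A_1^8$, $A_1^3D_4$ ($q=2$) and $A_1^7$ need ad hoc semisimple elements.

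Your regular semisimple elements avoid \cite{Lawunip} and give sharper bounds for large $q$. However, the residual small-field set is larger than ``chiefly $q=2,3$''. A regular semisimple element of $E_8$ (resp.\ $E_7$) has order at least the Coxeter number $30$ (resp.\ $18$). Hence $A_1(q)^8$ contains none for $q\le 7$, and $A_1(q)^7$ contains none for $q\le 5$; such thresholds must be checked for every form $M_0$.

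In those two cases the regular unipotent fallback fails as well. For good $p$, the restriction $L(\bar G)\downarrow\bar M$ gives $\dim C_{\bar G}(u)=92$ (resp.\ $49$), which exceeds $\dim(\bar G/\bar M)/3$. Your other fallback does work, uniformly in $q$. An element of $M$ inducing a $7$-cycle on the factors has $\dim C_{L(\bar G)}(x)\le 38$ (resp.\ $\le 19$). Note that the group induced on the eight $A_1$'s is $\AGL_3(2)$; $\AGL_1(8)$ is only a subgroup of it. This is arguably cleaner than the paper's element of order $q+1$ with centralizer ${}^2\!A_3(q)\,{}^2\!A_4(q)T_1$.

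\textbf{Torus normalizers.} Your Coxeter element improves on the paper's element of order $7$ for $(q\pm1)^r.W$. Treating every $E_8$ torus by the Weyl-group trick also replaces the paper's use of subsystem subgroups and cyclic generators. But $M/T\cong C_W(w_0)$ need not contain a Coxeter element. For $T=(q^2+1)^4$, a Coxeter element commuting with $w_0$ would produce an element of order $60$ in $W(E_8)$, which is impossible. There you must take another element, e.g.\ one of order $20$ in $C_W(w_0)$, all of whose root orbits have length $20$.
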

	
	\begin{proof}
		First we handle the maximal rank subgroups in \cite[Table 5.1]{LSS}. These are normalizers of subgroups $\bar M^F$, where $\bar M$ is a connected subgroup of $\bar G$ listed in Table \ref{maxe78}. In the table we also list the 
		$\bar G$-class of a unipotent element $u \in \bar M^F$, as given by \cite{Lawunip}. The unipotent class is labelled as in \cite[Tables 22.2.1,22.2.2]{liebeck_seitz_2012unipotent}, where the centralizer order is also given, and an upper bound for this order is included in Table \ref{maxe78}. In all cases except $A_2^4\,(p=3),\,A_1^8 < E_8$ and $A_1^3D_4\,(q=2),\,A_1^7< E_7$, we have $|C_Gu)| < |G:M|^{1/3}$, so it remains to consider these cases.
		
		\begin{table}[h!]
			\caption{Maximal rank subgroups in $E_7,E_8$}\label{maxe78}
			\[
			\begin{array}{|l|l|l|l|}
				\hline
				\bar G & \bar M & \hbox{unip. elt. }u \in \bar M^F & |C_G(u)| \le \\
				\hline
				E_8 & D_8 & E_8(a_4)\,(p\ne 2),\,E_8(b_4)\,(p=2) & 2q^{18} \\
				& A_1E_7 & E_8(a_3)\,(p\ne 2),\,E_7\,(p=2) & 4q^{16} \\
				& A_8 & E_8(a_6)\,(p\ne 3),\,E_8(b_6)\,(p=3) & 6q^{28} \\
				& A_2E_6 & E_8(b_5)\,(p\ne 3),\,E_6A_1\,(p=3) & 3q^{26} \\
				& A_4^2 & E_8(a_7)\,(p\ne 5),\,A_4A_3\,(p=5) & q^{48} \\
				& D_4^2 & A_6 & 2q^{38} \\
				& A_2^4 & A_2D_4(a_1) \,(p\ne 3),\,A_2^2A_1^2\,(p=3) &  2q^{64}\,(p\ne 3),q^{80}\,(p=3) \\
				& A_1^8 & - & - \\
				\hline
				E_7 & A_1D_6 & E_7(a_3) & 2q^{13} \\
				& A_7 & E_6(a_1)\,(p\ne 2), \,E_7(a_4)\,(p=2) & 2q^{17} \\
				& A_2A_5 & E_7(a_5)\,(p\ne 3), \,A_5A_1\,(p=3) & q^{25} \\
				& E_6T_1 & E_6 & 3q^{13} \\
				& A_1^3D_4 & A_1D_5(a_1)\,(p\ne 2), \,A_1D_4\,(p=2) & q^{25}\,(p\ne 2),2q^{31}\,(p=2) \\
				& A_1^7 & - & - \\
				\hline
			\end{array}
			\]
		\end{table}
		
		Consider the case where $\bar M = A_2^4 < E_8$ with $p=3$. Here $M$ is the normalizer of one of the following subgroups $M_0$:
		\begin{itemize}
			\item[(i)] $A_2^\eps(q)^4\, (\eps=\pm)$;
			\item[(ii)] $^2\!A_2(q^2)^2$;
			\item[(iii)] $^2\!A_2(q^4)$.    
		\end{itemize}
		The restriction $L(E_8) \downarrow A_2^4$ is given by \cite[Prop. 2.2]{LSmem96}. From this we see that in cases (ii) and (iii), $M$ contains a maximal torus of order $(q^4-q^2+1)^2$ or $q^8-q^4+1$ respectively, each of which has a regular semisimple element $g$, so $|C_G(g)| < |G:M|^{1/3}$ in these cases. In case (i), we choose an element $g$ projecting to each of the $A_2^\eps(q)$ factors as a regular semisimple element of order $q^2+\eps q+1$. Then we find from the restriction $L(E_8) \downarrow A_2^4$ that $\dim C_{L(E_8)}(g) \le 32$, giving (\ref{csgbd}) as usual.
		
		Now consider $\bar M = A_1^8 < E_8$, in which case $M$ is the normalizer of a subgroup $A_1(q)^8$ and $q>2$. Here $M$ contains a maximal torus $T$ of order $(q+1)^8$, and $T$ lies in a maximal rank subgroup normalizing $^2\!A_4(q)^2$. Hence there is an element $g \in T$ of order $q+1$ with centralizer containing $^2\!A_3(q)^2\!A_4(q)$, and in fact $|C_G(x)|$ is either $|^2\!A_3(q)^2\!A_4(q)|(q+1)$ or $|^2\!A_3(q)^2\!D_5(q)|$ (the latter only if $q=3$). Hence (\ref{csgbd}) holds as usual.
		
		Next suppose $\bar M = A_1^3D_4 < E_7$ with $q=2$. Here $M$ is the normalizer of $M_0 = A_1(2)^3.D_4(2)$ or
		$A_1(2^3).\,^3\!D_4(2)$. The restriction 
		\[
		L(E_7) \downarrow A_1^3D_4 = L(A_1^3D_4) \oplus (1\otimes 1\otimes 0 \otimes \l_1) 
		\oplus (1\otimes 0\otimes 1 \otimes \l_3) \oplus (0\otimes 1\otimes 1 \otimes \l_4) 
		\]
		(see \cite[Chap.11]{liebeck_seitz_2012unipotent}). If $M_0 = A_1(2)^3.D_4(2)$, choose an element $g = g_1g_2g_3g_4 \in M_0$ with $g_1,g_2,g_3 \in A_1(2)$ of order 3 and $g_4 \in D_4(2)$ of order 7. Then from the above restriction we find that $\dim C_{L(E_7)}(g) = 19$, and hence (\ref{csgbd}) holds. In the other case $M_0 = A_1(2^3).\,^3\!D_4(2)$, choose $g = g_1g_2$ with $g_1 \in A_1(8)$ of order 7 and $g_2 \in \,^3\!D_4(2)$ of order 13. Then $g$ is regular in $E_7$ and again (\ref{csgbd}) holds.
		
		Finally, consider $\bar M = A_1^7 < E_7$, in which case $M$ normalizes $M_0 = A_1(q)^7$ or $A_1(q^7)$. In the latter case $M_0$ has an element of order $q^7+1$ which is regular in $\bar S$. In the former case we have $q>2$, and $M_0$ has a maximal torus $T$ of order $(q+1)^7/d$ ($d=(2,q-1)$). For $q>3$, note that $T$ lies in a maximal rank subgroup 
		$A_2^-(q)A_5^-(q)$, so has an element $g$ of order $q+1$ with centralizer containing $A_2^-(q)A_4^-(q)(q+1)/d$; this must be the full centralizer in $S$, and (\ref{csgbd}) holds. And if $q=3$, $T$ has an element $g$ of order 4 with centralizer $A_1(q)A_3^-(q)A_3^-(q)$, and (\ref{csgbd}) holds again.
		
		Now we handle the cases where $M$ is a torus normalizer in \cite[Table 5.2]{LSS}. For $\bar G = E_7$, we have $M = T.W(E_7)$, where $|T| = (q-\eps)^7/d$ ($\eps = \pm 1$), and $q\ge 5$ if $\eps =+1$. Choose an element $g \in M$ mapping to an element of order 7 in $W(E_7)$. Then $g$ permutes the root spaces $L_\alpha$ in the root space decomposition $L(E_7) = H \oplus \sum_{\alpha \in \Phi}L_\alpha$ in 18 cycles of length 7, and also $\dim C_H(g)=1$. Hence $\dim C_{L(E_7)}(g) \le 19$ and (\ref{csgbd}) holds.
		
		For $\bar G = E_8$, $M$ is the normalizer of a torus $T$ of one of the following orders:
		\begin{itemize}
			\item[(i)] $(q-1)^8$ \,($q\ge 5$),
			\item[(ii)] $(q+1)^8$,
			\item[(iii)] $(q^2+q+1)^4$,
			\item[(iv)] $(q^2-q+1)^4$\,($q>2$),
			\item[(v)] $(q^2+1)^4$,
			\item[(vi)] $(q^4+q^3+q^2+q+1)^2$,
			\item[(vii)] $(q^4-q^3+q^2-q+1)^2$,
			\item[(viii)] $(q^4-q^2+1)^2$,
			\item[(ix)] $q^8+q^7-q^5-q^4-q^3+q+1$,
			\item[(x)] $q^8-q^7+q^5-q^4+q^2-q+1$.
		\end{itemize}
		For tori (i) and (ii), we have $M = T.W(E_8)$, and we argue as above using an element of order 7 in the Weyl group $W(E_8)$.
		
		Tori (vi) and (vii) are contained in subsytem subgroups $A_4^\eps(q) A_4^\eps(q)$, and $T$ contains an element $g = g_1g_2$ with each $g_i$ in $A_4^\eps(q)$ of order a primitive prime divisor of $q^5-\eps$. Using the restriction $L(E_8) \downarrow A_4A_4$ given in \cite[Chap.11]{liebeck_seitz_2012unipotent}, we see that $\dim C_{E_8}(g) \le 28$, and (\ref{csgbd}) holds.
		
		Tori (iii), (iv) and (viii) are contained in subsystem subgroups normalizing $A_2^\eps(q)^4$ or $A_2^-(q^2)^2$, and are handled as in the previous paragraph.
		
		Torus (v) lies in a subsystem $D_8(q)$ (as a subgroup of type $O_2^-(q)^4$), and from the restriction $L(E_8) \downarrow D_8$ we find that an element $g = g_1g_2g_3g_4$, with each $g \in O_2^-(q)$ of order $q^2+1$, satisfies $\dim C_{E_8}(g) \le 64$.
		
		Finally, tori (ix) and (x) are cyclic, and their generators are regular semisimple elements $g$, so (\ref{csgbd}) holds once again.
	\end{proof}
	
	\subsection{Subgroups in (3) of Theorem \ref{maxexcep}}
	
	\begin{lemma}\label{max3} Theorem \ref{excep} holds in the case where $M$ is as in part (3) of Theorem \ref{maxexcep}.
	\end{lemma}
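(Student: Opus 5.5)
The plan is to follow the strategy used for the maximal rank subgroups: for each possibility for $M$ in part (3) of Theorem \ref{maxexcep}, exhibit an element $g\in M$ with $|C_G(g)|<|G:M|^{1/3}$. By Lemma \ref{l:fpr_enough}(i) this gives $\fp(g,G/M)<|G:M|^{1/3}$.

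\textbf{Key observation.} These subgroups have non-maximal rank, so $|M|$ is far smaller than in the maximal rank case. For example, if $\bar M$ has dimension $d$, then $|M|$ is roughly $q^{d}$, times a small outer factor. Meanwhile $|G|$ is roughly $q^{\dim \bar G}$, so $|G:M|^{1/3}$ is about $q^{(\dim\bar G-d)/3}$. It therefore suffices to find $g\in M$ with $\dim C_{\bar G}(g)$ well below $(\dim \bar G-d)/3$, with some slack for constants.

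\textbf{Choice of $g$.} I would take $g$ to be a regular unipotent element of $\bar M^F$, or of a suitable simple factor of it. The $\bar G$-class of such an element, for each subgroup in \cite[Table 3]{LS3}, is recorded by Lawther \cite{Lawunip}. The centralizer orders then come from \cite[Tables 22.2.1--6]{liebeck_seitz_2012unipotent}. Typically $|C_G(u)|\le c\,q^{k}$ with $k$ small, for instance $k\le 16$ in $E_8$ for subgroups like $A_1$, $B_2$, $A_1A_2$ or $G_2F_4$. Against this, $(\dim\bar G-d)/3$ is large: it is about $80$ for $\bar M=A_1$ in $E_8$, and still about $(248-66)/3\approx 60$ for $G_2F_4$.

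The cases would go in the following order.
\begin{itemize}
\item[(a)] For the families in \cite[Table 3]{LS3}, go through each entry, quote the class of $u$, and compare with $|G:M|^{1/3}$.
\item[(b)] For $G_2(q)$ and $\PGL_3^\pm(q)<{}^3\!D_4(q)$, use semisimple elements of order $q^2\pm q+1$. These are regular in ${}^3\!D_4(q)$, giving $|C_G(g)|\le (q^2+q+1)^2$, which should beat $|G:M|^{1/3}\approx q^{5}$ or more. Small $q$ would be checked via \cite{atlas}.
\item[(c)] For $(2^2\times \POm_8^+(q).2^2).S_3$ and ${}^3\!D_4(q)<E_7(q)$, use an element of order $q^4-q^2+1$ in ${}^3\!D_4(q)$, or a triality-twisted element as in the $D_4T_2$ case of Lemma \ref{maxrke6}. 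Then bound $\dim C_{L(E_7)}(g)$ via the restriction $L(E_7)\downarrow D_4$.
\item[(d)] For $\PGL_2(q)\times S_5<E_8(q)$, take $g$ regular unipotent in $\PGL_2(q)$ times an element of order $5$ in $S_5$, or simply a regular unipotent of the $A_1$ factor, which lies in class $E_8(a_1)$-type or similar with small centralizer.
\item[(e)] For $F_4(q)<E_8(q)$ with $p=3$, use a regular unipotent of $F_4$ and its $E_8$-class.
\end{itemize}

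\textbf{Main obstacle.} I expect the hardest part to be small $q$, in particular $q=2,3$. There the constant factors in unipotent centralizer orders, and the possibility that the unipotent class of $u$ is larger than generic in bad characteristic, can spoil the crude inequality. In those cases I would instead pick a semisimple element of $M$ of ppd order, for example $q_{n}$ with $n$ maximal among the cyclotomic factors of $|M|$. I would compute $\dim C_{L(\bar G)}(g)$ from the known restriction of $L(\bar G)$ to $\bar M$ in \cite[Chap.11]{liebeck_seitz_2012unipotent}, exactly as in Lemmas \ref{maxrke6} and \ref{maxrke78}, and resort to \cite{atlas} for the few groups small enough to be listed there.
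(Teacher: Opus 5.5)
\textbf{Verdict: correct.} Your proposal follows the paper's overall route, differs genuinely only in case (c), and has three factual slips, none of which breaks the argument.

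\textbf{Where you match the paper.} For the entries of \cite[Table 3]{LS3}, the paper also takes a regular unipotent element of $\bar M^F\cap G$, reads off its $\bar G$-class from \cite{Lawunip} and its centralizer order from \cite{liebeck_seitz_2012unipotent}. It uses regular semisimple elements for $G_2(q),\PGL_3^\pm(q)<{}^3\!D_4(q)$. For $F_4<E_8$ with $p=3$ it uses a unipotent element of class $E_8(b_4)$, taken from \cite{CST}.

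\textbf{Where you differ: case (c).} The paper observes that this $D_4$ lies in a subsystem $A_7$ of $E_7$, acting on the natural $8$-space. A regular unipotent element of $D_4$ therefore has Jordan blocks $(7,1)$ there and lies in the $E_7$-class $A_6$, so $|C_G(u)|<q^{19}$. This handles both normalizers and all odd $q$ at once.

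Your semisimple route does work for the ${}^3\!D_4(q)$ normalizer. The relevant restriction is $L(E_7)\downarrow D_4=L(D_4)\oplus V(2\lambda_1)\oplus V(2\lambda_3)\oplus V(2\lambda_4)$, which comes from $D_4<A_7$, not from the subsystem restrictions in \cite[Chap.11]{liebeck_seitz_2012unipotent} or from $D_4T_2<E_6$. From it, $g$ of order $q^4-q^2+1$ has $\dim C_{\bar G}(g)=13$.

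However, the normalizer $(2^2\times\POm_8^+(q).2^2).S_3$ has order prime to every ppd of $q^{12}-1$, so it contains no such element. For that case your unworked ``triality-twisted'' alternative would actually have to be carried out, and the unipotent route avoids this.

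\textbf{Corrections.}
\begin{itemize}
\item Your illustrative bound ``$k\le 16$'' is wrong. Table~\ref{unip3} has exponents $22$ for $G_2F_4$, $48$ for $B_2$ and $52$ for $A_1A_2$, the last two coming from bad-characteristic classes. All are still far below $|G:M|^{1/3}$.
\item In (d), the $A_1$ factor of $\PGL_2(q)\times S_5$ lies in the subsystem $A_4A_4$. Its regular unipotent element is therefore in the distinguished class $E_8(a_7)$, with centralizer of dimension $40$, not near $E_8(a_1)$. The resulting bound $120q^{40}$ still suffices.
\item In (b), for $M=G_2(q)$ one has $|G:M|^{1/3}\approx q^{14/3}$, not $q^5$. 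The inequality $(q^2+q+1)^2<|G:M|^{1/3}$ holds only barely at $q=3$: $169^3=4826809<4842747=|G:M|$.
\item Your small-$q$ check in (b) is genuinely needed. For $q=2$, a Sylow $7$-subgroup of ${}^3\!D_4(2)$ is $7^2$, so every element of order $7$ has centralizer order divisible by $49>17472^{1/3}=|G:M|^{1/3}$. One must instead use the exact count $\fp(g,G/M)=|C_G(g)|/|C_M(g)|=49/7=7$. The paper's own text asserts the centralizer bound here without this caveat.
\end{itemize}
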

	
	\begin{proof}
		In this case we have $M=N_G(\bar M^F \cap G)$, where $\bar M$ is a connected reductive subgroup of non-maximal rank that is either listed in \cite[Table 3]{LS3}, or is one of the four extra possibilities in (3) of Theorem \ref{maxexcep}. 
		
		Suppose first that $M$ is as in \cite[Table 3]{LS3}. We pick a regular unipotent element $u$ of $\bar M^F \cap G$, and in Table \ref{unip3} we list the $\bar G$-class of $u$, as given by \cite{Lawunip}, together with an upper bound for the centralizer order $|C_G(u)|$ given by \cite[Chap.22]{liebeck_seitz_2012unipotent}. In all cases, (\ref{csgbd}) is satisfied.
		
		\begin{table}[h!]
			\caption{Subgroups in \cite[Table 3]{LS3}}\label{unip3}
			\[
			\begin{array}{|l|l|l|l|}
				\hline
				G & \bar M & \bar G\hbox{-class of }u & |C_G(u)| \le \\
				\hline
				E_8(q) & G_2F_4 & E_8(a_5)\,(p\ne 3),\,E_8(b_5)\,(p=3) & 6q^{22} \\
				& A_1G_2G_2 & D_5A_2\,(p\ne 7),\,A_6A_1\,(p=7) & 2q^{36} \\
				& A_1A_2 & E_8(a_4)\,(p\ne 5),\,A_1A_2A_4\,(p=5) & q^{52} \\    
				& B_2 & E_8(a_6)\,(p\ne 5,7),A_6A_1\,(p=7),A_4A_3\,(p=5) & q^{48} \\
				& A_1\,(3 \hbox{ classes}) & E_8, E_8(a_1), E_8(a_2) & 2q^{12} \\
				\hline
				E_7(q) & G_2C_3 & E_7(a_4)\,(p\ne 2,7),\,A_6\,(p=7),\,A_1D_5\,(p=2) & 2q^{20} \\
				& A_1F_4 & E_7(a_2)\,(p\ne 3),\,E_6\,(p=3) & 3q^{13} \\
				& A_1G_2 & E_7(a_4)\,(p\ne 7),\,A_6\,(p=7) & 2q^{20} \\
				& A_1A_1 & E_7(a_5) & 6q^{21} \\
				& A_2 & E_6(a_1)\,(p\ne 5,7),\,A_6\,(p=7),\,A_2A_4\,(p=5) & q^{27} \\
				& A_1\,(2 \hbox{ classes}) & E_7, \,E_7(a_1) & 2q^{9} \\
				\hline
				E_6^\eps(q) & F_4 & E_6 & 3q^{6} \\
				& C_4 & E_6(a_1) & q^{8} \\
				& A_2G_2 & E_6(a_3)\,(p\ne 2),\,D_5(a_1)\,(p=2) & 2q^{14} \\
				& G_2 & E_6(a_1)\,(p\ne 2),\,D_5\,(p=2) & 2q^{11} \\
				& A_2 & E_6(a_3)\,(p\ne 5),\,A_4A_1\,(p=5) & 2q^{16} \\
				\hline
				F_4(q) & A_1G_2 & F_4(a_2) & 8q^{8} \\
				& G_2\,(p=7) & F_4(a_2) & 8q^{8} \\
				& A_1 & F_4 & 4q^{4} \\
				\hline
				G_2(q) & A_1 & G_2 & 3q^{2} \\
				\hline
			\end{array}
			\]
		\end{table}
		
		Now consider the four extra cases in part (3) of Theorem \ref{maxexcep}. The subgroups of $E_7(q)$ and $E_8(q)$ can be dealt using unipotent elements again, as follows. The $F_4 <E_8$ with $p=3$ contains a unipotent element $u$ in the class $E_8(b_4)$ (see \cite{CST}), and $|C_S(u)| < 2q^{18}$. The $D_4<E_7$ (resp. $A_1<E_8$) lies in a subsystem $A_7$ (resp. $A_4A_4$), hence a regular unipotent element  $u$ in the $D_4$ (resp. $A_1$) lies in the class $A_6$ (resp. $E_8(a_7)$ -- see \cite{Lawunip}), and $|C_S(u)| < q^{19}$ (resp. $120q^{40}$). In all cases, (\ref{csgbd}) holds.
		
		It remains to deal with the case where $G = \,^3\!D_4(q)$ and $M$ is one of the subgroups $G_2(q)$ or $\PGL_3^\eps(q)$ (with $q \equiv \eps \hbox{ mod }3$ and $q>2$ for the latter). We use semisimple elements here; centralizers of such elements in $G$ are given in \cite[Table II]{kleidman1988maximal_3D4}. An element $g \in G_2(q)$ of order $q^2+q+1$ (lying in a subsystem $\SL_3(q)$) is regular in $D_4$, and so is an element of order $q^2-1$ in $\PGL_3^\eps(q)$. Hence (\ref{csgbd}) holds in these cases, completing the proof.
	\end{proof}
	
	\subsection{Subfield and twisted subgroups}
	
	\begin{lemma}\label{max4} Theorem \ref{excep} holds in the case where $M$ is as in part (4) of Theorem \ref{maxexcep}.
	\end{lemma}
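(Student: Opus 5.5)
Here $M = N_G(G(q_0))$ is a subfield subgroup with $q = q_0^r$ for a prime $r$, or one of the twisted subgroups $^2\!E_6(q^{1/2}) < E_6(q)$, $^2\!F_4(q) < F_4(q)$, $^2\!G_2(q) < G_2(q)$. The plan is to use elements that are regular in $\bar G$ but lie inside $M_0 = \bar M^F\cap G$, and then prove the centralizer bound (\ref{csgbd}). The index is huge: for a subfield subgroup, $|G:M| \ge c\,q^{\dim \bar G(1-1/r)}/\log q$, so $|G:M|^{1/3}$ is roughly at least $q^{\dim \bar G/6}$. A regular element has centralizer of order about $q^{\mathrm{rank}}$, and $\mathrm{rank} \ll \dim\bar G/6$. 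The same holds for the twisted subgroups, whose index is roughly $q^{\dim\bar G/2}$ or more.

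First choice: a regular unipotent element $u$ of $M_0$. For $M_0 = G(q_0)$ it is regular unipotent in $\bar G$ as well. For the twisted cases it is regular too: $^2\!E_6 < E_6$ contains regular unipotents of $E_6$ (its $F_4$-type fixed points contain them), and the same holds for $^2\!F_4 < F_4$ and $^2\!G_2 < G_2$ (e.g. via \cite{Lawunip}). Then \cite[Tables 22.2.1--6]{liebeck_seitz_2012unipotent} gives $|C_G(u)| \le (60,p^2)q^{r}$, with $r = \mathrm{rank}(\bar G)\le 8$, as in the proof of Lemma \ref{l:centralizer_simplegroup_13}.

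Next, bound $|M| \le |G(q_0)|\cdot|\Out|$ with $|\Out| \le 6\log_p q$ crudely, and $|G(q_0)| < q_0^{\dim\bar G}$. Then check
\[
\bigl((60,p^2)q^{r}\bigr)^3 < \frac{|G|}{|M|}
\]
family by family, using $q\ge q_0^2$. For example, for $E_8$ this needs $q^{24}$ against roughly $q^{124}$; for $G_2$, $q^{6}$ against roughly $q^{7}/(6\log q)$ with $q=q_0^2$. The twisted case $^2\!G_2(q)<G_2(q)$ has index about $q^{7}$, and $^2\!B_2(q)<\,^2\!B_2(q^r)$ requires $r\ge 3$.

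The main obstacle is the smallest ranks with small $q$. These are $G_2(q)$ with $q=q_0^2$ and constants $3$ or $(60,p^2)$ for $p=2,3$; $^2\!B_2$, $^2\!G_2$, $^3\!D_4(q)\ge\,^3\!D_4(q_0)$, and $^2\!F_4(2)$-related cases. Here unipotent centralizers such as $3q^2$ or $4q^2$ may not be small enough. For these I would instead take a regular semisimple $g$ in a cyclic torus of $M_0$ of maximal order, e.g. of order $q_0^2\pm q_0+1$ in $G_2(q_0)$. Such a $g$ has $|C_G(g)|$ equal to a maximal torus order of $G$ containing it, which is about $q^{2}$, and this comfortably beats the index. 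Any remaining tiny cases (e.g. $G_2(4)\ge G_2(2)'$, $^2\!B_2(8^3)$) I would settle with \cite{atlas} and GAP, giving $\fp(g,G/M)<|G:M|^{1/3}$ as required.
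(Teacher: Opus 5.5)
Your proposal is correct and is essentially the paper's argument: a regular unipotent element of $M$ is regular in $G$, its centralizer order is read off from Liebeck--Seitz, and (\ref{csgbd}) then holds except in a few small cases, which the paper identifies as $G_2(4)$, $G_2(9)$ and ${}^2\!B_2(8)$. The only difference is how those exceptions are settled. The paper uses the sharper estimate $\fp(u)\le |C_G(u)|\,i(M)/|M|$, where $i(M)$ is the number of elements of $M$ of order $|u|$; you instead switch to a regular semisimple element of a cyclic torus of $M_0$, which also works (for example, an element of order $7$ in $G_2(2)$ has centralizer of order $21$ in $G_2(4)$, and an element of order $5$ in ${}^2\!B_2(2)$ has centralizer of order $5$ in ${}^2\!B_2(8)$).
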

	
	\begin{proof}
		Recall that the maximal subgroups $M$ in (4) of Theorem \ref{maxexcep} are the normalizers of  subfield subgroups $G(q_0)$ (where $\F_{q_0} \subset \F_q$), or of twisted subgroups $^2\!E_6(q^{1/2}) < E_6(q)$, $^2\!F_4(q) < F_4(q)$ and $^2\!G_2(q) < G_2(q)$. In all cases, $M$ contains a regular unipotent element $u$ of $G$, and the values of $|C_G(u)|$ are given in \cite[Chap.22]{liebeck_seitz_2012unipotent} (also \cite{Spalt, suzuki1962class} for $G$ of type $^3\!D_4$, $^2\!B_2$). We list upper bounds for these values in Table \ref{regu}. In all but three cases, (\ref{csgbd}) holds; the exceptional case are $G = G_2(4)$, $G_2(9)$ and $^2\!B_2(8)$. For these cases we use the bound $\hbox{fix}(u) \le |C_G(u)|\cdot u(M)/|M|$ (where $u(M)$ is the number of elements of $M$ of the same order as $u$) to obtain the conclusion. 
	\end{proof}

	\begin{table}[h!]
		\caption{Regular unipotent elements}\label{regu}
		\[
		\begin{array}{l|lllllllll}
			\hline
			G & E_8(q) & E_7(q) & E_6^\eps(q) & F_4(q) & G_2(q) & ^2\!F_4(q) & ^2\!G_2(q) & ^3\!D_4(q) & ^2\!B_2(q) \\  
			\hline
			|C_G(u)|\le & 4q^8 & 4q^7 & 3q^6 & 4q^4 & 3q^2 & 4q^2 & 3q & 2q^4 & 2q \\
		\end{array}
		\]
	\end{table}
	
	\subsection{Exotic locals}
	
	\begin{lemma}\label{max5} Theorem \ref{excep} holds in the case where $M$ is as in part (5) of Theorem \ref{maxexcep}.
	\end{lemma}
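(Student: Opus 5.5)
The plan is to find, for each subgroup $M$ in part (5) of Theorem \ref{maxexcep}, an element $g \in M$ whose centralizer order satisfies (\ref{csgbd}), namely $|C_G(g)| < |G:M|^{1/3}$. These subgroups are tiny compared with $G$: each is a fixed finite group, or has order polynomial in $p$ of low degree via the torus-like normal subgroup. So $|G:M|^{1/3}$ is roughly $q^{\dim \bar G/3}$, which is $q^{14/3}$ for $G_2$, $q^{52/3}$ for $F_4$, $q^{26}$ for $E_6$, and $q^{248/3}$ for $E_8$. It therefore suffices to find $g\in M$ with $\dim C_{\bar G}(g)$ comfortably below $\dim \bar G/3$, or with an explicitly small $|C_G(g)|$.

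I will take $g$ to be an element of the top group $\SL_3(r)$ or $\SL_5(2)$, or of the Borovik subgroup, chosen so that it acts fixed-point-freely (or nearly so) on the normal $r$-subgroup $R$. Typical choices are an element of order $13$ in $\SL_3(3)$, of order $31$ in $\SL_5(2)$, of order $31$ in $\SL_3(5)$, and of order $7$ in $\SL_3(2)$. Such an element is semisimple in $\bar G$ when its order is prime to $p$. I will then bound $\dim C_{L(\bar G)}(g)$ using the known decomposition of $L(\bar G)$ restricted to $R$: for the exotic locals, $L(\bar G)$ decomposes as a sum of root-space-like eigenspaces of $R$ indexed by the nontrivial characters of $R$, and these are permuted by the top group. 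For example, $L(E_8)$ restricted to $5^3$ is the sum over the $124$ nontrivial characters, each with multiplicity $2$, and an element of order $31$ permutes these in $4$ orbits, giving $\dim C_{L(E_8)}(g) \le 8$. Similarly, for $2^{5+10}$, $L(E_8)$ splits as $31$ Cartan subalgebras of dimension $8$ permuted by the order-$31$ element, giving $\dim C\le 8$. For $3^{3+3}.\SL_3(3)<E_6$ and $3^3.\SL_3(3)<F_4$, the $78$ or $52$ dimensions are spread over $26$ nontrivial characters of $3^3$ with multiplicity $3$ or $2$, and an element of order $13$ gives $\dim C \le 6$ or $4$. For $2^3.\SL_3(2)<G_2$, an element of order $7$ permutes the $7$ Cartan subalgebras of dimension $2$, giving $\dim C\le 2$. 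In each case $g$ is regular or nearly regular, so $|C_G(g)|\le (q+1)^{\mathrm{rank}}\cdot$(small factor), which is far below $|G:M|^{1/3}$.

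For the Borovik subgroup $(Alt_5\times Alt_6).2^2<E_8(q)$, I will take $g=g_1g_2$ with $g_1\in Alt_5$ of order $5$ and $g_2\in Alt_6$ of order $5$ or $4$. Using the known restriction of $L(E_8)$ to $Alt_5\times Alt_6$ (or simply the fact that the subgroup lies in a subsystem $A_4A_4$ with $g$ regular semisimple in one factor), I will bound $\dim C_{\bar G}(g)\le 48$ or so, which already suffices since $|M|$ is bounded. When $p$ divides $|g|$, I will switch to an element of coprime order.

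The main obstacle is the small-$q$ cases, notably $G_2(p)$ with $p=3,5$ and $E_6^\eps(p)$ with small $p$. There the bound $(q+1)^{\mathrm{rank}}$ versus $|G:M|^{1/3}$ may be tight, and $g$ might fail to be regular when $p$ divides the element order or the multiplicity count is off. For these I would compute directly: use the Atlas character tables (for example $G_2(3)$, $G_2(5)$, ${}^2\!E_6(2)$) with GAP, or apply (\ref{fpbd}) with $i_r(M)$ to get the exact or bounded fixed-point count.
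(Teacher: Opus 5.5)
Your argument for the exotic locals is correct and is essentially the paper's. You choose the same elements, of order $h+1=7,13,13,31$. Your orbit count on the character spaces in $L(\bar G)$ of the normal elementary abelian subgroup is exactly the regularity statement that the paper quotes from \cite[Sect. 4.3]{KostTiep}. The count also covers $p=h+1$ (e.g.\ $G_2(7)$, $F_4(13)$, $E_8(31)$), where $g$ is unipotent rather than semisimple. A cyclic permutation of $|g|$ subspaces of dimension $m$ has an $m$-dimensional fixed space in any characteristic, so $g$ is regular unipotent and the bounds of Table \ref{regu} apply. The small-$q$ cases are not a real obstacle: in $G_2(3)$ the element of order $7$ has $|C_G(g)|=7$, while $|G:M|^{1/3}=3159^{1/3}>14$.

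The gap is the Borovik subgroup. Your parenthetical justification, that $(Alt_5\times Alt_6).2^2$ lies in a subsystem subgroup $A_4A_4$, is false. This subgroup lies in no proper closed positive-dimensional subgroup of $\bar G=E_8$. What is true is only that one of its elements of order $5$ has centralizer $A_4A_4$. Your other route, via the restriction of $L(E_8)$ to $Alt_5\times Alt_6$, is not carried out, so nothing in the proposal controls $C_{\bar G}(g_1g_2)$ for your diagonal element. This matters because (\ref{csgbd}) needs roughly $\dim C_{\bar G}(g)<248/3$. The order of $g$ alone does not give this, since $E_8$ contains elements of order $5$ with centralizer $E_7T_1$, of dimension $134$. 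The missing input is the one the paper uses: the proof of \cite[Lemma 3.5]{LSGeomDed} exhibits an element $g\in M$ of order $5$ with $C_{\bar G}(g)=A_4A_4$. Then $\dim C_{\bar G}(g)=48$, and (\ref{csgbd}) follows because $|M|=86400$ is bounded.
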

	
	\begin{proof}
		In this case $M$ is one of the `exotic local' subgroups listed in part (5) of Theorem \ref{maxexcep}, or the Borovik subgroup $(Alt_5 \times Alt_6).2^2 < E_8(q)$. For the exotic locals, it is known (see \cite[Sect. 4.3]{KostTiep}) that each of them possesses an element $g$ of prime order $h+1 = 7$, 13, 13, 31 (for $\bar G = G_2$, $F_4$, $E_6$, $E_8$ resp.) such that $g$ is a regular element of $G$, and hence (\ref{csgbd}) holds. Finally, if $M$ is the Borovik subgroup, the proof of \cite[Lemma 3.5]{LSGeomDed} shows that $M$ contains an element $g$ of order 5 with $E_8$-centralizer $A_4A_4$, and so again (\ref{csgbd}) holds.
	\end{proof}
	
	\subsection{Almost simple subgroups in class ${\mathcal U}$}
	
	To complete the proof of Theorem \ref{excep}, it remains to handle the case where $M$ is as in (6) of Theorem \ref{maxexcep} -- that is, $M$ is in the class ${\mathcal U}$ of almost simple maximal subgroups, not occurring in items (1)-(5). These were subdivided into the classes 
	${\mathcal U}_{p,1}$, ${\mathcal U}_{p,2}$ and ${\mathcal U}_{ASp}$, and we shall deal with these in turn.
	
	\begin{lemma}\label{max6p1} Theorem \ref{excep} holds in the case where $M$ is the class ${\mathcal U}_{p,1}$ of Theorem \ref{maxexcep}(6).
	\end{lemma}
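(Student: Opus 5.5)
We follow the pattern of the earlier lemmas: exhibit $g\in M$ with $|C_G(g)|<|G:M|^{1/3}$, so that \eqref{csgbd} and \Cref{l:fpr_enough}(i) give the conclusion. Here $M$ is almost simple with socle $M_0\in{\rm Lie}(p)$, $M_0\neq\PSL_2(p^a)$, and $M_0$ is not of the types in (1)--(5). First we record a bound on $|M|$. For $G\neq E_7(q),E_8(q)$ the class ${\mathcal U}$ is explicitly known from \cite{craven2023maximal, kleidman1988maximal_3D4, kleidman1988maximalG_2, malle1991maximal2F4}, and we can treat those groups case by case. For $E_7(q)$ and $E_8(q)$ we use \cite{Crmem1, Crmem2}. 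These show that a subgroup in ${\mathcal U}_{p,1}$ is defined over a field $\F_{q_0}$ with $q_0$ small relative to $q$, and has small untwisted rank; in particular $|M|\le |\Aut(M_0)|$ is at most roughly $q^{c}$ with $c$ far below $\dim\bar G$. Concretely, the rank of $M_0$ is at most half that of $\bar G$ and $q_0\le q$, so $|M|<q^{\dim\bar G/2}$ or better. As a result, $|G:M|^{1/3}$ exceeds roughly $q^{\dim\bar G/6}$.

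Next we choose $g$ to be a regular unipotent element $u$ of $M_0$. By \cite{liebeck_seitz_2012unipotent}, or by Lawther's tables \cite{Lawunip}, the $\bar G$-class of $u$ is then a distinguished, or at least large, unipotent class. Its centralizer dimension in $\bar G$ is small, say at most about $\dim\bar G/6$ minus a margin. We would then read $|C_G(u)|$ off \cite[Chap.22]{liebeck_seitz_2012unipotent} as $cq^{d}$ with $c\le 120$, and compare this with the bound from the first paragraph, exactly as in Table~\ref{unip3}.

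The main obstacle is the $\bar G$-class of $u$ when $p$ is small: in small characteristic a regular unipotent of $M_0$ can lie in a smaller class with a larger centralizer. Our first remedy is to replace $u$ by a regular semisimple element of $M_0$ of ppd order $q_{0,m}$, for $m$ the largest degree in the order of $M_0$. We would then bound $\dim C_{\bar G}(g)$ via the restriction $L(\bar G)\downarrow M_0$, which is known from Craven's work. For the finitely many small-$q$ cases that remain (e.g.\ $q\le 4$, or when the restriction is unavailable), we would use the sharper bound \eqref{fpbd} or GAP computations with the character tables in \cite{atlas}.
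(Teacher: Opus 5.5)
Your fallback (a semisimple element of $M_0$ of primitive-prime-divisor order, with $\dim C_{\bar G}(g)$ read off from Craven's restrictions of $L(\bar G)$) is exactly the paper's argument. However, the proposal omits the input that makes it executable, and the remaining steps do not stand on their own. What you need from \cite{Crmem1} is not a qualitative ``small field, small rank'' statement but its actual conclusion (Theorems 1.1, 1.2 there). That conclusion is: ${\mathcal U}_{p,1}$ is empty unless $G=E_8(q)$, and then $M_0$ is one of $\PSL_3(3)$, $\PSL_3(4)$, $\PSU_3(3)$, $\PSU_3(4)$, $\PSU_3(8)$, $\PSU_4(2)$, with the possible restrictions $L(E_8)\downarrow M_0$ listed in \cite[Props.~7.2, 8.1, 8.2]{Crmem1}. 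The restriction is ``known'' only because this list is finite and explicit. Without it you have no control of $\dim C_{\bar G}(g)$, and nothing justifies your assertion that only finitely many small-$q$ cases survive the generic argument. With the list the proof is a direct check: elements of orders $13,7,7,13,19,5$ have $\dim C_{L(E_8)}(g)=20,38,38,20,14,48$. Since $|M|$ is bounded by an absolute constant, $|G:M|^{1/3}$ is about $q^{248/3}$, so \eqref{csgbd} holds.

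The other steps have concrete problems.

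\textbf{The regular unipotent route is unsupported.} \cite{Lawunip} locates unipotent classes of positive-dimensional maximal subgroups of $\bar G$. But $M_0$ does not arise from such subgroups; that is exactly what puts $M$ in ${\mathcal U}$ rather than in (1)--(3). So nothing tells you which $\bar G$-class $u$ lies in. Every candidate is defined over a field of order at most $8$ in characteristic $2$ or $3$, so $u$ has very small order (e.g.\ $4$ for $\PSL_3(4)$, $\PSU_3(4)$, $\PSU_3(8)$). Elements of such small order can lie in classes such as $A_3$ (order $4$ when $p=2$), whose centralizer has dimension $100$, above the threshold of roughly $82$. The ``obstacle'' you flag is therefore the typical situation here, not an exception.

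\textbf{Your threshold is too strict.} The crude estimate $|M|<q^{\dim\bar G/2}$ forces you to demand $\dim C_{\bar G}(g)$ below roughly $\dim\bar G/6\approx 41$ for $E_8$. That would reject the order-$5$ element that works for $\PSU_4(2)$, which has dimension $48$. Moreover, for $\PSU_4(2)$ your ppd recipe offers nothing at the top degree, since $2^6-1$ is the Zsigmondy exception.

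\textbf{$q_0\le q$ is not automatic.} A group of Lie type over $\F_{q_0}$ can embed in one over a smaller field, as with $\PSL_2(q^2)\cong\POm_4^-(q)$.
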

	
	\begin{proof}
		Let $M_0$ be the socle of $M$, so that $M_0 \in Lie(p)$ and $M_0 \not \cong \PSL_2(p^a)$. By \cite[Thms. 1.1,1.2]{Crmem1}, we have $G = E_8(q)$ and $M_0$ is one of the following groups:
		\[
		\PSL_3(3),\, \PSL_3(4),\, \PSU_3(3),\, \PSU_3(4),\, \PSU_3(8),\,\PSU_4(2).
		\]
		The possible restrictions $L(E_8) \downarrow M_0$ are given by \cite[Props. 7.2,8.1,8.2]{Crmem1}. Using these, we choose an element $g \in M_0$ of order as in the following table, and compute $\dim C_{L(E_8)}(g)$:
		\[
		\begin{array}{l|llllll}
			M_0 & \PSL_3(3) & \PSL_3(4) & \PSU_3(3) & \PSU_3(4) & \PSU_3(8) & \PSU_4(2) \\
			\hline
			\hbox{order of }g & 13 & 7 & 7 & 13 & 19 & 5 \\
			\hline
			\dim C_{L(E_8)}(g) & 20 & 38 & 38 & 20 & 14 & 48 
		\end{array}
		\]
		Hence in all cases, (\ref{csgbd}) holds.
	\end{proof}
	
	\begin{lemma}\label{max6p2} Theorem \ref{excep} holds in the case where $M$ is the class ${\mathcal U}_{p,2}$ of Theorem \ref{maxexcep}(6).
	\end{lemma}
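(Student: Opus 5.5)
By Theorem \ref{maxexcep}(6)(b), here $M$ is almost simple with socle $M_0\cong \PSL_2(p^a)$, where $p$ is the defining characteristic of $G$. My plan is to find $g\in M_0$ satisfying (\ref{csgbd}), namely $|C_G(g)|<|G:M|^{1/3}$. Since $|M|\le |\PSL_2(p^a)|\cdot 2a$ is tiny compared with $|G|$, it suffices to find $g\in M_0$ with $\dim C_{L(\bar G)}(g)$ at most roughly $\frac13\dim \bar G$, up to a small margin. Here $\dim C_{\bar G}(g)\le \dim C_{L(\bar G)}(g)$, and $|C_G(g)|$ is then at most a small constant times $q^{\dim C_{\bar G}(g)}$.

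\textbf{Step 1 (the possibilities for $M_0$).} I would use the known lists to pin down $(G,p,a)$. For $G\ne E_7(q),E_8(q)$ the class $\mathcal U$ is known (\cite{craven2023maximal, kleidman1988maximalG_2, kleidman1988maximal_3D4, malle1991maximal2F4}). For $E_7,E_8$ the restrictions in \cite{Crmem2} apply. The outcome I expect is that either $p$ is large relative to the Coxeter number $h$ of $\bar G$ and $M_0$ is the $F$-fixed points of a maximal $A_1$ containing a regular unipotent element (e.g.\ $G_2$ with $p\ge 7$, $F_4$ with $p\ge 13$, $E_7$ with $p\ge 17,19$, $E_8$ with $p\ge 23,29,31$), or $p^a$ lies in a short explicit finite list. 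In the first case, \cite{Crmem2} also shows that $M_0$ stabilizes the same subspaces of $L(\bar G)$ as the algebraic $A_1$. Consequently the composition factors of $L(\bar G)\downarrow M_0$ are the restricted modules $L(2)\oplus\cdots$ with highest weights $2e_i$, where the $e_i$ are the exponents of $\bar G$.

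\textbf{Step 2 (the generic case).} Take $g=u$, a regular unipotent element of $M_0$. Every highest weight of $L(\bar G)\downarrow A_1$ is less than $p$, so each summand is a single Jordan block of $u$. Hence $\dim C_{L(\bar G)}(u)=\mathrm{rank}(\bar G)$, and by \cite{Lawunip} the element $u$ is regular unipotent in $\bar G$. Table \ref{regu} then gives $|C_G(u)|\le 4q^8$, and (\ref{csgbd}) follows at once, as in Lemma \ref{max4}. This holds even if $p^a<q$, because the bound depends only on $q$ while $|M|$ only gets smaller.

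\textbf{Step 3 (the bounded cases, main obstacle).} For the finitely many small $(p^a,\bar G)$ in which $M_0$ need not behave like a generic algebraic $A_1$ (notably in $E_7$, $E_8$, where $\mathcal U$ is not fully classified), I would use the possible module structures of $L(\bar G)\downarrow M_0$ listed in \cite{Crmem2}. From these I would pick a semisimple $g\in M_0$ of order $(p^a\pm1)/(2,p-1)$ and compute $\dim C_{L(\bar G)}(g)$ by counting weights congruent to $0$ modulo $|g|$. I would then check $|C_G(g)|<|G:M|^{1/3}$ against the centralizer lists \cite{mizuno, DL, liebeck_seitz_2012unipotent}, as in Lemma \ref{max6p1}. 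If a case is tight, I would fall back on (\ref{fixform}) with the count $|g^G\cap M|\le |M|$, or on GAP when $q$ is tiny. I expect this step to be the main difficulty, since it relies on case-by-case knowledge of the possible restrictions for small $p^a$ in $E_7$ and $E_8$.
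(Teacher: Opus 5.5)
\textbf{There is a genuine gap: the $E_8$ case, which is the heart of this lemma.}

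\textbf{Step 2 is misdirected.} The $F$-fixed points of maximal $A_1$'s are not in ${\mathcal U}_{p,2}$ at all. They are the $A_1$ rows of \cite[Table 3]{LS3}, i.e.\ item (3) of Theorem \ref{maxexcep}, and are already handled in Lemma \ref{max3}. Your claim there that every weight $2e_i$ is below $p$ is also false: for example $2e_i=58>31$ in $E_8$.

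What genuinely remains in ${\mathcal U}_{p,2}$ is the following.
\begin{itemize}
\item $E_7(q)$ with $p^a\in\{7,8\}$. Here Step 3 can be carried out, because the possible restrictions $L(E_7)\downarrow M_0$ are in \cite{craven2022E7}. The paper uses an element of order $7$.
\item $E_8(q)$. Here ${\mathcal U}$ is not classified, and the only input is \cite[Thm. 6]{LS1}: $p^a\le (2,p-1)t(E_8)=2624$.
\end{itemize}
The $E_8$ range is not a short list, and no catalogue of possible restrictions $L(E_8)\downarrow \PSL_2(p^a)$ is available across it. So you cannot compute $\dim C_{L(\bar G)}(g)$ by weight counting. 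Your fallback does not help either: using $|g^G\cap M|\le |M|$ in (\ref{fixform}) just returns $\fp(g,G/M)\le |C_G(g)|$.

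\textbf{The missing idea is a contradiction argument that needs no knowledge of the restriction.} Let $q_0=p^a\notin\{4,5,9\}$ (those socles are alternating and are treated with ${\mathcal U}_{ASp'}$), and take $g\in M_0$ of order $r=(q_0+1)/(2,p-1)$.
\begin{itemize}
\item If (\ref{csgbd}) fails, then $|C_G(g)|\ge (|E_8(q)|/|\mathrm{P\Gamma L}_2(q_0)|)^{1/3}$.
\item The semisimple centralizers in $E_8(q)$ \cite{Deriz} then force $C_{\bar G}(g)$ to be $E_7T_1$, $D_7T_1$ or $E_6A_1T_1$. The type $E_6T_2$ gives $q_0\le q^2$, which is incompatible with the inequality.
\item So $g$ lies in the central $1$-dimensional torus $T_1=\{T(c)\}$, whose only eigenvalues on $L(E_8)$ are $1,c^{\pm1},c^{\pm2}$.
\item For $r\ge 5$ these eigenvalues are distinct, so $g$ and $T_1$ stabilize exactly the same subspaces of $L(E_8)$. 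Hence $M_0$ stabilizes the same subspaces as the positive-dimensional group $\langle M_0,T_1\rangle$.
\item The proof of \cite[Thm. 6]{LS1} then puts $M$ in item (2) or (3) of Theorem \ref{maxexcep}, a contradiction.
\end{itemize}

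The leftover case $r=4$ (so $q_0=p=7$) needs a separate count.
\begin{itemize}
\item First show $C_{L(E_8)}(M_0)=0$; otherwise $M$ lies in a positive-dimensional subgroup.
\item Let $a,b,c,d$ be the multiplicities of the composition factors $V_1,V_3,V_5,V_7$. Via $\dim H^1(M_0,V_i)$, the previous point gives $a-c-d\le 0$.
\item Combine this with $a+3b+5c+7d=248$ and $a+b+c+d=\dim C_{L(E_8)}(g)\in\{134,92,82\}$. This forces $a+b+c+d=82$ and $d\le 1$.
\item An element of order $3$ in $M_0$ then has fixed space of dimension $a+b+c+3d=82+2d$. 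This is not among the possible values $80,86,92,134$, a contradiction.
\end{itemize}
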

	
	\begin{proof}
		In this case the socle of $M$ is $M_0 \cong \PSL_2(p^a)$. By \cite[Thm. 1.1]{Crmem2} and \cite[Thm. 1.1]{craven2022E7} (for $G \ne E_8(q)$), and \cite[Thm. 6]{LS1} (for $G = E_8(q)$), one of the following holds:
		\begin{itemize}
			\item[(i)] $G = E_7(q)$ and $M_0 = \PSL_2(p^a)$ with $p^a = 7$ or 8;
			\item[(ii)] $G = E_8(q)$ and $M_0 = \PSL_2(p^a)$ with $p^a \le (2,p-1)\cdot t(E_8)$, where $t(E_8)$ is a constant defined in terms of the root system of $E_8$; and in fact $t(E_8) = 1312$ by \cite{Law}.
		\end{itemize}
		
		In case (i), the possibilities for the restriction $L(E_7) \downarrow M_0$ are given in \cite[Sect. 6]{craven2022E7}. From this, we see that for $p^a=7$, an element $g \in M_0$ of order 7 has Jordan block sizes $7^{19}$ or $7^{17},5,3^3$, hence by \cite{Lawunip} is in one of the unipotent classes labelled $A_6$ or $E_7(a_5)$, and satisfies $|C_G(g)| \le 6q^{21}$; so (\ref{csgbd}) holds. And for $p^a=8$, an element $g \in M_0$ of order 7 satisfies $\dim C_{L(E_7)}(g) \le 18$, and again (\ref{csgbd}) holds.
		
		Now consider case (ii). Write $q_0 = p^a$, and assume that $q_0 \ne 4,5,9$ (we shall deal with these as alternating groups in the next lemma). Let $g \in M_0$ be an element of order $r:= (q_0+1)/(2,p-1)$, and note that $r \ge 4$. If (\ref{csgbd}) holds then we are done, so suppose that (\ref{csgbd}) fails, so that 
		\begin{equation}\label{cg}
			|C_G(g)| \ge |G:M|^{1/3} \ge \left(|E_8(q)|/|{\mathrm{P\Gamma L}}_2(q_0)|\right)^{1/3}.
		\end{equation}
		Now $r$ must divide the order of a maximal torus of $G$, which implies that $q_0 \le q^8$. From (\ref{cg}) and inspection of semisimple element centralizers in $E_8(q)$ (see \cite{Deriz}), it follows that $C_{\bar G}(g) = E_7T_1$, $D_7T_1$, $E_6A_1T_1$ or $E_6T_2$, where $T_i$ denotes a torus of rank $i$. In the $E_6T_2$ case we have $g \in T_2$ and so $q_0 \le q^2$; but then (\ref{cg}) does not hold. Hence 
		\begin{equation}\label{csgbar}
			C_{\bar G}(g) = E_7T_1, \,D_7T_1 \hbox{ or }E_6A_1T_1,
		\end{equation}
		and also $q_0 \le q$ (as $g \in T_1$). Writing $T_1 = \{T(c) : c \in K^*\}$ (where $K = \bar \F_q$), we see from the restrictions of $L(E_8)$ to the subsystem subgroups $E_7A_1$, $E_6A_1$ and $D_7T_1$ given in \cite[11.2,11.3]{liebeck_seitz_2012unipotent} that the eigenvalues of $T(c)$ on $L(E_8)$ are as follows:
		\[
		\begin{array}{l|l}
			C_{\bar G}(g) & \hbox{eigenvalues of }T(c) \\
			\hline
			E_7T_1 & 1^{134},(c^{\pm 1})^{56},(c^{\pm 2})^1 \\
			D_7T_1 & 1^{92},(c^{\pm 1})^{64},(c^{\pm 2})^{14} \\
			E_6A_1T_1 & 1^{82},(c^{\pm 1})^{56},(c^{\pm 2})^7 \\
			\hline
		\end{array}
		\]
		We have $g = T(c)$ for some $c$, and also $g$ has order $r = (q_0+1)/(2,p-1) \ge 4$. If $r \ge 5$, then the eigenvalues $c,c^{-1},c^2,c^{-2}$ are all distinct, so $g$ stabilizes precisely the same subspaces of $L(E_8)$ as the torus $T_1$. Hence $M_0$ stabilizes the same subspaces as the positive-dimensional subgroup $\langle M_0,T_1\rangle$ of $\bar S$. At this point the proof of \cite[Thm. 6]{LS1} shows that $M$ is as in (2) or (3) of Theorem \ref{maxexcep}, contrary to the fact that $M$ is in the class ${\mathcal U}$.
		
		It remains to consider the case where $r = 4$. Here $q_0 = p = 7$, and we give a different argument. 
		Let $W = C_{L(E_8)}(M_0)$, the fixed point space of $M_0$ on $L(E_8)$. If $\dim W \ge 2$, then $M = \PSL_2(7)$ or $\PGL_2(7)$ fixes a nonzero vector in $W$, and so by \cite[Prop. 4.5]{Crmem2}, $M \le N(\bar M^F)$ 
		for some positive dimensional proper connected subgroup $\bar M$ of $\bar G$, a contradiction (since $M$ is in the class ${\mathcal U}$). The same contradiction applies if $\dim W = 1$, since then the stabilizer in $\bar G$ of $W$ is a positive dimensional subgroup containing $M_0$ which is invariant under all automorphisms normalizing $M_0$. 
		Hence we suppose that 
		\begin{equation}\label{dimw}
			W = C_{L(E_8)}(M_0) =0,
		\end{equation}
		and aim for a contradiction.
		
		We can label the irreducible modules in characteristic 7 for $M_0 = \PSL_2(7)$ as $V_1,V_3,V_5,V_7$, where $V_i$ has dimension $i$ and is the $i-1^{th}$ symmetric power $S^i(V_2)$ of the natural module for $\SL_2(7)$. Let 
		\[
		L(E_8) \downarrow M_0 = V_1^a/V_3^b/V_5^c/V_7^d,
		\]
		meaning that the restriction has the composition factors $V_1,V_3,V_5,V_7$ with multiplicities $a,b,c,d$ respectively. From \cite{AJL}, we have $\dim H^1(M_0,V_i)$ equal to 0 for $i=1,3$ and equal to 1 for $i=5,7$. From elementary considerations (see for example \cite[Prop. 3.6]{Litterick}), it follows that the fixed point space $W$ has dimension at least $a-c-d$, and hence by our assumption (\ref{dimw}), we have
		\begin{equation}\label{amcmd}
			a-c-d \le 0.
		\end{equation}
		We also have 
		\begin{equation}\label{abcd1}
			a+3b+5c+7d = 248,
		\end{equation}
		and (\ref{csgbar}) gives
		\begin{equation}\label{abcd2}
			a+b+c+d = 134,\,92 \hbox{ or }82.
		\end{equation}
		From (\ref{abcd1}) we have 
		\[
		3(a+b+c+d)  = 248 +2(a-c-d)-2d,
		\]
		and so it follows from (\ref{amcmd}) that $a+b+c+d = 82$, and also $(a-c-d,\,d) = (0,1)$ or $(-1,0)$. 
		
		Now consider an element $h \in M_0$ of order 3. Then $C_{\bar G}(h)$ is one of the subsystem subgroups $A_8$, $A_2E_6$, $D_7T_1$, $E_7T_1$, of dimensions 80, 86, 92, 134. Since the dimensions of $C_{V_i}(h)$ for $i=1,3,5,7$ are 1,1,1,3 respectively, it follows that 
		\[
		a+b+c+3d = 80, 86, 92 \hbox{ or } 134.
		\]
		This is not compatible with the equation $a+b+c+d=82$ and the fact that $d = 0$ or 1. This final contradiction completes the proof.
	\end{proof}
	
	\begin{lemma}\label{max6asp} Theorem \ref{excep} holds in the case where $M$ is the class ${\mathcal U}_{ASp'}$ of Theorem \ref{maxexcep}(6).
	\end{lemma}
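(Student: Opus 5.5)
The idea is to bound $|M|$ crudely and then find a single element $g \in M$ with $|C_G(g)| < |G:M|^{1/3}$, so that (\ref{csgbd}) holds. Here $M_0 = {\rm Soc}(M)$ is alternating, sporadic, or in ${\rm Lie}(p')$, and we include the groups $\PSL_2(4),\PSL_2(5),\PSL_2(9)$ left over from \Cref{max6p2}, viewed as $A_5,A_6$.

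\textbf{Step 1: reduce to a finite list.} For $G \ne E_7(q),E_8(q)$ the class ${\mathcal U}$ is known (\cite{craven2023maximal, kleidman1988maximal_3D4, kleidman1988maximalG_2, malle1991maximal2F4, suzuki1962class}). For $E_7,E_8$ we use \cite{Cralt, Litterick}, which give an explicit finite list of candidates for $M_0$, together with feasible composition-factor data for $L(\bar G)\downarrow M_0$ and, where relevant, for the minimal module. In every case $|M|$ is bounded by an absolute constant. Indeed, the non-generic subgroups satisfy $|M| \le |\Aut(M_0)|$, and the candidate $M_0$ have order at most that of, say, $\PSL_2(61)$ or $\PSU_3(16)$ in $E_8$. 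Hence $|G:M|^{1/3}$ is at least roughly $|G|^{1/3}/C$.

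\textbf{Step 2: pick $g$ of large prime order.} For each candidate $M_0$, choose $g\in M_0$ of the largest available prime order $r$; by \Cref{l:order_divisible_by_7} we may take $r\ge 7$ unless $M_0 \in\{A_5,A_6,\PSU_4(2)\}$. The known restriction $L(\bar G)\downarrow M_0$ from \cite{Litterick, Cralt} (or the Brauer character of $M_0$ when $r\ne p$) gives $\dim C_{L(\bar G)}(g)$. We then bound $|C_G(g)|$ by roughly $q^{\dim C_{\bar G}(g)}$ times a small constant, using that $C_{\bar G}(g)$ is reductive of that dimension, and read the actual centralizers off \cite{Deriz, mizuno, DL} or \cite{liebeck_seitz_2012unipotent} if $r=p$. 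For an element of prime order $r\ge 7$ in a subgroup with no trivial composition factors, the trace bound typically forces $\dim C_{L(\bar G)}(g)$ to be near $\dim L(\bar G)/r$ plus a small correction. This lies well below $\dim \bar G/3$ (for example $< 82$ for $E_8$, $< 44$ for $E_7$), which yields (\ref{csgbd}) for all $q$ beyond tiny exceptions.

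\textbf{Step 3: small cases.} The main obstacle is small $M_0$ such as $A_5$, $A_6$, $\PSL_2(q_0)$ with small $q_0$, and $\PSU_4(2)$ inside $E_8(q)$ for small $q$. Here a $5$-element may have centralizer dimension close to the threshold: for $A_5\le E_8$ the worst feasible cases have $\dim C(g)$ around $48$--$56$, still below $82$. When the dimension bound fails, or $q$ is so small that the constant factors matter (e.g.\ $q=2,3$), we first switch to an element of a different order, such as $3$ combined with $5$ via an element of order $15$ in $A_6$, or order $6$. Failing that, we use the exact formula (\ref{fixform}) together with the bound $|g^G\cap M|\le i_r(M)$, as in (\ref{fpbd}). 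For the smallest groups $G$ (such as $G_2(3)$, $G_2(4)$, $^3\!D_4(2)$, $^2\!F_4(2)'$, $F_4(2)$, $^2\!E_6(2)$) we instead read centralizer orders and permutation characters directly from \cite{atlas} or check them with GAP. I expect the bookkeeping over all feasible module decompositions for $A_5,A_6$ and small $\PSL_2$ in $E_7,E_8$ to be the delicate part. It should nonetheless succeed, because $|M|\le 1440$ there, so even $|C_G(g)|$ as large as $q^{60}$ in $E_8$ is acceptable.
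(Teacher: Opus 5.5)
\textbf{Same strategy as the paper, but one case is left open.} Your strategy matches the paper's: exhibit $g\in M_0$ satisfying (\ref{csgbd}) by reading $\dim C_{L(\bar G)}(g)$ off the known or feasible restrictions $L(\bar G)\downarrow M_0$, and fall back on permutation characters from \cite{atlas} in tiny cases such as $(G_2(4),J_2)$. The gap is the one case where your Step 2 has no input: $M_0\cong Alt_6$ in $E_8(q)$ with $p=3$.

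Neither \cite{Litterick} nor \cite{Cralt} analyses $L(E_8)\downarrow M_0$ in this case. So you cannot read off the $E_8$-class of any element. A priori, elements $x$ of order $5$ and $y$ of order $4$ could have centralizers of type $E_7T_1$, $D_7T_1$ or $A_1E_6T_1$, of dimension $134$, $92$ or $82$, which are too large for (\ref{csgbd}).

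Your fallbacks do not reach this case:
\begin{itemize}
\item $A_6$ has no elements of order $6$ or $15$; its element orders are $1,\dots,5$, and its $3$-elements are unipotent, so Brauer characters say nothing about them.
\item The bound $\fp(g)\le |C_G(g)|\,i_r(M)/|M|$ still requires knowing $|C_G(g)|$, which is exactly what is unknown.
\item Your figure $\dim C(g)\approx 48$--$56$ is simply asserted.
\end{itemize}
What is needed is an argument that \emph{some} element of $M_0$ has small centralizer without knowing the restriction. The paper attempts this as follows. It assumes $x$ and $y$ both violate (\ref{csgbd}). It then uses the trace tables of \cite{CG} to force $C_{E_8}(x)$ and $C_{E_8}(y)$ into the three types above, with $\chi(x)\in\{78+55\tau,\,28+50\tau,\,28+25\tau\}$ and $\chi(y)\in\{132,64,28\}$. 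Finally it solves for the multiplicities $a,b,c,d,e$ of the $3$-modular irreducibles of dimensions $1,3,3,4,9$.

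\textbf{The paper's own argument for this case is wrong as written.} At an element of order $5$, the five $3$-modular Brauer characters of $A_6$ take the values $1,\tau,1-\tau,-1,-1$. Item (2) of the paper's argument uses $1,\tau,\tau,-4,-9$ instead. With the correct values the system is consistent. Let $b$ be the multiplicity of the $3$-dimensional module taking value $\tau$ at $x$, and $c$ that of the other. Then $(a,b,c,d,e)=(79,55,0,1,0)$ gives $\chi(1)=248$, $\chi(x)=78+55\tau$ and $\chi(y)=132$, i.e.\ both $x$ and $y$ of type $E_7T_1$. So this case needs genuinely more input, for instance further constraints on $L(E_8)\downarrow M_0$ or further elements. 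In any event, nothing in your proposal addresses it.

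\textbf{Smaller points.}
\begin{itemize}
\item Your claim that the $E_8$ candidates have order at most about $|\PSL_2(61)|$ or $|\PSU_3(16)|$ is false, since $Th$, $\PSL_4(5)$ and $J_3$ occur. Only boundedness of $|M|$ is needed, so this is harmless.
\item Choosing the largest prime $r$ can give $r=p$, where Brauer characters do not determine the class of $g$. The paper deliberately uses $p'$-elements in such cases: order $5$ for $\PSL_2(16)$ with $p=17$, order $9$ for $\PSL_2(8)$ with $p=7$, and order $8$ for $\PSL_3(3)$ with $p=13$.
\item Your ``$\dim L/r$ plus a small correction'' heuristic is not a proof. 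The routine cases still require the case-by-case check the paper records in its tables.
\end{itemize}
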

	
	\begin{proof}
		By \cite{kleidman1988maximal_3D4, kleidman1988maximalG_2, suzuki1962class}, the class ${\mathcal U}_{ASp'}$ is empty when $S$ is of type $^2\!B_2$, $^2\!G_2$ or $^3\!D_4$. And from \cite{malle1991maximal2F4} together with \cite[4.11]{craven2023maximal}, for type $^2\!F_4$ the class ${\mathcal U}_{ASp'}$ is also empty unless $q=8$, in which case it contains a subgroup $\PGL_2(13)$; this subgroup has an element $g$ of order 13 which has centralizer of order 65, so (\ref{csgbd}) holds.
		
		For $G = G_2(q)$, $F_4(q)$ or $E_6^\eps(q)$, the subgroups in class ${\mathcal U}_{ASp'}$ are known (see \cite{kleidman1988maximalG_2, craven2023maximal}), as are their actions on the Lie algebra $L(\bar G)$. In Tables \ref{xg2} - \ref{xe6} we give the socles $M_0$ of these subgroups, together with the order of an element $g \in M_0$ for which $\dim C_{\bar G}(g)$ is as in the last column of the table. In all cases except for $(G,M_0) = (G_2(4), J_2)$, and recalling that semisimple and unipotent centralizer orders can be found in \cite{mizuno, DL, liebeck_seitz_2012unipotent}, it follows that (\ref{csgbd}) holds; in the exceptional case, the permutation character of the action is given in \cite[p.97]{atlas}, from which we see that $\fp(g) = 3$, giving the conclusion.
		
		\begin{table}[h!]
			\caption{Subgroups in class ${\mathcal U}_{ASp'}$ for $G = G_2(q)$}\label{xg2}
			\[
			\begin{array}{|l|l|l|l|}
				\hline
				M_0 & q & o(g) & \dim C_{\bar G}(g) \\
				\hline
				\PSL_2(8) & \ge 5 & 7 & 2 \\
				\PSL_2(13) & \ge 4 & 13 & 2 \\
				\PSL_3(3) & \ge 5 & 7 & 2 \\
				J_1 & 11 & 7 & 2 \\
				J_2 & 4 & 7 & 2 \\
				\hline
			\end{array}
			\]
		\end{table}
		
		\begin{table}[h!]
			\caption{Subgroups in class ${\mathcal U}_{ASp'}$ for $G = F_4(q)$}\label{xf4}
			\[
			\begin{array}{|l|l|l|l|}
				\hline
				M_0 & q & o(g) & \dim C_{\bar G}(g) \\
				\hline
				\PSL_2(8) & \ge 7 & 7 & 8+2\delta_{p,7} \\
				\PSL_2(13) & \ge 7 & 13 & 4 \\
				\PSL_2(17) & \ge 13 & 17 & 4 \\
				\PSL_2(25) & \ge 3 & 13 & 4 \\
				\PSL_2(27) & \ge 13 & 13 & 4 \\
				\PSL_4(3) & 2 & 13 & 4 \\
				^3\!D_4(2) & \ge 3 & 13 & 4 \\
				\hline
			\end{array}
			\]
		\end{table}
		
		\begin{table}[h!]
			\caption{Subgroups in class ${\mathcal U}_{ASp'}$ for $G = E_6^\eps(q)$}\label{xe6}
			\[
			\begin{array}{|l|l|l|l|}
				\hline
				M_0 & q & o(g) & \dim C_{\bar G}(g) \\
				\hline
				\PSL_2(8) & \ge 5 & 7 & 12 \\
				\PSL_2(11) & \ge 9 & 11 & 8 \\
				\PSL_2(13) & \ge 5 & 13 & 6 \\
				\PSL_2(19) & \ge 5 & 19 & 6 \\
				\Omega_7(3) & 2 & 13 & 6 \\
				^2\!F_4(2)' & \ge 3 & 13 & 6 \\
				M_{12} & 5 & 11 & 6 \\
				J_3 & 4 & 19 & 6 \\
				Fi_{22} & 2 & 13 & 6 \\
				\hline
			\end{array}
			\]
		\end{table}
		
		\begin{table}[h!]
			\caption{Subgroups in class ${\mathcal U}_{ASp'}$ for $G = E_7(q)$}\label{xe7}
			\[
			\begin{array}{|l|l|l|l|}
				\hline
				M_0 & q & o(g) & \dim C_{\bar G}(g) \\
				\hline
				Alt_6 & \ge 5 & 5 & 27 \\
				\PSL_2(7) & \ge 5 & 7 & 19 \\
				\PSL_2(13) & \ge 3 & 13 & 13 \\
				\PSL_2(19) & \ge 4 & 19 & 7 \\
				\PSL_2(27) & \ge 13 & 13 & \le 13 \\
				\PSL_2(29) & \ge 4 & 29 & 7 \\
				\PSL_2(37) & \ge 3 & 37 & 7 \\
				\PSU_3(3) & \ge 7 & 7 & \le 31 \\
				\PSU_3(8) & \ge 3 & 19\,(p\ne 19) & 7 \\
				& & 7\,(p=19) & 19 \\
				M_{12},M_{22},HS & 5 & 11 & 13 \\
				Ru & 5 & 13 & 13 \\
				J_3 & 4 & 19 & 6 \\
				Fi_{22} & 2 & 13 & 6 \\
				\hline
			\end{array}
			\]
		\end{table}

		\begin{table}[h!]
			\caption{Candidates for class ${\mathcal U}_{ASp'}$ for $G = E_8(q)$}\label{xe8}
			\[
			\begin{array}{|l|l|l|l|}
				\hline
				M_0 & p & o(g) & \dim C_{\bar G}(g) \\
				\hline
				\PSL_2(7) & \hbox{any} & 7 & 38 \\
				\PSL_2(8) & p\ne 7 & 7 & \le 40 \\
				&  p=7 & 9 & 28 \\
				\PSL_2(11) & \hbox{any} & 11 & 28 \\
				\PSL_2(16) & p\ne 17 & 17 & 16 \\
				&  p=17 & 5 & \le 52 \\
				\PSL_2(17) & \hbox{any} & 17 & 16 \\
				\PSL_2(19) & \hbox{any} & 19 & 14 \\
				\PSL_2(25) & p\ne 13 & 13 & 20 \\
				&  p=13 & 5 & 48 \\
				\PSL_2(29) & \hbox{any} & 29 & \le 10 \\
				\PSL_2(31) & \hbox{any} & 31 &  8 \\
				\PSL_2(32) & \hbox{any} & 31 &  8 \\
				\PSL_2(41) & \hbox{any} & 41 &  8 \\
				\PSL_2(49) & \hbox{any} & 7 &  38 \\
				\PSL_2(61) & \hbox{any} & 61 &  8 \\
				\PSL_3(3) & p\ne 13 & 13 & 20 \\
				&  p=13 & 8 & \le 34 \\
				\PSL_3(5) & \hbox{any} & 31 &  8 \\
				\PSL_4(5) & 2 & 31 &  8 \\
				M_{11} & 3,11 & 5 & 48 \\
				J_3 & 2 & 5 & 48 \\
				Th & 3 & 31 & 8 \\
				\hline
			\end{array}
			\]
		\end{table}

		Next consider $G = E_7(q)$. In this case, the subgroups in class ${\mathcal U}_{ASp'}$ are given in \cite{craven2022E7}, apart from two cases. In these cases, $M_0 = \PSL_2(7)$ or $Alt_6$, and the possible actions on $L(E_7)$ are given in \cite[Sect.6.1]{craven2022E7}, \cite[6.1]{Cralt} respectively. As in the previous cases, we give in Table \ref{xe7} the socles $M_0$ of the subgroups in ${\mathcal U}_{ASp'}$, together with the order of an element $g \in M_0$ for which $\dim C_{\bar G}(g)$ is as in the last column of the table; in all cases (\ref{csgbd}) holds.
		
		Finally, suppose that $S = E_8(q)$. In this case, the subgroups in class ${\mathcal U}_{ASp'}$ are not known, but all candidates can be found in \cite{Litterick}; also in \cite[Chap.6]{Litterick},   with the exception of $(M_0,p) = (Alt_6,3)$, the possible Brauer characters of the representations on $L(E_8)$ of the candidates are given. We shall give a separate argument for alternating groups in the next paragraph, and for the non-alternating groups we give in Table \ref{xe8} the possible socles of subgroups in ${\mathcal U}_{ASp'}$, together with elements $g$ and values $\dim C_{\bar G}(g)$ for which (\ref{csgbd}) holds.
		
		To complete the proof we need to deal with alternating socles in ${\mathcal U}_{ASp'}$. By \cite{Cralt}, the possible socles are $M_0 = Alt_6$ or $Alt_7$. For $M_0 = Alt_7$, the possibilities for the action of $M_0$ on $L(E_8)$ are given in \cite[Sect.7]{Cralt}, from which we see that an element $g \in M_0$ of order 7 satisfies $\dim C_{E_8}(g) \le 40$, and (\ref{csgbd}) holds. Similarly, if $M_0 = Alt_6$ and $p\ne 3$, Section 6 of \cite{Cralt} gives the possibilities for $L(E_8) \downarrow M_0$, and an element $g \in M_0$ of order 5 satisfies $\dim C_{E_8}(g) \le 52$.
		
		It remains to consider $M_0 = Alt_6$ with $p=3$. In this case, the possible restrictions 
		$L(E_8) \downarrow M_0$ are not analysed in either \cite{Cralt} or \cite{Litterick}. We argue as follows that there is an element $g \in M_0$ that satisfies (\ref{csgbd}). Suppose this is not the case, and let $x,y \in M_0$ be elements of orders 5 and 4, respectively. From the list of centralizers and traces of elements of such orders that can be found in \cite[Table 4]{CG}, we see that the assumption that $x,y$ do not satisfy (\ref{csgbd}) forces $C_{E_8}(x)$ and $C_{E_8}(y)$ both to be among the types $E_7T_1$, $D_7T_1$, $A_1E_6T_1$. Moreover, if $\chi$ denotes the Brauer character of $M_0$ on $L(E_8)$, then for the respective centralizers, we have 
		\[
		\begin{array}{l}
			\chi(x) = 78+55\tau,\,28+50\tau \hbox{ or } 28+25\tau, \\
			\chi(y) = 132,\,64 \hbox{ or }28,
		\end{array}
		\]
		where $\tau = \frac{1}{2}(1+\sqrt{5})$. 
		From \cite{atlas_breuer_characters}, we see that the 3-modular irreducibles for $Alt_6$ are $V_1$, $V_3$, $V_3^*$, $V_4$ and $V_9$, where $V_i$ has dimension $i$. Let their multiplicities as composition factors of $L(E_8)\downarrow M_0$ be $a,b,c,d,e$ respectively. Then the values of $\chi(1)$, $\chi(x)$ and $\chi(y)$ give the equations
		\begin{itemize}
			\item[(1)] $a+3b+3c+4d+9e = 248$,
			\item[(2)] $a+b\tau +c\tau -4d-9e = 78+55\tau,\,28+50\tau \hbox{ or } 28+25\tau$,
			\item[(3)] $a+b+c-2d+e = 132,\,64 \hbox{ or }28$.
		\end{itemize}
		From (2) we have $b+c = 55$, 50 or 25, and $a-4d-9e = 78$ or 28, and adding (1) to the latter gives
		\[
		2a+3(b+c) = 326 \hbox{ or }276.
		\]
		Hence $b+c$ is even, so $b+c=50$ and so $a=63$. Also $a-4d-9e = 28$, which gives $4d+9e =35$. Also by (3) we have $2d-e = -19$, 49 or 85. The last two equations do not have integral solutions for $d,e$. This final contradiction completes the proof.
	\end{proof}

	%%%%%%%%%%%%%%%
	
	\section{Classical groups: preliminaries} \label{class1}

	It remains to prove \Cref{t:main_simple} for classical groups. In this section, 
	we establish a number of preliminary results required for the proof, which will be presented in \Cref{sec:geometric_classes,sec:class_S}.
	Throughout this section, $p$ is a prime number and $K = \overline{\F_p}$,  an algebraic closure of $\F_p$.  
	
	\subsection{Some representation theory}
	
	We begin with some representation theoretic lemmas.

	\begin{lemma}
		\label{l:semiregular_eigenvalues}
		Let $H$ be a finite group and let $x\in H$ be a $p'$-element with $\gen x \trianglelefteq H$. Then, the action of $H$ on $\gen x \sm \{1\}$ is equivalent to the action of $H$ on the set of nontrivial irreducible $K\gen x$-modules. In particular, if $H \le \GL_n(K)$ then the action of $H$ on the set of the nontrivial eigenspaces of $x$ is equivalent to the action on a subset of $\gen x \sm \{1\}$. 
	\end{lemma}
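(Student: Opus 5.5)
The plan is to identify the nontrivial irreducible $K\gen x$-modules with nontrivial characters of $\gen x$, and then use that $\gen x$ is cyclic of $p'$-order. Let $m=|x|$, which is coprime to $p$. Then $K$ contains a primitive $m$-th root of unity $\zeta$, and $K\gen x$ is semisimple by Maschke. Its irreducible modules are the one-dimensional modules $V_i$, $0\le i<m$, on which $x$ acts as $\zeta^i$; the nontrivial ones are $V_1,\dots,V_{m-1}$.

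$H$ acts on $\gen x$ by conjugation, since $\gen x$ is normal. It therefore acts on $K\gen x$-modules by twisting: for $V$ a module and $h\in H$, let $V^h$ be $V$ with $x$ acting as $x^{h^{-1}}$ does on $V$ (the convention must match right versus left actions; I would fix it so that we get a genuine action). Conjugation by $h$ sends $x\mapsto x^{k_h}$ for some $k_h$ coprime to $m$. Twisting then sends $V_i$ to $V_{ik'}$, where $k'$ is the inverse of $k_h$ or $k_h$ itself, depending on the convention.

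Identify $\gen x\sm\{1\}$ with the nonzero residues $i$ mod $m$ via $x^i\leftrightarrow i$, and the nontrivial modules with the same set via $V_i\leftrightarrow i$. On the first set $h$ acts by multiplication by $k_h$; on the second it acts by multiplication by $k_h^{-1}$ (or $k_h$). To make the two actions equivalent when the multiplier is the inverse, I would compose with the bijection $i\mapsto -i$. This does not repair the inversion, so I would instead use the pairing
\[
\gen x\times\{\text{characters}\}\to K^*,\qquad (x^i,\lambda)\mapsto\lambda(x^i).
\]
This pairing is nondegenerate and $H$-invariant, so the character group is the contragredient of $\gen x$ as an $H$-set. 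For a cyclic group, multiplication by units is abelian, so $k\mapsto k^{-1}$ gives the same orbit structure. Most simply, the map $x^i\mapsto V_i$ intertwines $h$ acting by $k_h$ with $h$ acting by $k_h$ once the twist convention is chosen as $V^h$ having $x^{h}$ act as $x$ acts on $V$. I expect choosing this convention correctly to be the only real care needed; the stabilizer of $x^i$ and of $V_i$ both equal the set of $h$ with $ik_h\equiv i$, so either way the permutation actions are equivalent.

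For the "in particular": when $H\le\GL_n(K)$, the eigenspaces of $x$ on $K^n$ are the isotypic components for the $V_i$. For $h\in H$, $h$ maps the $\zeta^i$-eigenspace $E_i$ of $x$ to an eigenspace of $hxh^{-1}=x^{k}$ for eigenvalue $\zeta^{i}$, which is $E_{ik^{-1}}$. Thus $H$ permutes the nontrivial eigenspaces exactly as it permutes the corresponding modules $V_i$ occurring in $K^n$. This set is an $H$-invariant subset of the nontrivial irreducibles, and by the first part it is equivalent to a subset of $\gen x\sm\{1\}$.
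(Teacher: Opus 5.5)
Your proposal is correct and is essentially the paper's argument (match $x^i$ with the character $x\mapsto\zeta^i$). Your closing observation is exactly what is needed for the contragredient twist that the paper's one-line proof leaves implicit: the orbits $iA=iA^{-1}$ coincide, where $A=\{k_h : h\in H\}\le(\mathbb{Z}/m)^\times$, and corresponding points have the same stabilizer $\{h : ik_h\equiv i\}$, so the two $H$-sets are isomorphic orbit by orbit.

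One slip to fix: with the convention ``$x^h$ acts on $V^h$ as $x$ acts on $V$'' you get $V_i^h=V_{ik_h^{-1}}$, not $V_{ik_h}$. It is the opposite convention (``$x$ acts on $V^h$ as $x^h$ acts on $V$'') that makes $x^i\mapsto V_i$ literally equivariant, and that rule is an action only because $H$ acts on $\langle x\rangle$ through the abelian group $(\mathbb{Z}/m)^\times$.
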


	\begin{proof}
		Let $\zeta$ be an element of $K^\times$ of order $|x|$. The map $x^i \mapsto \phi_{\zeta^i}$ gives the desired equivalence of $H$-actions, where $\phi_{\zeta^i}$ is the homomorphism $\gen x \to K^\times$ given by $x\mapsto \zeta^i$.  The last part (``in particular...'') follows immediately. 
	\end{proof}
	
	Typically, we will apply the lemma when $H$  induces a semiregular group on $\gen x \sm \{1\}$, say of order $t$, in which case $H$ induces a semiregular group of order $t$ on the nontrivial eigenspaces of $x$. In particular, in this case the dimensions of the nontrivial eigenspaces of $x$ occur with multiplicities that are multiples of $t$.
	The following variant will sometimes be useful.
	
	\begin{lemma}
		\label{l:prime_order_eigenvalues}
		Let $H\le \GL_n(K)$ and let $x\in H$ be a $p'$-element of prime order. Then $N_H(\gen x)/C_H(x)$ has all orbits of size $|N_H(\gen x)/C_H(x)|=|x^H\cap \gen{x}|$ on the set of nontrivial eigenspaces of $x$.
	\end{lemma}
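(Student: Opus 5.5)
Set $N=N_H(\gen x)$ and $C=C_H(x)$. Since $x$ has prime order $r$, $\gen x$ is generated by any of its nontrivial elements, so $N$ permutes $\gen x\sm\{1\}$. I would first show that $N/C$ acts semiregularly on $\gen x\sm\{1\}$. Indeed, $C$ acts trivially. Conversely, if $h\in N$ fixes some $x^i$ with $x^i\neq 1$, then $h$ centralizes $\gen{x^i}=\gen x$, so $h\in C$. Hence every orbit of $N/C$ on $\gen x\sm\{1\}$ has size $|N/C|$.

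Next I would apply \Cref{l:semiregular_eigenvalues} with the group $N$ in place of $H$. This is allowed because $\gen x\trianglelefteq N$ and $x$ is a $p'$-element. The lemma says that the action of $N$ on the set of nontrivial eigenspaces of $x$ is equivalent to its action on an $N$-invariant subset of $\gen x\sm\{1\}$. Concretely, the eigenspace for the eigenvalue $\zeta^i$ corresponds to the character $x\mapsto\zeta^i$, and $h\in N$ sends that eigenspace to the eigenspace of $x^{h}$-conjugate eigenvalue. An invariant subset of a set on which $N/C$ acts semiregularly is a union of regular orbits. So every orbit on the nontrivial eigenspaces also has size $|N/C|$. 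Note that $C$ fixes every eigenspace, so the action does factor through $N/C$.

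Finally I would identify $|N/C|$ with $|x^H\cap\gen x|$. The map $N\to x^H\cap\gen x$ given by $h\mapsto x^h$ lands in $\gen x$ because $h$ normalizes $\gen x$. It is surjective: if $x^g\in\gen x$ with $g\in H$, then $x^g$ is a nontrivial element of the prime-order group $\gen x$, so $\gen{x^g}=\gen x$, that is $\gen x^g=\gen x$ and $g\in N$. Its fibres are exactly the cosets of $C$, so $|x^H\cap\gen x|=|N:C|$.

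There is no real obstacle here; the only point needing care is the semiregularity, which is precisely where primality of $|x|$ is used. Without it, a nontrivial power $x^i$ with $\gen{x^i}<\gen x$ could be fixed by elements outside $C$.
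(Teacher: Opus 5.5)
Your proposal is correct and follows the paper's proof, which uses the same three steps: primality of $|x|$ gives semiregularity of $N_H(\gen x)/C_H(x)$ on $\gen x\sm\{1\}$, the map $C_H(x)g\mapsto x^g$ is a bijection onto $x^H\cap\gen x$, and \Cref{l:semiregular_eigenvalues} applied to $N_H(\gen x)$ transfers the orbit sizes to the nontrivial eigenspaces. You simply spell out the verifications that the paper leaves implicit.
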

	
	\begin{proof}
		Since $x$ has prime order, $N_H(\gen x)/C_H(x)$ acts semiregularly on $\gen x \sm \{1\}$. Also, the map $N_H(\gen x)/C_H(x)\to x^H \cap \gen x$ sending $C_H(x)g\mapsto x^g$ is a well-defined bijection. The statement now follows from \Cref{l:semiregular_eigenvalues}. 
	\end{proof}

	Let us also recall the following result due to Scott \cite{scott1977matrices}.
	
	\begin{lemma}
		\label{l:scott}
		Assume that $\gen{x,y}\le \GL_n(K)=\GL(V)$ is irreducible. Then, there exists $z\in\{x,y,xy\}$ such that $\dim(C_V(z))\le n/3$.
	\end{lemma}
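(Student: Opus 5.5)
The statement is Scott's theorem \cite{scott1977matrices} specialised to three elements whose product is $1$. The plan is to reduce to Scott's inequality and then do a dimension count. One degenerate case must be set aside first. If $n=1$ and $\gen{x,y}$ is trivial, the module is irreducible but $\dim C_V(z)=1>1/3$ for every $z$, so the lemma fails as literally stated. We therefore assume $\gen{x,y}$ acts nontrivially, which is the situation in all intended applications. Put $G=\gen{x,y}$. Irreducibility and nontriviality then give $C_V(G)=0$, and dually $C_{V^*}(G)=0$, where $V^*$ is the dual module.

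We invoke Scott's inequality in the following form. Let $g_1,\dots,g_m\in\GL(V)$ satisfy $g_1\cdots g_m=1$ and generate $G$. Then
\[
\sum_{i=1}^m \dim [V,g_i] \;\ge\; \dim V/C_V(G) + \dim V^*/C_{V^*}(G).
\]
Here $[V,g]=\mathrm{Im}(g-1)$, so $\dim [V,g]=n-\dim C_V(g)$.

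If a self-contained argument is wanted, we would reproduce the standard proof. Consider the space $Z$ of $1$-cocycles $G\to V$ and the space $B$ of coboundaries. A cocycle $f$ is determined by the values $f(g_1),\dots,f(g_{m-1})$, and the relation $g_1\cdots g_m=1$ imposes one vector condition. Alternatively, one bounds $\dim Z$ by restricting cocycles to each cyclic subgroup $\gen{g_i}$, where the cocycle values are constrained to lie in spaces of dimension $\dim[V,g_i]$ modulo coboundaries. Combining this with $\dim B=n-\dim C_V(G)$ and the duality between $H^1(G,V)$ and the relevant space for $V^*$ yields the inequality. This cocycle count is the only genuinely nontrivial step, and I would simply cite \cite{scott1977matrices} for it rather than re-derive it.

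Now apply the inequality with $m=3$ and $(g_1,g_2,g_3)=(x,y,(xy)^{-1})$. These satisfy $g_1g_2g_3=1$ and generate $G$. Since $C_{V}((xy)^{-1})=C_V(xy)$, the right-hand side equals $2n$, and we get
\[
(n-\dim C_V(x))+(n-\dim C_V(y))+(n-\dim C_V(xy))\;\ge\;2n.
\]
Equivalently, $\dim C_V(x)+\dim C_V(y)+\dim C_V(xy)\le n$. Hence the smallest of the three fixed spaces has dimension at most $n/3$, and that element is the required $z\in\{x,y,xy\}$.
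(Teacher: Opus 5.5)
Your proposal is correct and is the same argument as the paper's: apply Scott's Theorem~1 to the triple $x,y,(xy)^{-1}$ to get $\dim C_V(x)+\dim C_V(y)+\dim C_V(xy)\le n$, then take the smallest term. Your observation that the case $n=1$, $x=y=1$ must be excluded is accurate, since the inequality quoted in the paper's proof fails there; this is harmless for the applications, where $\gen{x,y}$ acts nontrivially.
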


	\begin{proof}
		Scott's lemma \cite[Thm. 1]{scott1977matrices} gives $\dim(C_V(x))+\dim(C_V(y))+\dim(C_V(xy))\le n$, from which the conclusion follows.
	\end{proof}
	
	The following lemma uses Green correspondence. We will usually apply it to the case where $S$ is a cyclic torus in a group of Lie type $G$ containing a cyclic Sylow $p$-subgroup $P$.	
	
	\begin{lemma}
		\label{l:green_correspondence}
		Let $G$ be a finite group and let $W$ be an indecomposable $KG$-module. Let $P$ be a cyclic Sylow $p$-subgroup of $G$, let $S$ be an abelian subgroup of $G$ containing $P$, and assume that 
		\begin{itemize}
			\item[($\star$)] $S\trianglelefteq N_G(P)=N_G(P_0)$ for every $1\neq P_0\le P$.
		\end{itemize}
		Then, $W\downarrow P = W_0\oplus U$ where all indecomposable submodules of $U$ have dimension $|P|$, and $W_0$ is the direct sum of at most $|N_G(P):S|$ indecomposable modules of the same dimension.
	\end{lemma}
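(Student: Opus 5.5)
The plan is to use the Green correspondence between $G$ and $N=N_G(P)$, together with the structure of indecomposable modules for a cyclic $p$-group. Since $P$ is cyclic, every indecomposable $KP$-module is uniserial of dimension at most $|P|$. So the task is to show that the summands of $W\downarrow P$ of dimension less than $|P|$ occur at most $|N:S|$ times and all have the same dimension.

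First assume $W$ is projective. Then $W\downarrow P$ is free, so $W_0=0$ and we are done. Otherwise $W$ has a nontrivial vertex $Q\le P$. Hypothesis ($\star$) gives $N_G(Q)=N$ for all $1\ne Q\le P$, so by Green's theorem there is a Green correspondent $f(W)$, an indecomposable $KN$-module. It satisfies $W\downarrow N=f(W)\oplus Y$, where every indecomposable summand of $Y$ is relatively $\mathfrak Y$-projective. Here $\mathfrak Y=\{P^g\cap N : g\in G\sm N\}$.

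Next I would show $\mathfrak Y$ consists only of the trivial group. Take $g\notin N$ with $P_0:=P^g\cap P\ne 1$. Since $P$ is cyclic, $P_0$ is the unique subgroup of its order in both $P$ and $P^g$, so $P_0^{g^{-1}}=P_0$. By ($\star$), $g\in N_G(P_0)=N$, a contradiction. The only care needed is that $P^g\cap N$ is a $p$-subgroup of $N$, hence lies in $P\trianglelefteq N$. So $\mathfrak Y=\{1\}$ and $Y$ is projective. Restricting further to $P$, $Y\downarrow P$ is free, and so contributes only summands of dimension $|P|$ to $U$.

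It remains to analyse $f(W)\downarrow P$. Since $S\trianglelefteq N$ and $S$ is abelian containing $P$, Clifford theory applies to $f(W)\downarrow S$. Because $S$ is abelian and $P\le S$, every indecomposable $KS$-module is $\lambda\otimes M$ with $\lambda$ a one-dimensional module for the $p'$-part of $S$ and $M$ an indecomposable $KP$-module (uniserial). The group $N$ permutes the indecomposable summands of $f(W)\downarrow S$. I would argue that they form a single $N$-orbit up to isomorphism, with the number of summands at most $|N:S|$, in the style of Clifford's theorem for indecomposables. This uses that $f(W)$ is a summand of $\mathrm{Ind}_S^N$ of any summand of its restriction, because $p\nmid |N:S|$ (as $P\le S$) and so $f(W)$ is relatively $S$-projective. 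Concretely, $f(W)\mid \mathrm{Ind}_S^N(X)$ with $X$ indecomposable. Mackey's formula then gives $\mathrm{Ind}_S^N(X)\downarrow S=\bigoplus_{t\in N/S}X^t$, so $f(W)\downarrow S$ is a sum of at most $|N:S|$ conjugates $X^t$, all of the same dimension. Conjugation by $t$ preserves $P$, and restriction to $P$ preserves indecomposability and dimension. Hence $f(W)\downarrow P$ is a sum of at most $|N:S|$ indecomposables of equal dimension, some possibly of dimension $|P|$ (these can be moved into $U$).

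The main obstacle is this last step: making rigorous that $f(W)\downarrow S$ is a direct sum of conjugates of one indecomposable $X$. The key ingredients are relative $S$-projectivity via $p\nmid|N:S|$ and Krull–Schmidt applied to the Mackey decomposition. A secondary point is checking that $X^t\downarrow P$ stays indecomposable. This holds because $X=\lambda\otimes M$ with $M$ uniserial, and conjugation merely twists by an automorphism of $P$.
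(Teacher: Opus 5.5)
Your proposal is correct and follows essentially the same route as the paper: it uses Green correspondence with $H=N_G(P)$, and it uses ($\star$) together with the cyclicity of $P$ to get $\mathfrak Y=\{1\}$, so the complement of the Green correspondent in $W\downarrow N_G(P)$ is projective; Clifford theory is then applied to the relatively $S$-projective correspondent, restricted to $S$ and then to $P$. Your Higman/Mackey/Krull--Schmidt argument and the observation that $S=P\times S_{p'}$ (so restriction from $S$ to $P$ preserves indecomposability) simply make explicit the steps the paper delegates to Clifford theory in Benson or states without comment.
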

	
	\begin{proof}
		By \cite[Corollary 3.6.10]{benson1998representations}, every $KG$-module is projective relative to $P$, i.e., it is a direct summand of some module induced from $P$ (see \cite[Proposition 3.6.4]{benson1998representations}). Moreover, assumption ($\star$) implies that 
		\begin{equation}
			\label{eq:green}
			P^g \cap N_G(P)=1 \text{ for every } g\not\in N_G(P).
		\end{equation}
		Now we apply Green correspondence. Let $D$ be a vertex of $W$, see \cite[Definition 3.10.1]{benson1998representations}; up to conjugacy, $D\le P$ by \cite[Proposition 3.6.9]{benson1998representations}.
		We may assume that $W$ is not projective, as otherwise $W\downarrow P$ is projective 
		and the lemma holds with $W_0=0$.
		In particular, $D\neq 1$.
		
		By \cite[Theorem 3.12.2(i)]{benson1998representations}, applied with $H=N_G(P)=N_G(D)$, we have $W\downarrow N_G(P) = W_1\oplus W_2$, where $W_1$ is indecomposable and $W_2$ is projective (note that, with notation as in \cite[Theorem 3.12.2(i)]{benson1998representations}, $\ca Y=\{1\}$ by \eqref{eq:green}).
		
		Setting $U:=W_2 \downarrow P$, we see that $U$ is projective, and so, since $P$ is cyclic, indecomposable submodules of $U$ have dimension $|P|$.
		Moreover, $W_1$ is projective relative to $S$, and so by Clifford theory (see \cite[Section 3.13]{benson1998representations}), $W_1\downarrow S$ is a direct sum of indecomposable modules, conjugate under $N_G(P)/S$. When restricted to $P$, each such module remains indecomposable,  and so setting $W_0:=W_1\downarrow P$, the lemma is proved.
	\end{proof}
	
	\subsection{Some optimization}
	
	The following few lemmas will be used to bound the dimension $d$ of the centralizer of certain semisimple elements $g\in \GL_n(K)=\GL(V)$. For example, assume that $\dim(C_V(g))\le n/3$ and that the dimensions of the nontrivial eigenspaces of $g$ occur with multiplicity at least $4$ (we will use \Cref{l:semiregular_eigenvalues} in order to produce such elements).  Then \Cref{l:optimization_easier}, below, applied with $A=n/3$ and $B=4$, asserts that $d\le n^2/4$; equality is attained when $C_V(g)=0$ and $g$ has four nontrivial eigenspaces of dimension $n/4$. As another example, assuming that $\dim(C_V(g))\le n/2$ and that the dimensions of the nontrivial eigenspaces of $g$ occur with multiplicity at least $4$, we have $d\le 5n^2/16$, with equality attained when $C_V(g)$ has dimension $n/2$ and there are four nontrivial eigenspaces of dimension $n/8$.
	
	\Cref{l:optimization}, below, takes into account also the number of nontrivial eigenvalues of $g$ (named $R(\underline a)$), which will be useful on some occasions.  \Cref{l:optimization_orthogonal} is a useful variant for symplectic and orthogonal groups.
	
	\begin{lemma}
		\label{l:optimization}
		Let $n$ and $B$ be positive integers with $B<n$, let $1\le A<n$ be a real number, and assume $A+B\le n$.
		Let $\ca A\subseteq \R^n$ be the set of $\underline a =(a_1, \ldots, a_n)$ such that for every $i$, $a_i=0$ or $a_i\ge 1$; $a_1\le A$; $\sum_i a_i = n$; for each $i\ge 2$, $a_i$ is either zero or equal to $a_t$ for at least $B$ values of $t\ge 2$. Denoting by $R(\underline a)$ the number of $i\ge 2$ such that $a_i$ is nonzero, we have
		\[
		\max_{\underline a\in \ca A} \left(R(\underline a)+ \sum_{i=1}^n a_i^2\right) = \begin{cases}
			B+ A^2 + (n-A)^2/B & \text{if $A(B+1) \ge 2n$} \\
			B+ n^2/B & \text{if $A(B+1) \le 2n$}
		\end{cases} 
		\]
	\end{lemma}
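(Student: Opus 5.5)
The plan is to first understand the structure of an optimal $\underline a$, and then reduce the problem to a one-variable optimization. Fix $\underline a\in\ca A$ and write $a_1=c\in[0,A]$. The nonzero entries $a_i$ with $i\ge 2$ are grouped into classes of equal values, each class of size at least $B$. Say the distinct values are $b_1,\dots,b_k$, with multiplicities $m_1,\dots,m_k\ge B$. Then $R=\sum m_j$, $\sum_j m_jb_j=n-c$, and the objective is
\[
c^2+\sum_j m_j(1+b_j^2).
\]
The key observation is that, with $c$ fixed, we want to merge everything into a single class with multiplicity exactly $B$. I will show this step by step. First, if some $m_j>B$, move mass from one copy of $b_j$ into the others. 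Splitting $m$ copies of value $b$ into $B$ copies of value $mb/B$ replaces $m+mb^2$ by $B+m^2b^2/B$. This is an increase, because $m^2b^2/B-mb^2=mb^2(m-B)/B\ge (m-B)$ whenever $mb^2\ge B$. That inequality holds since $b\ge1$ and $m\ge B$. Second, two classes $(m_1,b_1)$ and $(m_2,b_2)$ can be merged: after reducing both to multiplicity $B$, merging replaces $2B+B(b_1^2+b_2^2)$ by $B+B(b_1+b_2)^2$. The gain is $2Bb_1b_2-B\ge 2B-B>0$. Hence for fixed $c$ the maximum is $B+c^2+(n-c)^2/B$, attained with $B$ entries equal to $(n-c)/B\ge1$. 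This is valid because $c\le A\le n-B$. The degenerate case of no nonzero $a_i$ with $i\ge2$ only arises if $c=n>A$, so it cannot happen.

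It then remains to maximize $f(c)=c^2+(n-c)^2/B$ over the allowed values of $c$. Here I must be careful about the constraint "$a_i=0$ or $a_i\ge1$": this allows $c=0$ or $c\in[1,A]$. Since $f$ is a convex quadratic, its maximum on this set is attained at an endpoint, namely $c=0$ or $c=A$ (the point $c=1$ is dominated by one of the two, by convexity on $[0,A]$). Comparing the two values,
\[
f(A)-f(0)=A^2+\frac{(n-A)^2-n^2}{B}=\frac{A\bigl(A(B+1)-2n\bigr)}{B},
\]
which is nonnegative exactly when $A(B+1)\ge2n$. This gives the two cases in the statement. Finally, both values are actually attained, using the vectors $(A,\frac{n-A}{B},\dots)$ and $(0,\frac nB,\dots)$ padded with zeros; the requirement $B<n$ ensures there are enough coordinates.

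I expect the main care point to be the merging and splitting step. It needs the bound $b\ge1$ (to get $mb^2\ge B$ and $2b_1b_2\ge 1$), and it must also respect the integrality of the multiplicities. That integrality is fine, since we only ever reduce multiplicities to $B$ exactly.
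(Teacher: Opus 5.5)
Your proof is correct and follows essentially the same route as the paper: compress each class of equal nonzero values to exactly $B$ copies, merge classes pairwise until $R(\underline a)=B$, and then maximize the convex quadratic $c^2+(n-c)^2/B$ over $c\in\{0\}\cup[1,A]$, where the maximum sits at $c=0$ or $c=A$. You also check attainment explicitly, and you compress both classes to multiplicity $B$ before merging. The paper leaves attainment implicit and tacitly assumes the second class already has multiplicity $B$, so your version is slightly more careful on both points.
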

	
	\begin{proof}
		Let $\underline a =(a_1, \ldots, a_n)\in \ca A$ and denote  $f(\underline a) =R(\underline a)+ \sum_{i=1}^n a_i^2$. We claim that if $R(\underline a)> B$, then there exists some $(b_1, \ldots, b_n)\in \ca A$ with $R(\underline b)<R(\underline a)$ and $f(\underline b)> f(\underline a)$. In particular, this will allow us to assume $R(\underline a)=B$.

		Without loss, $0\neq a_2 = \cdots = a_{r+1}$ for some $r\ge B$, and $a_t\neq a_2$ for $t> r+1$. If $r>B$, then replace $a_i$ by $ra_i/B$ for $i=2, \ldots, B+1$, and replace $a_i$ by $0$ for $i \in \{B+2, \ldots, r+1\}$. Since 
		\[
		r + \sum_{i=2}^{r+1} a_i^2 = r+ ra_2^2 < B+ r^2 a_2^2/B =  B+ \sum_{i=2}^{B+1} (ra_i/B)^2
		\]
		the claim is proved in this case. Assume then $r=B$. If there exists $t>B+1$ with $a_t\neq 0$, then without loss $0\neq a_{B+2}=\cdots a_{2B+1}$ and $a_\ell \neq a_2, a_{B+2}$ for $\ell > 2B+1$. Now for $i=2, \ldots, B+1$ replace $a_i$ by $a_i+a_{B+i}$, and for $i=B+2, \ldots, 2B+1$, replace $a_i$ by $0$. Then 
		\[
		2B + \sum_{i=2}^{2B+1} a_i^2 < B+ \sum_{i=2}^{B+1} (a_i+a_{B+i})^2
		\]
		and so the claim holds in this case also. 
		
		Therefore the initial claim is proved, so assume  $R(\underline a)=B$, and without loss, $a_t=0$ for every $t>B+1$. We may then simply maximize  $a_1^2 + Ba_2^2$ subject to $a_1 + Ba_2 = n$, $1\le a_1 \le A$ or $a_1=0$, and $a_2\ge 1$ or $a_2=0$. This is a parabola in $a_1$ and the maximum is given by either $a_1=0$ or $a_1=A$, which give the values $f(\underline a)=B+ n^2/B$ and $f(\underline a)=B+ A^2 + (n-A)^2/B$ respectively. Noting that $B+A^2 + (n-A)^2/B\ge B+ n^2/B$ if and only if $A(B+1) \ge 2n$, the proof is complete.
	\end{proof}
	
	\begin{lemma}
		\label{l:optimization_easier} 
		With notation as in \Cref{l:optimization}, we have
		\[
		\max_{\underline a \in \ca A} \sum_{i=1}^n a_i^2 = \begin{cases}
			A^2 + (n-A)^2/B & \text{if $A(B+1) \ge 2n$} \\
			n^2/B & \text{if $A(B+1) \le 2n$}
		\end{cases}
		\]
	\end{lemma}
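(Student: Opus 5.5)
The plan is to follow the proof of \Cref{l:optimization} verbatim, simply dropping the term $R(\underline a)$. Put $g(\underline a)=\sum_i a_i^2$ for $\underline a\in\ca A$. The first step is to show that we may assume $R(\underline a)=B$. The two replacement operations used there do not leave $\ca A$: they keep $a_1$ fixed, keep $\sum_i a_i = n$, keep every nonzero $a_i$ at least $1$, and keep the multiplicity condition. Each operation also \emph{strictly increases} $\sum_{i\ge 2} a_i^2$.

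For the first operation, when $r>B$ equal entries $a_2$ are replaced by $B$ entries equal to $ra_2/B$, we have
\[
r a_2^2 < r^2a_2^2/B \iff B<r.
\]
For the second operation, merging two blocks of $B$ equal values into one block, the inequality
\[
\sum_{i=2}^{B+1}\left(a_i^2+a_{B+i}^2\right) < \sum_{i=2}^{B+1}\left(a_i+a_{B+i}\right)^2
\]
is immediate because all the entries involved are positive. These inequalities are exactly the ones from the previous proof without the $R$ terms. The terms dropped there were in our favour anyway: $R$ decreased, so $f$ was already increasing without them.

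Since $\ca A$ is compact (or since these operations strictly increase $g$), a maximizer exists. It must have either $R(\underline a)=B$ or $R(\underline a)=0$. The case $R(\underline a)=0$ forces $a_1=n>A$, which is impossible. So the maximizer has $a_1+Ba_2=n$ and no further nonzero entries.

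The second step is to maximize the convex function $a_1^2+B\left((n-a_1)/B\right)^2$ over the admissible $a_1$, namely $a_1=0$ or $1\le a_1\le A$. We must check that $a_2=(n-a_1)/B\ge 1$ for these values of $a_1$; this holds because $A+B\le n$. A convex function attains its maximum at an endpoint, so the maximum is achieved at $a_1=0$ or $a_1=A$. The endpoint $a_1=1$ is dominated by one of these two by convexity on $[0,A]$. The two endpoint values are $n^2/B$ and $A^2+(n-A)^2/B$.

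Finally we compare the two values:
\[
A^2+\frac{(n-A)^2}{B}\ge \frac{n^2}{B}\iff A^2B+A^2-2nA\ge 0\iff A(B+1)\ge 2n.
\]
This gives the stated case split. There is no real obstacle here. The only care needed is the feasibility check in the first step, confirming that the modified vectors remain in $\ca A$, and this is the same check as in \Cref{l:optimization}.
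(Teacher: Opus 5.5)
Your proof is correct and is essentially the paper's argument. The paper deduces the statement in one line from the proof of \Cref{l:optimization}: the maximizer of $R(\underline a)+\sum_i a_i^2$ has the smallest possible value $R(\underline a)=B$, and $R(\underline a)\ge B$ everywhere on $\ca A$ because $a_1\le A<n$, so the same vector maximizes $\sum_i a_i^2$. You instead rerun those same reductions and the same endpoint analysis directly for $\sum_i a_i^2$, adding the existence of a maximizer by compactness and the check $a_2\ge 1$ from $A+B\le n$, which makes your version more self-contained.
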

	
	\begin{proof}
		By the proof of the previous lemma, the maximum of $R(\underline a)+\sum_{i=1}^n a_i^2$ is attained when $R(\underline a)$ is as small as possible, and so the same value gives the maximum of $\sum_{i=1}^n a_i^2$.
	\end{proof}

	\begin{lemma}
		\label{l:optimization_orthogonal} 
		With notation as in \Cref{l:optimization}, we have
		\[
		\max_{\underline a \in \ca A} \left(R(\underline a)+\sum_{i=1}^n a_i^2 \pm a_1\right) = \begin{cases}
			B+ A^2 \pm A + (n-A)^2/B & \text{if $A(B+1)\pm B \ge 2n$} \\
			B+ n^2/B & \text{if $A(B+1)\pm B \le 2n$}
		\end{cases}
		\]
	\end{lemma}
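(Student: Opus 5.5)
This is the analogue of \Cref{l:optimization} with the extra linear term $\pm a_1$, and I will prove it by running the same two-stage argument. Set $g(\underline a)=R(\underline a)+\sum_i a_i^2\pm a_1$.

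\textbf{Reduction to $R(\underline a)=B$.} The term $\pm a_1$ involves only the first coordinate. The modifications used in the proof of \Cref{l:optimization} touch only coordinates $i\ge 2$:
\begin{itemize}
\item[(a)] merging $r>B$ equal nonzero entries into $B$ entries of value $ra_2/B$;
\item[(b)] adding a second block of $B$ equal entries onto the first block.
\end{itemize}
So these moves leave $\pm a_1$ unchanged and strictly increase $R+\sum a_i^2$. Hence they strictly increase $g$ and stay inside $\ca A$. Consequently the maximum is attained with $R(\underline a)=B$, $a_2=\cdots=a_{B+1}$, and $a_t=0$ for $t>B+1$.

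\textbf{One-variable maximization.} Put $a_2=(n-a_1)/B$. We then maximize
\[
h(a_1)=B+a_1^2\pm a_1+(n-a_1)^2/B
\]
over $a_1\in\{0\}\cup[1,A]$, subject to $a_2\ge 1$ or $a_2=0$. The hypothesis $A+B\le n$ gives $a_2=(n-a_1)/B\ge 1$ for all $a_1\le A$, so the constraint on $a_2$ is harmless.

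Since $h$ is a convex quadratic in $a_1$, its maximum over an interval is attained at an endpoint. The only candidates are therefore $a_1=0$, $a_1=1$ and $a_1=A$. The point $a_1=1$ is dominated because it lies in the convex hull of $\{0,A\}$ and $h$ is convex. The two remaining values are
\[
h(0)=B+n^2/B,\qquad h(A)=B+A^2\pm A+(n-A)^2/B.
\]

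\textbf{Comparing endpoints.} We have
\[
h(A)-h(0)=A^2\pm A+\frac{A^2-2nA}{B}=\frac{A}{B}\bigl(A(B+1)\pm B-2n\bigr).
\]
Since $A>0$, the sign of this difference is the sign of $A(B+1)\pm B-2n$. This gives exactly the stated dichotomy.

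The only step needing care is the reduction. One must check that after the moves (a) and (b) the entry $a_1$ and the bound $a_1\le A$ are untouched, and that the new entries remain at least $1$; both are immediate.
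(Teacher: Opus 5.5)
Your proof is correct and follows the paper's argument. You reuse the reduction from \Cref{l:optimization}, whose moves touch only coordinates $i\ge 2$ and so leave $\pm a_1$ unchanged, to reach $R(\underline a)=B$, and then maximize the convex quadratic in $a_1$ at $a_1=0$ or $a_1=A$. Your factorization $h(A)-h(0)=\frac{A}{B}\bigl(A(B+1)\pm B-2n\bigr)$, and your remarks that $a_2\ge 1$ holds automatically and that $a_1=1$ is dominated, fill in details the paper leaves implicit.
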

	
	\begin{proof}
		We follow the proof of \Cref{l:optimization}, and in the same way we see that the maximum occurs when $R(\underline a)=B$. Then we need to maximize $a_1^2\pm a_1 + Ba_2^2$	subject to the same constraints, and the maximum is attained for $a_1=0$ or $a_1=A$.
	\end{proof}

	In the following lemma, for a partition $a_1  \ge \cdots \ge a_n$ of $n$, we denote by $a'_1  \ge \cdots \ge a'_n$ the transpose partition. The lemma will be applied to bound the dimension $d$ of the centralizer of certain unipotent elements $g\in \GL_n(K)=\GL(V)$, which will be produced thanks to \Cref{l:green_correspondence}.  (Recall that if $a_i$ are the sizes of the Jordan blocks, then $d=\sum (a'_i)^2$ where $d$ is the  dimension of the centralizer of $g$ in $\GL_n(K)$.)
	
	\begin{lemma}
		\label{l:unipotent_centralizer}
		Let $B\ge 2,C\ge 0$, and $n$ be integers, with $B,C\le n$,  and let $a_1  \ge \cdots \ge a_n$ be a partition of $n$. Assume that there exists a subset $I$ of $\{1,\ldots, n\}$ such that $\sum_{i\in I} a_i\le  C$, and $a_j\ge B$ for every $j\not\in I$. Then 
		\[
		\sum_{i=1}^n (a'_i)^2 \le  \frac{n^2}{B} + C^2\left(1-\frac{1}{B}\right).
		\]
	\end{lemma}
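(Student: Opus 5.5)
The plan is to split the partition into its large parts and its small parts, use that conjugating a union of partitions adds the transposes, and then bound each piece directly.

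\emph{Reduction.} Every part $a_i<B$ must have $i\in I$, since all indices outside $I$ carry parts of size at least $B$. Hence the parts smaller than $B$ have total size $c\le \sum_{i\in I}a_i\le C$. So we may replace $I$ by the set of indices of parts smaller than $B$. Let $\alpha$ be the partition formed by the parts $\ge B$, with $k$ parts and total $m$, and let $\beta$ be the partition formed by the parts $<B$, with total $c=n-m\le C$. Since every part of $\alpha$ is at least $B$, we get $kB\le m$, that is, $k\le m/B$.

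\emph{Transpose decomposition.} The $i$-th column of the Young diagram of $a$ counts the parts $\ge i$. Therefore $a'_i=\alpha'_i+\beta'_i$ for every $i$. Moreover $\alpha'_i\le k$ for all $i$, and $\alpha'_i=k$ for $i\le B$. Also $\beta'_i=0$ for $i\ge B$, and $\sum_i\beta'_i=c$.

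\emph{Expansion and bounds.} Expanding gives
\[
\sum_i (a'_i)^2=\sum_i(\alpha'_i)^2+2\sum_i\alpha'_i\beta'_i+\sum_i(\beta'_i)^2 .
\]
We bound the three sums separately:
\begin{itemize}
\item[(i)] $\sum_i(\alpha'_i)^2\le k\sum_i\alpha'_i=km$, since each $\alpha'_i\le k$.
\item[(ii)] $\sum_i\alpha'_i\beta'_i\le k\sum_i\beta'_i=kc$, for the same reason.
\item[(iii)] $\sum_i(\beta'_i)^2\le(\sum_i\beta'_i)^2=c^2$.
\end{itemize}
Together with $k\le m/B$, this gives
\[
\sum_i (a'_i)^2\le \frac{m^2}{B}+\frac{2mc}{B}+c^2=\frac{(m+c)^2}{B}+c^2\Bigl(1-\frac1B\Bigr)=\frac{n^2}{B}+c^2\Bigl(1-\frac1B\Bigr).
\]
Finally, $c\le C$ and $1-1/B>0$, so the right-hand side is at most $n^2/B+C^2(1-1/B)$, as required.

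The only step needing care is the reduction to "$I$ equals the set of small parts" together with the additivity of transposes. Once these are in place, the estimate is a one-line computation. There are no degenerate cases: if $k=0$ or $c=0$, the bound follows directly from the same computation.
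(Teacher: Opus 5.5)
Your proof is correct, and it takes a different route from the paper's.

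\emph{The paper's route.} It argues by local moves on the partition and then a real relaxation.
\begin{itemize}
\item It splits each part indexed by $I$ into parts of size $1$. Each such move raises $a'_1$ by one and lowers some $a'_j$ with $j>1$ by one, so $\sum_i (a'_i)^2$ increases.
\item It then arranges that either $\sum_{i\in I}a_i=C$ or every part outside $I$ equals $B$.
\item Finally it treats the transpose entries as real numbers and maximizes $\sum_i (a'_i)^2$ under $a'_1-C=a'_2=\cdots=a'_B$. It pushes all mass from $a'_j$, $j>B$, into the first $B$ coordinates and evaluates at $a'_1=(n-C)/B+C$, $a'_2=\cdots=a'_B=(n-C)/B$.
\end{itemize}

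\emph{Your route.} You use the additivity $a'=\alpha'+\beta'$ for the split into parts $\ge B$ and parts $<B$. You then apply three elementary estimates together with $kB\le m$.

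\emph{What each approach buys.} Your argument avoids the ``we may assume'' reductions, which the paper only sketches. It also avoids the passage to real variables, where one must check that each modification preserves the constraints.

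It gives the slightly sharper intermediate bound $n^2/B+c^2(1-1/B)$, where $c$ is the total size of the parts smaller than $B$. It also makes the extremal case transparent. Equality in your inequalities forces all small parts to equal $1$, all large parts to equal $B$, and $c=C$. This is exactly the configuration the paper reaches by optimization.

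The paper's approach is written in the same optimization style as its other lemmas in that subsection, and it exhibits the maximizing transpose directly. For this particular statement, your decomposition is both shorter and more rigorous.
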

	
	\begin{proof}
		The case $C=n$ is trivial, so assume $C<n$. Fix any $C'\le C$, and assume $\sum_{i\in I} a_i = C'$. Assume that $a_i>1$ for some $i\in I$; then consider the partition $b_1\ge \cdots \ge b_n$ obtained by replacing $a_i$ by a part of length $a_i-1$, and adding one part of length $1$. In the transpose partition, this means that $b'_1=a'_1+1$, and $b'_j = a'_j-1$ for some $j>1$. Since
		\[
		(a'_1)^2 + (a'_j)^2 < (a'_1+1)^2 + (a'_j-1)^2,
		\]
		then iterating this procedure, we may assume that $a_i\le 1$ for every $i\in I$. 
		Similarly, we may assume that either $C'=C$, or $a_j=B$ for every $j\not\in I$.
		
		Next we look at the transpose partition; we take $a'_1, \ldots, a'_n$ real numbers and we maximize $\sum_{i=1}^n (a'_i)^2$ under the given constraints. 
		
		Assume first $C'=C$. The condition $a_j\ge B$ for every $j\not\in I$ is equivalent to $a'_1 -C= a'_2 = \cdots = a'_B$. (Note that $I$ is a proper subset of $\{1, \ldots, n\}$ since $C< n$.)
		Assume that there exists $j>B$ with $0<a'_j< a'_1-C$; without loss, $j=B+1$. Note that
		\[
		\sum_{i=1}^{B+1} (a'_i)^2 < \sum_{i=1}^B (a'_i + a_{B+1}/B)^2.
		\]
		In particular, we may assume that $a'_j=0$ for every $j>B$. This choice gives 
		\begin{align*}
			\sum_{i=1}^n (a'_i)^2 &= (\frac{n-C}{B} + C)^2 + (B-1)\frac{(n-C)^2}{B^2}\\
			&=\frac{(n-C)^2}{B} + C^2 + \frac{2C(n-C)}{B} \\
			&=\frac{n^2}{B} + C^2\left(1-\frac{1}{B}\right),
		\end{align*}
		which concludes the proof in this case.
		
		Assume finally $a_j=B$ for every $j\not\in I$, so up to reordering $a'_1 -C' = a'_2 =\cdots a'_B = (n-C')/B$ and $a_j=0$ for $j> B$. Let $\eps = (C-C')/B$, so that $\eps + C' + (B-1)\eps = C$. Then replace each $a'_j$, $j>1$, by $b'_j = a'_j-\eps$, and replace $a'_1$ by $b'_1 = a'_1 + (B-1)\eps = b'_2 + C$. As above, $\sum (a'_i)^2 \le \sum (b'_i)^2$, which reduces to the case $C'=C$, addressed in the previous paragraph.
	\end{proof}

	\subsection{Centralizer bounds}
	
	Next we need some bounds for the dimension and size of centralizers in classical groups. We begin by recording a calculation for the order of the groups.
	
	\begin{lemma} 
		\label{l:order_classical_groups}
		The following estimates hold for every $n\ge 1$ and every prime power $q$.
		
		\begin{align*}
			q^{n^2-2} < \frac{9q^{n^2}}{32} &< |\GL_n(q)|<q^{n^2} \\
			q^{n^2-2} < \frac{9q^{n^2-1}}{16} &< |\SL_n(q)|<q^{n^2-1} \\
			q^{n^2} &< |\GU_n(q)|\le 1.5 q^{n^2} < q^{n^2+1} \\
			q^{n^2-2} < \frac{q^{n^2-1}}{1.5} &< |\SU_n(q)|<  q^{n^2-1} \\
			q^{2n^2+n-1} < \frac{9q^{2n^2+n}}{16} &< |\Sp_{2n}(q)| =|\SO_{2n+1}(q)|<q^{2n^2+n} \\
			q^{2n^2-n-1} < 2\frac{9q^{2n^2-n}}{32} &< |\Or^+_{2n}(q)|<2q^{2n^2-n} \\
			q^{2n^2-n} < 2\frac{9q^{2n^2-n}}{16} &< |\Or^-_{2n}(q)|\le 2 q^{2n^2-n} \le q^{2n^2-n+1} \text{ if $(n,q)\neq (1,2)$}\\
		\end{align*}
		
	\end{lemma}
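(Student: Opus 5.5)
The plan is to write each group order as the leading power of $q$ times a product of factors of the form $1-q^{-i}$ (or $1+q^{-i}$, $1-(-q)^{-i}$), and to bound that product uniformly in $n$ and $q$. Every product of distinct factors $1-q^{-j}$ with $j\ge1$ is at least $\prod_{j\ge1}(1-2^{-j})\approx 0.2887>9/32$. Every product of factors $1-q^{-2j}$ is at least $\prod_{j\ge1}(1-4^{-j})\approx0.6885>9/16$. Both infinite products are standard numerical facts, which I would quote or check by truncating and bounding the tail.

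\emph{Linear groups.} We have $|\GL_n(q)|=q^{n^2}\prod_{i=1}^n(1-q^{-i})$, so the product lies in $(0.2887,1)$ and the $\GL_n$ bounds follow. Also $|\SL_n(q)|=q^{n^2-1}\prod_{i=2}^n(1-q^{-i})$, and this product is at least $\prod_{i\ge2}(1-2^{-i})\approx0.577>9/16$. The outer inequalities $q^{n^2-2}<\tfrac{9}{32}q^{n^2}$ and $q^{n^2-2}<\tfrac{9}{16}q^{n^2-1}$ reduce to $q^{-2}\le 1/4<9/32$ and $q^{-1}\le1/2<9/16$.

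\emph{Unitary groups.} We have $|\GU_n(q)|=q^{n^2}\prod_{i=1}^n(1-(-q)^{-i})$, and the partial products $P_m=\prod_{i\le m}(1-(-q)^{-i})$ alternate.
\begin{itemize}
\item For the lower bound, group consecutive factors as $(1+x)(1-x/q)$ with $x=q^{-(2k-1)}$. Each such pair exceeds $1$ because $x/q\le x/2$ and $(1+x)(1-x/2)>1$ for $0<x<1$. A leftover odd factor also exceeds $1$. Hence $|\GU_n(q)|>q^{n^2}$.
\item For the upper bound, write $P_m=(1+q^{-1})\prod_{i=2}^m(1-(-q)^{-i})$ and group the factors from $i=2$ on as $(1-y)(1+y/q)$ with $y=q^{-2k}$. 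Each such pair is less than $1$, and so is a leftover even factor $1-q^{-2k}$. Hence $P_m\le 1+q^{-1}\le 1.5$, and $1.5\le q$ gives the last inequality.
\end{itemize}
Then $|\SU_n(q)|=|\GU_n(q)|/(q+1)$. This gives the lower bound $q^{n^2}/(q+1)\ge q^{n^2-1}/1.5$. It also gives the upper bound $q^{n^2}(1+q^{-1})/(q+1)=q^{n^2-1}$, which is strict as soon as $n\ge2$, since $P_n<1+q^{-1}$ once a second factor appears. The degenerate case $n=1$ has to be noted separately.

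\emph{Symplectic and orthogonal groups.} We have $|\Sp_{2n}(q)|=q^{2n^2+n}\prod_{i=1}^n(1-q^{-2i})$, which lies in $(0.6885\,q^{2n^2+n},\,q^{2n^2+n})$. For the orthogonal groups, $|\Or^\pm_{2n}(q)|=2q^{2n^2-n}(1\mp q^{-n})\prod_{i=1}^{n-1}(1-q^{-2i})$.
\begin{itemize}
\item In the $+$ case the factors $1-q^{-n}$ and $1-q^{-2i}$ are distinct factors of the form $1-q^{-j}$, so the product exceeds $0.2887>9/32$; it is also clearly less than $1$.
\item In the $-$ case the product exceeds $\prod_j(1-4^{-j})>9/16$.
\item For the upper bound in the $-$ case with $n\ge2$, use $(1+q^{-n})(1-q^{-2})\le1$, which holds since $q^{-n}\le q^{-2}$. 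Then $2q^{2n^2-n}\le q^{2n^2-n+1}$. The case $(n,q)=(1,2)$ is excluded exactly as in the statement.
\end{itemize}
The outermost inequalities again reduce to $q^{-1}<9/16$ and $q^{-1}<9/8$.

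I do not expect a real obstacle. The only delicate points are the alternating unitary product and the boundary cases $n=1$, and I would handle both by the explicit pairing above together with direct inspection of the small cases.
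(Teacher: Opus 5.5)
Your route differs from the paper's, which just quotes \cite[Lemma 5.1]{guralnick2020larsen_tiep_character} together with the order formulas and $q+1\le 1.5q$. You instead write each order as a power of $q$ times a product of factors $1\pm q^{-i}$, bound these by $\prod_{j\ge1}(1-2^{-j})$ and $\prod_{j\ge1}(1-4^{-j})$, and pair factors in the alternating unitary product. This is self-contained and works for $\GL_n$, $\Sp_{2n}$, $\GU_n$ and for all the lower bounds. However, two steps fail as written.

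\textbf{The $\Or^+_{2n}$ lower bound.} The factors $1-q^{-n}$ and $1-q^{-2i}$ ($1\le i\le n-1$) are not distinct when $n$ is even, since $1-q^{-n}=1-q^{-2(n/2)}$. For example, $|\Or^+_4(q)|=2q^6(1-q^{-2})^2$. So your appeal to $\prod_j(1-2^{-j})$ is unjustified. The fix is easy: $1-q^{-n}\ge 1/2$ and $\prod_{i=1}^{n-1}(1-q^{-2i})>0.688$, so the product exceeds $0.344>9/32$.

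\textbf{The case $n=1$.} Here the statement is actually false in three places, and your argument does not detect this.
\begin{itemize}
\item We have $|\SL_1(q)|=|\SU_1(q)|=1=q^{n^2-1}$, so both strict upper bounds become equalities. Your $\SL_n$ argument tacitly uses $\prod_{i=2}^n(1-q^{-i})<1$, which fails for the empty product. For $\SU_n$ you only say that $n=1$ ``has to be noted separately''.
\item For the minus type you prove $|\Or^-_{2n}(q)|\le 2q^{2n^2-n}$ only for $n\ge2$, and then say $(n,q)=(1,2)$ is excluded ``as in the statement''. This leaves $n=1$, $q\ge3$ uncovered, and there the middle inequality is false for every $q$: $|\Or^-_2(q)|=2(q+1)>2q$.
\end{itemize}
What holds for $n=1$ in the minus case is only the outer bound $2(q+1)\le q^{2}$, which is true exactly when $q\ge3$. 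This is the real content of the exception $(n,q)\ne(1,2)$; compare its use in the proof of Lemma~\ref{l:order_centralizers}(i), where $|\Or^-_2(2)|=6>4$ is the issue. So you cannot prove the lemma as stated. You should formulate and prove the amended version: the strict upper bounds for $\SL_n$ and $\SU_n$ only for $n\ge2$, and, for $n=1$ in the minus case, only $|\Or^-_2(q)|\le q^2$ for $q\ge3$.
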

	
	\begin{proof}
		The estimates follow from \cite[Lemma 5.1(i, iii)]{guralnick2020larsen_tiep_character}, and from the order formulas for the groups under consideration. We just note that the occurrence of the factor $1.5$ follows from use of the bound $q+1\le 1.5 q$.
	\end{proof}

	The next lemma records, in a special case, the fact that the dimension of the centralizer of an element in a symplectic or orthogonal group is approximately half the dimension of the centralizer in the general linear group.

	\begin{lemma}
		\label{l:compare_dim_centralizer_GL_Sp}
		Let $g\in Y:=\Sp_n(K)$ or $Y:=\Or_n(K)$, and let $d'$ and $d$ be the dimension of the centralizer of $g$ in $Y$ and $\GL_n(K)$, respectively. 
		\begin{itemize}
			\item[(i)] Assume $Y=\Sp_n(K)$. If $g$ is unipotent and $p$ is odd, then $d'\le d/2 + R/2$ where $R$ is the number of Jordan blocks of $g$. If $g$ is semisimple, without eigenvalue $-1$ if $p$ is odd, then $d'= d/2 + m/2$ where $m$ is the dimension of the $1$-eigenspace of $g$.
			\item[(ii)]  Assume $Y=\Or_n(K)$. If $g$ is unipotent and $p$ is odd, then $d'\le d/2$. If $g$ is semisimple, without eigenvalue $-1$ if $p$ is odd, then $d'= d/2 - m/2$ where $m$ is the dimension of the $1$-eigenspace of $g$.
		\end{itemize}
	\end{lemma}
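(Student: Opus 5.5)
The plan is to reduce to the two basic cases, unipotent and semisimple, and use the standard descriptions of centralizers in $\GL_n(K)$, $\Sp_n(K)$ and $\Or_n(K)$. Dimension of the centralizer in $\Or_n(K)$ equals that in $\SO_n(K)$, so we may work with the Lie algebra: $C_Y(g)$ has the same dimension as the fixed space of $g$ on $\mathrm{Lie}(Y)$ in the semisimple case. In the unipotent case with $p$ odd we use the known centralizer dimension formulas. Identify $\mathrm{Lie}(\GL_n(K))=V\otimes V^*\cong V\otimes V$, using the form to identify $V^*\cong V$ as $g$-modules. Then $\mathrm{Lie}(\Sp_n(K))\cong S^2V$ and $\mathrm{Lie}(\Or_n(K))\cong \wedge^2V$ (for $p$ odd, and for semisimple elements in all characteristics at the level of dimension counts). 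Moreover $V\otimes V=S^2V\oplus\wedge^2V$ when $p$ is odd.

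\emph{Semisimple case.} Let $V=\bigoplus_\lambda V_\lambda$ be the eigenspace decomposition. The form pairs $V_\lambda$ with $V_{\lambda^{-1}}$, so $\dim V_\lambda=\dim V_{\lambda^{-1}}$, and $V_1$ is nondegenerate. By hypothesis $V_{-1}=0$ when $p$ is odd; when $p=2$ we have $-1=1$, so no self-paired eigenvalue other than $1$ occurs. Then
\[
d=\sum_\lambda (\dim V_\lambda)^2 = m^2+2\sum_{\{\lambda,\lambda^{-1}\},\,\lambda\ne\lambda^{-1}} (\dim V_\lambda)^2 .
\]
The centralizer in $Y$ is $Y(V_1)\times\prod_{\{\lambda,\lambda^{-1}\}}\GL(V_\lambda)$, since $g$ preserves each pair $V_\lambda\oplus V_{\lambda^{-1}}$ as a totally-singular-split space. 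Its dimension is therefore $\dim Y(V_1)+\sum_{\text{pairs}}(\dim V_\lambda)^2$. Using $\dim\Sp_m=m(m+1)/2$ and $\dim\Or_m=m(m-1)/2$, this gives $d'=d/2\pm m/2$. In characteristic $2$ the orthogonal group on an odd-dimensional or any $V_1$ still has dimension $m(m-1)/2$, so the formula persists.

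\emph{Unipotent case, $p$ odd.} Use the description of \cite{liebeck_seitz_2012unipotent}: if $g$ has $r_i$ Jordan blocks of size $i$, then
\[
d=\sum_{i,j}\min(i,j)\,r_ir_j ,
\]
and
\[
\dim C_{\Sp}(g)=\tfrac12 d+\tfrac12\sum_{i\text{ odd}} r_i ,\qquad
\dim C_{\Or}(g)=\tfrac12 d-\tfrac12\sum_{i\text{ odd}} r_i .
\]
The symplectic formula yields $d'\le d/2+R/2$, since $\sum_{i\text{ odd}}r_i\le R$. The orthogonal formula yields $d'\le d/2$.

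Alternatively, one can argue that the fixed space of $g$ on $V\otimes V$ decomposes over $S^2V\oplus\wedge^2V$. The difference of the two fixed-space dimensions is $\sum_{i\text{ odd}}r_i$, which comes from tensoring Jordan blocks over $\F_p$ with $p$ odd: $\dim (S^2J_i)^g-\dim(\wedge^2J_i)^g$ is $1$ or $0$ according to the parity of $i$, and cross terms contribute equally.

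The only delicate point is justifying the unipotent formulas, in particular the parity term. I would simply cite \cite[Thm. 3.1]{liebeck_seitz_2012unipotent} rather than reprove it.
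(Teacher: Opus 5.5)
Your proposal is correct and matches the paper's argument. In the semisimple case you decompose $V$ into $V_1$ and the pairs $V_\lambda\oplus V_{\lambda^{-1}}$, get $C_Y(g)\cong Y(V_1)\times\prod\GL(V_\lambda)$, and use $\dim\Sp_m(K)=m(m+1)/2$ and $\dim\Or_m(K)=m(m-1)/2$; in the unipotent case you cite \cite[Theorem 3.1]{liebeck_seitz_2012unipotent}. This is exactly what the paper does. The Lie-algebra identifications and the alternative parity computation are not needed (the latter would also require smoothness of unipotent centralizers for $p$ odd), but they do no harm.
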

	
	\begin{proof}
		If $g$ is unipotent, then by assumption  $p$ is odd and the bounds follow from \cite[Theorem 3.1]{liebeck_seitz_2012unipotent}. Assume then $g$ is semisimple; denote by $V_\lambda$ the $\lambda$-eigenspace of $g$ and by $m_\lambda$ its dimension.  If $\lambda \neq 1$ then $m_\lambda = m_{\lambda^{-1}}$ and the centralizer of $g$ restricted to $V_\lambda \oplus V_{\lambda^{-1}}$ is isomorphic to $\GL_{m_\lambda}(K)$. On the other hand, the centralizer of the restriction of $g$ to $V_1$ is isomorphic to $\Sp_{m_1}(K)$ (if $Y=\Sp_n(K)$) or $\Or_{m_1}(K)$ (if $Y=\Or_n(K)$). Since $\dim(\Sp_{m_1}(K))=m_1^2/2 + m_1/2$ and
		$\dim(\Or_{m_1}(K))=m_1^2/2 - m_1/2$, the conclusion follows.
	\end{proof}

	In part (iii) of the next lemma, a rational canonical block of $g$ refers to an indecomposable $\F_q\gen g$-submodule of the natural module. The natural module decomposes as a direct sum of such submodules; the number of summands is called the number of rational canonical blocks of $g$. Note that (iii) is a generalization of (i), but we prefer to state (i) separately for later reference.

	\begin{lemma}
		\label{l:order_centralizers}
		Let $q$ be a power of $p$, let $G$ be $\GU_n(q)$ or $\Sp_n(q)$ or $\Or^\varepsilon_n(q)$, let $g\in G$ and let $d'$ be the dimension of the centralizer of $g$ in $\GL_n(K)$ or $\Sp_n(K)$ or $\Or_n(K)$.
		\begin{itemize}
			\item[(i)] Assume $g$ is unipotent, and let $R$ be the number of Jordan blocks of $g$. Then $|C_G(g)|<2^Rq^{d'}\le q^{d'+R}$.
			\item[(ii)] Assume $g$ is semisimple, and let $E$ (resp. $E_2$) be the number of distinct irreducible factors (resp. irreducible factors of degree at least $2$) of the characteristic polynomial of $g$. If $G=\Sp_n(q)$ then $|C_G(g)|< 2^{E_2}q^{d'}\le q^{d'+E_2}$; if $G=\Or^\varepsilon_n(q)$ then $|C_G(g)|< 2^{E_2+2}q^{d'}\le q^{d'+E_2+2}$; and if $G=\GU_n(q)$ then $|C_G(g)|< 2^E q^{d'} \le q^{d'+E}$.
			\item[(iii)] Let $R$ denote the number of rational canonical blocks of $g$. Then  $|C_G(g)|< 2^Rq^{d'}\le q^{d'+R}$.
			\item[(iv)] Assume $g$ is regular. Then $|C_G(g)|<h^{(2,q-1)}(q+1)^r$ where $r$ is the untwisted Lie rank of $G$, and $h=1$ if $G=\GU_n(q)$, $h=2$ if $G=\Sp_n(q)$, $h=4$ if $G=\Or^\varepsilon_n(q)$.
		\end{itemize}
	\end{lemma}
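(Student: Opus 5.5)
The approach is to reduce each bound to counting over finite structures in the algebraic group. The basic tool is Lang--Steinberg together with the known structure of centralizers in $\GL_n(K)$, $\Sp_n(K)$ and $\Or_n(K)$. In every case $C_{\bar G}(g)$ is a closed subgroup of dimension $d'$, stable under the Frobenius endomorphism $F$ defining $G$, and $|C_G(g)|=|C_{\bar G}(g)^F|$. We write $C_{\bar G}(g)=C^\circ.A$ with $A$ the component group. Then $|C_G(g)|\le |A|\cdot|(C^\circ)^F|$, and $(C^\circ)^F$ has order at most a product of factors of the form $q^{\dim}\prod(1+q^{-i})$-type quantities. The aim is to show that the connected part contributes less than $\prod_j (1+q^{-1})\cdots$ times $q^{d'}$, with the product bounded by $2$ per "block", and that the component group contributes the remaining power of $2$.

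For (ii), take $g$ semisimple and decompose the natural module into the homogeneous components for the distinct irreducible factors of the characteristic polynomial. In $\GU_n(q)$, $C_G(g)$ is a direct product of groups $\GL_{m}(q^{k})$ or $\GU_m(q^k)$, one per factor (or per pair of factors). Each such group has order less than $2q^{k m^2}$; for $\GL$ one even gets $<q^{km^2}$, while $\GU_m(q^k)\le 1.5 q^{km^2}$ by Lemma \ref{l:order_classical_groups}. Summing the exponents gives $d'$, which yields the factor $2^E$. In $\Sp_n(q)$, the eigenvalue-$\pm1$ parts give $\Sp$ factors, whose order is less than $q^{\dim}$ by Lemma \ref{l:order_classical_groups}. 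The remaining factors give $\GL_m(q^k)$, whose order is less than $q^{\dim}$, or $\GU_m(q^k)$, with order at most $1.5q^{\dim}$; each of these comes from an irreducible factor of degree $\ge2$ (or a pair), which accounts for $2^{E_2}$. Lemma \ref{l:compare_dim_centralizer_GL_Sp} confirms that the dimensions add up to $d'$. For $\Or^\varepsilon_n(q)$ the same argument applies, except that the $\pm1$-eigenspaces contribute orthogonal groups $\Or^{\pm}_m(q)$, each of order less than $2\cdot 1.5 q^{\dim}$ or so. These two extra factors are bounded by $2^2$, giving $2^{E_2+2}$.

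For (i) and (iii), take $g$ unipotent, or in general write $g=su$ via the Jordan decomposition. Then $C_G(g)=C_{C_G(s)}(u)$, and we apply the unipotent result inside each factor of $C_G(s)$, where the factors are $\GL$ or $\GU$ over extension fields, or $\Sp$ and $\Or$ in the $\pm1$ parts. For a unipotent $u$ in $\GL_m(q)$ or $\GU_m(q)$, $C(u)=U.R$ with $R\cong\prod_i \GL_{r_i}(q)$ or $\GU_{r_i}(q)$, where $r_i$ is the number of blocks of size $i$ and $U$ is a $p$-group of order $q^{\dim U}$. Bounding each $|\GU_{r_i}(q)|$ by $(1+1/q)\cdots q^{r_i^2}<2^{r_i}q^{r_i^2}$ gives $2^R q^{d'}$. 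For $\Sp$ and $\Or$ in odd characteristic, the reductive part is a product of $\Sp_{r_i}$ and $\Or_{r_i}$ by Liebeck--Seitz. Each $\Or_{r_i}(q)$ is bounded by $2^{r_i}q^{\dim}$; a crude bound of $2$ per block suffices here, since $|\Or^\pm_r(q)|\le 2q^{\dim}\cdot 1.5\le 2^r q^{\dim}$ for $r\ge 2$, and $|\Or_1|=2$. In even characteristic we use the Liebeck--Seitz description of centralizers and their component groups, which are elementary abelian of rank at most the number of block sizes. The number of rational canonical blocks of $g$ dominates the number of blocks counted over all factors, which gives (iii), and (i) is the special case $s=1$. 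The inequality $2^R\le q^R$ is trivial.

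For (iv), $g$ regular means $C^\circ$ is abelian of dimension $r$. It is an extension of a torus by a unipotent group, and has $F$-fixed points of order at most $(q+1)^r$. The component group has order dividing that of the center times $2$'s: it is trivial for $\GU$, of order at most $2$ for $\Sp$ in odd characteristic, and of order at most $4$ for $\Or$, with the extra $2$ from the $\Or/\SO$ quotient. This is where the exponent $(2,q-1)$ appears. The main obstacle will be (iii), and more precisely the even-characteristic orthogonal and symplectic unipotent parts, where the component groups are messier. There I would simply quote the explicit centralizer orders from Liebeck--Seitz (Chapters 3--4) and compare factor by factor.
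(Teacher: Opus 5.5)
For (i)--(iii) your route is the paper's. You split $V$ into (paired) generalized eigenspaces, so that $C_G(g)=C_{C_G(s)}(u)$ is a product of unipotent centralizers in $\GL$/$\GU$ groups over extension fields and in $\Sp$/$\Or$ groups on the $X\pm1$ parts, and you quote Liebeck--Seitz for the latter. The even-characteristic count, which you only sketch, is done explicitly in the paper from \cite[Theorem 7.2]{liebeck_seitz_2012unipotent}. There $|C_G(g)|<2^{\ell+b+t+\delta}q^{d'}$, with $b$ the number of rank-one factors (this absorbs the excess of $\Or^-_2(q)$), and $\ell+b\le 2\sum a_i$, $t+\delta\le\sum b_j$.

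The genuine gap is in (iv): your component-group count is wrong. You claim order at most $2$ for $\Sp$ in odd characteristic and at most $4$ for $\Or$. But a regular $g$ may have both eigenvalues $1$ and $-1$. Then $V_1$ and $V_{-1}$ each carry a regular unipotent element of $\Sp(V_{\pm1})$ or $\Or(V_{\pm1})$, and each contributes its own component group: $\{\pm1\}$ in the symplectic case, up to $\Or_1\times\Or_1$ in the orthogonal case. For instance, with $J_2$ a unipotent Jordan block, $g=J_2\perp(-J_2)\in\Sp_4(3)$ is regular and $C_G(g)\cong C_6\times C_6$ has order $36>2\cdot(3+1)^2$. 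So the bound your count produces is false, and your sentence about where $(2,q-1)$ comes from does not follow from what you wrote.

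The exponent $(2,q-1)$ is exactly the number of admissible eigenvalues $\pm1$, and this is how the paper obtains it. Regularity of $g$ forces each $u_f$ to be regular, and the $\GL$/$\GU$ pieces contribute at most $(q+1)^{\dim/2}$-type factors. Each of the at most $(2,q-1)$ pieces with $f=X\pm1$ carries a regular unipotent of $\Sp$ or $\Or$, which has at most $h/2$ Jordan blocks. So (i) bounds that piece's centralizer by $2^{h/2}q^{\mathrm{rank}}=h\,q^{\mathrm{rank}}$. Your splitting $|C_G(g)|\le|A|\cdot|(C^\circ)^F|$ with $|(C^\circ)^F|\le(q+1)^r$ is a legitimate alternative route, once $|A|\le h^{(2,q-1)}$ is proved in this way.

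In (ii) for $\Or^\varepsilon_n(q)$ there is a second problem. Your two $\pm1$ factors, each bounded by roughly $2\cdot 1.5\,q^{\dim}$, give a constant $9$, not $2^2$. In fact no argument can give $2^2$. Take $g=1_{W_1}\perp(-1)_{W_2}\in\Or^+_4(3)$ with $W_1,W_2$ anisotropic planes. Then $C_G(g)\cong\Or^-_2(3)^2$ has order $64>2^{2}\cdot 3^{2}$, while $d'=2$ and $E_2=0$. The paper's proof makes the same slip: it charges only a factor $2$ per orthogonal factor, but $|\Or^-_2(q)|=2(q+1)>2q$. What does hold is the weaker bound $|C_G(g)|<q^{d'+E_2+2}$, and that is the form actually used later, e.g.\ in \Cref{l:general_parameters_SO}.
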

	
	\begin{proof}
		(i) This can be found in \cite{liebeck_seitz_2012unipotent}, as we proceed to explain. We address the case $G=\Sp_n(q)$; the case $G=\Or_n(q)$ is identical and the case $G=\GU_n(q)$ is easier. If $p$ is odd, then \cite[Theorem 7.1]{liebeck_seitz_2012unipotent} tells us that $C_G(g)=U\rtimes H$, where $U$ is unipotent and  $H$ is a direct product of at most $R$ groups, each of which is a symplectic or orthogonal group over $\F_q$. By \Cref{l:order_classical_groups}, each of these groups has order at most $2q^m$, where $m$ is the dimension of the corresponding algebraic group; the claimed bound follows. 
		Assume then $p=2$, where there are some complications in the structure of $C_G(g)$. Following \cite{liebeck_seitz_2012unipotent}, we may write	
		\[
		\F_q^n \downarrow \gen g = \bigoplus_{i=1}^\ell W(m_i)^{a_i}\oplus \bigoplus_{j=1}^r V(2k_j)^{b_j}
		\]
		where each $W(m_i)$ is the sum of two Jordan blocks of size $m_i$, and each $V(2k_j)$ is a Jordan block of size $2k_j$. (In particular, $n=2\sum_i a_im_i + 2\sum_j b_jk_j$.) Here the $m_i$ are pairwise distinct, and the same for the $k_j$; assume moreover the $k_j$ are in decreasing order.
		By \cite[Theorem 7.2]{liebeck_seitz_2012unipotent} we have $C_G(g)=U.(H\times K)$ where
		\begin{itemize}
			\item $U$ is unipotent;
			\item $H$ is a direct product of $\ell$ groups, the $i$-th of which is a symplectic or orthogonal group of rank $a_i$ over $\F_q$;
			\item $K\cong C_2^{t+\delta}$, where $t$ is the number of $j$ such that $k_j-k_{j+1}\ge 2$, and $\delta \in \{0,1\}$, with $\delta =1$ only if $r\neq 0$.
		\end{itemize}
		Letting $b$ be the number of $i$ such that $a_i=1$, by \Cref{l:order_classical_groups} we have $|C_G(g)|<2^{\ell + b + t + \delta} q^{d'}$. (The relevance of the parameter $b$ stands in the fact that, for $q=2$, $|\Or^-_2(q)|=2(q+1)$ is not less than $q^2$, cf. the last equation in \Cref{l:order_classical_groups}; it is instead less than  $2q^2$.) By the definitions we have  $\ell + b \le 2 \sum a_i$ and $t+\delta \le \sum b_j$, so $\ell + b + t + \delta \le 2\sum a_i + \sum b_j =R$,  which implies $|C_G(g)|<2^Rq^{d'}$, as desired for part (i).

		We now address (ii) and (iii) largely simultaneously. We assume $G=\Sp_n(q)$; the other cases are similar, as we point out at the end of the proof. For a monic polynomial $f\in \F_q[X]$ of degree $m$, denote $f^*(X) = X^m f(1/X)/f(0)$. Writing $V:=\F_q^{n}$, we have
		\[
		V\downarrow \langle g \rangle =\bigoplus_f V_f
		\]
		where $f$ is an irreducible monic polynomial in $\F_q[X]$ and $V_f$ is the generalized $f$-eigenspace of $g$, and moreover $W_{f,f^*}:=V_f + V_{f^*}$ is nondegenerate. Let $g_{f,f^*}$ be the restriction of $g$ to $W_{f,f*}$ and let $S_{f,f^*}=\Sp(W_{f,f^*})$, so
		\[
		C_G(g)=\prod_{f,f^*}C_{S_{f,f^*}}(g_{f,f^*}).
		\]
		Let now $g_f$ be the restriction of $g$ to $V_f$. Assume $f$ has degree $m$ and $V_f$ has dimension $k$. Note that $g_f$ acts $\F_{q^m}$-linearly; let $u_f$ be the unipotent part of $g_f$ as an element of $\GL_{k/m}(q^m)$. Write $S=S_{f,f^*}$, $g'=g_{f,f^*}$,  $u=u_f$, for ease of notation. Then $C_S(g')\cong C_L(u)$, where $L= \GU_{k/m}(q^{m/2})$ if $f=f^*$ and $\deg(f)>1$; $L=\GL_{k/m}(q^m)$ if $f\neq f^*$; $L=\Sp_k(q)$ if $f=f^*$ and $\deg(f)=1$ (i.e., $f=X\pm 1$). Assume next $g$ is as in (ii). Then $g$ is semisimple, and so $u=1$ and $C_L(u)=L$.  In particular, we see that the bound follows from \Cref{l:order_classical_groups}, noting that the number of $f$ with $f=f^*$ and $f\neq X+1$ and $V_f\neq 0$ is at most $E_2$ (since each such $f$ has degree at least $2$). Assume now $g$ is as in (iii), so $u$ may be nontrivial. The bound follows from (i), noting that the number of rational canonical blocks of $g'$ is at least the number of Jordan blocks of $u$. (More precisely, if $f=f^*$ then it is equal to the number of Jordan blocks of $u$, and otherwise it is twice that number.)

		The case $G=\Or^\varepsilon_n(q)$ is essentially the same; in (ii), we pay an additional factor $4\le q^2$
		to account for the case $f=X\pm 1$, in which case the centralizer of the restriction to $V_f$ is an orthogonal group rather than a symplectic one. In the case $G=\GU_n(q)$ we replace $f^*(X)$ by $f^\dagger(X)=X^n\bar f(1/X)/\bar f(0)$ where $x \mapsto \bar x = x^q$ is the involutory automorphism of $\F_{q^2}$. We have a similar decomposition of $V=\F_{q^2}^n$ into nondegenerate subspaces, the only difference being that if $f=f^\dagger$ and $\deg(f)=1$ (i.e., $f=X-\lambda$ with $\lambda^{q+1}=1$) then the centralizer of the restriction of $g$ to $V_f$ is $\GU_k(q)$, and so in (ii) we pay a factor $2\le q$ for each such $f$.
		
		(iv) We use the same notation as in the proof of (ii) and (iii). We have that $g$ regular if and only if $g_{f,f^*}$ is regular for every $f,f^*$, which is equivalent to $u=u_f$ being regular. 
		If $L=\GL_{k/m}(q^m)$ (resp. $\GU_{k/m}(q^{m/2})$) then $|C_L(u)|<(q+1)^{k}$ (resp. $|C_L(u)|<(q+1)^{k/2}$). If $L=\Or^\varepsilon_k(q)$ or $\Sp_k(q)$, then $u$ has at most $h/2$ Jordan blocks, so by (i) we deduce that $|C_L(u)|<2^{h/2}(q+1)^{k/2}=h(q+1)^{k/2}$. There are at most $(2,q-1)$ choices of $f$ with $L=\Or^\varepsilon _k(q)$ or $\Sp_k(q)$ (namely $f=X\pm 1$), and so we deduce $|C_G(g)|<h^{(2,q-1)}(q+1)^{n/2}$, as desired. If $G=\GU_n(q)$, we have $|C_G(g)|<(q+1)^n$ since $L$ is never $\Or^\varepsilon_k(q)$ or $\Sp_k(q)$.
	\end{proof}

	\subsection{Notation and remarks on \Cref{table:new_unique,table:sporadics}}
	\label{subsec:notation}
	
	Let $S$ be a simple group of Lie type over a field $\F_r$, or a sporadic group. In view of various isomorphisms, we  assume $m\ge 3$ if $S=\PSU_m(r)$, $m\ge 4$ if $S=\PSp_m(r)$, $m\ge 7$ if $S=\POm^\varepsilon_m(q)$. Moreover, in order to avoid some technical issues, we assume $S\neq \PSL_2(r), \PSL^\pm_3(r)$.
    
    In \Cref{table:new_unique,table:sporadics} we define certain elements $x_i \in S$, and we now explain our notation for these elements. The notation in \Cref{table:sporadics} is as in the ATLAS \cite{atlas}. We mention at once that the table is \cite[Table 9]{guralnick2012malle}, except that for $S=Co_1$ we chose $23A$ instead of $13A$.

	\begin{table}
		\centering
		\caption{In the table, $n_i:=|N_S(\gen{x_i})/C_S(x_i)|$. See \Cref{subsec:notation} for remarks and notation.}
		\label{table:new_unique}
		\begin{tabular}{llllll}
			\hline\noalign{\smallskip}
			$S$ & $|x_1|$ & $n_1$ & $|x_2|$ & $n_2$   &  Conditions \\
			\noalign{\smallskip}\hline\noalign{\smallskip}
			$\PSL_m(r)$ & $\Phi^*_m(r)$  & $m$ &   $\Phi^*_{m-1}(r)$ & $m-1$ & $m\ge 4$  \\
			$\PSU_m(r)$ & $\Phi^*_{m-1}(r^2)$ & $m-1$  &  $\Phi^*_{m/2}(r^2)$ & $m$ & $m\equiv 0\pmod 4$ \\
			$\PSU_m(r)$ & $\Phi^*_{m-1}(r^2)$ & $m-1$  &  $\Phi^*_{m/2}(r^2)$ & $m/2$ & $m\equiv 2\pmod 4$  \\
			$\PSU_m(r)$ & $\Phi^*_m(r^2)$ & $m$ & $\Phi^*_{m-2}(r^2)$ & $m-2$ & $m\equiv 3\pmod 4, \, m>3$  \\
			$\PSU_m(r)$ & $\Phi^*_m(r^2)$ & $m$ & $\Phi^*_{(m-1)/2}(r^2)$ & $m-1$ & $m\equiv 1\pmod 4$  \\
			$\PSp_{2m}(r)$  & $\Phi^*_{2m}(r)$ & $2m$ &  $\Phi^*_{2(m-1)}(r)$ & $2(m-1)$ & $(m,r)\neq (3,2)$ \\
			$\PSp_6(2)$  & $7$ & $6$ & & &\\  
			$\POm^+_{2m}(r)$ & $\Phi^*_{2(m-1)}(r)$ &  $2(m-1)$   & $\Phi^*_m(r)$ & $m$   &  \\
			$\POm^-_{2m}(r)$ & $\Phi^*_{2m}(r)$ & $m$ & $\Phi^*_{2(m-1)}(r)$ & $2(m-1)$ &   \\
			$\POm_{2m+1}(r)$ & $\Phi^*_{2m}(r)$ & $2m$ &  $\Phi^*_m(r)$ & $2m$ & $m$ odd  \\
			$\POm_{2m+1}(r)$ & $\Phi^*_{2m}(r)$ & $2m$ &  $\Phi^*_m(r)$ & $m$ & $m$ even \\
			$^2B_2(r)$ & $\Phi'_8$ & $4$  &&& \\
			$^2G_2(r)$  & $\Phi'_{12}$ & $6$  &&& \\ 
			$G_2(r)$ & $r^2+\varepsilon r +1$ & $6$  &&& $3\mid (r+\varepsilon)$ \\
			$G_2(r)$ & $r^2+r+1$ & $6$  &&& $3\mid r$  \\
			$^3D_4(r)$  & $r^4-r^2+1$ & $4$  &&& \\
			$F_4(r)$ & $r^4-r^2+1$ & $12$  &&&  \\
			$^2F_4(r)$ & $\Phi'_{24}$ & $12$  &&& \\	
			$E_6(r)$ & $\Phi^*_9(r)$ & $9$  &  $\Phi^*_8(r)$ &  $8$ &   \\
			$^2E_6(r)$ & $\Phi^*_{18}(r)$  & $9$  &   $\Phi^*_{10}(r)$ &  $10$ &   \\
			$E_7(r)$ &  $\Phi^*_{14}(r)$ & 14 &   &  &  \\
			$E_8(r)$ & $\Phi_{30}(r)$ & $30$  &&& \\
			\noalign{\smallskip}\hline\noalign{\smallskip}
		\end{tabular}
	\end{table}
	
	\begin{table}
		\centering
		\caption{In the table, $n_1:=|N_S(\gen{x_1})/C_S(x_1)|$.}
		\label{table:sporadics}
		\begin{tabular}{lll}
			\hline\noalign{\smallskip}
			$S$ & $x_1$ & $n_1$  \\
			\noalign{\smallskip}\hline\noalign{\smallskip}
			$M_{11}$ & $11A$ & $5$ \\
			$M_{12}$ & $11A$ & $5$  \\
			$J_1$ & $19A$ & $6$ \\
			$M_{22}$ & $11A$ & $5$ \\
			$J_2$ & $7A$ & $6$ \\
			$M_{23}$ & $23A$ & $11$ \\
			$^2F_4(2)'$ & $13A$ & $6$ \\
			$HS$ & $11A$ & $5$ \\
			$J_3$ & $19A$ & $9$ \\
			$M_{24}$ & $23A$ & $11$ \\
			$McL$ & $11A$ & $5$ \\
			$He$ & $17A$ & $8$ \\
			$Ru$ & $29A$ & $14$ \\
			$Suz$ & $13A$ & $6$ \\ 
			$ON$ & $31A$ & $15$ \\ 
			$Co_3$ & $23A$ & $11$ \\ 
			$Co_2$ & $23A$ & $11$ \\ 
			$Fi22$ & $13A$ & $6$ \\ 
			$Fi22$ & $13A$ & $6$ \\ 
			$HN$ & $19A$ & $9$  \\
			$Ly$ & $67A$ & $22$  \\
			$Th$ & $19A$ & $18$  \\
			$Fi_{23}$ & $17A$ & $16$  \\
			$Co_1$ & $23A$ & $11$  \\
			$J_4$ & $43A$ & $14$  \\
			$Fi'_{24}$ & $29A$ & $14$  \\
			$B$ & $47A$ & $23$  \\
			$M$ & $71A$ & $35$  \\
			\noalign{\smallskip}\hline\noalign{\smallskip}
		\end{tabular}
	\end{table}

Let us then focus on \Cref{table:new_unique}.	We denote by $\Phi_m(X)$ the $m$-th cyclotomic polynomial, and by $\Phi^*_m(r)$ the product of all primitive prime divisors (ppds) of $r^m-1$.  Recall that by Zsigmondy's theorem, $\Phi^*_m(r)>1$ unless $m=2$ and $r+1$ is a $2$-power, or $(m,r)=(6,2)$. (The notation $\Phi^*_m(r)$, which is borrowed from \cite{guralnick2012malle}, is not accidental, in the sense that $\Phi^*_m(r)$ always divides the $m$-th cyclotomic polynomials $\Phi_m$ evaluated at $r$.)
	We also denote $\Phi'_8 = r+\sqrt{2r}+1$, $\Phi'_{12} = r+\sqrt{3r}+1$, $\Phi'_{24} = r^2 + \sqrt{2r^3}+ r+\sqrt{2r}+1$.
	
	\subsection*{Definition of $x_i$}
	In \Cref{table:new_unique}, we define elements $x_1$ and $x_2$ via their orders, and for classical groups in each case is straightforward to deduce their action on the natural module. For example, $x_1\in \POm^+_{2m}(r)$ acts irreducibly on a nondegenerate $2m-2$-space, and trivially on the perpendicular complement. There are cases where we define only an element $x_1$, and the corresponding entry for $x_2$ is empty. 
	
	Note that in some cases the order of $x_i$ is $1$. For $x_1$, this happens when $S$ is $\PSL_6(2)$ or  $\POm^+_8(2)$. For $x_2$, this happens when $S$ is $\PSL_7(2)$, $\PSp_4(r)$ (and $r+1$ is a $2$-power), $\PSp_8(2)$, $\POm^+_{12}(2)$, or $\POm^-_8(2)$. In particular, in all these cases we regard $x_i$ as being not defined.

	As in \Cref{table:new_unique}, we set $n_i:=|N_S(\gen{x_i})/C_S(x_i)|$. In the following lemma, we verify this value (item (vi)), at the same time pointing out that the value does not change when passing to a quasisimple cover (item (ii)). Letting $X$ be a simple algebraic group over $\overline{\F_r}$ with a Frobenius endomorphism $F$ such that $S=[X^F, X^F]$, we note
	 that if $x_i$ is regular then $n_i=|N_{X^F}(T)/T^F|$, where $T$ is the maximal torus containing $x_i$ in $X$ (item (iv)); and that $N_S(\gen{x_i})/C_S(x_i)$ acts semiregularly on $\gen{x_i}\sm\{1\}$ (item (v)). We also include the case of sporadic groups (\Cref{table:sporadics}), where all claims can be verified by inspection of the ATLAS \cite{atlas}.

	\begin{lemma}
		\label{l:coprime_schur} 
		Assume $S\neq \PSL_2(r), \PSL^\pm_3(r)$. Let $L$ be a quasisimple cover of $S$ and let $x_i$ be an element appearing in \Cref{table:new_unique} or \Cref{table:sporadics}.
		\begin{itemize}
			\item[(i)]  $|x_i|$ is coprime to the order of the Schur multiplier of $S$, and so there exists a unique lift $y_i\in L$ of $x_i$ such that $|y_i|=|x_i|$. (In the items below, $y_i$ denotes such a lift.)
			\item[(ii)] We have $C_S(x_i) =C_L(y_i)/Z(L)$ and $N_S(\gen{x_i}) = N_L(\gen{y_i})/Z(L)$.
			
			\item[(iii)] Assume that $S$ is of Lie type. Then $x_i$ is non-regular in $S$ if and only if $i=2$ and one of the following holds: $S=\PSU_m(r)$ with $m\equiv 2,3\pmod 4$; $S=\PSp_{2m}(r)$; $S= \POm^+_{2m}(r)$ or $\POm_{2m+1}(r)$ with $m$ even.
			\item[(iv)] Assume that $S$ is sporadic or $S$ is of Lie type and $x_i$ is regular. Then $C_L(y_i)$ is abelian. Moreover, if $X$ is a simple  algebraic group with a Frobenius endomorphism $F$ such that $L=[X^F, X^F]$, and if  $T$ is the unique maximal torus of $X$ containing $y_i$, we have $C_L(y_i) = T\cap L$ and $N_L(\gen{y_i}) = N_L(T\cap L)$. 
			\item[(v)] For every nontrivial power $y$ of $y_i$, we have $C_L(y)=C_L(y_i)$ and $N_L(\gen y)=N_L(\gen{y_i})$, and so $N_L(\gen{y_i})/C_L(y_i)$ acts semiregularly on $\gen{y_i} \sm\{1\}$.
			\item[(vi)] $|N_L(\gen{y_i})/C_L(y_i)|$ is equal to $n_i$ as listed in \Cref{table:new_unique} or \Cref{table:sporadics}.
			\item[(vii)] Assume that $S$ is sporadic or $S$ is of Lie type and $x_i$ is regular. If $\ell$ is a prime divisor of $|x_i|$, then $\ell\ge 5$ and $\gen{x_i}$ contains a Sylow $\ell$-subgroup of $S$. 
		\end{itemize}
	\end{lemma}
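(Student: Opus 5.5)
The plan is to verify the items essentially in the order stated, treating sporadic groups by direct inspection of the ATLAS and groups of Lie type via the structure of maximal tori. For sporadic $S$, all claims (coprimality of $|x_1|$ with $|M(S)|$, abelian centralizer, $n_1$, and that $\langle x_1\rangle$ contains a Sylow subgroup for each prime $\ell\ge 5$ dividing $|x_1|$) are read off from the ATLAS. This works because in \Cref{table:sporadics} $|x_1|$ is prime and $C_S(x_1)=\langle x_1\rangle$ or a small abelian group.

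For $S$ of Lie type, I would first handle (i). The Schur multiplier of $S$ is the product of the exceptional part, which is a $\{2,3\}$-group apart from small cases, and the diagonal part, whose order divides $(m,r-1)$, $(m,r+1)$, $(2,r-1)^2$, $3$, or similar. A ppd $\ell$ of $r^k-1$ satisfies $\ell\equiv 1 \pmod k$ and $\ell\nmid r^j-1$ for $j<k$. For the relevant $k\ge 3$, such an $\ell$ does not divide $r\pm1$, so it is coprime to the diagonal part; it is also $\ge 5$ once one checks $k\ge 4$ or that the small cases are excluded (for example $\PSL_3$ is excluded). The existence and uniqueness of a lift $y_i$ of the same order then follow from coprimality: the preimage of $\langle x_i\rangle$ is $Z(L)\times\langle y_i\rangle$. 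Item (ii) follows from the same splitting. An element of $L$ centralizing $y_i$ modulo $Z(L)$ satisfies $[g,y_i]\in Z(L)$, and since $y_i^{[g]}$ then has the same order as $y_i$, coprimality forces $[g,y_i]=1$. The normalizer statement is handled the same way.

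For (iii)–(vi), I would work in the algebraic group $X$, with $y_i$ lying in a specified $F$-stable maximal torus $T$ (a Coxeter-type torus, or a torus of type $\Phi_{m-1}\times\Phi_1$, etc.). I would compute the eigenvalues of $y_i$ on the natural module: a ppd-order element has eigenvalues $\lambda^{r^j}$ that are all distinct, together with possibly a few eigenvalues equal to $1$. Regularity then reduces to checking that no root takes the value $1$ on $y_i$. The non-regular cases in (iii) are exactly those where the eigenvalue pattern repeats, for instance when $x_2$ acts on two isotropic subspaces with the same minimal polynomial, or has a $1$-eigenspace of dimension $\ge 2$ in $\Sp$ or $\Or$. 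When $y_i$ is regular, $C_X(y_i)^\circ=T$, and since $|y_i|$ is coprime to $p$ and to the torsion primes, $C_X(y_i)=T$; hence $C_L(y_i)=T\cap L$. Every nontrivial power $y$ of $y_i$ has the same property, because a ppd-order element has the same eigenspace structure as any of its nontrivial powers. This gives (iv) and (v), and semiregularity follows as in \Cref{l:prime_order_eigenvalues}. For (vi), $N_L(\langle y_i\rangle)=N_L(T\cap L)$, and its quotient is $C_{W}(wF)$ for the corresponding Weyl group element, which is cyclic of the tabulated order by the known structure of these tori (Coxeter numbers, etc.). In the non-regular cases, I would compute $n_2$ directly as the group of Galois-type permutations $\lambda\mapsto\lambda^{\pm r^j}$ preserving the spectrum.

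Item (vii) comes from the order formula. If $\ell$ divides $|x_i|$ with $x_i$ regular, the $\ell$-part of $|S|$ equals the $\ell$-part of the cyclotomic factor $\Phi_k(r)$, which occurs exactly once in $|S|$ for these $k$ because $k$ exceeds half the relevant degree. Therefore the cyclic torus $\langle x_i\rangle$ contains a Sylow $\ell$-subgroup, and $\ell\ge 5$ since $\ell\equiv 1\pmod k$ with $k\ge 4$. I expect the main obstacle to be the bookkeeping in (iii) and (vi) across the unitary and orthogonal cases, in particular getting $n_2$ right in the non-regular cases and handling the small exceptions ($\PSp_6(2)$, $G_2$ with $3\mid r$, the twisted groups with $\Phi'$).
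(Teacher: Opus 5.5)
Your outline follows the paper for (i), (ii), (vi) and (vii). That means ppd congruences checked against the Schur multiplier, the coprime splitting $Z(L)\times\gen{y_i}$, $|C_W(wF)|$ for the regular tori with a direct computation in the non-regular cases, and the order formula. The argument you give for (iii)--(v), however, has concrete holes.

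\textbf{Connectedness of $C_X(y_i)$.} The step ``since $|y_i|$ is coprime to $p$ and to the torsion primes, $C_X(y_i)=T$'' fails for some $X$ with $L=[X^F,X^F]$. The component group of a semisimple centralizer embeds in $\pi_1(X)$, and $|y_i|$ need not be coprime to it. For example, $\SL_5(2)=[\PGL_5(2),\PGL_5(2)]$, and $x_2$ has order $\Phi^*_4(2)=5$ with eigenvalues $1,\zeta,\dots,\zeta^4$. In $X=\PGL_5$ its centralizer is $T.5$, because multiplication by $\zeta$ permutes the spectrum; likewise for $\SU_5(2)$ with $\Phi^*_2(4)=5$. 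The conclusion $C_L(y_i)=T\cap L$ still holds, but you must prove it as the paper does: apply Steinberg's theorem in the simply connected group, then pass to quotients via (ii).

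\textbf{Exceptional covers.} Covers such as $3^2.\PSU_4(3)$, $3.G_2(3)$, $2.F_4(2)$ and $2^2.{}^2E_6(2)$ are not of the form $[X^F,X^F]$ at all. Your torus argument therefore says nothing about the first assertion of (iv) for them. Item (ii) only shows that $C_L(y_i)$ is the full preimage of $C_S(x_i)$, and a central extension of an abelian group need not be abelian. The missing observation is that the exceptional kernel is an $r$-group, while the centralizer in $[X^F,X^F]$ is a torus and hence an $r'$-group, so the extension splits.

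\textbf{The normalizer.} The equality $N_L(\gen{y_i})=N_L(T\cap L)$ gives only the inclusion $\le$ for free. The reverse inclusion needs $\gen{y_i}$ to be characteristic in $T\cap L$. This is automatic when $T\cap L$ is cyclic, but it needs a separate check for $x_2\in E_6^\pm(r)$, where $T\cap L$ is not cyclic.

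\textbf{Item (iii).} Your parenthetical criterion is wrong in the orthogonal case. A $2$-dimensional $1$-eigenspace does not destroy regularity in type $D$, since the roots are $\pm e_i\pm e_j$. This is exactly why $x_1\in\POm^+_{2m}(r)$ and $x_2\in\POm^-_{2m}(r)$ are regular. A repeated eigenvalue $1$ forces non-regularity only from multiplicity $2$ in type $C$, $3$ in type $B$ and $4$ in type $D$. Applied as written, your criterion gives the wrong list in the ``if and only if''.

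For the exceptional groups, ``no root is trivial on $y_i$'' is a restatement, not a method. The paper instead uses that $C_S(x_i)$ has maximal rank. It then suffices to check that $C_M(x_i)$ is an $r'$-group for the few maximal-rank overgroups $M$ of $x_i$ allowed by order considerations in \cite{LSS}. The same device gives (v) for exceptional types, since it uses only that $|y|$ is divisible by a ppd.
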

	
	\begin{proof}
		(i) The last statement (``and so...'') follows from the first. The first statement follows from inspection of the Schur multiplier $M$ of $S$ and, in groups of Lie type, from the fact that each prime divisor of $\Phi^*_c(r)$ is congruent to $1$ mod $c$. Let us handle for example $S=\PSL_m(r)$. Each prime divisor of $M$ divides $m$, 
		and each prime divisor of $|x_1|$ is at least $m+1$, so $(|x_1|,M)=1$. As for $x_2$, the only possible exception occurs when $m$ is a prime divisor of $\Phi^*_{m-1}(r)$. But then by definition of ppd, we have that $m$ does not divide $r-1$ and so does not divide $M$.
		
		(ii) follows immediately from (i).
		
		(iii) Assume first $S$ is classical, let $\tilde S$ be the universal covering group of $S$ and let $\tilde x_i$ be a lift of $x_i$ as in (i). We readily see that  $\tilde x_i$ has distinct eigenvalues on the natural module (in which case $x_i$ is regular) except possibly in the following cases: $i=2$ and $S=\PSU_m(r),\PSp_{2m}(r),\POm^\pm_{2m}(r),\POm_{2m+1}(r)$; or $i=1$ and $S=\POm^+_{2m}(r)$. In the case $i=2$ and $S=\POm^-_{2m}(r)$, or $i=1$ and $S=\POm^+_{2m}(r)$, we have that $\tilde x_i$ centralizes a $2$-space and acts irreducibly on a complement, so $x_i$ is regular. In the case $i=2$ and $S=\PSp_{2m}(r)$, or $S=\PSU_m(r)$ with $m\equiv 3\pmod 4$, $\tilde x_i$ centralizes a $2$-space and so $x_i$ is not regular. Let us go through the remaining cases. If $S=\PSU_m(r)$ with $m$ even or $m\equiv 1 \pmod 4$, a straightforward calculation shows that $\tilde x_2$ has distinct eigenvalues  if and only if $m\equiv 0,1\pmod 4$ (that is, if and only if a totally singular irreducible $\F_r\gen{\tilde x_2}$-submodule has even dimension).  On the other hand, if $S=\POm^+_{2m}(r),\POm_{2m+1}(r)$ then $\tilde x_2$ has distinct eigenvalues  if and only if $m$ is odd. This concludes the proof if $S$ is classical.
		
		Assume now $S$ is exceptional. Since $C_S(x_i)$ is a subgroup of $S$ of maximal rank, it is enough to prove that $C_M(x_i)$ is an $r'$-group for every maximal subgroup of maximal rank $M$ containing $x_i$. These are listed in \cite{LSS}, and the check is straightforward. For example, assume $S=E_6(r)$.  For $x_1$, by order considerations the only option is $M=\PSL_3(r^3).3$; for $x_2$, the only option is a parabolic with Levi subgroup $D_5(r)$. In both cases $C_M(x_i)$ is an $r'$-group and we are done. Assume now $S=\,^2E_6(r)$. For $x_1$, it must be $M=\PSU_3(r^3).3$; for $x_2$, the only option for $M$ is a reductive subgroup of type $^2D_5(r)$, and we are done. Assume now $S=E_7(r)$. The options for $M$ are reductive subgroups of type $^2A_7(r)$ and $A_1(r^7)$. We see that in both cases $C_M(x_1)$ is an $r'$-group.

		(iv) If $S$ is sporadic then by inspection of \cite{atlas} we have that $\langle x_i \rangle$ is self-centralizing, from which the claim follows. Assume now $S$ is of Lie type and $x_i$ is regular. 
		There exists a simple algebraic group $X$ and a prime power $s$ with $(s,r)\neq 1$ such that $L$ is an $s$-cover of $[X^F,X^F]$ (see \cite[Table 5.1.D]{kleidman1990liebeck}). In particular, it is sufficient to prove the assertion in the case where $L=[X^F, X^F]$. Let us begin with centralizers. A theorem of Steinberg (see \cite[Theorem 14.16]{malle_testerman_2011linear}) asserts that $C_L(y_i) = T$ if $X$ is simply connected, and the general case follows from this and (ii). For what concerns normalizers, certainly $N_L(\gen{y_i}) \le  N_L(C_L(y_i)) = N_L(T\cap L)$. For the other inclusion, we may assume $S=L$. Except for $x_2$ in $E^\pm_6(r)$, we see that $T\cap L$ is cyclic and so the inclusion holds. For $x_2$ in $E^\pm_6(r)$,  we readily see that $T\cap L$ has a unique subgroup of order $|x_i|$, and the inclusion holds also in this case.

		(v) If $S$ is sporadic, this is immediate since $|x_i|$ is prime. If $S$ is of Lie type and $x_i$ is regular, the statement follows from the same argument of (iii),(iv) applied to $y$. Assume then $x_i$ is not regular; the cases are listed in (iii). In all cases, $x_i$ and every nontrivial power of it fix the same subspaces of the natural module, from which we see that
		$C_L(y_i) = C_L(y)$ and $N_L(\gen{y_i}) = N_L(\gen y)$, as desired.

		(vi) If $S$ is sporadic the information is contained in \cite[Table 9]{guralnick2012malle}, except for $S=Co_1$, in which case we consult \cite{atlas}. Assume then $S$ is of Lie type. See \Cref{rem:weyl_group}, below, for the case where $x_i$ is regular. In the other cases we can check the value of $n_i$ directly, as follows. Consider $x_2\in \PSU_m(r)$ with $m\equiv 2 \pmod 4$; we  work in $S=\SU_m(r)$ for convenience. Then $N_S(\gen{x_2})$ is an extension field subgroup of type $\GU_2(r^{m/2}).(m/2)$, and $N_S(\gen{x_2})/C_S(x_2)\cong C_{m/2}$. Consider now $x_2\in \POm^+_{2m}(r)$ with $m$ even; we work in $S=\Omega^+_{2m}(r)$. Put $A=\SO^+_m(r)$ or $A=S$ according to whether $r$ is odd or even, and note that if $r$ is odd then $C_S(x_2) <C_A(x_2)$; in particular $N_S(\gen{x_2}) <N_A(\gen{x_2})$ and so we may compute $N_A(\gen{x_2})/C_A(x_2)$. Now we have that  $N_A(\gen{x_2})$ is an extension field subgroup of type $\GU_2(r^{m/2}).m$ and $N_A(\gen{x_2})/C_A(x_2)\cong C_m$. The case  $x_2\in \POm_{2m+1}(r)$ with $m$ even is entirely analogous.
		
		(vii) If $S$ is sporadic we use \cite{atlas}, and if $S$ is of Lie type this is a straightforward check. We only note that the parameter $\varepsilon$ for $S=G_2(r)$ ensures that $|x_1|$ is not divisible by $3$.
	\end{proof}
	
	\begin{remark}
		\label{rem:weyl_group}
		Let us recall that if $x_i$ is regular, then the value $N_L(\gen{y_i})/C_L(y_i)$ can be read off from the Weyl group, as follows. 
		Assume for convenience that $L=X^F$ is of simply connected type, so by \Cref{l:coprime_schur}(iv) we have $N_L(\gen{y_i})/ C_L(y_i) = N_{X^F}(T^F)/T^F$. Write $T=R_w$ where $R$ is a fixed $F$-stable  maximal torus and $w\in W:=N_X(R)/R$ (see \cite[Section 25.1]{malle_testerman_2011linear} for the notation used).  Since in our case $N_{X^F}(T^F)= N_{X^F}(T)$, we have that $|N_{X^F}(T^F)/T^F|=|C_W(F w)|$, where we view $F w$ as an element of the coset $F W$ of $W\rtimes \gen{F}$; see for example \cite[Proposition 25.3]{malle_testerman_2011linear}.
	\end{remark}

	We conclude this subsection with another lemma. Recall that $p$ is a prime and $K = \overline{\F_p}$.
	
	\begin{lemma}
		\label{l:landazuri_seitz}
		Let $S$ be sporadic or in $\mathrm{Lie}(p')$, let $x=x_i$ be an element in \Cref{table:new_unique} or \Cref{table:sporadics}, and assume that $m\ge 4$ if $S=\PSL^\pm_m(r)$.
		If there exists an irreducible projective $KS$-representation of dimension $d<10n_i$, then $S$ is either a sporadic group different from $ON, He,Th,Fi_{23}, Fi'_{24}, B, M$, or $S$ is one of the following:
		\begin{align*}
			&\PSL_m(r),\;(m,r)=(4,2),(4,3),(5,2) \\
			&\PSU_m(r),\;(m,r)=(4,2),(4,3),(5,2),(6,2),(7,2)\\
			&\PSp_{2m}(r),\;(m,r)=(2,3),(2,4),(2,5),(2,7),(3,2),(3,3),(4,2),(4,3) \\
			&\POm^{\pm}_{8}(2), \POm_7(3), \, ^2\!B_2(8), G_2(3), \,^3\!D_4(2), F_4(2). \\
		\end{align*}
	\end{lemma}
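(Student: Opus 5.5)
This lemma is a comparison of two known data sets. On one side are the Landazuri--Seitz--Zalesskii lower bounds for the minimal degree of a nontrivial irreducible projective representation of a group of Lie type in cross characteristic, together with the improved values of Tiep--Zalesskii and Hiss--Malle. On the other side is the value $n_i$ read off from \Cref{table:new_unique} or \Cref{table:sporadics}. The plan is to go through each family of $S$, write down a lower bound $\delta(S)$ for the minimal degree, and check that $\delta(S)\ge 10 n_i$ except in the listed cases. Since $n_1\ge n_2$ essentially always (up to a factor close to $1$), it suffices in each family to compare with $\max_i n_i$. Because the representation is over $K$ of characteristic $p\ne r$ (or $S$ is sporadic), only cross-characteristic bounds are relevant.

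\emph{Sporadic groups.} Here we use the ATLAS and the modular ATLAS, taking the minimal projective degree over all characteristics. For $ON$, $He$, $Th$, $Fi_{23}$, $Fi'_{24}$, $B$ and $M$ these are $342$, $51$, $248$, $782$, $781$, $4370$ and $196882$ (or the relevant smaller faithful degrees). Each exceeds $10n_1$, namely $150$, $80$, $180$, $160$, $140$, $230$ and $350$. For instance, $He$ has minimal degree $51$ only in characteristic $2$, and one checks $51\ge 80$ fails; I would therefore double-check $He$ carefully against the modular ATLAS, since the list excludes $He$ and the true minimal dimension there may be larger. No further claim is needed for the other sporadic groups, since they are allowed to appear in the exception list.

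\emph{Classical groups.} We use the bounds $\delta(\PSL_m(r))\ge (r^m-r)/(r-1)-1$ for $m\ge 4$, $\delta(\PSU_m(r))\ge (r^m-r)/(r+1)$, $\delta(\PSp_{2m}(r))\ge (r^m-1)/2$ for $r$ odd and $(r^m-1)(r^m-r)/(2(r+1))$ for $r$ even, and the analogous orthogonal bounds (roughly $r^{2m-3}$ or $r^{2m-2}$ in dimension $2m$ or $2m+1$). In every case $n_i\le m$ or $2m$ (for $\PSU$, at most $m$). The inequality $\delta\ge 20m$ holds once $r^{m-1}$ (or the corresponding power) is moderately large, which leaves a finite list of small pairs $(m,r)$. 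For each of these we check the exact minimal degree (Hiss--Malle tables for small degrees) against $10n_i$, and what survives should be exactly the displayed list.

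\emph{Exceptional groups.} We use Landazuri--Seitz again: for example, $\delta\ge r(r-1)\sqrt{r/2}$ for $^2B_2$, $r^3-1$ for $G_2$ with $r\ge 3$, $r^3(r^2-1)$ for $^3D_4$, $r^6(r^2-1)$ for $F_4$, and so on, against $n_1\le 30$. Only the smallest $r$ fail, giving $^2B_2(8)$ (minimal degree $14<40$), $G_2(3)$ ($14<60$), $^3D_4(2)$ ($26<40$), $F_4(2)$ ($52<120$), and nothing else. For $^2G_2(27)$, $^2F_4(8)$, $G_2(4)$ and so on, the bounds already exceed $10n_1$.

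The main obstacle is the bookkeeping at the boundary of the classical families: the generic bounds are not sharp for tiny $r$ and $m$, so one must consult exact minimal degrees, for example $\PSp_6(2)$ (degree $7$), $\POm_7(3)$ and $\PSU_6(2)$. In doing so, each borderline case must land correctly on one side of $10n_i$. I would handle this by tabulating all $(S,\delta(S),10n_i)$ with $\delta(S)<10\cdot 2m$ from the Hiss--Malle list.
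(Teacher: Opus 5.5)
Your plan is the paper's own. The paper's proof is a bare lookup: Hiss--Malle for the sporadic groups, Landazuri--Seitz for Lie type, and Hiss--Malle again for the few small groups that survive.

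The problem is in your sporadic paragraph, and the double-check you propose cannot fix it, because the statement is false there. You noticed that $51<80=10n_1$ for $He$ and hoped the true minimal dimension might be larger. It is not. The number $51$ is the degree of a pair of ordinary (complex) irreducible characters of $He$, not a characteristic-$2$ phenomenon. Reduce a $51$-dimensional $He$-lattice modulo a prime above $p$. The result has a nontrivial, hence faithful, composition factor; otherwise $He$ would embed in the pro-$p$ kernel of reduction. So for every prime $p$ there is an irreducible $KHe$-module of dimension at most $51<80$.

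Likewise, $342$ is only the characteristic-$0$ minimum for $3.ON$. In characteristic $7$, the group $3.ON$ has a $45$-dimensional irreducible module, and $45<150=10n_1$. So the claim that each of the seven degrees exceeds $10n_1$ fails for two of them. No proof of the lemma as worded can exist: $He$ (for every $p$), and $ON$ (for $p=7$), must be allowed in the conclusion.

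The damage is confined to the lemma. It is invoked only in Case~3 of the proof of \Cref{l:class_S}, where $p$ divides $|x_i|$. There, $ON$ occurs only with $p=31$, so its $7$-modular module is irrelevant. $He$ does occur, with $p=17$, but it is then handled like the other sporadic groups on that list, by its Brauer tables in the GAP library. So the main theorem is unaffected once $He$ is added to the exceptions.

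A smaller slip in your exceptional-group paragraph: the Landazuri--Seitz bound for $^2B_2(r)$ is $(r-1)\sqrt{r/2}$, not $r(r-1)\sqrt{r/2}$. With your formula, $^2B_2(8)$ would not be an exception at all, which contradicts the degree $14$ you quote.
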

	
	\begin{proof}
		For sporadic groups we consult \cite{hiss2001malle}. For groups of Lie type, we consult \cite{landazuri1974minimal} and we are reduced to the groups in the statement, with the addition of $\PSL_6(2), \PSL_7(2)$, which can be excluded with \cite{hiss2001malle}. 
	\end{proof}

	\subsection{Invariable generation of groups of Lie type} We need a result on invariable generation of simple groups of Lie type. Recall that elements $x_1, \ldots, x_t$ \textit{invariably generate} a group $G$ if $\gen{x_1^{g_1}, \ldots, x_t^{g_t}}=G$ for every $g_1, \ldots, g_t\in G$. We write $\gen{x_1,\ldots, x_t}_I=G$ in this case. We consider the set of the following finite simple groups:
	
	\begin{align}
		\label{eq:condition_invariable_generation}
		&E_6(r), \,^2E_6(r); \\
		&\text{$\PSL_m(r)$, $m\ge 5$ and $(m,r)\neq (5,2),(6,2),(7,2),(11,2),(13,2),(19,2),(5,3),(7,3), (7,5)$;} \nonumber \\
		&\text{$\PSU_m(r)$, $m\ge 5$ and $(m,r)\neq (5,2),(5,3),(13,2)$;}\nonumber\\
		&\text{$\PSp_{2m}(r)$, $m\ge 3$, $r$ odd;}\nonumber\\
		&\text{$\POm^+_{2m}(r)$, $m\ge 5$ odd;}\nonumber\\
		&\text{$\POm^-_{2m}(r)$, $m\ge 4$, $r$ odd;}\nonumber\\
		&\text{$\POm_{2m+1}(r)$, $m\ge 4$.}\nonumber
	\end{align}
	
	\begin{lemma}
		\label{l:invariable generation}
		Let $S$ be as in \eqref{eq:condition_invariable_generation} and let $x_1, x_2$ be as in \Cref{table:new_unique}. Then $\gen{x_1, x_2}_I=S$.
	\end{lemma}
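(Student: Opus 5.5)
Invariable generation of $S$ by $x_1,x_2$ means exactly that no maximal subgroup $M$ of $S$ contains conjugates of both $x_1$ and $x_2$. Indeed, if $\gen{x_1^{g_1},x_2^{g_2}}$ were proper, it would lie in some maximal subgroup $M$ meeting both classes, and the converse is immediate. So the plan is to show that, for $S$ in \eqref{eq:condition_invariable_generation}, no maximal subgroup of $S$ has order divisible by both $|x_1|$ and $|x_2|$ and also contains elements of both classes. The main tool is the classification of maximal subgroups containing ppd elements due to Guralnick, Penttila, Praeger and Saxl (``linear groups with orders having certain large prime divisors''). For the exceptional groups $E_6^\pm(r)$ we instead use the list of maximal subgroups of maximal rank in \cite{LSS} together with \cite{LS3}, \cite{craven2023maximal}, arguing as in the proof of \Cref{l:coprime_schur}(iii). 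There the overgroups of $x_1$ are only $\PSL^\pm_3(r^3).3$ (and possibly subfield-type or almost simple subgroups, which are excluded by order), while those of $x_2$ are only the $D_5$-type parabolic or reductive subgroups. No subgroup occurs in both lists.

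For the classical groups, I would work in the quasisimple cover (using \Cref{l:coprime_schur}(i)) acting on the natural module $V$ of dimension $N$. By Table \ref{table:new_unique}, $x_1$ and $x_2$ have orders divisible by ppds $r_e$ and $r_f$ of $r^e-1$ and $r^f-1$ respectively, with $e,f>N/2$ and $e\ne f$; in the unitary case these are ppds of $r^{2k}-1$ with suitable $k$. Let $M$ be a maximal subgroup whose order is divisible by $r_er_f$. The first step is to run through the Aschbacher classes $\ca C_1$--$\ca C_8$ and exclude each:
\begin{itemize}
\item[$\ca C_1$:] the element $x_1$ acts irreducibly on $V$, or on a nondegenerate hyperplane or $2$-space complement, so its invariant subspaces are nondegenerate of specified dimension. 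The subspaces stabilised by $x_2$ have different dimensions, so no common subspace is stabilised.
\item[$\ca C_2$:] ppds $r_e$ with $e>N/2$ force a block count $t\ge e$, which is incompatible with $r_f$.
\item[$\ca C_3$:] extension field subgroups of degree $b$ require $b\mid e$ and $b\mid f$. Since $e$ and $f$ are essentially consecutive, or $m$ and $m/2$ with the parity conditions, this fails; the parity restrictions in \eqref{eq:condition_invariable_generation}, such as $m$ odd for $\POm^+_{2m}$ and $r$ odd for the symplectic groups, are imposed precisely to kill the surviving $\ca C_3$ and $\ca C_8$ possibilities, e.g.\ $\Sp_{2m}(r)\le\Or^\pm_{2m}(r)$ when $r$ is even.
\item[$\ca C_4$, $\ca C_7$, $\ca C_6$:] these have orders with no ppd of $r^e-1$ for $e>N/2$.
\item[$\ca C_5$:] subfield subgroups are excluded since ppds of $r^e-1$ do not divide $|S(r_0)|$.
\item[$\ca C_8$:] handled by the dimension and parity of the fixed spaces of $x_2$.
\end{itemize}

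The main obstacle is class $\ca S$ (nearly simple irreducible subgroups) together with the small exceptional cases. Here I would invoke the Guralnick--Penttila--Praeger--Saxl list of groups whose order is divisible by a ppd $r_e$ with $e>N/2$: alternating groups in the deleted permutation module, groups of Lie type in defining characteristic, sporadic groups, and cross-characteristic examples of small dimension. For each entry I would check that $r_e$ and $r_f$ cannot simultaneously divide $|M|$. Typically $|M|$ contains ppds of only one of the two exponents; for example, $A_{N+1}$ or $A_{N+2}$ contains ppds $r_e$ only when $e+1$ or $e+2$ is prime, and it has a single large prime in that range. The finitely many pairs $(m,r)$ where such a coincidence genuinely occurs, or where Zsigmondy degenerates, are exactly the excluded ones in \eqref{eq:condition_invariable_generation}, and I expect the list to be tuned so that nothing remains. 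The delicate part is the bookkeeping for the unitary and orthogonal cases with $x_2$ non-regular, where I would check directly, using the ATLAS for borderline small ranks, that the almost simple candidates do not contain both classes.
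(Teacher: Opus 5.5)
Your overall strategy is the paper's: reduce to showing that no maximal subgroup meets both classes, control overgroups of ppd elements via Guralnick--Penttila--Praeger--Saxl, and use Craven's lists for $E_6^\pm(r)$. However, the premise your classical argument rests on is false. It is not true that both $x_1$ and $x_2$ have ppd exponents exceeding $N/2$:
\begin{itemize}
\item for $\POm^+_{2m}(r)$ ($m$ odd) the exponent of $x_2$ is $m=N/2$;
\item for $\POm_{2m+1}(r)$ it is $m<N/2$;
\item for $\PSU_m(r)$ with $m\not\equiv 3\pmod 4$ it is $m/2$ or $(m-1)/2$ over $\F_{r^2}$.
\end{itemize}
In these families only $x_1$ lies in the GPPS range, and $r_f$ divides the orders of many subgroups. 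So your two-ppd exclusions do not apply, and everything must come from the overgroups of $x_1$ alone. This is where the paper switches tools. It uses the single-element classification for a \emph{large} ppd, $\Phi^*_e(r)>2e+1$, from \cite[Lemma 2.1, Theorem 2.2]{guralnick2012malle}, and reserves the two-ppd result \cite[Corollary 3.4]{guralnick2012beauville} for $\PSL_m$, $\PSp_{2m}$ and $\POm^-_{2m}$. Nor is the list \eqref{eq:condition_invariable_generation} tuned so that nothing remains. The groups $\POm^+_{10}(2)$, $\POm^+_{14}(2)$, $\POm^+_{22}(2)$ belong to it, with $|x_1|=17,13,41$ not large, and they need an explicit pass through the examples of \cite{guralnick1999praeger}. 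For instance, the alternating examples in dimension $14$ contain $13$-cycles and are excluded only because they have no element of order $|x_2|=127$. The ATLAS does not cover the last two groups.

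Your $\ca C_1$ step also fails as stated for $\POm_{2m+1}(r)$. Both $x_1$ and $x_2$ fix a nondegenerate $1$-space and its perpendicular hyperplane, so ``different dimensions'' is false. What separates them is that this hyperplane has minus type for $x_1$ and plus type for $x_2$ (the $1$-spaces have different discriminants). Hence the two elements lie in non-conjugate $\ca C_1$ subgroups, and no pair of conjugates is reducible.

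Two smaller inaccuracies. First, even when both exponents exceed $N/2$, the $\ca C_3$ criterion is ``$b\mid f$ or $r_f=b$'', not ``$b\mid f$''. The second alternative is $\GL_1(r^m).m$ with $m$ prime and $\Phi^*_{m-1}(r)=m$, and this is the actual reason for the exclusions in the $\PSL$ and $\PSU$ lines. Second, for $E_6^\pm(r)$ your list of overgroups of $x_2$ is wrong; for example, $F_4(r)<E_6(r)$ contains elements of order $\Phi^*_8(r)$. This is harmless only if you argue as the paper does: take the complete list of overgroups of $x_1$ from \cite{craven2023maximal} and check that $|x_2|$ divides none of their orders.
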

	
	\begin{proof}
		Our main tool is \cite[Corollary 3.4]{guralnick2012beauville}. This result (mostly relying on \cite{guralnick1999praeger}) classifies the irreducible subgroups of $\GL_m(r)$, $m\ge 5$, containing elements of order $\Phi^*_{e_i}(r)$, $i=1,2$, with $e_1>e_2>m/2$. Sometimes this result will not apply, because we have at our disposal only one such element. Whenever this is the case, we will use \cite[Theorem 2.2]{guralnick2012malle}, which classifies the irreducible subgroups of $\GL_m(r)$ containing an element of order $\Phi_e^*(r)$ with $\Phi_e^*(r)>2e+1$. By  \cite[Lemma 2.1]{guralnick2012malle}, the inequality $\Phi_e^*(r)>2e+1$ is satisfied in all but a handful of cases, which either do not appear in \eqref{eq:condition_invariable_generation} or will be dealt with separately. (We point out that item (2) in \cite[Theorem 3.3 and Corollary 3.4]{guralnick2012beauville} should be amended, to include the case where $m$ is prime and $\Phi^*_{m-1}(r)=m$, in which case $\GL_1(r^m).m$ contains elements of orders both $\Phi^*_m(r)$ and $\Phi^*_{m-1}(r)$. By \cite[Lemma 2.1]{guralnick2012malle}, the only such cases are the ones listed in the second line  of \Cref{eq:condition_invariable_generation}, other than $(6,2),(7,2)$. This is why we excluded these cases for $S=\PSL_m(r)$.)

		In all cases below, we denote by $H$ the subgroup of $S$ generated by any conjugates of $x_1$, $x_2$ and $x_3$, so our aim is to show that $H=S$. Assume first $S=\PSL_m(r)$, so by assumption $m\ge 5$. Note $H$ is irreducible and $H$ does not preserve an extension field subgroup (see the last sentence in the previous paragraph).
		Moreover, if $r=\ell^a$ where $\ell$ is prime then $|x_1|$ is divisible by $\Phi^*_{am}(\ell) >1$, which rules out subfield subgroups. A classical subgroup does not have order divisible by both $|x_1|$ and $|x_2|$, so it is ruled out.
		Since by assumption $(m,r)\neq (6,2),(7,2)$, by \cite[Corollary 3.4]{guralnick2012beauville} there are no other possibilities (note that cases (4), (5), and (6) in that corollary are ruled out at once), so $H=S$, as desired.
		
		Assume now $S=\PSU_m(r)$, so $m\ge 5$.
		Assume first $m$ is odd. Then $H$ is irreducible, and we claim that $H$ does not preserve an extension field subgroup. If $m\equiv 3\pmod 4$ this holds since $m$ is odd and so $m$ and $m-2$ are coprime. If $m\equiv 1\pmod 4$, the only option would be a subgroup $\GU_1(r^m).m$. But since we excluded the cases $(m,r)=(5,2), (5,3), (13,2)$, we deduce from \cite[Lemma 2.1]{guralnick2012malle} that $\Phi^*_{m-1}(r)>m$ and so this possibility is excluded (note that $\Phi^*_{m-1}(r)$ divides $|x_2|$).
		Moreover, as above, a subfield subgroup does not have order divisible by $|x_1|$, so it is ruled out. Again by \cite[Lemma 2.1]{guralnick2012malle}, we have that $\Phi^*_m(r^2)>2m+1$ (since $m$ is odd), so we deduce by  \cite[Theorem 2.2]{guralnick2012malle} that there are no remaining possibilities and so $H=S$. Assume then $m$ is even. Again $H$ is irreducible. If $\Phi^*_{m-1}(r^2) >2(m-1)+1$, then by \cite[Theorem 2.2]{guralnick2012malle} we deduce $H=S$.
		And if $\Phi^*_{m-1}(r^2) \le 2(m-1)+1$, then by \cite[Lemma 2.1]{guralnick2012malle},  the only possibility is that $r^2=4$ and $m-1 =3$ or $m-1=6$, none of which is under consideration in 
		\eqref{eq:condition_invariable_generation}, so again $H=S$.
		
		Assume next $S=\PSp_{2m}(r)$, so by assumption $m\ge 3$ and $r$ is odd. Since $(m,r)\neq (3,2), (4,2)$, \cite[Corollary 3.4]{guralnick2012beauville} applies. Classical subgroups do not occur ($\Or^\pm_{2m}(r)$ is ruled out since $r$ is odd). An extension field subgroup would necessarily preserve a structure over $\F_{r^2}$. A subgroup of type $\Sp_m(r^2)$ is excluded by looking at $x_2$; a subgroup of type $\GU_m(r)$ is excluded because one of $m$ and $m-1$ is even. Subfield subgroups are excluded by looking at $|x_1|$, as above. By \cite[Corollary 3.4]{guralnick2012beauville} if $(m,r)\neq (3,3)$ there are no remaining possibilities and so $H=S$. If $(m,r)=(3,3)$, the conclusion follows from \cite[Corollary 3.4]{guralnick2012beauville}, since $A_7$ (in the fully deleted permutation module) embeds in $\Omega^\pm_6(3)$ rather than $\Sp_6(3)$.

		Assume now $S=\POm^-_{2m}(r)$ with $m\ge 4$ and $r$ odd. Since $(m,r)\neq (4,2)$,  \cite[Corollary 3.4]{guralnick2012beauville} applies and we argue as in the symplectic case.

		Assume now $S=\POm_{2m+1}(r)$, so $m\ge 4$ and $r$ is odd.
		Note that $H$ is irreducible. (Indeed, both $x_1$ and $x_2$ fix a nondegenerate $1$-space, but one has square discriminant and the other has non-square discriminant.) Here \cite[Corollary 3.4]{guralnick2012beauville} does not apply and we use \cite[Lemma 2.1 and Theorem 2.2]{guralnick2012malle} for the element $x_1$. 
		By \cite[Lemma 2.1]{guralnick2012malle} we have $\Phi^*_{2m}(r) > 4m+1$; and since $m\ge 4$, by \cite[Theorem 2.2]{guralnick2012malle} we deduce that $H=S$.

		Assume now $S=\POm_{2m}^+(r)$, so $m\ge 5$ is odd. Note that $H$ is irreducible. We use \cite[Lemma 2.1 and Theorem 2.2]{guralnick2012malle}. Since $m$ is odd, $H$ does not preserve an extension field subgroup. (The crux is that $x_1$ cannot belong to a subgroup $\GU_m(r)$, which happens instead for $m$ even.)  
		Assume first $\Phi^*_{2(m-1)}(r) > 4(m-1) +1$. Then by \cite[Theorem 2.2]{guralnick2012malle} we see that $H=S$. Assume finally $\Phi^*_{2(m-1)}(r) \le 4(m-1) +1$. Since $2(m-1) \ge 8$ and $m$ is odd, by \cite[Lemma 2.1]{guralnick2012malle} one of the following holds: $(m,r) = (7,2)$ (and $\Phi^*_{2(m-1)}(r) = 2(m-1) +1$) or $(m,r)=(5,2),(11,2)$ (and $\Phi^*_{2(m-1)}(r) = 4(m-1) +1$). 
		If $(m,r)=(7,2)$, we have $|x_1|=13$ and $|x_2| = \Phi^*_7(2) = 127$. Now \cite{guralnick1999praeger} lists the possibilities for the overgroups of $x_1$. With their notation, we have already excluded Examples 2.1, 2.2, 2.4. Example 2.5 does not arise as we have $r=2$, and the groups in Example 2.3 do not contain an element of order $127$. Finally, Examples 2.6--2.9 consist of an explicit list of  almost simple acting (projectively) absolutely irreducibly. All these can be ruled out, as either they do not contain an element of order $127$, or the representation does not have degree $14$. The cases  $(m,r)=(5,2), (11,2)$ can be handled in the same way, using \cite{guralnick1999praeger} (note that in the first case $|x_2|=\Phi^*_5(2) =31$, and in the second case $|x_2| = \Phi^*_{11}(2)=2047 = 23\cdot 89$).
		
		Assume now $S=E_6(r)$; we look at the list of maximal subgroups in \cite[Tables 2 and 9]{craven2023maximal}. We have $|x_1|=\Phi^*_{9}(r)\ge 37$ (by \cite[Lemma 2.1]{guralnick2012malle}), and we see that the overgroups of $x_1$ are of type $\PSL_3(r^3)$ and $^2E_6(r^{1/2})$.
		None of these can contain $x_2$ and we are done. 
		
		Assume finally $S=\,^2E_6(r)$; we look at  \cite[Tables 3 and 10]{craven2023maximal}. Note $|x_1| = \Phi^*_{18}(r)$ is equal to $19$ for $r=2$, and is at least $73$ for $r>2$ (by \cite[Lemma 2.1]{guralnick2012malle}). The overgroups of $x_1$ are a subgroup of type $\PSU_3(r^3)$, and $\PSL_2(19)$ (for $r=2$).
		None of these can contain $x_2$ and $H=S$.
	\end{proof}

	\subsection{Generation of groups of Lie type by conjugates}
	Next, we need a result on generation of simple groups of Lie type by two conjugate elements. 
	Consider the set of the following finite simple groups:
	\begin{align}
		\label{eq:more_groups}
		&\PSL^\pm_4(r)\\ 
		&\POm^+_{2m}(r) \text{ with $m$ even and $(m,r)\neq (4,2)$ }\nonumber \\
		&E_7(r) \nonumber
	\end{align}

	\begin{lemma}
		\label{l:generation_by_conjugates}
		Let $S$ be as in $\eqref{eq:more_groups}$. If $S\neq \PSL_4^-(r)$ then let $x=x_1$ be as in \Cref{table:new_unique}; if $S= \PSL_4^-(r)$ then let $x=x_2$ be as in \Cref{table:new_unique}. Then $S$ is generated by two conjugates of $x$.
	\end{lemma}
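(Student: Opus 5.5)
We argue via the standard counting/maximal-subgroup method. Since $x$ is a regular semisimple element whose order is a product of ppds (by \Cref{l:coprime_schur}(iii),(iv)), its centralizer $C_S(x)=T\cap S$ is a cyclic torus, and it suffices to find $g\in S$ with $\gen{x,x^g}=S$. If no such $g$ exists, then for every $g$ the subgroup $\gen{x,x^g}$ lies in some maximal subgroup $M$ of $S$ containing $x$. Hence, writing $\mathcal M(x)$ for the set of maximal subgroups containing $x$, we get
\[
|S| \le \sum_{M\in\mathcal M(x)} |\{g : x^g\in M\}| \le \sum_{M\in\mathcal M(x)} |C_S(x)|\cdot|x^S\cap M| ,
\]
so it is enough to show that $\sum_{M\in \mathcal M(x)} |x^S\cap M|/|x^S|<1$, i.e. $\sum_{M}\fpr(x,S/M)\cdot(\text{number of conjugates of }M\text{ containing }x)$ is small. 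Equivalently, we use the standard bound $\sum_{M} |M|/|N_S(M)|\cdots$; concretely we will bound $\sum_{M\in\mathcal M(x)} |M|\cdot|C_S(x)|/|S|<1$ after noting $|x^S\cap M|\le |M|$.

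The first step is to determine $\mathcal M(x)$. For $S=\PSL^\pm_4(r)$ with $x=x_1$ (resp. $x_2$ in the unitary case), $x$ acts irreducibly on a $4$-space (or on a nondegenerate $3$-space plus $1$-space in the unitary case, so take $x_2$ of order $\Phi^*_2(r^2)=\Phi^*_4(r)$, irreducible on the natural module). By \cite{guralnick1999praeger} (via \cite[Theorem 2.2]{guralnick2012malle} when $|x|>2e+1$), the overgroups of $x$ are extension field subgroups of type $\GL_2(r^2)$ or $\GL_1(r^4)$ (resp. $\GU_2(r^2)$-type), classical subgroups $\PSp_4(r)$, $\POm^-_4$, subfield subgroups (excluded by the ppd of the full $\ell^{4a}-1$), and a short explicit list of almost simple nearly-simple groups in small $r$. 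For $\POm^+_{2m}(r)$, $m$ even, $x=x_1$ has order $\Phi^*_{2(m-1)}(r)$ and fixes a nondegenerate $2$-space of minus type; the overgroups are the reducible stabilizer $\Or^-_2(r)\times\Or^-_{2m-2}(r)$, extension field groups of type $\GU_{m}(r)$ is impossible (since $2(m-1)\nmid$ appropriate orders) so essentially only the reducible one plus the sporadic examples in \cite{guralnick1999praeger}. For $E_7(r)$, with $|x_1|=\Phi^*_{14}(r)$, the maximal overgroups are listed in \cite{craven2023maximal, LSS}: subgroups of type $^2A_7(r)$ and $A_1(r^7)$ (as already used in \Cref{l:coprime_schur}(iii)), together with possible members of class $\mathcal U$, which by \cite{craven2022E7} are excluded by order ($\Phi^*_{14}(r)\ge 29$ and the only candidates with such elements are the small $\PSL_2$'s, which we check against Table \ref{xe7}).

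The second step is the count. The number of conjugates of a given $M$ containing $x$ equals $\fp(x,S/M)\le |C_S(x)|\,|x^S\cap M|/|M|$, and for the listed $M$ this is tiny (typically $1$, or bounded by $n_1$), because $x$ is regular and $C_S(x)\le M$. So the left side of our inequality is at most $\sum_M \fp(x,S/M)\,|M|\,/|S|\cdot |C_S(x)|$, which is dominated by the largest $|M|$ times $|C_S(x)|$ divided by $|S|$; using \Cref{l:order_classical_groups}, e.g. for $\POm^+_{2m}$ we get roughly $q^{(2m-2)^2/2+1}\cdot q^{m}/q^{2m^2-m}\ll 1$, and similar power savings in the other families. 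The main obstacle is the small cases: $\PSL^\pm_4(r)$ for small $r$ (and $\POm^+_8(r)$ for small $r$, $m=4$), where \cite{guralnick1999praeger} produces extra absolutely irreducible overgroups (e.g. $A_7<\PSL_4(2)$-type, $\PSU_4(2)$ in $\PSL_4(r)$, $2^4.A_6$-type) and the inequality is not automatic; there I would instead do a direct computation in GAP of the structure constants / random search for a conjugate $x^g$ with $\gen{x,x^g}=S$, which is feasible since these groups are small.
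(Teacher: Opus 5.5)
Your counting framework matches the paper's: grouping the maximal overgroups of $x$ by conjugacy class, your condition $\sum_{M\ni x}|x^S\cap M|/|x^S|<1$ is exactly $\sum_{M}\fp(x,S/M)^2/|S:M|<1$. The gap is in the step that carries the content, namely determining the maximal overgroups of $x$. This is wrong in two of the four families.

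\emph{Unitary case.} For $S=\PSU_4(r)$, $x_2$ has order $\Phi^*_4(r)$, so its eigenvalues lie in $\F_{r^4}$, a quadratic extension of $\F_{r^2}$. Hence $x_2$ is \emph{not} irreducible on $\F_{r^2}^4$; it stabilizes two complementary totally singular $2$-spaces $A\not\cong B$. Three consequences follow. First, \cite{guralnick1999praeger} and \cite[Theorem 2.2]{guralnick2012malle} are unavailable, since the ppd exponent over $\F_{r^2}$ is $2$, which is not larger than half the dimension. Second, there is no extension field subgroup of type $\GU_2(r^2)$, because unitary $\mathcal C_3$ subgroups have odd degree. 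Third, you miss the actual overgroups: the stabilizer of a totally singular $2$-space, the stabilizer of $\{A,B\}$, and the subgroups of type $\Sp_4(r)$ and $\SO^-_4(r)$. The paper reads these off from \cite{bray2013holt_colva}.

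\emph{Orthogonal case.} For $\POm^+_{2m}(r)$ with $m$ even, your claim that $x$ lies in no $\GU_m(r)$-type subgroup is false. Since $m-1$ is odd, $\GU_m(r)$ contains a torus $\GU_1(r^{m-1})\times\GU_1(r)$ of order divisible by $\Phi^*_{2(m-1)}(r)$, and $x$ lies in a unique such subgroup. You also omit the stabilizers of the $r+1$ nonsingular $1$-spaces in the fixed minus-type $2$-space (type $\Or_{2m-1}(r)$, or $\Sp_{2m-2}(r)$ for $r$ even); these are the largest overgroups. For $m=4$ there are further overgroups for \emph{every} $r$, so a GAP check for small $r$ does not cover them:
the triality images of the point stabilizers, acting via the spin module;
the triality images of the $2$-space stabilizer, which are of type $\GU_4(r)$;
and $(2,r-1)\times\PSU_3(r).3$ for $2<r\equiv 2\pmod 3$.
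The paper handles these by noting that $x$ lies in $G_2(r)$, so it is centralized by a triality automorphism and the fixed point ratios agree along the orbit.

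Your claim that $\fp(x,S/M)$ is $1$ or at most $n_1$ ``because $C_S(x)\le M$'' is also false. For $M=\Sp_4(r).(2,r-1)<\SL_4(r)$ one has $|C_S(x)|=(r^4-1)/(r-1)$ but $|C_{\Sp_4(r)}(x)|=r^2+1$, giving $\fp(x,S/M)=(r+1)/(2,r-1)$. Likewise $\fp$ is of order $r$ for the nonsingular point stabilizers in $\Omega^+_{2m}(r)$. With your crude bound $|x^S\cap M|\le|M|$, this symplectic subgroup of $\SL_4(r)$ alone contributes about $1/r$ (roughly $0.7$ for $r=3$). So the sum is not ``dominated by the largest $|M|$'' with a power saving. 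You need the exact fixed point numbers, computed via \Cref{l:fixed_points_precise_normal_subgroup} as the paper does.

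With the correct overgroup lists and these values, your union bound does close, and that is precisely the paper's proof. Your $\SL_4(r)$ and $E_7(r)$ cases are essentially right, though the $E_7$ overgroups come from \cite{craven2022E7}, not \cite{craven2023maximal}.
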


	\begin{proof}
		Let $\ca M_c=\ca M_c(x)$ be a set of representatives for the conjugacy classes of maximal subgroups containing $x$. For $M\in \ca M_c$, the number of conjugates of $M$ containing $x$ is $\fp(x,S/M)$. For each such conjugate, say $M^g$, the probability that a random conjugate of $x$ belongs to $M^g$ is $|x^G\cap M|/|x^G| = \fp(x,S/M)/|S:M|$. Letting  $P$ the probability that $x$ and a random conjugate do not generate, we deduce by a union bound that
		\begin{equation}
			\label{eq:generation_conjugates}  
			P\le \sum_{M\in \ca M_c}\frac{\fp(x,S/M)^2}{|S:M|}.
		\end{equation}
		In particular, it will be enough to show that the right-hand side of \eqref{eq:generation_conjugates} is less than $1$, and in order to achieve this we will mostly use \Cref{l:fpr_enough}. Let also $\ca M=\ca M(x)$ be the set of maximal subgroups of $S$ containing $x$. Note that if $|\ca M(x)|=1$, say $\ca M(x) = \{M\}$, then the right-hand of \eqref{eq:generation_conjugates} is at most $1/|S:M|<1$. When $S$ is classical, we will work with the cover of $S$ acting faithfully on the natural module, without changing notation.

		Let us start from $S=\SL_4(r)$ where $r=\ell^a$, so $|x|=\Phi^*_4(r)$ and each prime divisor of $|x|$ is $\equiv 1\pmod 4$. The case $r=2$ can be checked with GAP. Assume then $r\ge 3$. Note that $x$ is contained in a unique extension field subgroup $M_1$ of type $\GL_2(r^2)$, see for example \cite[Lemma 2.12]{breuer2008probabilistic}, so we have
		\[
		\frac{\fp(x,S/M_1)^2}{|S:M_1|}=\frac{1}{|S:M_1|} = \frac{2}{r^4(r^3-1)(r-1)} < r^{-6}.
		\]
		Moreover, $x$ is contained in a subgroup $M_2=\Sp_4(r).(2,r-1)$, and in a subgroup $M_3=\SO^-_4(r).(4,r-1)$, the latter being maximal only if $r$ is odd. In both cases, we have $x^S\cap M_i = x^{M_i}$, and we see from \Cref{l:fixed_points_precise_normal_subgroup} that
		\begin{align*}
			\frac{\fp(x,S/M_i)^2}{|S:M_i|} \le r^{-2}.
		\end{align*}
		For example, for $i=2$, setting $M_0=\Sp_4(r)$ and $d=(2,r-1)$ we have
		\begin{align}
			\label{eq:fpr_symplectic_subgroup}
			\frac{\fp(x,S/M_2)^2}{|S:M_2|} &= \frac{|C_S(x)|^2}{d^2|C_{M_0}(x)|^2}\cdot \frac{|M_2|}{|S|}\\ &=\frac{(r^4-1)^2}{(r-1)^2(r^2+1)^2d^2}\cdot \frac{d}{r^2(r^3-1)} \le r^{-2}.\nonumber
		\end{align}
		In particular, we have
		\begin{equation}
			\label{eq:fpr_SL_4}
			\sum_{i=1}^3 \frac{\fp(x,S/M_i)^2}{|S:M_i|} \le dr^{-2}+r^{-6} <1.
		\end{equation}
		Next, recalling that $r=\ell^a$, we have that $\Phi^*_{4a}(\ell)$ divides $\Phi^*_{4}(r)$. Since $\Phi^*_{4a}(\ell)>1$, subfield subgroups are ruled out. (We note at once that in this proof, subfield subgroups will always be ruled out with this argument.) Consulting the list of maximal subgroups of $S$ in \cite[p. 381]{bray2013holt_colva}, we see that the only possible other overgroups of $x$ are symplectic type subgroups and covers of $A_7$ and  $\PSU_4(2)$, occuring only (possibly) when $|x|=5$. By \cite[Lemma 2.1]{guralnick2012malle}, we have $|x|=5$ only if $r=2,3$; and in these cases, only $A_7$ arises among the overgroups above, for $r=2$. We are assuming $r\ge 3$ and so we conclude from \eqref{eq:generation_conjugates} and \Cref{eq:fpr_SL_4}.

		Assume now $S=\SU_4(r)$ and $|x|=\Phi^*_2(r^2)$. We check the cases $r=2,3$ with GAP so assume $r\ge 4$. Then $|x| \ge 13$ by \cite[Lemma 2.1]{guralnick2012malle}. We deduce from \cite[p. 382]{bray2013holt_colva} that the only overgroups of $x$ are the stabilizer $M_1$ of a totally singular $2$-space, the stabilizer $M_2$ of a decomposition into totally singular $2$-spaces, and subfield subgroups  $\Sp_4(r).d$ and $\SO^-_4(r).e$, where $d:=(2,r-1)$ and $e:=(4,r+1)/2$ (there are $d$ and $e$ classes, respectively); the orthogonal subgroup being maximal only for $r$ odd. We have $\fp(x,S/M_i)=1$ for $i=1,2$. Setting $M_0=\Sp_4(r)$, we can calculate $\fp(g,S/M_3)^2/|S:M_3|$ precisely, similarly to \eqref{eq:fpr_symplectic_subgroup}; we see that this quantity is at most $1/r^3$, and the same holds for $M_4$. The same holds also for $M_3^a$ and $M_4^a$ with $a\in \Aut(S)$, therefore
		\begin{align*}
			P&\le \sum_{i=1}^2 \frac{\fp(x,S/M_i)^2}{|S:M_i|} + d\cdot \frac{\fp(x,S/M_3)^2}{|S:M_3|} + e\cdot \frac{\fp(x,S/M_3)^2}{|S:M_3|}\\
			&\le  |S:M_1|^{-1} + |S:M_2|^{-1} + 2r^{-3} <1.\\
		\end{align*}

		Assume now $S=\Om^+_{2m}(r)$ with $m\ge 4$ even and $(m,r)\neq (4,2)$, so $|x|=\Phi^*_{2(m-1)}(r)$. Then $x$ stabilizes a unique nondegenerate $2$-space and $r+1$ nonsingular $1$-spaces. Moreover, $x$ lies in a unique extension field subgroup of type $\GU_m(r)$ if $m$ is even, and of type $\Or_m(r)$ if $m$ is odd.
		
		Let us first address the case $m=4$; see \cite[pp. 402--403]{bray2013holt_colva} for the list of maximal subgroups of $G$, taken from \cite{kleidman1987maximal_omega8}. Letting $A=\gen{\text{Inndiag}(\overline S),\tau}$ where $\overline S=\POm^+_8(r)$ and $\tau$ is a triality automorphism, we note  that for $r$ odd (resp. $r$ even) there are six (resp. three) $\overline S$-classes of subgroups $\Omega_7(r)$ (resp. $\Sp_6(r)$) conjugate under $A$, four of which (resp. two of which) act (projectively) irreducibly via the spin module. (Note that $\tau$ does not lift to an automorphism of $S$; this is why we considered $\overline S$ here.) Let $M_1\le S$ be the preimage of a representative for this $A$-class; we can take $M_1$ to be the stabilizer of some nonsingular $1$-space. Furthermore, there are three $\overline S$-classes of subgroups isomorphic to the stabilizer of a nondegenerate $2$-space of minus type, which are conjugate under $A$, and two of which are extension field subgroups of type $\GU_4(r)$. Let $M_2\le S$ be the preimage of a representative for this $A$-class. 
		
		Finally, for $2<r\equiv 2\pmod 3$, setting $d=(2,r-1)$  there are $d^2$ $S$-classes of subgroups $d\times\PSU_3(r).3$, acting irreducibly via the adjoint module. 
		Let $M_3$ be one such subgroup. Notice that $x^S\cap M_3$ is the union of at most six  $M_3$-classes. Indeed, if $y\in x^S\cap M_3$, then $y$ is irreducible as an element of $\mathrm{PGU}_3(r)$; let $\alpha$ be an eigenvalue of a lift of $y$ to $\GU_3(r)$.  Then $\beta:=\alpha^{r^2-1}$ is an eigenvalue of $x$, 
		and notice that $\alpha$ can be uniquely recovered from $\beta$.
		Since $x$ has six nontrivial eigenvalues, it follows that $x^S\cap M_3$ is indeed the union of at most six $M_3$-classes. Letting $x_1, \ldots, x_c$  ($c\le 6$) be representatives for such classes, we have $|C_S(x_i)|/|C_{M_3}(x)|\le r+1$
		and so by \Cref{l:fpr_enough} we deduce
		\[
		\frac{\fp(x,S/M_3)^2}{|S:M_3|}  \le \frac{36(r+1)^2}{|S:M_3|}.
		\]
		Next, letting $x\mapsto \overline x$  denote the map $S\to \overline S$, notice that $\overline x^A = \overline x^{\overline S}$. (Indeed, $\overline x$ belongs to a subgroup $G_2(r)$ and so is centralized by a triality automorphism.) In particular, for every $a\in A$ and every $i=1,2,3$, we have $\fpr(\overline x,\overline S/\overline{M_i}) = \fpr(\overline x,\overline S/\overline{M_i}^a)$. Consulting \cite{bray2013holt_colva}, we see that there are no other overgroups. For $r$ odd, by \eqref{eq:generation_conjugates} we then get 
		\[
		P\le  6\frac{(r+1)^2}{4|S:M_1|} + 3\frac{1}{|S:M_2|} + 144\frac{(r+1)^2}{|S:M_3|} <1.
		\]
		
		Assume then $m>4$. By \cite[Theorem 2.2]{guralnick2012malle}, if $\Phi^*_{2(m-1)}(r)> 4(m-1)+1$, then the only overgroups are the ones mentioned in the first paragraph handling $\POm^+_{2m}(r)$, and we conclude similarly to the case $m=4$.

		Assume finally $\Phi^*_{2(m-1)}(r)\le  4(m-1)+1$, so by \cite[Lemma 2.1]{guralnick2012malle}, recalling that we are assuming $m$ even, we see that $(m,r)=(6,2), (10,2)$, with $\Phi^*_{2(m-1)}(r)=2(m-1)+1$.
		For $m=6$ we can consult \cite{bray2013holt_colva} and there are no other overgroups. (Recall that the imprimitive subgroup of type $\Or_1(r)\wr S_{2m}$ is not maximal.) For $m=10$, we can consult
		\cite{guralnick1999praeger}; we find subgroups $J_1$ and $\PSL_2(19)$. (Note that $A_{22}$ embeds into $\Sp_{20}(2)$, but not into $\Omega^+_{20}(2)$, via the fully deleted permutation module; see for example \cite[p. 187]{kleidman1990liebeck}.)
		In these cases, from \Cref{l:fpr_enough} we have 
		\[
		\frac{\fp(x,S/M)^2}{|S:M|} \le \frac{|C_S(x)|^2}{|S:M|} < \frac{2^{24}}{|S:M|},
		\]
		from which the result follows.
		
		Assume finally $S=E_7(r)$. We have $|x_1| = \Phi^*_{14}(r) \ge 43$ by \cite[Lemma 2.1]{guralnick2012malle}. By \cite[Tables 1.1, 1.2 and 4.1]{craven2022E7}, the only maximal subgroups containing $x_1$ are $M_1=\PSL_2(r^7).7$ and a subgroup $M_2$ of type $\PSU_8(r)$. We have $|C_S(x_1)|\le r^7+1$ and so 
		\[
		P\le \frac{|C_S(x_1)|^2}{|S:M_1|} + \frac{|C_S(x_1)|^2}{|S:M_2|}< 1.
		\]
		The proof is concluded.
	\end{proof}
	
	\subsection{Generation of alternating groups} We also need a simple result on the generation of alternating groups.

	\begin{lemma}
		\label{l:generation_a_n}
		Let $\ell$ and $s$ be (not necessarily distinct) primes in $(m/2,m-3]$. Then $A_m$ is generated by one $\ell$-cycle and one $s$-cycle.
	\end{lemma}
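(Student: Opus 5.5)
Let $x$ be an $\ell$-cycle and $y$ an $s$-cycle in $A_m$, chosen so that their supports together cover $[m]$ and overlap in at least one point; this is possible since $\ell+s>m$. Set $H=\gen{x,y}$; the aim is to show $H=A_m$. Since $\ell,s\le m-3$ are odd primes (note $\ell>m/2\ge 2$ once $m\ge 4$; for very small $m$ the interval is empty or the statement can be checked directly), both cycles lie in $A_m$, so $H\le A_m$.

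\emph{Step 1: transitivity.} The supports of $x$ and $y$ cover $[m]$ and meet, so the orbit of $H$ containing a common point contains both supports. Hence $H$ is transitive.

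\emph{Step 2: primitivity.} Suppose $H$ preserves a nontrivial block system with blocks of size $b$, where $1<b<m$ and $b\mid m$. The $\ell$-cycle $x$ fixes $m-\ell\ge 3$ points and $\ell>m/2$. The number of blocks is $m/b\le m/2<\ell$, so $x$ cannot permute blocks in an $\ell$-cycle, and since $\ell$ is prime it fixes every block. A block meeting $\mathrm{supp}(x)$ is then a union of $x$-orbits containing the $\ell$-cycle, so it has size at least $\ell>m/2$, forcing $b=m$. This contradicts $b<m$, so $H$ is primitive.

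\emph{Step 3: Jordan's theorem.} A primitive group containing a cycle of prime length $\ell\le m-3$ contains $A_m$ (Jordan's theorem; see \cite[Thm. 3.3E]{DM}). Therefore $H\ge A_m$, and so $H=A_m$.

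The main obstacle is Step 3, which needs the citation of Jordan's theorem on prime cycles; this is exactly why the hypothesis $\ell\le m-3$ is imposed. If one wished to avoid that theorem, an alternative is the classical result that a primitive group containing a $p$-cycle with $p$ prime and $p<m-2$ is alternating or symmetric, which is the same fact. Steps 1 and 2 are routine; the only care needed is the choice of overlapping supports and excluding degenerate small $m$.
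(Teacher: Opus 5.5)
Your proof is correct and follows the same route as the paper: choose the two cycles so that $\langle x,y\rangle$ is transitive, use $\ell>m/2$ to force primitivity, and conclude with Jordan's theorem on prime cycles fixing at least three points. The only difference is that you spell out the overlapping-support construction and the block argument, which the paper states in one line each.
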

	
	\begin{proof}
		Choose $x$ an $\ell$-cycle and $y$ an $s$-cycle such that $\gen{x,y}$ is transitive. Since $x$ cannot preserve nontrivial blocks, $\gen{x,y}$ is primitive. Finally $\gen{x,y}$ contains a cycle of prime length fixing at least three points and so $\gen{x,y}=A_m$ by Jordan's theorem. 
	\end{proof}
	
	\begin{lemma}
		\label{l:two_distinct_primes}
		Assume that $m\ge 16$. Then there are at least two distinct primes in $(m/2,m-3]$.
	\end{lemma}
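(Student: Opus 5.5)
We need at least two distinct primes in $(m/2, m-3]$ whenever $m\ge 16$. The plan is to combine an explicit prime-counting estimate for large $m$ with a finite check for small $m$, in the spirit of the use of \cite{el_bach} in the proof of \Cref{lemtwrs}.

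\emph{Large $m$.} The key input is a strengthening of Bertrand's postulate. By Nagura's theorem, for every real $x\ge 25$ there is a prime in $(x, 6x/5]$. I would apply it twice. First take $x=m/2$: this gives a prime $p_1$ with $m/2<p_1\le 3m/5$. Then take $x=3m/5$: this gives a prime $p_2$ with $3m/5<p_2\le 18m/25$. The two primes are distinct because they lie in disjoint intervals. It remains to check that $18m/25\le m-3$. This is equivalent to $7m/25\ge 3$, that is, $m\ge 75/7$, so it holds for all $m\ge 11$. Nagura requires $x=m/2\ge 25$, so this argument covers $m\ge 50$. If one prefers to avoid Nagura, one could instead use the Rosser--Schoenfeld explicit bounds on $\pi(x)$ to show that $\pi(m-3)-\pi(m/2)\ge 2$ beyond some explicit threshold.

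\emph{Small $m$.} For $16\le m\le 49$ I would verify the claim directly, either by listing primes or by a short table. For example, at $m=16$ the interval is $(8,13]$, which contains $11$ and $13$. At $m=17$ the interval is $(8.5,14]$, again containing $11$ and $13$. At $m=18$ the interval is $(9,15]$, with $11$ and $13$. At $m=19$ it is $(9.5,16]$, and at $m=20$ it is $(10,17]$; both contain $11$ and $13$. The remaining values are handled in the same way. Near $m=16$ the interval is tight, so its lower end must be treated carefully as a strict inequality.

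\emph{Main obstacle.} The only real subtlety is choosing a citable explicit estimate and making sure its threshold is matched by the finite check. The finite check itself is routine.
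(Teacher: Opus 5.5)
Your proposal is correct and is essentially the paper's proof. Both apply Nagura's theorem twice for $m\ge 50$ to get primes in $(m/2,3m/5]$ and above $3m/5$ up to $18m/25\le m-3$ (the paper uses $\lfloor 3m/5\rfloor$ as the cut point), and both handle $16\le m\le 49$ by direct inspection.
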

	
	\begin{proof}
		If $m\ge 50$, then by \cite{nagura1952interval} there is a prime in $(m/2,\floor{3m/5}]$,
		and a prime in $(\floor{3m/5}, 18m/25]$. Since $18m/25 \le m-3$, the proof in this case is complete. If $16\le m\le 49$, then one checks the statement directly.
	\end{proof}
	
	\begin{lemma}
		\label{l:generation_a_n_second}
		For $m \ge 11$, there are two distinct primes $\ell$ and $s$ such that $\ell,s\ge 7$ and $A_m=\gen{x_1,y_1}=\gen{x_2,y_2}$ where $|x_1|=|y_1|=\ell$ and $|x_2|=|y_2|=s$.
	\end{lemma}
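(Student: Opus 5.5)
The plan is to combine \Cref{l:generation_a_n} with a prime-counting input, splitting into a large-$m$ range and a finite range checked by hand.

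\textbf{Large $m$.} For $m\ge 16$, \Cref{l:two_distinct_primes} gives two distinct primes $\ell<s$ in $(m/2,m-3]$. Since $\ell>m/2\ge 8$, both primes are at least $11$, so certainly $\ell,s\ge 7$. Apply \Cref{l:generation_a_n} twice, once with the pair $(\ell,\ell)$ and once with $(s,s)$; that lemma explicitly allows the two primes to coincide. This yields an $\ell$-cycle $x_1$ and an $\ell$-cycle $y_1$ with $A_m=\gen{x_1,y_1}$, and similarly $s$-cycles $x_2,y_2$ with $A_m=\gen{x_2,y_2}$. Here $|x_1|=|y_1|=\ell$ and $|x_2|=|y_2|=s$, as required.

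\textbf{Small $m$, $11\le m\le 15$.} The interval $(m/2,m-3]$ may then contain fewer than two primes; for example, for $m=11$ it is $(5.5,8]$, which contains only $7$. So I treat these values directly, still using cycles of prime length $\ge 7$ but relaxing the requirement that the prime lie in $(m/2,m-3]$.

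The relaxed requirement is checked by rerunning the argument of \Cref{l:generation_a_n} for a prime $p\ge 7$ with $p\le m-3$. One chooses two $p$-cycles whose supports together cover $[m]$ and overlap, so the generated group is transitive. Primitivity follows if $p>m/2$, since a $p$-cycle then cannot permute blocks nontrivially; otherwise one checks primitivity directly, for instance because the group contains a $p$-cycle with $p$ prime fixing at least three points and is transitive. Once the group is primitive, Jordan's theorem (a primitive group containing a $p$-cycle fixing at least three points contains $A_m$) gives the full group $A_m$.

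For a prime $p=m-2$ or $m-1$, the cycle fixes at most two points, so Jordan's theorem does not apply in the usual form. In that case I would instead use a version of Jordan's theorem valid whenever $p\le m-3$, or pick a different prime.

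A case-by-case choice is as follows:
\begin{itemize}
\item $m=11$: primes $7$ and $11$. An $11$-cycle together with a suitable conjugate generates $A_{11}$, since $A_{11}$ is $2$-generated by conjugate $11$-cycles; this can be verified in GAP.
\item $m=12$: primes $7$ and $11$.
\item $m=13$: primes $7$ and $13$ (or $7$ and $11$).
\item $m=14$ and $m=15$: primes $7$ and $11$ (or $11$ and $13$).
\end{itemize}
Each of these finitely many cases can be confirmed by exhibiting explicit generating pairs, which is quickly checked with GAP.

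The main obstacle is this small range, where primes close to $m$ (that is, $m-1$ or $m-2$) fall outside the hypothesis of Jordan's theorem. I would handle it by a direct computational check, or by using Jones's classification of primitive groups containing a cycle: apart from $A_m$ and $S_m$, these are only affine and projective-linear groups and a few small exceptions. Primitivity then forces $A_m$ once the orders are compared.
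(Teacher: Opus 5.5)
Your proof is essentially the paper's: for $m\ge 16$ the paper combines \Cref{l:generation_a_n} with \Cref{l:two_distinct_primes} exactly as you do (applying the former with equal primes), and for $11\le m\le 15$ it simply checks the statement in GAP. Rely on that computation rather than on your hand arguments in the small range, several of which are wrong: for $m=14,15$ two $7$-cycles cannot have overlapping supports covering $[m]$, so they never generate a transitive group (with the prime $7$ you would need elements of cycle type $(7,7)$ or $(7,7,1)$, or else use the primes $11$ and $13$); a transitive group containing a prime cycle fixing three points need not be primitive (for example $S_7\wr S_2$ on $14$ points); and the $p\le m-3$ form of Jordan's theorem says nothing about $p\in\{m-1,m-2\}$.
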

	
	\begin{proof}
		For $m\ge 16$ the statement follows from \Cref{l:generation_a_n,l:two_distinct_primes}. For $11\le m \le 15$ we can use GAP.
	\end{proof}

	\section{Classical groups: classes $\ca C_1, \ldots, \ca C_8$} 
	
	\label{sec:geometric_classes}
	
	Throughout this section, $p$ is a prime number, $q$ is a power of $p$, and $K=\overline{\F_p}$. Moreover, $G$ is a classical group with natural module $V=\F_{q^u}^n$, where $u=2$ if $G$ is unitary and $u=1$ otherwise. In view of various isomorphisms, we  assume $n\ge 3$ if $G=\PSU_n(q)$, $n\ge 4$ if $G=\PSp_n(q)$, $n\ge 7$ if $G=\POm^\varepsilon_n(q)$.  Recall also that $\Sp_4(2)'=\PSL_2(9)=A_6$, $\PSL_4(2)=A_8$, $\PSU_4(2)=\PSp_4(3)$, $\PSL_2(4)=\PSL_2(5)=A_5$ (see for example \cite[Proposition 2.9.1]{kleidman1990liebeck}).

	Aschbacher \cite{aschbacher1984maximal_subgroups} partitioned the members of $\ca A(G)$ into nine classes, which are generally denoted $\ca C_1, \ldots, \ca C_8, \ca S$ (see also \cite{kleidman1990liebeck,liebeck1998subgroup}). It will be convenient for us to introduce a further class $\ca N$, which we  define below. In this section, we handle classes $\ca C_1, \ldots, \ca C_8$ and  $\ca N$. We will handle class $\ca S$ in \Cref{sec:class_S}, and we postpone a brief description of it to that section.
	
	Classes $\ca C_1, \ldots, \ca C_8$ are sometimes called ``geometric'', in that the subgroups in these classes preserve natural structures on the natural module. We devote one subsection to each class, and at the beginning of each subsection, we give brief information on the class. We refer to \cite[Chapter 4]{kleidman1990liebeck} for detailed descriptions of the subgroups in each class. 
	
	Class $\ca N$ is non-empty only for $G=\POm^+_8(q)$, and $G=\Sp_4(q)$ with $q$ even. Put $\delta=3$ if $G=\POm^+_8(q)$, and $\delta=2$ if $G=\Sp_4(q)$. Then $\Aut(G)$ has a normal subgroup $\Gamma$ of index $\delta$, such that $\Aut(G)\sm \Gamma$ contains a graph automorphism of order $\delta$. We let $\ca N$ be the set of members of $\ca A(G)$ that do not extend to maximal subgroups of almost simple groups contained in $\Gamma$. (In particular, by the definition of $\ca A(G)$ each of them extends to a maximal subgroup of some almost simple group not contained in $\Gamma$.)

	For convenience, in the proofs we will replace $G$ by its quasisimple cover acting faithfully on $V$. We first handle some groups computationally (also for class $\ca S$).
	
	\begin{lemma}
		\label{l:small_groups}
		\Cref{t:main_simple} holds if $G$ is one of the following:
		\begin{align*}
			&\PSL_2(q) \emph{ with $q\le 9$} \\
			&\PSL_3(q) \emph{ with $q\le 8$} \\
			&\PSL_4(q) \emph{ with $q\le 3$} \\
			&\PSU_3(q) \emph{ with $q\le 8$} \\
			&\PSU_4(q) \emph{ with $q\le 3$}\\
			&\PSp_4(q) \emph{ with $q\le 4$} \\
			& \Sp_6(2)
		\end{align*}
	\end{lemma}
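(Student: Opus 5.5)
The statement is a finite check, so the plan is to verify \Cref{t:main_simple} directly for each listed group, using GAP as the paper announced in the introduction. For each $G$ in the list, I would first obtain from the GAP character table library the character table of $G$. I would then obtain a full set of representatives of the subgroups in $\ca A(G)$: the maximal subgroups of every almost simple group $A$ with socle $G$, intersected with $G$. For these small groups these are all recorded, for instance in \cite{atlas} and \cite{bray2013holt_colva}, and the table of marks or the stored maximal subgroups in GAP are available. Novelty subgroups, namely those that are maximal in some $A$ but not in $G$, must be included. The groups in $\ca N$, for example for $\Sp_4(q)$ with $q=2,4$ where a graph automorphism is present, also need care and must be included explicitly.

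For each such $M$ I would compute the fusion of the classes of $M$ into $G$, or equivalently the induced permutation character $1_M^G$. This is routine in GAP, either via the stored class fusions or by constructing $M$ as a subgroup of a permutation representation of $G$. Then, using \Cref{l:fpr_enough}(i), for each class $g^G$ meeting $M$ I would compute $\fp(g,G/M)=|C_G(g)|\,|g^G\cap M|/|M|$. It then remains to check that some $g\in M$ satisfies $\fp(g,G/M)<|G:M|^{1/3}$. In most cases I expect an element $g\in M$ with $|C_G(g)|<|G:M|^{1/3}$ to suffice immediately; for example, an element of maximal order in $M$ often works. Only the remaining pairs need the exact permutation character value.

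The main obstacle is the small-index cases, where $|G:M|^{1/3}$ is tiny. Examples are $\PSL_2(q)$ acting on $q+1$ points, or $A_5\cong\PSL_2(4)$ acting on $5$ or $6$ points, where the bound is at most about $2$. In these cases I need an element with exactly one fixed point. Parabolic subgroups are handled by \Cref{l:parabolic}, since a regular unipotent element has a unique fixed point. For the other small-index actions, such as $\PSL_2(7)$ on $7$ points, $\PSL_2(9)\cong A_6$ on $6$ or $10$ points, and $\PSp_4(3)$ on $27$ or $36$ points, I would choose the element by hand, for instance a $4$-cycle-type element or an element of order $5$, and confirm from the character values that it fixes exactly one point. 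The check consists of running the loop over all pairs $(G,M)$ and confirming that the minimum over $g$ of $\fp(g,G/M)$ is less than $|G:M|^{1/3}$ in every case.
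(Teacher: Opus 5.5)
Your proposal is correct and is essentially the paper's proof: a finite GAP/ATLAS verification that for every $M\in\ca A(G)$ (the maximal subgroups of $G$ via GAP; the non-maximal members of $\ca A(G)$ and the case $\Sp_6(2)$ via the ATLAS) some $g\in M$ satisfies $\fp(g,G/M)<|G:M|^{1/3}$, computed with \Cref{l:fpr_enough}.

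One small correction to your worked examples: a unique fixed point is not required. For instance, for $\PSU_4(2)\cong\PSp_4(3)$ on $27$ points an element of order $5$ fixes $2<3$ points, not one. This action, like $A_6$ on $10$ points, is in fact parabolic, so \Cref{l:parabolic} already supplies a regular unipotent element with exactly one fixed point.
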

	
	\begin{proof}
		If $G\neq \Sp_6(2)$ we use GAP.	For each maximal subgroup $M$ of $G$, we perform a random search in $M$ and find $g\in M$ with $\fp(g,G/M)<|G:M|^{1/3}$. For the cases where $M\in \ca A(G)$ but $M$ is not maximal in $G$, as well as for the case $G=\Sp_6(2)$, we directly inspect \cite{atlas} and find $g\in M$ with $\fp(g,G/M)<|G:M|^{1/3}$.
	\end{proof}

	\subsection{Class $\ca C_1$}
	
	Subgroups $M$ in class $\ca C_1$ are stabilizers of certain subspaces $U$ of the natural module $V$. If $G$ preserves some nondegenerate form, then $U$ is nondegenerate, or totally singular, or a nonsingular $1$-space in orthogonal groups in even characteristic. What is more, if $U$ is nondegenerate, then $U$ is not similar to $U^\perp$.
	We also include the case where $G=\SL_n(q)$ and $M$ is the stabilizer of a flag or antiflag.

	\begin{lemma}
		\Cref{t:main_simple} holds if $M$ is in class $\ca C_1$.
	\end{lemma}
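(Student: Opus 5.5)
The plan is to find, for each subspace stabilizer $M$, an element $g\in M$ that stabilizes very few subspaces in the $G$-orbit of $U$. Ideally $g$ fixes exactly one such subspace, so that $\fp(g,G/M)=1$. Throughout, $G$ is replaced by its quasisimple cover acting on $V$, and the groups of \Cref{l:small_groups} are excluded.

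\textbf{Totally singular subspaces and $\SL_n(q)$.} When $U$ is totally singular, or $G=\SL_n(q)$ with $U$ any subspace, flag or antiflag, $M$ is a parabolic subgroup. Here \Cref{l:parabolic} applies directly: a regular unipotent $g$ lies in a unique conjugate of $M$, so $\fp(g,G/M)=1<|G:M|^{1/3}$. The one case needing care is the antiflag stabilizer in $\SL_n(q)$, which is not parabolic. For it, take $g$ acting on $U\oplus W$ (with $\dim U=k$, $\dim W=n-k$) as the direct sum of irreducible elements, or of a ppd element on each summand. If $k\ne n-k$, the characteristic polynomials on the two summands are coprime, so $g$ determines the decomposition and has a unique fixed antiflag of that type. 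If the two summands are $p'$-elements of coprime orders, the same conclusion holds. Small degenerate cases are covered by \Cref{l:small_groups} or direct inspection.

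\textbf{Nondegenerate subspaces.} Suppose $U$ is nondegenerate of dimension $k$ with $k\le n-k$, say, and $U$ not similar to $U^\perp$ when $k=n-k$. Choose $g=g_1\oplus g_2$ with $g_1\in\mathrm{Isom}(U)$ and $g_2\in\mathrm{Isom}(U^\perp)$, each regular semisimple with irreducible characteristic polynomials, or of ppd-type orders $\Phi^*_k$-like and $\Phi^*_{n-k}$-like, with disjoint sets of eigenvalues. Then every $g$-invariant nondegenerate $k$-space isometric to $U$ is a sum of $g$-homogeneous components. By the choice of $g$, the only such subspace is $U$, so again $\fp(g,G/M)=1$.

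Where such a choice is impossible, use the estimate from \Cref{l:fpr_enough} instead. This happens for small $k$: $k=1$ or $2$ in orthogonal groups, $k=1$ in unitary groups, and the nonsingular $1$-spaces in even-characteristic orthogonal groups (where $M\cong\Sp_{n-1}(q)$). In these cases pick $g\in M$ regular semisimple with $|C_G(g)|<h^{(2,q-1)}(q+1)^r$ by \Cref{l:order_centralizers}(iv). Since $|G:M|$ is at least about $q^{n-1}$ and typically about $q^{kn}$, compare the bound using \Cref{l:order_classical_groups}. When $|G:M|$ is only about $q^{n-1}$, the bound $|C_G(g)|\approx q^{n/2}$ is too weak. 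There one must instead count fixed points directly: $g$ fixes exactly the nondegenerate $1$-spaces among its eigenspaces. Choosing $g$ so that $V=U\oplus U^\perp$ with $g$ irreducible on $U^\perp$ gives $\fp\le 1$ or $2$ (for example, for $\Sp_{n-1}(q)<\Omega_n(q)$, the fixed points are the nonsingular vectors in $C_V(g)$).

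\textbf{Main obstacle.} I expect the main obstacle to be the residual small cases. These are the orthogonal groups with $k\le 2$ and small $q$ where ppds fail by Zsigmondy ($(q,n)=(2,6)$ and $q+1$ a $2$-power), and the cases $k=n-k$ in unitary and symplectic groups, where $M=\mathrm{Isom}(U)\times\mathrm{Isom}(U^\perp)$ and the two blocks could be swapped. For the latter, choose $g_1,g_2$ of different orders so that no element swaps the blocks, and handle the remaining few groups by direct computation of $\fp(g,G/M)$ via \Cref{l:fixed_points_precise_normal_subgroup}.
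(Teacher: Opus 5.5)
Your route is the paper's: a regular unipotent element and \Cref{l:parabolic} when $M$ is parabolic, and otherwise $g=g_1\perp g_2$ (or $g_1\oplus g_2$ for an antiflag) built from irreducible pieces with no common constituent, so that $U$ is the only $g$-invariant subspace of its kind. Your parabolic and antiflag cases are fine as stated.

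The gap is the claim that, for nondegenerate $U$, the eigenvalues of $g_1$ and $g_2$ can be made disjoint (giving $\fp(g,G/M)=1$) except when $k$ is small. Take $\Omega^\varepsilon_n(q)$ with $q$ odd and $\dim U$, $\dim U^\perp$ both odd. This happens for every such $k$, including $k=1$ with $n$ even, where $g$ cannot act irreducibly on $U^\perp$. In this case disjointness is impossible. Since $\det g=1$, we get $\det g_1=\det g_2=\epsilon\in\{\pm1\}$. An isometry of an odd-dimensional orthogonal space always has its determinant as an eigenvalue, because eigenvalues other than $\pm1$ occur in inverse pairs. So the $\epsilon$-eigenspace $E$ of $g$ meets both $U$ and $U^\perp$. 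With $g$ semisimple, as in your construction, $g$ then also stabilizes every subspace obtained from $U$ by replacing $U\cap E$ with another nondegenerate subspace of $E$ of the same isometry type. Your uniqueness argument therefore collapses, and in general $\fp(g,G/M)>1$.

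A similar problem arises if $U$ has minus type and $\dim U^\perp=\dim U+1$. Take $g_2$ irreducible on a minus-type hyperplane $W_2$ of $U^\perp$. Then $W_2$ is a second $g$-invariant subspace of the same type as $U$, whatever the eigenvalues are.

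The paper handles these cases by counting directly. It gets $\fp(g,G/M)\le q+1$ (the number of $1$-spaces of the $2$-dimensional $\epsilon$-eigenspace) in the odd/odd case and $\fp(g,G/M)\le 2$ in the minus-type case, then checks $(q+1)^3<|G:M|$. Your fallback through the generic bound of \Cref{l:order_centralizers}(iv) is no substitute, because the factor $16(q+1)^{n/2}$ is too large for small $q$. For instance, with $q=3$, $\dim U=2$ and $n=20$ it only gives $|C_G(g)|^3<2^{72}$, while $|G:M|<3^{37}$. For $\dim U=2$ the direct construction already gives $\fp(g,G/M)\le 3$, so no fallback is needed there.

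A few further corrections:
\begin{itemize}
\item Irreducible semisimple elements do not exist in $\GU_m(q)$ for $m$ even, in $\Or^+_m(q)$, or in odd-dimensional orthogonal groups. Your ``$\Phi^*$-like'' alternative therefore has to be the paper's device: act irreducibly on each of two complementary totally singular subspaces $A\not\cong B$, or on a minus-type hyperplane. This keeps the module multiplicity-free, so invariant subspaces really are sums of homogeneous components.
\item For $q$ even and $U$ a nonsingular $1$-space, $M\cong\Sp_{n-2}(q)$, not $\Sp_{n-1}(q)$. A semisimple $g\in M$ has a nonzero, even-dimensional, nondegenerate fixed space, so $\fp(g,G/M)$ is the number of nonsingular $1$-spaces in $C_V(g)$. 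This is $q\pm1$ when $\dim C_V(g)=2$, not at most $2$; the case still works since $(q+1)^3<|G:M|$.
\item The ``swap'' obstacle for $\dim U=\dim U^\perp$ in unitary and symplectic groups does not arise. There $U\cong U^\perp$, the relevant subgroup is the $\ca C_2$ stabilizer of $\{U,U^\perp\}$, and so $\dim U<n/2$ throughout $\ca C_1$.
\end{itemize}
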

    
	\begin{proof}
		If $M$ is parabolic, let $g\in M$ be a regular unipotent element, so $\fp(g,G/M)=1$ by \Cref{l:parabolic}.
		Assume then $M$ is not parabolic. We go through all remaining cases.
		
		Assume $G=\SU_n(q)$, so $M$ is the stabilizer of a nondegenerate subspace $U_1$ of dimension $m_1<n/2$, and set $m_2=n-m$. Write $V=U_1\perp U_2$ and, according to this decomposition, let $g=g_1\perp g_2$ where $g_i$ is as follows. 
		If $m_i$ is odd, then $g_i$ acts irreducibly.
		If $m_i=2$ then $g_i$ is diagonal with distinct eigenvalues.
		If $m_i\neq 2$ is even, then $g_i$  acts irreducibly on each of a pair of complementary totally singular subspaces $A$ and $B$, and moreover $A\not\cong B$ as $\F_{q^2}\gen{g_i}$-modules. It is possible to choose $g_1$ and $g_2$ so that $\det(g)=1$, and such that $g_1$ and $g_2$ have no common eigenvalue, so $\fp(g,G/M)=1$. 
		
		Assume now $G=\Sp_n(q)$, so $M$ is the stabilizer of a nondegenerate subspace $U_1$ of dimension $m_1<n/2$. Write $V=U_1\perp U_2$, and accordingly take $g=g_1\perp g_2$ where $g_i$ acts irreducibly, so $\fp(g,G/M)=1$.
		
		Assume now $G=\Omega^\varepsilon_n(q)$. Assume first $M$ is the stabilizer of a nondegenerate subspace $U_1$ of dimension $m_1\le n/2$ and type $\delta_1$; set $m_2=n-m$ and let $\delta_2$ be the type of $U_2:=U_1^\perp$. We have that $U_1$ is not similar to $U_2$, and so $m_1<n/2$ unless $\varepsilon =-$ and $n\equiv 0\pmod 4$.
		According to the decomposition $V=U_1\perp U_2$, we choose $g=g_1\perp g_2$ where $g_i$ is as follows.

		If $m_i$ is odd, then $g_i$ acts irreducibly on a nondegenerate hyperplane of minus type and trivially on a complement. If $m_i$ is even and $\delta_i=-$, then $g_i$ acts irreducibly.
		If $(m_i,\delta_i,q)=(4,+,2)$ then $g_i$ acts irreducibly on a nondegenerate $2$-space and trivially on a complement. If $m_i$ is even and $\delta_i=+$ and $(m_i,q)\neq (4,2)$ then $g_i$ acts irreducibly on each of a pair of complementary totally singular subspaces $A$ and $B$, and moreover either $(m_i,q) = (2,3)$ and $g_i=-1$, or $(m_i,q)=(2,2)$ and $g_i=1$, or $A\not\cong B$ as $\F_q\gen{g_i}$-modules. It is possible to choose $g_1$ and $g_2$ so that $g\in \Omega_n^\varepsilon(q)$, and such that moreover $g_1$ and $g_2$ have no nontrivial irreducible submodule in common. In particular, we have $\fp(g,G/M)=1$ unless we are in one of the following cases:
		\begin{itemize}
			\item[(i)] $\delta_1=-$ and $m_2=m_1+1$, in which case $\fp(g,G/M)\le 2$. 
			\item[(ii)] $m_1$ and $m_2$ are odd, in which case $\fp(g,G/M)\le q+1$ (the number of $1$-subspaces of a $2$-space).
			\item[(iii)] $(m_1,\delta_1,q)=(2,+,3)$ and $m_2$ is odd, in which case $\fp(g,G/M)\le 3$ (one plus the number of nondegenerate $1$-subspaces of $U_1$).
		\end{itemize}
		Assume now $M=\Sp_{n-2}(q)$ is the stabilizer of a nonsingular $1$-space with $q$ even. We take $g=g_1\perp g_2$ where $g_1$ acts irreducibly on a nondegenerate $(n-2)$-space and $g_2$ is trivial on a nondegenerate $2$-space $U$. 
		Then $\fp(g,G/M)$ is the number of nonsingular $1$-spaces of $U$, so $\fp(g,G/M)\le q+1$, 
		and $(q+1)^3 < q^{n-1}(q^n-1) \le |G:M|$.

		The only remaining case is when $G=\SL_n(q)$ with $n\ge 3$ and $M$ is the stabilizer of an antiflag $V=U_1\oplus U_2$ with $\dim(U_1)\neq \dim(U_2)$. (Note that the stabilizer of a flag in $G$ is parabolic and so it has been considered in the first paragraph of the proof.)
		We choose $g=(g_1,g_2)$ where $g_i$ is irreducible on $U_i$, 
		so $\fp(g,G/M)=1$.
	\end{proof}
	
	\subsection{Class $\ca C_2$}
	
	Subgroups $M$ in class $\ca C_2$ are stabilizers of direct sum decompositions $V=V_1\oplus \cdots \oplus V_t$ of the natural module, where $\dim(V_i)=m$ and $t>1$. If $G$ preserves a nondegenerate form, then the $V_i$ are either nondegenerate and similar, or totally singular; in the latter case $t=2$. In all cases we have $n=mt$ and $M\le \GL(V_1)\wr S_t$.

	\begin{lemma}
		\label{l:determinant_imprimitive}
		Let $g\in \GL_m(K)\wr C_t$ preserve the decomposition $V=V_1\oplus \cdots \oplus V_t$, inducing a $t$-cycle on the spaces, and denote by $g'$ the restriction of $g^t$ to $V_1$. Then $\det(g)=(-1)^{m(t-1)}\det(g')$.
	\end{lemma}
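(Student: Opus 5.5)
The plan is to choose a basis adapted to the decomposition and compute the determinant directly as a block matrix. Write $g = \sigma \cdot (h_1,\ldots,h_t)$, where $h_i \in \GL(V_i)$ and $\sigma$ permutes the $V_i$ cyclically. After relabelling, $g$ maps $V_i$ to $V_{i+1}$ for $i<t$ and $V_t$ to $V_1$. Fix a basis of $V_1$ and transport it along $g$: the basis of $V_{i+1}$ is the image under $g$ of the basis of $V_i$, for $i=1,\ldots,t-1$.

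In this basis, $g$ sends each basis vector of $V_i$ to the corresponding basis vector of $V_{i+1}$ for $i<t$. On $V_t$, the image $g(V_t)\subseteq V_1$ is described by the matrix of $g^t|_{V_1}=g'$ with respect to the chosen basis of $V_1$, since $g^{t-1}$ identifies the basis of $V_1$ with that of $V_t$. Hence the matrix of $g$ is the block companion matrix
\[
\begin{pmatrix} 0 & 0 & \cdots & 0 & A \\ I & 0 & \cdots & 0 & 0 \\ 0 & I & \cdots & 0 & 0 \\ \vdots & & \ddots & & \vdots \\ 0 & 0 & \cdots & I & 0 \end{pmatrix},
\]
where $A$ is the matrix of $g'$ (up to the choice of whether we write matrices acting on rows or on columns; either convention gives the same determinant).

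This matrix factors as $P\cdot \mathrm{diag}(I,\ldots,I,A)$, where $P$ is the permutation matrix of the block cyclic shift. Since $P$ is the $m$-th tensor power arrangement of a $t$-cycle, it is the permutation matrix of a product of $m$ disjoint $t$-cycles. Each $t$-cycle has sign $(-1)^{t-1}$, so $\det P=(-1)^{m(t-1)}$. It follows that $\det g = (-1)^{m(t-1)}\det(g')$.

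The only subtle point is verifying that $g^t|_{V_1}$ is exactly the block $A$ in this adapted basis. This holds because $g^t$ applied to the $j$-th basis vector of $V_1$ equals $g$ applied to the $j$-th basis vector of $V_t$. The rest of the argument is routine.
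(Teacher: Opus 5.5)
Your argument is correct and is essentially the paper's proof: transporting a basis of $V_1$ along $g$ amounts to conjugating $g$ within $\GL_m(K)\wr C_t$ to the normal form $\tau\cdot(1,\ldots,1,g')$ (the paper writes $(g',1,\ldots,1)\tau$), after which $\det(g)=\det(\tau)\det(g')$ with $\det(\tau)=(-1)^{m(t-1)}$. One small wording slip: the block shift $P$ is the Kronecker product of a $t\times t$ cyclic permutation matrix with $I_m$, not an ``$m$-th tensor power'', though your conclusion that $P$ is a product of $m$ disjoint $t$-cycles is correct.
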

	
	\begin{proof}
		We have that $g$ is conjugate in $\GL_m(K)\wr C_t$ to $(g', 1, \ldots, 1)\tau$, where $\tau$ is an element of order $t$ inducing a $t$-cycle on the spaces. The lemma follows from the fact that $\det(\tau)=(-1)^{m(t-1)}$.
	\end{proof}

	\begin{lemma}
		\Cref{t:main_simple} holds if $M$ is in class $\ca C_2$.
	\end{lemma}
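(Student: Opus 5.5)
The natural approach is to produce, for each $M$ stabilizing a decomposition $V=V_1\oplus\cdots\oplus V_t$ with $\dim V_i=m$, an element $g\in M$ that permutes the summands as a $t$-cycle (or, for $t=2$, possibly fixes both) and whose action is chosen so that $|C_G(g)|$ is small. The bound $\fp(g,G/M)\le|C_G(g)|$ from \Cref{l:fpr_enough} then suffices, and we aim for $|C_G(g)|<|G:M|^{1/3}$. Since $|G:M|$ is roughly $q^{n^2-m^2t}=q^{n^2(1-1/t)}$ up to the $t!$ factor, it suffices to have $|C_G(g)|$ of order $q^{cn}$ for a small constant $c$, since then the inequality holds once $n$ is not tiny.

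The construction is as follows. Take $g=(g',1,\ldots,1)\tau$ with $\tau$ cyclically permuting the $V_i$ and $g'$ acting on $V_1$. Then $g^t$ restricted to each $V_i$ is conjugate to $g'$, and the $\F_q\langle g\rangle$-module $V$ is the induced module of $V_1$. If $g'$ acts irreducibly on $V_1$ with eigenvalue $\lambda$ of degree $m$, then $g$ has eigenvalues the $t$-th roots of $\lambda$, and we choose $\lambda$ so that these are all distinct. In that case $g$ is regular semisimple, and $|C_G(g)|\le(q+1)^{n}$ (or $(q+1)^{n/2}$ up to a constant in symplectic and orthogonal groups) by \Cref{l:order_centralizers}(iv), with the analogous bound for $\GL$ and $\GU$. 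The determinant condition $g\in G$ is handled by \Cref{l:determinant_imprimitive}: we need $\det g'=(-1)^{m(t-1)}$ (times a power to land in $\SL$ or $\SU$). This is achieved by taking $g'$ of order dividing $(q^m-1)/(q-1)$, or $q^m+1$-type in the unitary case, and adjusting by a scalar in the $\Sp$ and $\Omega$ cases where the form must be preserved. For $V_i$ nondegenerate, $g'$ must lie in the isometry group of $V_1$; for the totally singular case ($t=2$) we instead take $g=(h,h^{-*})$, or a swap, in the Levi $\GL_{n/2}$ with $h$ irreducible, which gives $C_G(g)$ a torus of order at most $q^{n/2}$.

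The cases then proceed in turn: $\SL_n$ and $\SU_n$ with general $m,t$; $\Sp_n$ with nondegenerate $V_i$; $\Omega^\varepsilon_n$ with nondegenerate $V_i$, including $m=1$, $q$ odd, where $M=2^{n-1}.S_n$ type and $g$ is a signed $n$-cycle or $(n-1)$-cycle; and the totally singular case. In each case we compare the centralizer bound with $|G:M|$ via \Cref{l:order_classical_groups}, using $t!\le t^t$.

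I expect the main obstacle to be the small cases. These are $m=1$ with $q$ small (for instance $q=2,3$, where regular elements of the required shape may not exist, or where $|G:M|$ is only about $q^{n^2/2}/n!$), together with small $n$ and the exceptional possibilities where distinct $t$-th roots cannot be arranged. For $m=1$ I would use a monomial element that is a product of a long cycle and sign changes, compute $|C_G(g)|$ directly (which is roughly $nq$), and check $n^3q^3<q^{\binom n2}/n!$-type inequalities, leaving finitely many groups to \Cref{l:small_groups} or GAP.
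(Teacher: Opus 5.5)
Your generic argument is the paper's. The element $g=(g',1,\ldots,1)\tau$, with $g'$ irreducible on $V_1$, is regular, so $|C_G(g)|$ has order about $q^{n}$ (about $q^{n/2}$ for symplectic and orthogonal groups), and this beats $|G:M|^{1/3}$ once $n$ is moderately large. Your totally singular case, via an irreducible element of the Levi $\GL_{n/2}(q^u)$, also matches.

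The gap is in the residual cases. You flag them, but you propose to dispose of them as finitely many groups for \Cref{l:small_groups} or GAP. In fact they are infinite families (small $n$, arbitrary $q$) on which the bound $\fp(g,G/M)\le|C_G(g)|$ fails for \emph{every} choice of $g$. For example, take $G=\SL_4(q)$ and $M$ of type $\GL_2(q)\wr S_2$, a member of $\ca A$ for all $q\ge 3$. Then $|G:M|=q^4(q^2+1)(q^2+q+1)/2<q^8$. On the other hand, every centralizer algebra in $M_4(\F_q)$ has dimension at least $4$, so its unit group has order at least $(q-1)^4$. Hence $|C_G(g)|^3\ge (q-1)^9>|G:M|$ for every $g\in G$ once $q\ge 7$. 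The same obstruction occurs for $\SL_2(q)$ and $\SL_3(q)$ with $m=1$, $\SU_3(q)$ with $m=1$, $\SU_4(q)$ with $m=2$, and $\Sp_4(q)$ with $m=2$.

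In these cases one must use the exact count of \Cref{l:fpr_enough}, and the paper picks elements with $\fp(g,G/M)=1$:
\begin{itemize}
\item for $m=n/2$, $g=(g_1,g_2)\in\GL_{n/2}(q)^2\cap G$ with $g_1$ a Singer cycle and $g_2$ diagonal; then $V$ is not a sum of two isomorphic $\F_q\langle g^2\rangle$-modules, so no conjugate of $g$ lies in $M\sm\GL_{n/2}(q)^2$;
\item for $(n,m)=(3,1)$, the diagonal element with eigenvalues $\lambda,\lambda^{-1},1$, where $\lambda$ generates $\F_q^\times$;
\item for $\Sp_4(q)$, a regular semisimple element of order $q+1$ in $\Sp_2(q)^2$;
\item analogous choices in the unitary case.
\end{itemize}

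Your estimate that a monomial regular element has centralizer of order roughly $nq$ is what hides this. That is roughly its centralizer inside the monomial group $M$. In $G$, the centralizer of a regular $g$ is the intersection with $G$ of the unit group of the $n$-dimensional algebra $\F_q[g]$, whose order lies between $(q-1)^n$ and $q^n$.

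The orthogonal case also has infinite families, now in $t$, that need separate treatment.
\begin{itemize}
\item The $t$-cycle $\tau$ lies in $\SO^+_n(q)\sm\Omega^+_n(q)$ when $q\equiv 3\pmod 4$, $m\equiv 2\pmod 4$ and $t$ is even. This has to be compensated by taking $g'\in\SO^+_m(q)\sm\Omega^+_m(q)$.
\item For $(m,q)=(2,5),(4,3)$ and every $t$, there is no $g'\in\Omega^+_m(q)$ with $V_1=A\oplus B$, $A,B$ totally singular and irreducible, and $A\not\cong B$. The paper handles these by splitting $V=(V_1\perp\cdots\perp V_{t-1})\perp V_t$ and using elements of $\SO\sm\Omega$ on both pieces. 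Your proposed scalar adjustment does not address either of these problems.
\end{itemize}

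Two smaller points. When $p\mid t$, the $t$-th roots of $\lambda$ cannot be distinct. This is harmless: $V\cong\F_q[X]/(f(X^t))$ is still a cyclic $\F_q\langle g\rangle$-module, so $g$ is regular though not semisimple, which is all \Cref{l:order_centralizers}(iv) needs. Also, an irreducible $g'$ of order dividing $(q^m-1)/(q-1)$ has determinant $1$, so when $(-1)^{m(t-1)}=-1$ you need $g'$ of norm $-1$ instead.
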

	
	\begin{proof} 
		Assume $G=\SL_n(q)$ and $M$ of type $\GL_m(q)\wr S_t$. Let $g=(g', 1, \ldots, 1)\tau\in G$, where $g'\in \GL_m(q)$ is irreducible, $\tau$ is a $t$-cycle, and $\det(g')\det(\tau)=1$. (By a $t$-cycle, we mean an element of order $t$ inducing a $t$-cycle on the spaces.)
		Then $g$ is regular, and therefore $|C_G(g)|\le q^n-1<q^n$. 
		Now by \Cref{l:order_classical_groups}, $|G:M| \ge q^{n(n-m)-2}/t!$. 
		Bounding $t! \le q^{n\log_2(n)}$, $m\le n/2$, and $n\le q^{\log_2(n)}$, we see that if $n\ge 14$ then
		\[
		|C_G(g)|^3 <q^{3n} < q^{n(n-m)-2}/t! \le |G:M|,
		\]
		and the result follows from \Cref{l:fpr_enough}.
		Analyzing now the cases $n\le 13$, 
		we see that the inequality $q^{3n} < q^{n(n-m)-2}/t!$ holds unless $(n,m)=(6,3)$ or $2\le n\le 5$. Moreover, replacing $q^{n(n-m)-2}/t!$ with the precise value of $|G:M|$, we further exclude the case $n=5$.
		
		Now, by \cite{kleidman1990liebeck,bray2013holt_colva}, since $M\in \ca A$, if $m=1$ then $q\ge 5$; if $m=2$ then $q\ge 3$; if $n=2$ then $q\neq 5$. We also have $(n,q)\neq (3,2),(3,3)$ in view of \Cref{l:small_groups}.

		In the remaining cases with $m=n/2$ (namely, $(n,m)=(6,3),(4,2),(2,1)$), choose $1\neq g=(g_1,g_2)\in\GL_{n/2}(q)^2 \cap G$ such that $g_1$ is a Singer cycle and $g_2$ is a diagonal element satisfying $\det(g_1)\det(g_2)=1$.
		Note that $V$ does not decompose as the sum of two equivalent $\F_q\gen{g^2}$-modules. 
		In particular, a conjugate of $g$ cannot belong to $M\sm \GL_{n/2}(q)^2$, so $\fp(g,G/M)=1$.
		
		The remaining cases are $(n,m)=(3,1), (4,1)$, with $q\ge 5$. 
		Assume $n=3$, so $q\ge 7$ in view of \Cref{l:small_groups}.
		Let $g$ be a diagonal element with eigenvalues $(\lambda, \lambda^{-1},1)$ where $\lambda$ generates $\F_q^\times$. Then $\fp(g,G/M)=1$, since $g^j$ does not act homogeneously on a $j$-space for $j=2,3$.
		
		Assume finally $n=4$. For $q=5$, let $g\in M$ be a regular (semisimple) element inducing a $3$-cycle on the spaces; concretely, for example, $g=(\lambda,1,1,-1)\tau$ where $\tau=(1,2,3)$ and $|\lambda|=4$.  For $q\ge 7$, let $g\in \GL_1(q)^4\cap G$ be regular (semisimple).
		In both cases we have $|C_G(g)| < (q^4-1)/(q-1)$, and  $(q^4-1)^3/(q-1)^3 < |G:M|$. 
		
		Assume now $G=\Omega^+_n(q)$ and $M$ of type $\Or^+_m(q)\wr S_t$. In most cases, it will be sufficient to note that $M$ contains a suitable regular element. First note that a $t$-cycle $\tau$ of $S_t$ belongs to $\SO^+_n(q)\sm \Omega^+_n(q)$ if and only if $q\equiv 3 \pmod 4$ and
		$m\equiv 2\pmod 4$ and $t$ is even, and otherwise $\tau\in \Omega^+_n(q)$. This follows from the fact that $\tau$ fixes complementary totally singular subspaces $A$ and $B$, and has determinant $(-1)^{m(t-1)/2}$ on both; recall that in this situation,  $\Omega^+_n(q)$ consists of the elements whose determinant on $A$ (and $B$) is a square in $\F_q$ (see for example \cite[Lemma 4.1.9]{kleidman1990liebeck}). Next, we have $(m,q)\neq (2,2),(2,3),(2,4),(4,2)$ by \cite[Proposition 2.3.6]{bray2013holt_colva}. In particular, if $(m,q)\neq (2,5), (4,3)$, we can choose $g=(g', 1, \ldots, 1)\tau \in \Omega^+_n(q)$ where as an $\F_q\gen{g'}$-module $V_1=A\oplus B$, where $A$ and $B$ are totally singular and irreducible, and $A\not\cong B$. Then $g$ has distinct eigenvalues, and the centralizer of $g$ in $\Or^+_n(q)$ is isomorphic to the centralizer of a regular element of $\GL_{n/2}(q)$; 
		therefore $|C_G(g)| < q^{n/2}$. Similarly to the $\SL_n(q)$ case, it can be easily checked that $q^{3n/2}<|G:M|$, which concludes the proof. We are left with $(m,q)= (2,5), (4,3)$. 
		In these cases, setting $W=V_1\perp \cdots \perp V_{t-1}$, according to the decomposition $V=W\perp V_t$ we choose $g=g_1\perp g_2$, where $g_1 = (g', 1,\ldots, 1)\tau\in \SO^+_{n-m}(q)\sm \Omega^+_{n-m}(q)$ is defined as above (in particular, note that $g'\in \SO^+_m(q)\sm \Omega^+_m(q)$ and $|g'|=4$ for $(m,q)=(2,5)$, and $|g'|=8$ for $(m,q)=(4,3)$), and $g_2$ is defined as follows: If $(m,q)=(2,5)$ then $g_2=g'$, and if $(m,q)=(4,3)$ then as an $\F_q\gen{g_2}$-module $V_t = C\perp D$, where $C$ and $D$ are of minus type, $g_2$ is trivial on $D$ and has order $4$ on $C$. Next we divide into the two cases. Assume first $(m,q)=(4,3)$; note that $g$ is regular, and $|C_G(g)|<2q^{n/2-2}(q+1)^2$, which gives the conclusion for $n\ge 12$. For $n=8$, it is easy to see that $\fp(g,G/M)=1$ (since $V$ does not decompose as the sum of two equivalent $\F_q\gen{g^2}$-modules). 
		Assume then $(m,q)=(2,5)$ and denote by $\lambda$ a generator of $\F_5^\times$. Then $g$ is not necessarily regular; specifically, $g$ is regular if and only if $t$ is odd. 
		If $g$ is not regular, then for $\alpha=\lambda^{\pm 1}$ there are two $\alpha$-Jordan blocks, of sizes $1$ and the $5$-part of $t-1$. In particular, by working in $\GL_{n/2}(q)$ we see that $|C_G(g)|<q^{n/2+2}$. We have $q^{3n/2+6}<|G:M|$ and the result follows.
		
		The other cases are similar; we will list the choice of the elements, generally omitting the details of the calculations. If $G=\SU_n(q)$ and $M$ is of type $\GU_m(q)\wr S_t$, we have $(m,q)\neq (2,2)$ by \cite[Proposition 2.3.6]{bray2013holt_colva}. We choose $g=(g', 1\ldots, 1)\tau$ where $g'$ is irreducible for $m$ odd, and irreducible on a hyperplane for $m$ even, and $\tau$ is a $t$-cycle. We have that $g$ is regular and so $|C_G(g)|<(q+1)^n$ by \Cref{l:order_centralizers}(iv). Similarly to the $\SL_n(q)$ case we  reduce to the cases $n\le 4$ or $(n,m)=(6,3)$. If $m=n/2$ then we choose $g=g_1\perp g_2 \in \GU_{n/2}(q)^2\cap G$ where $|g_1|=q^3+1$ for $n=6$ (resp. $|g_1|=q^4-1$ for $n=4$) and $g_2$ is diagonal; we have $\fp(g,G/M)=1$. The remaining cases are $(n,m)=(3,1),(4,1)$; in view of \Cref{l:small_groups} we have $q\ge 7$ and $q\ge 5$ in the respective cases and we can choose a diagonal regular (semisimple) element $g$, such that moreover $\fpr(g,G/M)=1$ for $n=3$.
		If $G=\Sp_n(q)$ and $M$ of type $\Sp_m(q)\wr S_t$, then $(m,q)\neq (2,2)$ by \cite[Proposition 2.3.6]{bray2013holt_colva}. We choose $g=(g', 1, \ldots, 1)\tau$ where $g'$ is irreducible and $\tau$ is a $t$-cycle; then $g$ is regular and $|C_G(g)|<(q+1)^{n/2}$ (since $g$ does not have eigenvalues $\pm 1$; see the proof of \Cref{l:order_centralizers}(iv)). 
		As above, we reduce to the case $(n,m)=(4,2)$.
		We have $q\ge 5$ by \Cref{l:small_groups} and 
		we can choose a regular semisimple element $g$ of $\Sp_m(q)^t$ of order $q+1$ with $\fp(g,G/M)=1$.
		Assume now $G=\Omega^\varepsilon_n(q)$ and $M$ of type $\Or^-_m(q)\wr S_t$, so $\varepsilon =(-1)^t$. If $q$ is odd and $(m,q)\neq (2,3)$, we choose $g=(g', 1, \ldots, 1)\tau \in \Omega^\varepsilon_n(q)$ where $g'$ irreducible, so $g$ is regular. (Note $\det(\tau)=1$, and since $(m,q)\neq (2,3)$, we can choose $g\in \Omega^-_m(q)$ or $\SO^-_m(q)\sm \Omega^-_m(q)$ so that $g\in \Omega_n^\varepsilon(q)$.)
		If  $(m,q)= (2,3)$, we write $V=W\perp V_t$ where $W=V_1\perp \cdots \perp V_{t-1}$, and accordingly we choose $g=g_1\perp g_2$ where $g_1=(g', 1, \ldots, 1)\tau\in \SO^\varepsilon_{n-2}(q)$ is as above, and either $g_2=1$ or $|g_2|=4$, so that $g\in \Omega^\varepsilon_n(q)$. If $g_2=1$ then $g$ is regular; if $|g_2|=4$ then the dimension of the centralizer in $\Or_n(K)$ is at most $n/2+2$, which gives the conclusion. Assume then $q$ is even. Then we choose $g=(g', 1, \ldots, 1)\tau\in \Omega^\varepsilon_n(q)$ where $g'$ is either irreducible, or irreducible on a nondegenerate codimension $2$ subspace and a reflection on the perpendicular complement (according to whether $\tau\in \Omega^\varepsilon_n(q)$ or not). Then $g$ is regular.  Assume now $M$ is of type $\Or_m(q)\wr S_t$ where $mq$ is odd and $G=\Omega^\varepsilon_n(q)$ with $\varepsilon \in \{+,-,\circ\}$. Suppose first $m\ge 3$; we have $(m,q)\neq (3,3)$ by \cite[Proposition 2.3.6]{bray2013holt_colva}. We may then choose $g=(g', 1, \ldots, 1)\tau \in \Omega^\varepsilon_n(q)$ where $g'$ is irreducible on a hyperplane and $\pm 1$ on the complement. Assume now $m=1$. Then the $t$-cycle $\tau$ has determinant $(-1)^{t-1}$. If $t$ is odd then choose $g=\tau\in \Omega^\varepsilon_n(q)$, so $g$ is regular.
		If $t$ is even then choose $g=\tau'$ a $(t-1)$-cycle, so $g$ is also regular since it has at most two $1$-Jordan blocks, one of which of size one (see \cite[Theorem 3.1]{liebeck_seitz_2012unipotent}). Assume now $M$ is of type $\Or_{n/2}(q)^2$ where $nq/2$ is odd. Then we may choose $g=g_1\perp g_2$ where $g_1$ and $g_2$ are regular and $g$ is also regular. Finally, if $M$ is of type $\GL_{n/2}(q^u).2$ with $G=\Sp_n(q),\Omega^+_n(q)$ and $u=1$, or $G=\SU_n(q)$ and $u=2$, then we may choose an irreducible element $g$ of $\GL_{n/2}(q^u)$ that is regular in $G$ and the result follows.
	\end{proof}
	
	\subsection{Class $\ca C_3$} Subgroups in class $\ca C_3$ are normalizers of subfields $E$ of $\End(V)$, where $E$ is a field extension of $\F_{q^u}$ of degree $r$ and $r$ divides $n$. (Equivalently, these are the normalizers of $E^\times$, which is a cyclic group of order $(q^u)^r-1$ acting homogeneously on $V$ with irreducible submodules of dimension $r$.)  We have $C_{\GL_n(q)}(E)\cong \GL_{n/r}(q^r)$ and $N_{\GL_n(q)}(E)\cong \GL_{n/r}(q^r)\rtimes \text{Gal}(\F_{q^r}/\F_q)$. We call $N_{\GL_n(q)}(E)$ an $r$-extension field subgroup of $\GL_n(q)$. We record the following

	\begin{lemma}
		\label{l:basic_extension}
		Let $g\in G=\GL_n(q)$ and assume $C_G(g)=C_G(g^r)$. Then, the number of $r$-extension field subgroups of $G$ containing $g$ is equal to the number of cyclic subgroups $R$ of $C_G(g)$ order $q^r-1$ acting homogeneously on $V=\F_q^n$.
	\end{lemma}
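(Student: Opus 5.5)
We will set up an explicit bijection between the two sets. Write $V=\F_q^n$ and $\ca E$ for the set of $r$-extension field subgroups of $G$ containing $g$. Each $N\in\ca E$ has the form $N=N_G(E)$ for a unique subfield $E\le \End(V)$ with $E\cong\F_{q^r}$ and $E\supseteq \F_q\cdot 1$; equivalently, $N=N_G(E^\times)$, where $E^\times$ is cyclic of order $q^r-1$ and acts homogeneously on $V$ with irreducible constituents of dimension $r$. Conversely, any cyclic subgroup $R\le G$ of order $q^r-1$ acting homogeneously on $V$ generates an $\F_q$-subalgebra $\F_q[R]\subseteq\End(V)$. By homogeneity, and since $R$ contains the scalars, this subalgebra is isomorphic to $\F_{q^r}$ and its unit group is $R$ itself. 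So the assignment $R\mapsto N_G(R)$ is a bijection between such subgroups $R$ and the $r$-extension field subgroups, with inverse $N\mapsto E^\times$; the latter is the unique cyclic homogeneous subgroup of order $q^r-1$ in $N$, namely the $\F_q$-unit group of the centre of $C_G(E)$.

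It then remains to show that $g\in N_G(R)$ if and only if $R\le C_G(g)$. One direction is immediate, since $C_G(R)\le N_G(R)$. For the other, suppose $g\in N_G(E^\times)=\GL_{n/r}(q^r)\rtimes\Gal(\F_{q^r}/\F_q)$. The quotient by $\GL_{n/r}(q^r)=C_G(E)$ is cyclic of order $r$, so $g^r$ centralizes $E$, that is, $R\le C_G(g^r)$. The hypothesis $C_G(g)=C_G(g^r)$ then gives $R\le C_G(g)$.

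Combining the two steps, $\ca E$ is in bijection with the set of cyclic homogeneous subgroups $R$ of $C_G(g)$ of order $q^r-1$, which proves the lemma. The only step needing real care is the first one, namely the correspondence between homogeneous cyclic subgroups of order $q^r-1$ and subfields $E$. The key point there is that the irreducible constituents of a homogeneous cyclic $R$ of order $q^r-1$ all have dimension exactly $r$, so that $\F_q[R]$ is a field of order $q^r$ whose unit group is $R$. We read this dimension condition into the notion of ``acting homogeneously,'' as in the description of $\ca C_3$ given just before the statement.
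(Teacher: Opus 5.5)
Your central step is exactly the paper's one-line proof. If $g\in N_G(R)$ then $g^r\in C_G(R)$, because $N_G(E)/C_G(E)$ embeds in $\Gal(\F_{q^r}/\F_q)$, and hence $R\le C_G(g^r)=C_G(g)$. The converse via $C_G(R)\le N_G(R)$ is also fine.

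The paper takes for granted the identification of $r$-extension field subgroups with the subgroups $R$ (equivalently, with the subfields $E$). Your justification of that identification uses a false claim: $E^\times$ is in general \emph{not} the unique cyclic homogeneous subgroup of order $q^r-1$ in $N=N_G(E)$. Here is a counterexample. Suppose $m=n/r\ge 2$, write $V=\F_{q^r}^m$ with $E$ acting by scalars, let $\lambda$ generate $\F_{q^r}^\times$, and put $y=\mathrm{diag}(\lambda,\lambda^q,\lambda,\ldots,\lambda)\in \GL_m(q^r)=C_G(E)\le N$. Then $\langle y\rangle$ is cyclic of order $q^r-1$ and acts homogeneously on $V$; the Frobenius map $v\mapsto v^q$ is an $\F_q\langle y\rangle$-isomorphism between the first two coordinate lines. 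Yet $\langle y\rangle\neq E^\times$, since $\lambda^q\neq\lambda$. For $m=1$ your claim does hold: an element of $N$ inducing a field automorphism of order $k\ge 2$ has order dividing $k(q^{r/k}-1)<q^r-1$. So, as written, you have not established that $R\mapsto N_G(R)$ is injective, i.e.\ that $E$ is determined by $N$.

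This is easy to repair in either of two ways.

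\begin{itemize}
\item Recover $E$ from $N$ as the centralizer in $\End(V)$ of $N^{(\infty)}=\SL_m(q^r)$ when $m\ge 2$. The $\F_q$-algebra spanned by $\SL_m(q^r)$ contains $c\,e_{ij}$ for all $c\in\F_{q^r}$ and $i\neq j$, hence equals $\End_E(V)$, and its centralizer is $E$.
\item Closer to how the lemma is used, count subfields rather than normalizers. For $M=N_G(E)$, the right cosets $Mx$ correspond bijectively to the conjugates $E^x$, and $g$ fixes $Mx$ if and only if $g\in N_G(E^x)$. The cyclic homogeneous subgroups of order $q^r-1$ form a single $G$-class, since each makes $V$ an $m$-dimensional $\F_{q^r}$-space. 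So $\fp(g,G/M)$ is exactly the number of such $R$ with $g\in N_G(R)$, and no injectivity statement is needed.
\end{itemize}

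A minor point: ``since $R$ contains the scalars'' is a consequence, not an input. Also, the dimension condition you read into homogeneity is automatic. A generator of $R$ has all its eigenvalues Galois-conjugate, hence each of multiplicative order $q^r-1$, and such an eigenvalue generates $\F_{q^r}$ over $\F_q$.
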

	
	\begin{proof}
		Simply note that if $g\in N_G(R)$  then $g^r \in C_G(R)$, and so $R\le C_G(g^r)=C_G(g)$.
	\end{proof}

	\begin{lemma}
		\label{l:classical_C3}
		\Cref{t:main_simple} holds if $M\in \ca C_3$.
	\end{lemma}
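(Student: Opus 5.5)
We follow the pattern used for $\ca C_2$: in most cases it suffices to find $g\in M$ with $|C_G(g)|^3<|G:M|$, and in the small leftover cases we use \Cref{l:basic_extension} to show $\fp(g,G/M)=1$ directly. Let $M$ be of type $\GL_{n/r}(q^r)$, $\GU_{n/r}(q^r)$ ($r$ odd), $\Sp_{n/r}(q^r)$, $\Or^\varepsilon_{n/r}(q^r)$, $\GU_{n/2}(q)$ in $\Sp_n(q)$ or $\Omega^\pm_n(q)$, and so on, with $r$ prime, as in \cite[Chapter 4]{kleidman1990liebeck}. Write $M_0=M\cap C_G(E)$, so that $|M:M_0|\le r$.

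\textbf{Main step.} Pick $g\in M_0$ that is regular semisimple, or at least regular, in $G$. In $\SL_n(q)$ we take $g$ to be a generator of a Singer-type torus of $\GL_{n/r}(q^r)$ of determinant $1$, with order divisible by a ppd of $q^n-1$. We then have $C_G(g)$ cyclic of order dividing $q^n-1$, so $|C_G(g)|<q^n$. In the other types we choose $g$ acting irreducibly on $V$ (or on a nondegenerate hyperplane or codimension-$2$ subspace when the form forces it). Such elements lie in $M_0$ because the torus in question already lives in the extension field group. Then \Cref{l:order_centralizers}(iv) gives $|C_G(g)|<h^{(2,q-1)}(q+1)^{\mathrm{rank}}$. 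On the other side, \Cref{l:order_classical_groups} gives $|G:M|\ge q^{n^2(1-1/r)-c}/r$ for $\SL$ and $\SU$, and an analogous bound of order $q^{n^2(1-1/r)/2}$ for $\Sp$ and $\Or$. For $n\ge 6$, say, $3n$ is much smaller than $n^2/2$, so the inequality $|C_G(g)|^3<|G:M|$ holds outright.

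\textbf{Small cases.} The remaining cases are $n\le 5$ roughly, with $r=n$ or $r=2$. These are $\SL_2$, $\SL_3$, $\SL_4$ with $r=2$, $\SU_3$ with $r=3$, $\Sp_4$ of type $\Sp_2(q^2)$ or $\GU_2(q)$, and the orthogonal types of degree $7$ or $8$. Here $M$ may contain only a torus normalizer $(q^n-1)/(q-1).n$, so the crude bound fails. For $r=n$ we instead take $g$ of order a ppd of $q^n-1$, or a full Singer generator. Then $C_G(g)=C_G(g^r)$ is the torus itself and contains a unique cyclic subgroup $R$ of order $q^r-1$ acting homogeneously, so \Cref{l:basic_extension} gives $\fp(g,G/M)=1$. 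For $r=2$ with $n=4$ we take $g$ generating a torus of order $(q^4-1)/(q-1)$ in $\SL_2(q^2)$. Again only one $\F_{q^2}$-structure is centralized, so $\fp(g,G/M)\le 1$. The groups already covered by \Cref{l:small_groups} are discarded.

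\textbf{Expected obstacle.} The main difficulty is the semilinear and orthogonal bookkeeping. First, for $\Omega^\varepsilon_n(q)$ with $M$ of type $\GU_{n/2}(q)$ or $\Or_{n/2}(q^2)$ we must check that the chosen $g$ lies in $\Omega$, not just in $\SO$; where it does not, we modify $g$ on a small nondegenerate summand as in the $\ca C_2$ proof. Second, for $M$ of type $\GU_{n/2}(q)$ in $\Sp_n$ or $\Omega^+_n$ with $n/2$ even, an element irreducible on $V$ need not exist inside $M$. In that case we take $g$ irreducible on two complementary totally singular $n/2$-spaces and use \Cref{l:order_centralizers}(ii) for the estimate $|C_G(g)|\le q^{n/2+2}$, which is still small enough for $n\ge 8$.
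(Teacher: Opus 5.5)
\textbf{Gap: the centralizer-bound main step fails in cases you claim it covers.} Your route differs from the paper's. You aim for $|C_G(g)|^3<|G:M|$ in most cases and keep \Cref{l:basic_extension} for a short list of small cases; the paper uses \Cref{l:basic_extension} in every case. As written, your main step is false in cases you explicitly assign to it.

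\begin{itemize}
\item The clearest instance is $G=\Omega^+_8(q)$ with $M$ of type $\GU_4(q)$, where you assert that $|C_G(g)|\le q^{n/2+2}$ suffices for $n\ge 8$. Here $|G:M|$ is only of order $q^{12}$, while your bound gives $q^{18}$.
\item Even the exact value $|C_G(g)|=(q^4-1)/(2,q-1)$ for your element (irreducible on complementary totally singular $4$-spaces) is too large for every even $q$. For $q=2$ you get $|C_G(g)|=15$, but $|G:M|=1120<15^3$.
\item Your threshold ``$n\ge 6$'' also fails. For $\SL_6(2)$ and $M=\GL_3(4).2$, a Singer cycle has $|C_G(g)|=63$, but $|G:M|=55552<39^3$. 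Moreover $2^6-1$ has no ppd, contrary to your choice of $g$.
\item With the crude bound of \Cref{l:order_centralizers}(iv) that you quote, further cases drop out, e.g.\ $\SU_6(2)$ with $M$ of type $\GU_2(8)$, and $\Sp_8(2)$ with $M$ of type $\Sp_4(4)$.
\end{itemize}

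None of these appear in your list of small cases. For the degree-$8$ orthogonal cases you do list, you give no argument. So the proof is incomplete exactly where centralizer orders are too coarse and you must count fixed points.

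\textbf{The fix: count extension field subgroups containing $g$.} Drop centralizer estimates and apply \Cref{l:basic_extension} uniformly, as the paper does. It bounds $\fp(g,G/M)$ by the number of $r$-extension field subgroups containing $g$.
\begin{itemize}
\item Choose $g\in M$ such that $C_{\GL}(g)=C_{\GL}(g^r)$ is a single Singer torus. For instance, take $g$ irreducible with $|g|$ divisible by a suitable ppd, so $g^r$ stays irreducible. Then exactly one such subgroup contains $g$, and $\fp(g,G/M)=1$.
\item In the $\GU_{n/2}(q)$, $\Or^+_{n/r}(q^r)$ and $\GU_{n/r}(q^r)$ ($n/r$ even) cases, take $g$ irreducible on complementary totally singular $A,B$ with $A\not\cong B$ as $\gen{g^r}$-modules. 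Then $C_{\GL}(g)=C_{\GL}(g^r)$ is a product of two Singer cycles. It contains at most $r$ homogeneous cyclic subgroups of order $q^r-1$, so $\fp(g,G/M)\le r$.
\item For type $\Or_{n/r}(q^r)$ with $n/r$ odd, take $g$ irreducible on a nondegenerate $(n-r)$-space and trivial on its complement. This gives $\fp(g,G/M)\le|\GL_r(q)|/(q^r-1)$.
\end{itemize}
All three bounds are easily below $|G:M|^{1/3}$, and no split into ``large'' and ``small'' cases is needed.

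\textbf{Two minor slips.}
\begin{itemize}
\item $\SL_2(q^2)$ has no element of order $(q^4-1)/(q-1)$. You want it in $\GL_2(q^2)\cap\SL_4(q)$.
\item For type $\Or_{n/r}(q^r)$ with $n/r$ odd, no element of $M$ is regular. A $\pm1$-eigenvector over $\F_{q^r}$ spans an $r$-dimensional $\F_q$-subspace on which the element acts as $\pm1$.
\end{itemize}
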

	
	\begin{proof}
		Let us start with $G=\SL_n(q)$, so $M$ is of type $\GL_{n/r}(q^r)$ for a prime divisor $r$ of $n$. Let $g\in M$ act irreducibly and of order $(q^n-1)/(q-1)$. If $n\ge 3$ or $(n,q)\neq (6,2)$, then by Zsigmondy's theorem $|g|$ is divisible by a ppd of $q^n-1$, so $g^r$ acts irreducibly. This holds also if $n=2$ or $(n,q)=(6,2)$, so $\fp(g,G/M) = 1$ by \Cref{l:basic_extension} (or \cite[Lemma 2.12]{breuer2008probabilistic}). 
		
		Assume now $G=\SU_n(q)$ and $M$ of type $\GU_{n/r}(q^r)$ for an odd prime divisor $r$ of $n$. If $n/r$ is odd, then let $g\in M$ be an element of order $(q^n+1)/(q+1)$ acting irreducibly on $V$; as above $\fp(g,G/M)=1$. If $n/r$ is even, we let $g\in M$ be of order $(q^n-1)/(q+1)$, acting irreducibly on each of a pair of complementary maximal totally singular spaces $A$ and $B$. We see that $A\not\cong B$ as $\F_{q^2}\gen{g^r}$-modules. In particular, setting $L=\GL_n(q^2)$ we have that $C_L(g)=C_L(g^r)=H\times K$ where $H$ and $K$ are Singer cycles on $A$ and $B$ (and $H$ and $K$ have the same eigenvalues). Then, by \Cref{l:basic_extension} the number of $r$-extension field subgroups of $L$ containing $g$ is $r$, so $\fp(g,G/M)\le r$ and the result follows.

		Assume next $G=\Sp_n(q)$ and $M$ of type $\Sp_{n/r}(q^r)$. We choose $g\in M$ of order $q^{n/2}+1$. Since $(n,q)\neq (6,2)$ (see \Cref{l:small_groups}), by Zsigmondy's theorem we have that $|g|$ is divisible by a ppd of $q^n-1$, so $\fp(g,G/M)=1$ by  \Cref{l:basic_extension}.
		
		Assume $G=\Omega^+_n(q)$ and $M$ of type $\Or^+_{n/r}(q^r)$, so $n/r\ge 4$ since $M\in \ca A$. Then we let $g\in \Omega^+_{n/r}(q^r)$ be of order $(q^{n/2}-1)/(2,q-1)$, acting irreducibly on each of a pair of complementary maximal totally singular subspaces $A$ and $B$. We deduce $\fp(g,G/M)\le r$ as in the unitary case.

		Now let $G=\Omega^-_n(q)$. If $M$ is of type $\Or^-_{n/r}(q^r)$ with $n/r\ge 4$, we let $g\in \Omega^-_{n/r}(q^r)$ be of order $(q^{n/2}+1)/(2,q-1)$, and then $\fp(g,G/M)=1$. If $M$ is of type $\Or_{n/r}(q^r)$ with $n/r\ge 3$ odd, then we choose $g\in \Omega_{n/r}(q^r)$ of order $(q^{(n-r)/2}+1)/(2,q-1)$, acting irreducibly on a nondengenerate $(n-r)$-space $A$ and trivially on $B=A^\perp$. Setting $L=\GL_n(q)$, we have that $C_L(g)=C_L(g^r)=H\times \GL_r(q)$ where $H$ is a Singer cycle on $A$. Then, by \Cref{l:basic_extension} the number of $r$-extension field subgroups of $L$ containing $g$ is equal to the size of the conjugacy class of a Singer cycle in $\GL_r(q)$, which is $s:=|\GL_r(q)|/(q^r-1)$, so $\fp(g,G/M)\le s$. 
		
		Finally, if $M$ is of type $\GU_{n/2}(q)$ and $G$ is symplectic or orthogonal, we choose $g$ of order $(q^{n/2}+1)/(2,q-1)$ if $n$ is odd, and $(q^{(n-1)/2}-1)/(2,q-1)$ if $n$ is even; we have $\fp(g,G/M)\le r$. 
	\end{proof}
	
	\subsection{Class $\ca C_4$} Subgroups $M$ in class $\ca C_4$ are stabilizers of tensor product decompositions $V=V_1\otimes V_2$ where $\dim(V_i)=n_i$ and $n=n_1n_2$. If $G=\SL_n(q)$ then $n_1\neq n_2$. If $V_1$ and $V_2$ are equipped with a nondegenerate form then they are not similar; so in all cases $M\le \GL(V_1)\otimes \GL(V_2)$. 
	
	\begin{lemma}
		\Cref{t:main_simple} holds if $M$ is in class $\ca C_4$.
	\end{lemma}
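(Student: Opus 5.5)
We will use the same strategy as for $\ca C_2$: choose a semisimple element $g$ of $M$ that is regular (or nearly regular) in $G$. Then $|C_G(g)|$ is at most roughly $(q+1)^{n}$ in the linear and unitary cases, and roughly $(q+1)^{n/2}$ times a bounded constant in the symplectic and orthogonal cases (\Cref{l:order_centralizers}(iv)). Since $M\le \GL(V_1)\otimes\GL(V_2)$ has order at most about $q^{u(n_1^2+n_2^2)}$, \Cref{l:fpr_enough} will give the result as soon as $|C_G(g)|^3<|G:M|$. We may assume $2\le n_1<n_2$, or $n_1=n_2$ with the forms on $V_1,V_2$ of different types.

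The construction of $g$ is as follows. Take $g=g_1\otimes g_2$ with $g_i$ semisimple in the appropriate isometry group of $V_i$, chosen so that the eigenvalues of $g$ (the products $\lambda\mu$ with $\lambda$ an eigenvalue of $g_1$ and $\mu$ one of $g_2$) are as distinct as possible.

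\emph{Linear and unitary cases.} Take $g_1$ and $g_2$ irreducible, or irreducible of order a ppd, with $\gcd(n_1,n_2)$ handled by choosing orders $q^{n_1}-1$ and $q^{n_2}-1$ of coprime ppd parts. Then $g$ acts on $V\otimes K$ with $n_1n_2$ distinct eigenvalues, so $g$ is regular and $|C_G(g)|<q^{un}$-ish.

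\emph{Symplectic and orthogonal cases} ($\Sp\otimes\Or$, $\Sp\otimes\Sp\le\Or^+$, $\Or\otimes\Or$). Take $g_1,g_2$ acting irreducibly on nondegenerate hyperplanes or whole spaces, with ppd orders. The eigenvalue $1$ and pairs $\lambda\mu=\lambda'\mu'$ can then only arise from the forced symmetries $\lambda\leftrightarrow\lambda^{-1}$. This bounds $\dim C_{\GL_n(K)}(g)$ by about $2n$, and hence, via \Cref{l:compare_dim_centralizer_GL_Sp} and \Cref{l:order_centralizers}(ii), bounds $|C_G(g)|$ by $q^{n+O(1)}$.

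Finally we compare with $|G:M|$ using \Cref{l:order_classical_groups}. The index is about $q^{u(n^2-n_1^2-n_2^2)}$ in the linear and unitary cases and about $q^{(n^2-n_1^2-n_2^2)/2}$ otherwise. Since $n_1\ge2$, we have $n^2-n_1^2-n_2^2\ge n^2/2-4$, roughly, so the cube of the centralizer bound is beaten once $n$ is moderately large.

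The main obstacle will be the small cases: $n_1=2$ with $n$ small (for instance $\SL_2\otimes\SL_3<\SL_6$ and $\Sp_2\otimes\Or_3<\Sp_6$), and $q=2,3$, where ppds may fail (such as $(6,2)$) or distinct-eigenvalue choices are impossible. There the index is only about $q^{2n-4}$ or so, so regularity is not enough. For these we would instead compute $\fp(g,G/M)$ more precisely via \Cref{l:fpr_enough}(ii), bounding the number of $M$-classes in $g^G\cap M$ (tensor factorizations of $g$ up to scalars, typically at most a handful) and the ratio $|C_G(g)|/|C_M(g)|$. Any residual tiny groups are covered by \Cref{l:small_groups} or a direct check in GAP.
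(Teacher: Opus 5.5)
Your overall plan (find $g\in M$ regular in $G$, then check $|M|\,|C_G(g)|^3<|G|$ via \Cref{l:fpr_enough}) is the paper's plan too. The difference is how $g$ is built, and the paper's choice removes every difficulty you run into.

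\emph{The paper's construction.} It takes $g=g_1\otimes g_2$ with one factor a \emph{single unipotent Jordan block} and the other semisimple with distinct eigenvalues $\lambda_1,\dots,\lambda_k$. Then $g$ is the direct sum of the blocks $\lambda_i J$, one per eigenvalue, so $g$ is regular in $\GL_n(K)$. This needs no ppds and no coincidence analysis, and membership in $G$ is immediate. The only exception is $\Or^{\xi_1}_{n_1}\otimes\Or^{\xi_2}_{n_2}$ with $n_1,n_2$ even. There the paper uses two unipotent blocks of size $n_1/2$ and gets $\dim C_{\Or_n(K)}(g)=n$.

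\emph{Your construction.} Semisimple$\otimes$semisimple gets regularity arithmetically. If $|g_1|$ and $|g_2|$ are coprime, then $\lambda\mu=\lambda'\mu'$ forces $\lambda/\lambda'=\mu'/\mu=1$, since one side is an $|g_1|$-th and the other an $|g_2|$-th root of unity. This holds whatever $\gcd(n_1,n_2)$ is. The mechanism is sound, and in the even--even orthogonal case it would even give the better value $n/2$. However, you asserted coprimality only in the linear case, and your stated choices do not always exist or deliver it:
\begin{itemize}
\item $\GU_m(q)$ has no irreducible semisimple elements when $m$ is even.
\item In $\Sp_m(q)\otimes\Or_{m+1}(q)$, e.g.\ $\Sp_2(q)\otimes\Or_3(q)<\Sp_6(q)$, your $g_1$ and $g_2$ both have order a ppd of $q^m-1$, possibly the same prime. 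With $\mu=\lambda$ the eigenvalue $1$ occurs twice, so $g$ is not regular.
\item Ppds are missing for $n_i=2$ with $q+1$ a $2$-power, and for $(n_i,q)=(6,2)$.
\end{itemize}
All of these are patchable (e.g.\ $\mu=\lambda^c$ for a suitable $c$), but the patches are not in the proposal.

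\emph{The small cases.} The obstacle you describe comes from an arithmetic slip. Since $n_2=n/n_1\le n/2$, we have $n_1^2+n_2^2\le 4+n^2/4$, so the index is nowhere near $q^{2n-4}$. For $\SL_2\otimes\SL_3<\SL_6$ one has $|G:M|>q^{34}/q^{13}=q^{21}$, while a regular $g$ has $|C_G(g)|^3<q^{18}$. The paper simply checks $q^{n_1^2+n_2^2+3n}<q^{n^2-2}$, which holds for every $n\ge 6$.

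In the symplectic and orthogonal cases, your weak bound $\dim C_{\GL_n(K)}(g)\lesssim 2n$, hence $|C_G(g)|\le q^{n+O(1)}$, is what creates the apparent difficulty. Once $g$ is regular, \Cref{l:order_centralizers}(iv) gives $|C_G(g)|<h^{(2,q-1)}(q+1)^{n/2}$, the bound you yourself quoted in your first paragraph. With it, the crude inequality already holds for $\Sp_2(q)\otimes\Or_3(q)<\Sp_6(q)$, where $|G:M|$ is about $q^{15}$ and $|C_G(g)|^3$ about $q^{9}$.

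So no refined count of $M$-classes in $g^G\cap M$ is needed. That matters, because ``tensor factorizations up to scalars, typically a handful'' is not yet an argument.
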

	
	\begin{proof}
		Assume first $G=\SL_n(q)$ and $M$ of type $\GL_{n_1}(q)\otimes \GL_{n_2}(q)$ where $1<n_1< n_2<n$ and $n_1n_2=n$. In particular, $n\ge 6$. Let $g=g_1\otimes g_2\in M$ where $g_1\in \SL_{n_1}(q)$ is regular semisimple (e.g., irreducible), and $g_2\in \SL_{n_2}(q)$ is regular unipotent. Then $g$ is regular, 
		and so $|C_G(g)|\le q^n-1$,
		from which    
		\[
		|M|  |C_G(g)|^3< q^{n_1^2+n_2^2+3n}<|G|
		\]
		and the conclusion follows from \Cref{l:fpr_enough}. 
		
		Assume now $G=\Sp_n(q)$, so $M$ is of type $\Sp_{n_1}(q)\otimes \Or^\varepsilon_{n_2}(q)$ with $n_1n_2=n$, $q$ odd, 
		and $\varepsilon \in \{+,-,\circ\}$.
		Since $M\in \ca A$, we have $n_2\ge 3$ (see \cite[Proposition 4.4.4]{kleidman1990liebeck}). Now we let $g=g_1\otimes g_2\in M$, where $g_1\in \Sp_{n_1}(q)$ is unipotent with a single Jordan block
		and $g_2\in \Or^\varepsilon_{n_2}(q)$ is  irreducible if $\varepsilon=-$, irreducible on complementary totally singular subspaces if $\varepsilon =+$, irreducible on a nondegenerate hyperplane if $\varepsilon =\circ$. 
		Then $g$ is regular, and moreover $|C_G(g)|\le 2(q+1)^{n/2}$ (since $g$ does not have both $1$ and $-1$ as eigenvalues; see the proof of \Cref{l:order_centralizers}(iv)). We then see that  $|M||C_G(g)|^3<|G|$.
		
		The other cases are $G=\SU_n(q)$ and $M$ of type $\GU_{n_1}(q)\otimes \GU_{n_2}(q)$; $G=\Omega^+_n(q)$ and $M$ of type $\Sp_{n_1}(q)\otimes \Sp_{n_2}(q)$ or $\Or^{\xi_1}_{n_1}(q)\otimes \Or^{\xi_2}_{n_2}(q)$ (in the latter case, $q$ odd and $(\xi_1, \xi_2)\neq (\circ, -), (-, \circ)$); $G=\Omega_n(q)$ with $nq$ odd and $M$ of type $\Or_{n_1}(q)\otimes \Or_{n_2}(q)$; $G=\Omega^-_n(q)$ with $q$ odd and $M$ of type $\Or_{n_1}(q)\otimes \Or^-_{n_2}(q)$ with $n_1$ odd. Except when $M$ is of type $\Or^{\xi_1}_{n_1}(q)\otimes \Or^{\xi_2}_{n_2}(q)$ with $n_1$ and $n_2$ even, we can choose $g=g_1\otimes g_2$ where one element $g_i$ is unipotent with one Jordan block, and the other element $g_j$ is semisimple with distinct eigenvalues, so $g$ is regular. In the remaining case, we can choose $g_1$ unipotent with two Jordan blocks of size $n_1/2$, and $g_2$ semisimple with distinct eigenvalues. Then the dimension of the centralizer of $g$ in $\Or_n(K)$ is $n$, and the conclusion follows.
	\end{proof}
	
	\subsection{Class $\ca C_5$} Subgroups in class $\ca C_5$ are classical subgroups defined over a proper subfield of $\F_{q^u}$. The possibilities are listed in \cite[\S 4.5]{kleidman1990liebeck}.
	
	\begin{lemma}
		\Cref{t:main_simple} holds if $M$ is in class $\ca C_5$.
	\end{lemma}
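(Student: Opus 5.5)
The plan is to use regular unipotent elements, in the spirit of \Cref{l:parabolic} and the subfield case for exceptional groups (\Cref{max4}). Subgroups $M$ in $\ca C_5$ are normalizers of subfield subgroups $G(q_0)$ with $q=q_0^k$ and $k$ prime (including $\SU_n(q_0)$ inside $\SL_n(q)$ with $q=q_0^2$, and $\Sp_n(q_0)$ or $\Omega^\pm_n(q_0)$ inside $\SU_n(q)$ with $q=q_0^2$, etc.). In every case $M$ contains an element $u$ that is regular unipotent in $G$. The two exceptions are the orthogonal cases where a regular unipotent element of $\Omega^\varepsilon_n$ in odd characteristic has two Jordan blocks, or is forced to have them by type. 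There I will take $u$ to be a unipotent element of $M$ with at most two Jordan blocks, whose centralizer in $\Or_n(K)$ has dimension at most about $n/2+1$. By \Cref{l:fpr_enough}, $\fp(u,G/M)\le |C_G(u)|$, so it suffices to prove $|C_G(u)|^3<|G:M|$.

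For the centralizer bound I will use \Cref{l:order_centralizers}(i), or directly the fact that the centralizer of a regular unipotent element has order at most $(2,p)\,\gcd\text{-factor}\cdot q^{r}$ with $r$ the rank, as recalled in the proof of \Cref{l:centralizer_simplegroup_13}. This gives $|C_G(u)|\le c\,q^{r}$ with $c\le 4$, and $r\le n$ in all cases ($r\approx n/2$ for symplectic and orthogonal groups). For the index I will use \Cref{l:order_classical_groups}. In the proper subfield case with $q=q_0^k$, $k\ge 2$, we have $|G:M|\ge q^{\dim G(1-1/k)-O(1)}/(k\cdot|\text{small scalars}|)\ge q^{\dim G/2-3}$. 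Since $\dim G$ is quadratic in $n$ and $3r$ is linear, the inequality $(cq^r)^3<|G:M|$ holds for all but a finite list of small $n$. In the unitary/symplectic/orthogonal subgroups of $\SL_n(q)$ or $\SU_n(q)$, the index is roughly $q^{n^2/2}$ or $q^{n^2-n^2/2}$, which is even better.

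What remains is the small-rank residue. This means $n=2,3,4$ for linear and unitary groups, $n=4,6$ for symplectic groups and $n=7,8$ for orthogonal groups, with small $q_0$ and $k=2$ or $3$. Some of these are already covered by \Cref{l:small_groups}. For the others I will compute the exact value $\fp(u,G/M)=|C_G(u)|\cdot|u^G\cap M|/|M|$, bounding $|u^G\cap M|$ by the number of regular unipotent elements of $M$, which is $|M_0|/|C_{M_0}(u)|$ times the number of such classes. This produces the sharper bound $\fp(u,G/M)\le |C_G(u)|\cdot t/|C_{M}(u)|$, which is roughly $q^r/q_0^r$ times a constant, as in the treatment of $G_2(4)$ and $^2\!B_2(8)$ in \Cref{max4}. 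For $\PSL_2(q)$ with $M=\PGL_2(q^{1/2})$ or $\PSL_2(q_0)$, I may instead use a semisimple element of order $q_0+1$, for which this estimate is cleaner. I expect this small-case bookkeeping to be the main obstacle, especially $\PSL_2(q)$, $\PSL_3(q)$ and $\PSU_3(q)$ with $k=2$, where $|G:M|^{1/3}$ is only about $q^{1/2}$ to $q^{4/3}$. There I will first try semisimple elements $g\in M$ with $C_G(g)\le M$ or $g^G\cap M$ a single $M$-class, giving $\fp(g,G/M)\le |C_G(g)|/|C_M(g)|$, which is small.
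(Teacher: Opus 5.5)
Your route differs from the paper's. The paper uses no unipotent elements for $\ca C_5$. It takes a semisimple element $g\in M_0$ of Singer/ppd type: order $(q_0^n-1)/(q_0-1)$ for $\SL_n$, and orders such as $(q_0^n+1)/(q_0+1)$, $q_0^{n-1}+1$, $q_0^{n/2}\pm 1$ in the other cases. It observes that a $G$-conjugate of $g$ lying in $M$ shares a maximal torus with $g$ and is defined over $\F_{q_0}$, hence lies in $M_0$; so $g^G\cap M=g^{M_0}$. Then \Cref{l:fixed_points_precise_normal_subgroup} gives $\fp(g,G/M)<(q/q_0)^{n-1}/|M:M_0|$, against $|G:M|>(q/q_0)^{n^2-1}/|M:M_0|$. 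Since $3(n-1)\le n^2-1$, this handles every $n\ge 2$ at once; only $q=9$, $n=2$ and $\SU_4(2)<\SU_4(8)$ are treated separately.

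Your approach, modelled on \Cref{max4}, gives an element whose centralizer order is read off uniformly. The cost is that $|G:M|$ is only of order $|G|^{1/2}$ when the field degree is $2$. So the crude bound $|C_G(u)|^3<|G:M|$ leaves a residue that must be settled by exact fusion counts depending on the precise structure of $M$. At $n=2$ the unipotent genuinely fails: for $\PGL_2(q_0)<\PSL_2(q_0^2)$ with $q_0$ odd, $\fp(u,G/M)=q_0>|G:M|^{1/3}$. So your semisimple fallback there, which is exactly the paper's kind of element, is forced.

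Some of your bookkeeping needs correcting, though none of it breaks the method.

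\begin{itemize}
\item The case where $M$ has no $G$-regular unipotent is $\Or^\pm_n(q)<\SU_n(q)\le\SL_n(q^2)$ with $n$ even. It is not the orthogonal-in-orthogonal case, since a regular unipotent of $\Omega^\xi_n(q_0)$ is regular in $\Omega^\varepsilon_n(q)$. In the unitary case $u$ has Jordan type $(n-1,1)$, and the relevant centralizer is in $\GL_n(K)$, of dimension $n+2$, not in $\Or_n(K)$.
\item Your list of residual cases is incomplete. It must also include $n=5$ for $\SL_n$ with field degree $2$, and $n=6$ for $\SU_6(q)\supset\Sp_6(q),\Or^\pm_6(q)$.
\item For $\SU_4(q)\supset\Or^+_4(q)$ the exact count gives $\fp(u,G/M)=q^2(q+1)/c$, against $|G:M|=q^4(q^3+1)(q^2+1)/c$, where $c=|M:\SO^+_4(q)|$. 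The required inequality holds only because $c=(q+1,4)\ge 2$; with $c=1$ it would fail for every $q$. So in these small cases you really need the exact structure of $M$.
\item $\SU_n(q_0)<\SL_n(q_0^2)$ belongs to $\ca C_8$ in the paper, though including it is harmless.
\end{itemize}
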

	
	\begin{proof}
		Assume first $G=\SL_n(q)$, so $M$ is of type $\GL_n(q_0)$ where $q=q_0^b$ and $b$ is prime. Let $g\in M$ be of order $(q_0^n-1)/(q_0-1)$. Setting $M_0=\SL_n(q_0)$, we claim that $g^G\cap M\subseteq M_0$. Indeed, let $x\in \GL_n(q_0)$ and $z\in Z(\GL_n(q))$, and assume $(xz)^a = g$ with $a\in \GL_n(q)$. Then $x^a$ and $g$ belong to the same maximal torus $T$ of $\GL_n(q)$, and moreover they are defined over $\F_{q_0}$. In particular, either $x^a$ acts irreducibly over $\F_{q_0}$, or it acts diagonally over $\F_{q_0}$, the latter case occurring only when $n=b$.
		In the first case, $x^a\in \gen g$ and so $z=x^{-a}g\in \gen g\le \SL_n(q_0)$, and so $xz\in \GL_n(q_0)$, as desired. In the second case, we have that if $\alpha$ is an eigenvalue of $g$ then $\alpha^{q_0-1}\in \F_{q_0}$,
		which is easily seen to be impossible unless $(q_0,n)=(3,2)$. 
		Since $b=n$, in this exceptional case we have $q=9$, which has been handled in \Cref{l:small_groups}. 
		Therefore $g^G\cap M\subseteq M_0$, as claimed. What is more, $g^G\cap M= g^{M_0}$, since all semisimple elements of $M_0$ with the same characteristic polynomial are conjugate in $M_0$, and so by \Cref{l:fixed_points_precise_normal_subgroup} we get
		\[
		\fp(g,G/M)= \frac{|C_G(g)|}{|C_{M_0}(g)||M:M_0|} \le \frac{(q^n-1)(q_0-1)}{(q-1)(q_0^n-1)|M:M_0|} < \frac{(q/q_0)^{n-1}}{|M:M_0|}.
		\]
		We also have 
		\[
		|G:M|= \frac{|G:M_0|}{|M:M_0|} >\frac{(q/q_0)^{n^2-1}}{|M:M_0|}.
		\]
		In particular, using $|M:M_0|\ge 1$ we have $\fp(g,G/M)^3 < |G:M|$.

		Assume next $G=\SU_n(q)$ and $M$ is of type $\GU_n(q_0)$ where $q=q_0^b$ and $b$ is an odd prime. We have $(n,q) \neq (3,8)$ in view of \Cref{l:small_groups}. Assume first also $(n,q)\neq (4,8)$. If $n$ is odd, let $g\in M$ be of order $(q_0^n+1)/(q_0+1)$, and if $n$ is even, let $g\in M$ be of order $q_0^{n-1}+1$. (The exclusion of the two above cases ensures that $|g|$ is divisible by a ppd of $q_0^{2n}-1$ or $q_0^{2n-2}-1$.) 
		Setting $M_0=\SU_n(q_0)$, by the same argument as in the $\SL_n(q)$ case we have $g^G\cap M=g^{M_0}$, and so
		\[
		\fp(g,G/M)\le \frac{(q^n+1)(q_0+1)}{(q+1)(q_0^n+1)|M:M_0|}.
		\]
		Moreover $|G:M|=|G:M_0|/|M:M_0|$, and using $n\ge 3$ we see that $\fp(g,G/M)^3< |G:M|$. Recall now that we excluded the case $(n,q)=(4,8)$. In this case we have $M=\SU_4(2)$ and we choose $g\in M$ of order $5$.
		Then $g^G\cap M=g^M$ and 
		\[
		\fp(g,G/M)= \frac{|C_G(g)|}{|C_M(g)|} = \frac{455}{5} < |G:M|^{1/3},
		\]
		which concludes the proof in this case. The other cases are similar. If $G=\SU_n(q)$ and $M$ of type $\Or^\varepsilon_n(q)$ we choose $g$ of order $q^{n/2}+1$ or $q^{(n-1)/2}+1$ or $q^{n/2}-1$ (depending on $\varepsilon$). If $G=\SU_n(q)$ and $M$ of type $\Sp_n(q)$, we choose $g$ of order $q^{n/2}+1$. If $G=\Sp_n(q)$ and $M$ of type $\Sp_n(q_0)$, where $q=q_0^b$, we choose $g$ of order $q_0^{n/2}+1$. If $G=\Omega^\varepsilon_n(q)$ and $M$ of type $\Or^\xi_n(q)$ where $q=q_0^b$ and $\varepsilon = \xi^b$, we choose $g$ of order $(q_0^{n/2}+1)/(2,q-1)$ or $(q_0^{(n-1)/2}+1)/(2,q-1)$ or $(q_0^{n/2}-1)/(2,q-1)$ (depending on $\xi$).
		In all cases, a similar argument as above applies.
	\end{proof}
	
	\subsection{Class $\ca C_6$} 
	Subgroups $M$ in class $\ca C_6$ are normalizers of symplectic type subgroups.
	
	Let $r$ be a prime, and $E$ an $r$-group of symplectic type such that $|E| = r^{2m+1}$, $E$ is of exponent $r\cdot (2,r)$ and $E$ is as in Table \ref{c6tab} (see \cite[\S 4.6]{kleidman1990liebeck}). We have $n=r^m$ and $V = V_n(q^u)$ is a faithful absolutely irreducible $\F_{q^u}E$-module, where 
	$q^u \equiv 1 \hbox{ mod }r$. Moreover, $M \le N_{\GL(V)}(E)$, where $N_{\GL(V)}(E)/EZ$ is as in the table (here $Z=Z(\GL(V))$).
	
	\begin{table}[h!] \caption{} \label{c6tab}
		\[
		\begin{array}{|l|l|l|}
			\hline
			E & n & N_{\GL(V)}(E)/EZ  \\
			\hline
			r^{1+2m},\,r \hbox{ odd} & r^m & \Sp_{2m}(r) \\
			4 \circ 2^{1+2m} & 2^m & \Sp_{2m}(2)  \\
			2_{\pm}^{1+2m} & 2^m & \Or^{\pm}_{2m}(2) \\
			\hline
		\end{array}
		\]
	\end{table}
	We begin with a known fact, cf. 
	\cite[p. 706]{shult1965groups}. This reference addresses only the case of elements of prime power order, but the proof works in higher generality, as we now record.

	\begin{lemma}
		\label{l:semiregular_hall_higman}
		With notation as above, let $g\in N_{\GL(V)}(E)$ act with all orbits of size $|g|$ on the nonzero vectors of $E/Z(E)\cong \F_r^{2m}$. Then the centralizer of $g$ in $\GL_n(K)$ has dimension $1+(r^{2m}-1)/|g|$. In particular, if $|g|=r^m+1$ then $g$ is regular in $\GL_n(K)$.
	\end{lemma}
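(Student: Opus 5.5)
We compute $\dim C_{\End(V\otimes K)}(g)$ and use the fact that the centralizer of $g$ in $\GL_n(K)$ is the unit group of the centralizer algebra, which is open in it. So the two dimensions agree.

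Since $V$ is an absolutely irreducible faithful $E$-module, $\End_K(V\otimes K)$ has a basis indexed by $E/Z(E)\cong \F_r^{2m}$: for each coset $\bar e$ we choose a representative $e$ acting on $V$. The images of these $r^{2m}=n^2$ elements are linearly independent, because distinct cosets give distinct characters of $Z(E)$-twisted trace, and hence span $\End(V\otimes K)$. The element $g$ normalizes $E$, so conjugation by $g$ permutes the lines $K\cdot e$ according to the action of $g$ on $E/Z(E)$. Explicitly, $g^{-1}eg=\lambda_e\, e^{g}$ with $\lambda_e\in K^\times$.

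Thus $\End(V\otimes K)$ decomposes as a direct sum of $\langle g\rangle$-stable subspaces, one for each $g$-orbit $\mathcal O$ on $E/Z(E)$, and each such summand is spanned by the lines in $\mathcal O$. Conjugation by $g$ acts on the orbit summand as a monomial matrix cycling $|\mathcal O|$ lines. Its fixed space has dimension at most $1$, with equality exactly when the product of the scalars around the cycle is $1$.

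For the zero vector (orbit $\{1\}$) the summand consists of the scalars, which gives dimension $1$. For a nonzero orbit of size $|g|$, going once around the cycle applies $g^{|g|}=1$ by conjugation, so $g^{-|g|}eg^{|g|}=e$. This means the product of the scalars is $1$, and that orbit contributes exactly $1$. There are $(r^{2m}-1)/|g|$ such orbits, so the total is $1+(r^{2m}-1)/|g|$.

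For the final claim, $|g|=r^m+1$ gives $(r^{2m}-1)/(r^m+1)=r^m-1$, so the dimension is $r^m=n$. This is the minimum possible dimension of a centralizer in $\GL_n(K)$, so $g$ is regular.

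The main point to check is the linear independence/spanning of the $E/Z(E)$-indexed basis. This is standard: the trace of a non-central element of $E$ on $V$ is $0$, so the trace form separates the cosets, and a count gives $r^{2m}=n^2=\dim\End$.
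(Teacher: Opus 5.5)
Your proof is correct and is essentially the paper's argument: both use the basis of $\End_K(V\otimes K)$ given by coset representatives of $Z(E)$ in $E$, and both count the fixed vectors of conjugation by $g$ orbit by orbit. The differences are cosmetic: the paper gets the basis from the surjection $K[E]\to \End_K(V\otimes K)$ with kernel $(z-\zeta)K[E]$ rather than from trace orthogonality (which tacitly uses $n=r^m\neq 0$ in $K$, true since $r\neq p$), and it removes the scalars by choosing the representatives along a single $\langle g\rangle$-orbit on $E$, which has size exactly $|g|$ --- the same fact as your step showing that the product of the scalars around each cycle is $1$.
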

	
	\begin{proof}
		Let  $V_K:=K^n$, let $K[E]$ be the group algebra of $E$  and let $\phi\colon K[E] \to \End_K(V_K)$ be the morphism corresponding to the $K[E]$-module $V_K$; this is also a morphism of $K\gen g$-modules. Since $E$ acts irreducibly, $\phi$ is surjective. Moreover, we have $J:=\mathrm{Ker}(\phi) = (z-\zeta 1)K[E]$, where $Z:=Z(E) = \gen z$ and $z$ acts by the scalar $\zeta$ on $V_K$. Therefore $\End_K(V_K)\cong K[E]/J$ as $K\gen g$-modules. Let $\{a_1Z, \ldots, a_tZ\}$  be an orbit of $g$ on $E/Z(E) \sm \{0\}$, so $t=|g|$.  Then there exist $z_1, \ldots, z_t\in Z$ such that $\{a_1z_1, \ldots, a_tz_t\}$ is an orbit of $g$ on $E$. Now, any set of representatives for the cosets of $Z$ in $E$ maps (via the natural projection) to a $K$-basis of $K[E]/J$.
		In particular, there exists a $K$-basis $v_1, \ldots, v_{n^2}$ of $K[E]/J$ such that $v_1g=v_1$ and such that $g$ permutes the other $v_i$ in orbits of size $|g|$. The lemma follows.
	\end{proof}

	\begin{lemma}
		\Cref{t:main_simple} holds if $M$ is in class $\ca C_6$.
	\end{lemma}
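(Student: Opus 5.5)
Since $|M|$ is tiny compared with $|G|$ in class $\ca C_6$, I will find $g\in M$ with $|C_G(g)|^3<|G:M|$ and conclude by \Cref{l:fpr_enough}. Here $|M|\le |Z|\cdot r^{2m}\cdot|\Sp_{2m}(r)|$ (or $|\Or^\pm_{2m}(2)|$), so $|M|\le q\,r^{2m^2+3m}$ roughly. Meanwhile $|G|\ge q^{n^2/2-O(n)}$ with $n=r^m$, so $|G:M|$ has order $q^{cn^2}$ with $c\approx 1/2$ (or $1$ for linear and unitary groups) once $n$ is moderately large. The elements $g$ will come from \Cref{l:semiregular_hall_higman}.

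The main construction is as follows. Inside the image $\Sp_{2m}(r)$ (resp. $\Or^-_{2m}(2)$ or $\Or^+_{2m}(2)$), I will take a Singer-type element $\bar g$ of order $r^m+1$. Such an element acts semiregularly on the nonzero vectors of $E/Z(E)=\F_r^{2m}$, since it acts irreducibly with all nontrivial powers fixed-point-free. I will lift $\bar g$ to $g\in M$ of order coprime to $r$ (possible since $|\bar g|$ is prime to $r$). Then \Cref{l:semiregular_hall_higman} shows that $g$ is regular in $\GL_n(K)$. Therefore $|C_G(g)|\le (q^u+1)^n$, and in the symplectic and orthogonal cases \Cref{l:order_centralizers}(iv) gives $|C_G(g)|<4^{2}(q+1)^{n/2}$. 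I must check that a suitable lift lies in $G$ and not merely in $N_{\GL(V)}(E)$, adjusting by scalars from $Z$ as needed to fix the determinant. For the case $2^{1+2m}_+$, where no element of order $2^m+1$ acts semiregularly, I will instead use an element of $\Or^+_{2m}(2)$ of order $2^{m-1}+1$ acting irreducibly on a $(2m-2)$-space. This gives a centralizer of dimension $O(n^2/2^{m-1})$, which is still $O(n)$ up to a constant factor, so the same estimate applies.

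The inequality $|C_G(g)|^3<|G:M|$ then reads roughly $q^{3n}\cdot q\,r^{2m^2+3m}<q^{cn^2}$. This holds comfortably once $n\ge 8$ or so, using $r^{2m^2+3m}\le q^{2\log_2^2 n+3\log_2 n}$ together with \Cref{l:order_classical_groups}.

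The main obstacle is the small cases: $n=2,3,4,5,7$ with small $q$ ($q=p$ or $p^2$ minimal with $r\mid q^u-1$), and $n=8$ with $q$ odd. For these I will compute directly. Groups already covered by \Cref{l:small_groups} are dismissed. In the remaining ones, such as $\SL_3(q)$ and $\SU_3(q)$ with $3^{1+2}.Q_8$, $\SL_5(11)$, $\Sp_4(q)$ and $\Omega^+_8(q)$ with $2^{1+6}_+$, I will use a regular semisimple element of order $r^m+1$ (or a suitable element of order $5$ or $7$). I will bound $\fp(g,G/M)$ via \Cref{l:fixed_points_precise_normal_subgroup}, counting the $M$-classes in $g^G\cap M$ from the known structure of $M$, since the crude bound $|C_G(g)|$ may just fail there.
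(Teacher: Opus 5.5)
Your outline follows the paper's proof: take $g\in M$ of order $r^m+1$ acting semiregularly on $E/Z(E)$, so that $g$ is regular by \Cref{l:semiregular_hall_higman}; get $|C_G(g)|^3<|G:M|$ for large $n$; and for small $n$ count the elements of that order in $M$ and use \Cref{l:fpr_enough}. The one step that fails as written is the orthogonal case $E=2^{1+2m}_+$.

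\textbf{The orthogonal case.} Your element of order $2^{m-1}+1$ fixes, or permutes in $3$-cycles, the nonzero vectors of the complementary minus-type $2$-space of $E/Z(E)$. So it is not semiregular, and \Cref{l:semiregular_hall_higman} does not apply. It is also not regular, so \Cref{l:order_centralizers}(iv) is unavailable.

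This can be repaired. Since $|g|$ is odd and $|Z(E)|=2$, the proof of \Cref{l:semiregular_hall_higman} shows that $\End_K(V_K)$ is a permutation module on $E/Z(E)$. Hence $\dim C_{\GL_n(K)}(g)$ equals the number of $\langle g\rangle$-orbits, which is at most $2n$. You would then bound $|C_G(g)|$ via \Cref{l:compare_dim_centralizer_GL_Sp} and \Cref{l:order_centralizers}(ii).

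The paper's choice is cleaner. Take $g$ of order $2^m-1$ acting as a Singer cycle on each of two complementary totally singular $m$-spaces of $E/Z(E)$. This $g$ is semiregular on nonzero vectors, so the lemma applies directly. It gives centralizer dimension $n+2$ in $\GL_n(K)$ and at most $(n+2)/2$ in $\Or_n(K)$.

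The difference is visible at $n=8$. In $\Omega^+_8(3)$ your order-$5$ element has $C_{\Or^+_8(3)}(g)\cong\GU_2(9)$, so $|C_G(g)|\ge 1800$. This is far above $|G:M|^{1/3}\approx 157$, so an exact count of $g^G\cap M$ is forced. The order-$7$ element instead has $|C_G(g)|\le|\SO^-_2(3)|\cdot|\GU_1(27)|=112$, so the crude bound suffices.

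\textbf{Two smaller slips.} First, in the unitary case the bound $(q^u+1)^n=(q^2+1)^n$ is too weak for your asymptotic claim. For example, with $E=3^{1+4}$ in $\SU_9(2)$ one gets $(q^2+1)^{3n}=5^{27}>|G:M|$, and $n=9$ is not on your list of small cases. Use \Cref{l:order_centralizers}(iv) instead, which gives $|C_G(g)|<(q+1)^n$ for regular $g\in\GU_n(q)$.

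Second, in the small cases $g$ need not be semisimple. For $q=5$ the elements of order $5$ in $\Sp_4(5)$ and $\SL_4(5)$ are regular unipotent; for instance $|C_{\Sp_4(5)}(g)|=2q^2$. The element counts still work, but the centralizer orders must be computed for unipotent elements rather than from a torus.
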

	
	\begin{proof}
		Assume $G=\SL_n(q)$, so $n=r^m$ with $r$ prime and $r\neq p$.
		If $r$ is odd then $q\equiv 1\pmod r$; and if $r=2$ and $n>2$, then $M$ is of type $E.\Sp_{2m}(2)$, where $E=C_4\circ 2^{1+2m}$, and $q \equiv 1 \pmod 4$. For $n=2$ it is also possible that $q \equiv 3 \pmod 4$ and $M \le Q_8.S_3$.
		
		Assume first $n=2$. Then, letting $E$ be the quaternion group of order $8$, we have $M=E.S_3=E.\Or^-_2(2)$ or $M=E.3$. If $M=E.S_3$ (resp. $M=E.3$), let $g\in M$ be an element of order $3$ (resp. $4$). Then $g$ is regular in $G$ and so $|C_G(g)|\le q+1$. Moreover, letting $\delta$ be the number of elements of $M$ of order $3$ (resp. $4$) in $M$, we have $\delta=8$ (resp. $\delta=6$), so $|g^G\cap M|\le \delta$ and 
		\[
		\fp(g,G/M)=\frac{|g^G\cap M||G:M|}{|g^G|}\le \frac{\delta(q+1)}{|M|} < |G:M|^{1/3}. 
		\]
		Assume next $n=3$, so $E.Q_8\le M\le E:\Sp_2(3)$, where $E$ is an extraspecial group of order $27$ and exponent $3$. We may assume $q\ge 7$ in view of \Cref{l:small_groups}.
		Let $g\in M$ be of order $4$, so $g$ is regular semisimple in $G$ by \Cref{l:semiregular_hall_higman}, and $|C_G(g)|\le q^2-1$.
		The number of elements of $M$ of order $4$ is $54$. (Indeed, letting $Z$ be the center of $M$, we have $M/Z \ge \F_3^2\rtimes Q_8$, which has $9\cdot 6$ elements of order $4$,
		and each of these admits a unique lift in $G$ of order $4$.)
		Therefore
		\[
		\fp(g,G/M) =\frac{|g^G\cap M||G:M|}{|g^G|} \le \frac{54(q^2-1)}{|M|} < |G:M|^{1/3},
		\]
		since $|M|\ge 27\cdot 8$. Now assume $n=4$, so $(4\circ E).A_6\le M\le (4\circ E).\Sp_4(2)$, where $E$ is any extraspecial group of order $2^5$. Then by \Cref{l:semiregular_hall_higman} an element of $M$ of order $5$ is regular in $G$, and a similar calculation as above suffices. Finally, assume $n\ge 5$, so $M=ZE.\Sp_{2m}(r)$ where $Z=Z(\SL_n(q))$ (see \cite[Propositions 4.6.5 and 4.6.6]{kleidman1990liebeck}). Let $g\in M$ be of order $r^m+1$, so $g$ is regular in $\GL_n(K)$ by \Cref{l:semiregular_hall_higman}. Moreover, we have $C_{\GL_n(q)}(g)\SL_n(q)=\GL_n(q)$, and so $|C_{G}(g)|\le (q^n-1)/(q-1)$. 
		Therefore
		\[
		|M||C_{G}(g)|^3< (q-1) r^{2m^2+m}r^{2m+2}\frac{(q^n-1)^3}{(q-1)^3} < |G|.
		\]
        
		Assume now $G=\Sp_n(q)$, so $E.\Omega^-_{2m}(2)\le M \le E.\Or^-_{2m}(2)$, where $n=2^m$ and $E=D_8\circ \cdots \circ D_8\circ Q_8$ is an extraspecial group 
		of order $2^{2m+1}$. Let $g\in M$ be of order $2^m+1$, so $g$ is regular in $\GL_n(K)$ by \Cref{l:semiregular_hall_higman}, and  the dimension of the centralizer of $g$ in $\Sp_n(K)$ is $n/2$, and moreover $|C_G(g)|<4(q+1)^{n/2}$ by \Cref{l:order_centralizers}(iv). For $n\ge 8$ we see that  $|M||C_G(g)|^3 < |G|$ and the result follows. Assume then $n=4$. We have $q\ge 5$ in view of \Cref{l:small_groups}. We have $|g|=5$, and if $p\ne 5$ then $|C_G(g)|\le q^2+1$, and if $p=5$ then $|C_G(g)|=2q^2$. The number of elements of $E.\Or^-_4(2)=E.S_5$ of order $5$ is $16\cdot 24=384$,
		so
		\[
		\fp(g,G/M)=\frac{|g^G\cap M||G:M|}{|g^G|} \le \frac{384 \cdot 2q^2}{|M|},
		\]
		which is $<|G:M|^{1/3}$ for $q\ge 5$.
		
		The other cases are similar. Assume $G=\SU_n(q)$ and $M$ of type $E.\Sp_{2m}(r)$ where $E$ is extraspecial of order $r^{2m+1}$ or $E=C_4\circ R$ where $R$ is extraspecial of order $2^{2m+1}$. Exactly as in the linear case, if $n=3$ (resp. $n=4$, resp. $n\ge 5$) then we choose $g$ of order $4$ (resp. $5$, resp. $r^m+1$), so $g$ is regular in $\GL_n(K)$. Finally, assume $G=\Omega^+_n(q)$ and  $E.\Omega^+_{2m}(2)\le M \le E.\Or^+_{2m}(2)$, where $n=2^m$ and $E=D_8\circ \cdots \circ D_8$ is an extraspecial group
		of order $2^{2m+1}$. Let $U:=E/Z(E)$ and let $g\in M$ be of order $2^m-1$, such that as an $\F_2\gen g$-module $U=A/Z(E)\oplus B/Z(E)$, where $A/Z(E)$ and $B/Z(E)$ are totally singular and irreducible.
		By \Cref{l:semiregular_hall_higman}, the dimension of the centralizer of $g$ in $\GL_n(K)$ is $n+2$. By \Cref{l:compare_dim_centralizer_GL_Sp}(ii), it follows that the dimension of the centralizer of $g$ in $\Or_n(K)$ is at most $(n+2)/2$, and we conclude with a similar calculation as above. 
	\end{proof}
	
	\subsection{Class $\ca C_7$}  Subgroups $M$ class $\ca C_7$ are stabilizers of tensor product decompositions $V=V_1\otimes \cdots \otimes V_t$ where $m=\dim(V_i)$ and $n=m^t$. If the $V_i$ are equipped with a nondegenerate form then they are similar; in all cases we have $M/Z\le \PGL(V_1)\wr S_t$.
	
	\begin{lemma}
		\Cref{t:main_simple} holds if $M$ is in class $\ca C_7$.
	\end{lemma}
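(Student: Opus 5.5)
The plan is to use the same template as for $\ca C_2$ and $\ca C_4$: exhibit an element $g\in M$ whose centralizer in $G$ is small, and then conclude from \Cref{l:fpr_enough} by checking $|M|\,|C_G(g)|^3<|G|$. Since $M$ has type $\GL_m(q)\wr S_t$ (or the corresponding form-preserving analogue: $\Sp_m(q)\wr S_t$, $\Or^\varepsilon_m(q)\wr S_t$ or $\GU_m(q)\wr S_t$), with $n=m^t$, $t\ge 2$ and $m\ge 2$ (in fact $m\ge 3$ in most cases by \cite{kleidman1990liebeck}), the subgroup $M$ is tiny. Indeed
\[
|M|\le |\PGL_m(q^u)|^t\, t!\,(q^u-1)\le q^{u(tm^2+1)}\, t!,
\]
while $|G|\ge q^{u(n^2-2)}$ by \Cref{l:order_classical_groups}. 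So it suffices that $|C_G(g)|\le q^{cn}$ for a modest constant $c$, since $n^2$ dominates $tm^2+3cn+\log_q t!$ as soon as $n=m^t\ge 9$.

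For the element, take $g=g_1\otimes g_2\otimes\cdots\otimes g_t$ with each $g_i$ in the corresponding classical group on $V_i$. Following the $\ca C_4$ argument, the natural choice is $g_1$ semisimple with distinct eigenvalues (irreducible, or irreducible on complementary totally singular spaces, as in $\ca C_2$) and $g_2$ unipotent with a single Jordan block. The remaining $g_i$ should be chosen so that the tensor product is regular or nearly regular: for instance $g_2\otimes\cdots\otimes g_t$ unipotent with few Jordan blocks, or $g_3,\dots,g_t$ semisimple with eigenvalues chosen so that all products are distinct.

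A cleaner alternative, which I would try first, is to avoid tensor subtleties by using the element $(h,1,\dots,1)\tau\in \GL_m(q)\wr C_t$, where $\tau$ cycles the tensor factors. Its action is computable: $V\cong V_1^{\otimes t}$ with $\tau$ permuting factors, and the centralizer dimension is bounded via the number of orbits of $\tau$ on a basis of pure tensors, roughly $n/t+$ small. That would give $\dim C_{\GL_n(K)}(g)\le n^2/t+O(n)$, which is already enough for $t\ge 2$ since
\[
3\left(\tfrac{n^2}{2}\right)<n^2-tm^2
\]
fails. Hence the product-of-elements approach, with near-regularity, is preferred. Then pass from $\GL_n(K)$ to $\Sp_n$/$\Or_n$ dimensions with \Cref{l:compare_dim_centralizer_GL_Sp}, and to orders with \Cref{l:order_centralizers}, getting $|C_G(g)|<q^{n+R}$ or $2(q+1)^{n/2}$-type bounds.

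The main obstacle is verifying near-regularity of the tensor product in each case. For a unipotent Jordan block $J_m$ tensored with another, the Jordan form of $J_m\otimes J_m$ in characteristic $p$ can have many blocks when $p$ is small. So I would keep at most one unipotent factor and make all other factors semisimple with multiplicatively independent eigenvalue sets; this needs enough eigenvalues, which fails for small $q$ (say $q\le 3$ with $m=2$). Those small cases, together with $n$ small (say $n\le 16$), I would treat by direct estimates: take the regular semisimple choice when possible, otherwise bound $\fp$ via $|g^G\cap M|\le i_{|g|}(M)$ as in the $\ca C_6$ proof, and use \Cref{l:small_groups} for the tiniest groups.
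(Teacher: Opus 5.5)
\textbf{There is a genuine gap.} Your plan rests on finding $g\in M$ that is regular or nearly regular, so that $|C_G(g)|\le q^{cn}$. You treat the failure of this as a small-$q$ or small-$n$ nuisance to be handled by direct estimates. In fact it fails for every fixed $(m,q)$ once $t$ is large, so for infinitely many groups.

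In your construction, the semisimple part $s=g_1\otimes g_3\otimes\cdots\otimes g_t$ acts on an $m^{t-1}$-dimensional space with all its eigenvalues in $\F_{q^{uN}}^\times$, where $N=\mathrm{lcm}(1,\ldots,m)$. So at most $q^{uN}-1$ of its $m^{t-1}$ eigenvalues are distinct. An eigenvalue $\mu$ of multiplicity $k_\mu$ contributes $k_\mu$ Jordan blocks of size $m$ to $g$, hence $\dim C_{\GL_n(K)}(g)=m\sum_\mu k_\mu^2\ge n^2/(m(q^{uN}-1))$, which is quadratic in $n$. ``Multiplicatively independent'' eigenvalue sets simply do not exist once $m^{t-1}>q^{uN}-1$.

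Your other option fails as well. A tensor product of unipotent elements of $\GL_m(q^u)$ has order less than $pm$, so all its Jordan blocks have size less than $pm$ and it cannot have few blocks. No cleverer choice helps either: every element of $M$ has order at most the exponent of $\GL_m(q^u)$ times the largest element order in $S_t$. For fixed $m,q$ this is $n^{o(1)}$ as $t\to\infty$, so the minimal polynomial has degree $n^{o(1)}$ and the centralizer in $\GL_n(K)$ has dimension at least $n^{2-o(1)}$.

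\textbf{The missing idea is that near-regularity is unnecessary.} $|M|$ is tiny compared with $|G|$ (roughly $t!\,q^{tm^2}$ against $q^{n^2}$ in the linear case). So it suffices that $\dim C_{\GL_n(K)}(g)$ lie a constant factor below $n^2/3$.

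Your choice of $g_1$ (distinct eigenvalues) and $g_2$ (a single Jordan block) is the right one. The paper simply repeats $g_1$ in all the other slots: $g=x\otimes\cdots\otimes x\otimes y$, with $x$ of order $(q^m-1)/(q-1)$ and $y$ regular unipotent.
\begin{itemize}
\item Since $x$ has distinct eigenvalues, once the first $t-2$ eigenvalue indices of $x^{\otimes(t-1)}$ are fixed, at most one value of the last index produces a given product. So every eigenvalue of $x^{\otimes(t-1)}$ has multiplicity at most $m^{t-2}$.
\item Hence every generalized Jordan block of $g$ has size $m$ and occurs at most $m^{t-2}$ times, giving $d\le m\cdot m^{t-2}\cdot m^{t-1}=m^{2t-2}=n^2/m^2$.
\item With $m\ge 3$ this yields $|M|\,|C_G(g)|^3<t!\,q^{m^2t+3m^{2t-2}}<|G|$.
\end{itemize}

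The remaining types are analogous, but need two adjustments your proposal also misses.
\begin{itemize}
\item For $\Sp_m(q)\wr S_t$ with $qt$ odd (where $m=2$ occurs), take $x$ of order $q^{m/2}+1$. Bound the symplectic centralizer dimension by $m^{2t-2}/2+n/2$ via \cite[Theorem 3.1]{liebeck_seitz_2012unipotent}, apply \Cref{l:order_centralizers}(iii) with $m^{t-1}$ rational blocks, and treat $n=8$ separately.
\item For $\Or^\pm_m(q)$ factors ($q$ odd, $m$ even), a single unipotent Jordan block of size $m$ does not exist in the orthogonal group. So $y$ must have two blocks of size $m/2$, which still gives $d\le 2m^{2t-2}$.
\end{itemize}
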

	
	\begin{proof}
		First, we have $n\ge 8$ since $M\in \ca A$ (see \cite{bray2013holt_colva}).  Let $G=\SL_n(q)$ and let $M$ be of type $\GL_m(q)\wr S_t$, with $n=m^t$ and $m>1$, $t>1$. Since $M\in \ca A$, we have $m\ge 3$ (see \cite[Section 4.7]{kleidman1990liebeck}). Let $g=x\otimes \cdots \otimes x\otimes y\in \GL_m(q)^{\otimes t}$ where $x$ has order $(q^m-1)/(q-1)$ and $y$ is regular unipotent. Then each generalized Jordan block of $g$ over $K$ has size $m$, and each such block occurs with multiplicity at most $m^{t-2}$. (A generalized Jordan block refers to a Jordan block corresponding to any eigenvalue in $K$.) Therefore, the dimension of the centralizer of $g$ in $\GL_n(K)$ is at most $m^{2t-2}$ (cf. proof of \Cref{l:optimization}). Hence $|C_G(g)|<q^{m^{2t-2}}$ and
		\[
		|M| |C_G(g)|^3< t! q^{m^2t +3m^{2t-2}}<|G|.
		\]
		Assume now $G=\Sp_n(q)$ and $M$ of type $\Sp_m(q)\wr S_t$, with $qt$ odd.
		Then we may choose an element as above but where $x$ has order $q^{m/2}+1$.
		Then, the dimension of the centralizer of $g$ in $\Sp_n(K)$ is at most $m^{2t-2}/2+n/2$ (see \cite[Theorem 3.1]{liebeck_seitz_2012unipotent}). Moreover, $g$ has $n/m=m^{t-1}$ generalized Jordan blocks over $K$, so by \Cref{l:order_centralizers}(iii) we have $|C_G(g)|<2^{m^{t-1}}q^{m^{2t-2}/2 + n/2} \le q^{m^{t-1}+ m^{2t-2}/2 + n/2}$, and so
		\[
		|M| |C_G(g)|^3< t!q^{m^2t/2 + mt/2 + 3m^{t-1}+ 3m^{2t-2}/2 + 3n/2}
		\]
		which is less than $|G|$ if $n>8$ (i.e., $n\ge 32$). If $n=8$, since $M\in A$ we have $q\ge 5$ (see \cite{bray2013holt_colva}), and $M=\Sp_2(q)^{\otimes 3}.2^2.S_3$ and $g$ has two $1$-Jordan blocks of dimension $2$, and one rational $f$-block of dimension $4$, where $f$ is the minimum polynomial of an element of order $(q+1)/2$ over $\F_q$. (A rational $f$-block refers to an indecomposable $\F_q \gen{g}$-submodule whose minimum polynomial is $f$.) In particular, the dimension of the centralizer of $g$ in $\Sp_n(K)$ is $2+4 =6$ (see \cite[Theorem 3.1]{liebeck_seitz_2012unipotent}), and $|C_G(g)|=q^4|\GU_1(q)||\Or^\pm_2(q)|\le 2q^4(q+1)^2$
		and we see that $|M||C_G(g)|^3<|G|$. 
		
		The other cases are similar. These are $G=\SU_n(q)$ and $M$ of type $\GU_m(q)\wr S_t$; $G=\Omega^+_n(q)$ and $M$ of type $\Or_m^\pm(q)\wr S_t$ ($q$ odd) or $\Sp_m(q)\wr S_t$ ($qt$ even); $G=\Omega_n(q)$ with $qn$ odd and $M$ of type $\Or_m(q)\wr S_t$. Except for the case where $M$ is of type $\Or_m^\pm(q)\wr S_t$, we can choose $g=x\otimes \cdots \otimes x\otimes y$ where $x$ is semisimple with distinct eigenvalues and $y$ unipotent with one Jordan block, as above. In the exceptional case, we have $m\ge 4$ and we choose $y$ unipotent with two Jordan blocks of size $m/2$. We have that all generalized Jordan blocks of $g$ over $K$ have size $m/2$, and each such block occurs with multiplicity at most $2m^{t-2}$, so the dimension of the centralizer of $g$ in $\GL_n(K)$ (resp. $\Or_n(K)$) is at most $2m^{2t-2}$ (resp. at most $m^{2t-2}$, see \cite[Theorem 3.1]{liebeck_seitz_2012unipotent}), and the conclusion follows.
	\end{proof}

	\subsection{Class $\ca C_8$} 
	
	Subgroups in class $\ca C_8$ are classical subgroups with the same natural module as $G$.

	\begin{lemma}
		\Cref{t:main_simple} holds if $M$ is in class $\ca C_8$.
	\end{lemma}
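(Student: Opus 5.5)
The $\ca C_8$ subgroups are few: for $G=\SL_n(q)$ they are of type $\Sp_n(q)$, $\Or^\varepsilon_n(q)$ ($q$ odd) and $\GU_n(q_0)$ with $q=q_0^2$; for $G=\Sp_n(q)$ with $q$ even, of type $\Or^\pm_n(q)$. In each case I would pick an element $g$ lying in a large cyclic torus of $M$ whose order is divisible by a ppd. Then $g$ is irreducible, or irreducible on a small nondegenerate piece, and its $G$-conjugates in $M$ form very few $M$-classes. The fixed point count then comes from \Cref{l:fixed_points_precise_normal_subgroup} (or \Cref{l:fpr_enough}(ii)), exactly as in the $\ca C_5$ argument.

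For $G=\SL_n(q)$ and $M$ of type $\Sp_n(q)$, I take $g\in \Sp_n(q)$ of order $q^{n/2}+1$. It acts irreducibly, and its order is divisible by a ppd of $q^n-1$ unless $(n,q)=(6,2)$, a case handled by \Cref{l:small_groups} or directly. Then $|C_G(g)|\le (q^n-1)/(q-1)$ while $|C_{M_0}(g)|=q^{n/2}+1$. Elements of $M_0=\Sp_n(q)$ with the same characteristic polynomial are $M_0$-conjugate, since an irreducible semisimple element is determined by its eigenvalues. The eigenvalues of $g$ are closed under inversion, so $g^G\cap M$ is the union of at most the boundedly many $M_0$-classes coming from scalar multiples that preserve this property. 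This gives $\fp(g,G/M)\lesssim q^{n/2-1}$, and comparing with $|G:M|\approx q^{n^2/2-n/2}$ yields the cube bound. For $\Or^\varepsilon_n(q)$ I use the analogous choice: order $(q^{n/2}+1)/2$ for $\varepsilon=-$, and for the other types an element irreducible on a nondegenerate hyperplane of minus type (odd $n$) or irreducible on complementary totally singular subspaces with non-isomorphic actions ($\varepsilon=+$). For $\GU_n(q_0)$ I take $g$ of order $(q_0^n+1)/(q_0+1)$ or $q_0^{n-1}+1$ as in the unitary $\ca C_5$ case, and the estimate is the same. For $\Sp_n(q)$, $q$ even, with $M=\Or^-_n(q)$ I take $g$ of order $q^{n/2}+1$ irreducible, so $|C_G(g)|=q^{n/2}+1=|C_M(g)|$ and $\fp\le 2$. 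For $M=\Or^+_n(q)$ I take $g$ irreducible on two complementary totally singular $n/2$-spaces with non-isomorphic actions. Then $C_G(g)\le \GL_1(q^{n/2})$, which is much smaller than $|G:M|^{1/3}\approx q^{n/6}$ (using $|G:M|=q^{n/2}(q^{n/2}+1)/2$). Small $n$ here ($n=4$, or $q=2$ with tiny $n$) goes to \Cref{l:small_groups} or direct computation.

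The main obstacle is bounding the number of $M$-classes in $g^G\cap M$, especially for $M$ of type $\Or^\varepsilon_n(q)$ inside $\SL_n(q)$. There $M$ contains scalars and similitudes, and a $G$-conjugate of $g$ may lie in $M$ only modulo scalars. I would handle this exactly as in the $\ca C_5$ proof: if $(xz)^a=g$ with $x\in M_0$ and $z$ central, then eigenvalue considerations (closure under inversion forcing $z^2=1$ or similar) bound the possibilities by a constant such as $2(n,q-1)$, which is harmless against the margin of $q^{\Theta(n^2)}$. The remaining small cases with $\Or_n(q)$ for $n\le 4$ are either not in $\ca A$ or were covered by \Cref{l:small_groups}.
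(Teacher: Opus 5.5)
The proposal follows the paper except in one case, $\Or^+_n(q)<\Sp_n(q)$ with $q$ even, where the step as written fails. For $G=\SL_n(q)$ (types $\Sp_n(q)$, $\Or^\varepsilon_n(q)$, $\GU_n(q^{1/2})$) and for $\Or^-_n(q)<\Sp_n(q)$ your plan is the paper's: take a torus element with a ppd, show $g^G\cap M=g^{M_0}$ (or boundedly many $M_0$-classes), and apply \Cref{l:fixed_points_precise_normal_subgroup}, with essentially the same element orders. The only extra work the paper needs there is an exact index computation for $n=3$ in the unitary case.

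\textbf{The failing step.} For $M=\Or^+_n(q)<G=\Sp_n(q)$ with $q$ even, you switch to the crude bound $\fp(g,G/M)\le |C_G(g)|$. This cannot work.
\begin{itemize}
\item $C_G(g)\cong \GL_1(q^{n/2})$ has order $q^{n/2}-1$.
\item $|G:M|=q^{n/2}(q^{n/2}+1)/2$, whose cube root is roughly $q^{n/3}$, not $q^{n/6}$ as you wrote.
\item Since $(q^{n/2}-1)^3>q^{n/2}(q^{n/2}+1)/2$ for all $n\ge 4$, your inequality is false. It would be false even against your value $q^{n/6}$.
\end{itemize}
In fact $|C_G(g)|\approx |G:M|^{1/2}$ for both $\Or^\pm$, so no bound by the full centralizer order can succeed here.

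\textbf{The repair.} Count classes, as you already did for $\Or^-$.
\begin{itemize}
\item The stabilizer $\GL_{n/2}(q)$ of the pair $A,B$ in $\Sp_n(q)$ preserves the quadratic form $Q(a+b)=f(a,b)$. Hence $C_G(g)=C_M(g)$.
\item By \Cref{l:fpr_enough}(ii), $\fp(g,G/M)$ then equals the number of $M$-classes in $g^G\cap M$, which is $1$.
\end{itemize}
The paper treats both types at once. It takes any $g\in M$ with no eigenvalue $1$ and gets $\fp(g,G/M)=1$. The reason is that for $q$ even the quadratic forms polarizing to $f$ are exactly the $Q+f(w,\cdot)^2$. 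If $g$ preserves $Q$, then $g$ preserves $Q+f(w,\cdot)^2$ if and only if $wg=w$.

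This argument also makes your appeal to small cases unnecessary. That appeal would not have sufficed anyway: \Cref{l:small_groups} covers $\Sp_4(q)$ only for $q\le 4$.
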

	
	\begin{proof}
		Assume first $G=\SL_n(q)$, so $M$ is of type $\GU_n(q^{1/2})$, $\Sp_n(q)$, or $\Or^\varepsilon_n(q)$. Assume first $M$ is of type $\GU_n(q^{1/2})$, so $n\ge 3$ and $M=\gen{M_0,y}$ where $M_0=\SU_n(q^{1/2})$ and $y\in \GU_n(q^{1/2})Z$ with $Z=Z(\GL_n(q))$. Then choose $g\in M_0$ of order $(q^{n/2}+1)/(q^{1/2}+1)$ if $n$ is odd, and order $q^{(n-1)/2}+1$ if $n$ is even. First note that $g^G\cap M= g^{M_0}$. Indeed, assume $g^a = xz$ where $a\in G$, $x\in \GU_n(q^{1/2})$ and  $z\in Z$. Then $x$ and $g^a$ belong to the same maximal torus $T$ of $\GL_n(q)$. If $n$ is odd then $T$ is cyclic and irreducible and, since $x$ and $g^a$ both belong to unitary subgroups, we deduce that they must belong to the same cyclic subgroup $C$ of $T$ order $q^{n/2}+1$. In particular $g^a\in C\le M$, which proves that $g^G\cap M \subseteq M\cap \SL_n(q) =M_0$. Moreover, $g^G\cap M=g^{M_0}$ since all irreducible elements of $M_0$ with the same characteristic polynomial are conjugate in $M_0$. The case where $n$ is even is analogous and so we have $g^G\cap M= g^{M_0}$ in all cases. Therefore by \Cref{l:fixed_points_precise_normal_subgroup} we get
		\[
		\fp(g,G/M)=\frac{|C_G(g)|} {|M:M_0||C_{M_0}(g)|} \le \frac{(q^{n-\delta}-1)}{A(q^{(n-\delta)/2}+1)}=\frac{q^{(n-\delta)/2}-1}{A}
		\]
		where $A=\delta=1$ if $n$ is even, and $A=q^{1/2}-1$ and $\delta=0$ if $n$ is odd.
		Next, we have $|M|\le (q^{1/2}-1)|M_0|$
		and so by \Cref{l:order_classical_groups} we deduce $|G:M|\ge |G:M_0|/(q^{1/2}-1)> 9q^{n^2/2-1/2}/(16(q^{1/2}-1))$, and we have $\fp(g,G/M)^3<|G:M|$ as soon as $n\ge 4$. Assume then $n=3$, so $q>5$ in view of \Cref{l:small_groups}. Then we have $M=\SU_3(q^{1/2})\times (3,q^{1/2}-1)$ (see \cite{bray2013holt_colva}), and we still have $\fp(g,G/M)^3<|G:M|$, by using the exact  value of $|G:M|$, rather than the above approximation.

		Assume next $G=\SL_n(q)$ and $M$ of type $\Or^\varepsilon_n(q)$ with $\varepsilon \in \{+,-,\circ\}$. Suppose $\varepsilon = +$, so $n\ge 4$ is even and $q$ is odd, and $M=\CO^+_n(q)\cap G$.
		Setting $M_0=\SO^+_n(q)$, we choose $g\in M_0$ of order $q^{n/2}-1$. Then $g$ is regular semisimple in $G$ (note that  $(q,n)\neq (2,4)$).
		No element of $\CO^+_n(q)\sm \Or^+_n(q)$ can have the same eigenvalues as $g$, so as above we have $g^G\cap M=g^{M_0}$
		and 
		\[
		\fp(g,G/M)\le \frac{|C_G(g)|} {|C_{M_0}(g)|} \le \frac{(q^{n/2}-1)^2}{(q-1)(q^{n/2}-1)} < |G:M|^{1/3}.
		\]
		Assume now $n\ge 3$ is odd and $M$ is of type $\Or_n(q)$. Setting $M_0=\SO_n(q)$, we choose $g\in M_0$ of order $q^{(n-1)/2}+1$ and similarly to above we have
		\[
		\fp(g,G/M)\le \frac{|C_G(g)|} {|C_{M_0}(g)|} \le \frac{q^{n-1}-1}{q^{(n-1)/2}+1} = q^{(n-1)/2}-1< |G:M|^{1/3}.
		\]
		The cases $G=\SL_n(q)$ and $M$ of type $\Or^-_n(q)$ or $\Sp_n(q)$ ar similar; in both cases we choose $g$ of order $q^{n/2}+1$.  Finally, for $G=\Sp_n(q)$ and $M=\Or^\varepsilon_n(q)$ with $\varepsilon \in \{+,-\}$ and $q$ even, letting $g\in M$ be an element without eigenvalue $1$, we have $\fp(g,G/M)=1$.
	\end{proof}
	
	\subsection{Class $\ca N$} Recall that class $\ca N$ was defined at the beginning of \Cref{sec:geometric_classes}.
	
	\begin{lemma}
		\label{l:omega_8_triality}
		\Cref{t:main_simple} holds if $G=\POm^+_8(q)$, or $G=\Sp_4(q)$ with $q$ even, and $M$ is in class $\ca N$.
	\end{lemma}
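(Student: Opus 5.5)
We will work from the explicit lists of the members of $\ca N$, namely the subgroups that become maximal only in almost simple groups containing a triality (for $\POm^+_8(q)$) or a graph automorphism (for $\Sp_4(q)$, $q$ even). These lists are in \cite{kleidman1987maximal_omega8} and \cite{bray2013holt_colva}. The small cases $\Sp_4(2)'$ and $\Sp_4(4)$ are already covered by \Cref{l:small_groups}, and $\POm^+_8(2)$ can be handled in GAP using \cite{atlas}.

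\textbf{The case $G=\Sp_4(q)$, $q=2^{2a+1}\ge 8$.} Here the members of $\ca N$ are:
\begin{itemize}
\item the Borel subgroup;
\item the torus normalizers $(q\pm1)^2.D_8$;
\item the normalizers $(q^2+1).4$;
\item the subfield-type subgroups with $q_0^2=q$ type data;
\item ${}^2\!B_2(q)$.
\end{itemize}
The Borel subgroup is handled by \Cref{l:parabolic}. For the torus normalizers we take a regular semisimple element of order $q\pm1$ (respectively $q^2+1$). Its centralizer has order at most $(q+1)^2$, and this is smaller than $|G:M|^{1/3}$. For ${}^2\!B_2(q)$ we take $g$ of order $q\pm\sqrt{2q}+1$. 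Such an element is regular in $\Sp_4(q)$, so $|C_G(g)|\le q^2+1$, while $|G:M|\approx q^5$, which suffices.

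\textbf{The case $G=\POm^+_8(q)$.} Here the members of $\ca N$ fall into four groups:
\begin{itemize}
\item the parabolics $P_{134}$, $P_{13}$ and similar, handled again by \Cref{l:parabolic};
\item the triality-twisted subgroups $G_2(q)$, ${}^3\!D_4(q_0)$ with $q=q_0^3$, and $\PGL_3^\pm(q)$;
\item the local subgroups such as $[2^9].\SL_3(2)$ (for $q=p$ odd), $(q\pm1)^4/d.W(D_4)$-type and $(\SL_2(q)^4)$-type torus and subsystem normalizers, and $\Omega_7(q)$-like subgroups not stabilizing a $1$-space;
\item the subgroups $(D_{2(q^2+1)/d}\times\cdots)$.
\end{itemize}
For each we choose a regular (or nearly regular) element and bound $|C_G(g)|$ via \Cref{l:order_centralizers}(iv), giving $|C_G(g)|<16(q+1)^4$.
\begin{itemize}
\item For $G_2(q)$, an element of order $q^2+q+1$ is regular in $D_4$, as noted in the proof of \Cref{max3}, so $|C_G(g)|\le(q^2+q+1)^2$ against $|G:M|\approx q^{14}$.
\item For $\PGL_3^\pm(q)$, an element of order $q^2-1$ is regular.
\item For ${}^3\!D_4(q_0)$ we use a regular unipotent element, exactly as in \Cref{max4}. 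Its centralizer has order at most $4q^4$, and this is compared with $|G:M|\ge q^{28-28/3}$.
\item For the torus and subsystem normalizers we pick a regular semisimple element of the relevant torus, which gives centralizer at most $16(q+1)^4$ against index of order $q^{20}$ or larger.
\item For the exotic $[2^9].\SL_3(2)$ we take an element of order $7$. Its $\Or_8(K)$-centralizer has small dimension, at most $4$ by \Cref{l:compare_dim_centralizer_GL_Sp}, since $7$ acts freely on the nonzero vectors of $2^3$. With $q=p\ge3$ this gives $|C_G(g)|\le c q^4 <|G:M|^{1/3}$.
\end{itemize}

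\textbf{Main obstacle.} The difficult part is bookkeeping for small $q$, especially $q=3$ (and $q=2,4$ for $\POm^+_8$). There regular semisimple elements may fail to exist in the chosen torus, or the crude bound $16(q+1)^4$ is too big relative to the index. In those cases we would instead:
\begin{itemize}
\item compute $\fp(g,G/M)$ exactly from the character table and permutation character in GAP/\cite{atlas} when available;
\item otherwise use \Cref{l:fpr_enough}(i) with $|g^G\cap M|$ bounded by the number of elements of that order in $M$.
\end{itemize}
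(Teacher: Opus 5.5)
\textbf{There is a genuine gap: the case list for $\ca N$ is wrong.} Your overall strategy matches the paper's: run through the explicit novelties, use \Cref{l:parabolic} for parabolics, and use well-chosen semisimple elements otherwise. But the lemma is a finite case check over the members of $\ca N$, and your list does not match them.

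For $\POm^+_8(q)$ the paper reads the list off \cite[Table 8.50]{bray2013holt_colva} (novelties $\mathrm{N}_1$, $\mathrm{N}_4$). It consists of the parabolics, $d\times G_2(q)$, $(\Omega^+_2(q)\times\frac1d\GL_3(q)).[2d]$, $(\Omega^-_2(q)\times\frac1d\GU_3(q)).[2d]$ and $(D_{2(q^2+1)/d})^2[2d].S_2$.
\begin{itemize}
\item The two $\GL_3^{\pm}$-families never appear in your proposal. These are the stabilizer of a pair of complementary totally singular $3$-spaces spanning a nondegenerate $6$-space, and its unitary analogue. They are neither torus normalizers nor of $\SL_2(q)^4$-type, and their index is about $q^{18}$, not ``$q^{20}$ or larger''. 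They are easy once seen: the paper takes $g$ in the $\SL_3(q)$ (resp.\ $\GU_3(q)$) factor of order a ppd of $q^3-1$ (resp.\ $q^6-1$), and gets $\fp(g,G/M)=1$ because $g$ fixes exactly two totally singular $3$-spaces. But they must be treated.
\item Conversely, ${}^3\!D_4(q_0)$, $\PGL^\pm_3(q)$, the spin $\Omega_7(q)$, and for $\Sp_4$ also ${}^2\!B_2(q)$ and $\Sp_4(q_0)$, are not in $\ca N$. They are handled in classes $\ca C_5$ and $\ca S$. Including them is harmless, but it suggests the list was imported from the subgroup structure of ${}^3\!D_4(q)$.
\item Your $G_2$ justification via \Cref{max3} has the same problem: that lemma concerns $G_2(q)<{}^3\!D_4(q)$, not $G_2(q)<\POm^+_8(q)$.
\item For $\Sp_4(q)$ you only treat $q=2^{2a+1}$. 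The graph automorphism, and with it the novelties (Borel, $(q\pm1)^2{:}D_8$, $(q^2+1){:}4$), exists for every even $q$. So $q=16,64,\dots$ are left uncovered. Your torus argument works verbatim there, but it has to be stated.
\end{itemize}

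\textbf{Small $q$ is deferred rather than handled.} Your crude centralizer bounds fail in some of the genuine cases, and the fallback (``GAP or $i_r(M)$'') is not carried out. The paper avoids this with exact fixed-point counts that work for all $q$.
\begin{itemize}
\item For $M=d\times G_2(q)$, your bound $(q^2+q+1)^2$ exceeds $|G:M|^{1/3}$ at $q=2,3$; for example, at $q=3$ it is $169$ against $|G:M|^{1/3}\approx105$. In $\Omega^+_8(q)$, an element of $\SL_3(q)<G_2(q)$ of order a ppd of $q^3-1$ acts irreducibly on two complementary totally singular $3$-spaces and trivially on their $2$-dimensional perpendicular complement. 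Hence its centralizer has order at most $(q^3-1)(q-1)$. The paper goes further and uses $g^G\cap M=g^M$ to get $\fp(g,G/M)\le(q-1)^2/d$.
\item For $(D_{2(q^2+1)/d})^2[2d].S_2$ with $q=2,3$, the torus is $C_5^2$ and contains no element with distinct eigenvalues. So ``a regular semisimple element of the relevant torus'' does not exist. The paper instead takes $g$ irreducible on $V_1$ and trivial on $V_2$. It then counts $\fp(g,G/M)$ exactly as the number $q^2(q^2-1)/2$ of Sylow $r$-subgroups of $\Omega(V_2)\cong\PSL_2(q^2)$.
\end{itemize}
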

	
	\begin{proof}
		Assume first $G=\POm^+_8(q)$; we go through the possibilities given in \cite[Table 8.50]{bray2013holt_colva}, which is taken from \cite{kleidman1987maximal_omega8}. For convenience, we work in $G=\Omega^+_8(q)$, and put $d=(2,q-1)$. Using the notation as in the table, we are only concerned with novelties $\mathrm{N_1}$ and $\mathrm{N_4}$. If $M$ is parabolic, we use \Cref{l:parabolic}.  
		Assume now $M\cong d\times G_2(q)$. We choose $g\in \SL_3(q)<M$ of order a ppd of  $q^3-1$.
		Then $g$ preserves  a nondegenerate $6$-space $W$ of plus type (acting irreducibly on complementary maximal totally singular subspaces),
		so $|C_G(g)|\le (q^3-1)(q-1)$.
		Moreover $|C_M(g)|=d(q^3-1)/(q-1)$ and $g^G\cap M=g^M$ (note all cyclic subgroups of $G_2(q)$ of order $|g|$ are conjugate).
		Therefore
		\[
		\fp(g,G/M)=\frac{|C_G(g)|}{|C_M(g)|} \le \frac{(q-1)^2}{d} < |G:M|^{1/3}.
		\]
		Assume next $M=(\Omega^+_2(q)\times \frac 1d \GL_3(q)).[2d]$  with $q\ge 3$; then  $M$ is the stabilizer of $\{A,B\}$ where $A$ and $B$ are totally singular $3$-spaces with trivial intersection and $A\oplus B$ nondegenerate; see \cite[proof of Proposition 3.2.3]{kleidman1987maximal_omega8}. The subgroup $H=\Omega^+_2(q)\times  \frac 1d \GL_3(q)$ preserves $A$ and $B$. We let $g\in \SL_3(q)<H$ be of order a ppd of $q^3-1$, acting trivially on $(A\oplus B)^\perp$. Note that if $g$ lies in a conjugate of $M$, then it lies in the corresponding conjugate of $H$; but $g$ fixes precisely two totally singular $3$-spaces and so $\fp(g,G/M)=1$.
		
		Next let $M=(\Omega^-_2(q)\times \frac 1d \GU_3(q)).[2d]$  with $q\ge 3$. The subgroup $H= \Omega^-_2(q)\times \frac 1d \GU_3(q)$  is the stabilizer in $\GU_4(q^2)\cap G$ of a nondegenerate $1$-space for the unitary geometry (see \cite[proof of Proposition 3.2.2]{kleidman1987maximal_omega8}). We let $g\in H$ be of order a ppd of $q^6-1$ if $q\neq 2$, and of order $9$ if $q=2$.  Similarly to the previous case we have $\fp(g,G/M)=1$.
		
		Assume finally $M=(D_{2(q^2+1)/d})^2[2d].S_2$. Then $M$ is the normalizer of a Sylow $r$-subgroup of $G$, where $r$ is an odd prime divisor of $q^2+1$; we have that $M$ stabilizes a decomposition $V=V_1\perp V_2$ into $4$-spaces of minus type (see \cite[Proof of Proposition 3.3.1]{kleidman1987maximal_omega8}). Let $g\in M$ be of order $(q^2+1)/d$, acting irreducibly on $V_1$ and centralizing $V_2$. Then $\fp(g,G/M)$ is equal to the number of Sylow $r$-subgroups of $\Omega(V_2)\cong \PSL_2(q^2)$, which is $q^2(q^2-1)/2$, and
		\[
		\fp(g,G/M)^3=\frac{q^6(q^2-1)^3}{8} < |G:M|.
		\]
		The proof for $\Omega^+_8(q)$ is complete.
		
		Assume now $G= \Sp_4(q)$ with $q$ even; the classes in $\ca N$ can be found in \cite[Section 14]{aschbacher1984maximal_subgroups} (see also \cite{bray2013holt_colva}). If $M$ is parabolic, then we use \Cref{l:parabolic}. If $M=(C_{q-1})^2\colon D_8$, then $M$ is the normalizer of a nondegenerate $2$-space of plus type in $\SO^+_4(q)<G$. We let $g\in C_{q-1}^2< M$ be of order $q-1$ with eigenvalues $\lambda^{\pm 1}, \lambda^{\pm 2}$ on the natural module $V$;
		then $\fp(g,G/M)=1$, since $g$ fixes only two nondegenerate $2$-spaces.
		Assume next $M=(C_{q+1})^2\colon D_8$, so $M$ is the normalizer of a nondegenerate $2$-space of minus type in $\SO^+_4(q)<G$. We let $g\in M$ be of order $q+1$, with distinct eigenvalues on $V$, and similarly to the previous case we have $\fp(g,G/M)=1$. Assume finally $M=C_{q^2+1}\colon 4$, so $M$ is the normalizer of a subgroup $C_{q^2+1}$. Then we let $g\in M$ be of order $q^2+1$, and we have $\fp(g,G/M)=1$ as in the proof of \Cref{l:classical_C3}. 
		The proof is now complete. 
	\end{proof}

	\section{Classical groups: class $\ca S$}

	\label{sec:class_S}

	In this section we complete the proof of \Cref{t:main_simple}. The only remaining case is when $G$ is classical and $M$ is in the class $\ca S$ of subgroups. Throughout this section, $V=\F_q^n$ denotes the natural module for $G$, where $q$ is a power of the prime $p$.  
	(This is a change of notation from Section \ref{sec:geometric_classes}, where the natural module is $\F_{q^u}^n$ -- so in this section, the unitary case is $G = \PSU_n(q^{1/2})$.)
	Class $\ca S$ consist of the members $M$ of $\ca A(G)$ that do not belong to $\ca C_1, \ldots, \ca C_8, \ca N$. It was proved by Aschbacher \cite{aschbacher1984maximal_subgroups} (see also \cite{liebeck1998subgroup}) that these subgroups $M$ are almost simple. Moreover, if $L$ denotes the quasisimple cover of $S=\text{Soc}(M)$ acting faithfully on $V$, then $V$ is an absolutely irreducible $\F_q L$-module, which cannot be realized over a proper subfield of $\F_q$, and finally, if $G=\PSL_n(q)$ then $L$ fixes no nondegenerate unitary or bilinear form on $V$.

	\subsection{Strategy of proof} 
	\label{subsec:strategy}
	We outline now the strategy of proof of \Cref{t:main_simple} for $M$ in class $\ca S$.
	Letting $L$ be as in the previous paragraph, we seek an element $g\in L$ such that $C_G(g)$ is small. This is much less straightforward than in the previous section, since the embedding $L\hookrightarrow \SL_n(q)$ is not in a known list. As we shall briefly explain now, we will be able to construct such an element merely from algebraic properties of $L$. Our element $g$ will almost always be among the elements in \Cref{table:new_unique} and \Cref{table:sporadics}.
	
	In order to bound $|C_G(g)|$, a first key step is to bound $\dim(C_V(g))$, and for this we use generation properties of $L$ (borrowing ideas and results from \cite{guralnick2012malle} and earlier work). If $L$ is generated by two conjugates of $g$, then by the irreducibility of $L$ we clearly have $\dim(C_V(g))\le n/2$, which already is a useful bound. If $L$ has the stronger property of being generated by three conjugates of $g$ with product equal to 1, then in fact $\dim(C_V(g))\le n/3$ by Scott's lemma (\Cref{l:scott}). When $L=S$ is simple, \cite{guralnick2012malle} produces such an element. However, when $L\neq S$, this is not so straightforward, and we use invariable generation (\Cref{l:invariable generation}) together with a result of Gow (\cite{guralnick2015tiep_lifting}) in order to get to the same conclusion. In fact there are some cases for which neither of these  approaches works, and for these we have to content ourselves with the bound $\dim(C_V(g))\le n/2$. 
	
	For the case where $g$ is semisimple in $G$, as well as the bound for $\dim(C_V(g))$, we need to bound the dimensions of the nontrivial eigenspaces of $g$ on $V\otimes_{\F_q} K$; we will be satisfied with an upper bound of approximately $n/4$. In order to achieve this, we exploit the action of $N_L(\langle g \rangle)$ on $\gen g$ (see \Cref{l:coprime_schur}(v)) in conjunction with the basic \Cref{l:semiregular_eigenvalues}. When $g$ is not semisimple in $G$, we apply the Green correspondence (\Cref{l:green_correspondence}), using the fact that $\gen g$ contains a Sylow $p$-subgroup of $L$.
	
	The plan described above will be accomplished in \Cref{l:class_S_alternating,l:class_S}. In the cases where $S=\PSL_2(r)$ or $\PSL^\pm_3(r)$, the approach does not work; however, much is known about the representation theory of these groups, and we are able to amend the methods and argue somewhat more directly (\Cref{l:SL2}).

	\subsection{Some preliminary lemmas}

	We begin with some preliminary lemmas.
	
	\begin{lemma}
		\label{l:general_parameters_SL}
		Let  $M \le G:=\SL_n(q)$, let $g\in G$ be semisimple and assume that the dimensions of the nontrivial eigenspaces of $g$ occur with multiplicity at least $B$. Assume $c,C$ satisfy \eqref{eq:condition_first} or \eqref{eq:condition_second}, below, and assume $|M| \le q^C$, $\dim(C_V(g))\le cn$, and $cn+B \le n$. Then $|C_G(g)| < |G:M|^{1/3}$.
		\begin{align}
			&c(B+1)\le 2 \text{ \, and \, } n^2(B- 3)\ge B(C+2)	 \label{eq:condition_first}\\			&c(B+1)\ge 2 \text{ \, and \, } n^2\left(B-3c^2B-3 - 3c^2 +6c\right) \ge B(C+2) \label{eq:condition_second}
		\end{align}
	\end{lemma}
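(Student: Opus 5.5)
We bound $|C_G(g)|$ via the dimension $d$ of the centralizer of $g$ in $\GL_n(K)$, and compare $d$ with $\log_q|G:M|$. Since $g$ is semisimple, $C_{\GL_n(q)}(g)$ is a product of groups $\GL_{k}(q^{m})$ and unitary groups, one factor for each irreducible factor of the characteristic polynomial. By \Cref{l:order_centralizers}-style estimates in the linear case, $|C_G(g)|\le |C_{\GL_n(q)}(g)| < q^{d}$, where $d=\sum_\lambda m_\lambda^2$ and $m_\lambda$ is the dimension of the $\lambda$-eigenspace of $g$ on $V\otimes K$.

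By \Cref{l:order_classical_groups}, $|G|>q^{n^2-2}$. Together with $|M|\le q^C$ this gives $|G:M|>q^{n^2-2-C}$. It therefore suffices to show
\[
3d \le n^2-2-C ,
\]
and since the inequality $|C_G(g)|<q^d$ is strict, non-strict equality here still gives strictness.

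To bound $d$ we apply \Cref{l:optimization_easier} with $a_1=m_1\le A:=cn$ and with the remaining $a_i$ the nontrivial eigenspace dimensions. These occur with multiplicity at least $B$ by hypothesis. The condition $A+B\le n$ holds because $cn+B\le n$. We may assume $c>0$ and $cn\ge 1$; if $cn<1$, then $m_1=0$ and we are in the case $a_1=0$ anyway. If $c(B+1)\le 2$, the lemma gives $d\le n^2/B$. Then $3d\le n^2-2-C$ is equivalent to $3n^2\le B(n^2-2-C)$, that is, $n^2(B-3)\ge B(C+2)$, which is exactly \eqref{eq:condition_first}.

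If instead $c(B+1)\ge 2$, the lemma gives
\[
d\le c^2n^2+\frac{(1-c)^2n^2}{B}.
\]
Multiplying $3d\le n^2-2-C$ by $B$ and expanding $(1-c)^2=1-2c+c^2$, the required inequality becomes
\[
n^2\bigl(B-3c^2B-3+6c-3c^2\bigr)\ge B(C+2),
\]
which is \eqref{eq:condition_second}.

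The only point needing care is the reduction from $|C_G(g)|$ to $q^d$. One has to check that $|C_{\GL_n(q)}(g)|<q^{d}$ for semisimple $g$, including the unitary-type factors $\GU_k(q^{m/2})$ whose orders can exceed $q^{k^2m}$. This can be handled by the bound $|\GL_k(q^m)|<q^{mk^2}$ from \Cref{l:order_classical_groups}. Factors arising from self-dual polynomials do not appear in $\GL_n(q)$: the centralizer there is just $\prod \GL_{k_f}(q^{\deg f})$, with $\dim = \sum k_f^2\deg f = d$. The final step is passing from $\GL_n(q)$ to the subgroup $\SL_n(q)$, which only decreases the centralizer order.
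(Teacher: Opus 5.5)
Your proof is correct and follows the paper's argument: you bound $|C_G(g)|<q^d$ via $C_{\GL_n(q)}(g)=\prod_f \GL_{k_f}(q^{\deg f})$, apply \Cref{l:optimization_easier} with $A=cn$, and compare with $|G|>q^{n^2-2}$ from \Cref{l:order_classical_groups}, and your algebra reproduces \eqref{eq:condition_first} and \eqref{eq:condition_second} exactly. The one blemish is the mention of unitary factors in your first paragraph; you retract it correctly at the end, since no such factors occur in $\GL_n(q)$.
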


	\begin{proof}
		Clearly, $|C_G(g)| < q^d$ where $d$ is the dimension of the centralizer of $g$ in $\GL_n(K)$. Assume first \eqref{eq:condition_first}, so by \Cref{l:optimization_easier} (with $A=cn$) we have $d \le n^2/B$. Therefore, by \Cref{l:order_classical_groups} we get
		\[
		|M||C_G(g)|^3 < q^{C + 3n^2/B} \le q^{n^2-2} < |G|
		\]
		and the result follows. Case \eqref{eq:condition_second} is identical, using the relevant maximum in \Cref{l:optimization_easier}.
	\end{proof}

	\begin{lemma}
		\label{l:general_parameters_SU}
		Let  $M \le G:=\SU_n(q^{1/2})$, let $g\in G$ be semisimple and assume that the dimensions of the nontrivial eigenspaces of $g$ occur with multiplicity at least $B$. Assume $c,C$ satisfy \eqref{eq:condition_first_SU} or \eqref{eq:condition_second_SU}, below, and assume $|M| \le q^C$, $\dim(C_V(g))\le cn$, and $cn+B \le n$. Then $|C_G(g)| < |G:M|^{1/3}$.
		\begin{align}
			&c(B+1)\le 2 \text{ \, and \, } n^2(B- 3) \ge B(2C+3B+5) \label{eq:condition_first_SU}\\
			&c(B+1)\ge 2 \text{ \, and \, } n^2(B-3c^2B- 3 -3c^2 +6c) \ge B(2C+3B+5) \label{eq:condition_second_SU}
		\end{align}
	\end{lemma}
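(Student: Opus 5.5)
The plan is to follow the proof of \Cref{l:general_parameters_SL} almost word for word, with two changes. The exponents coming from the unitary order estimates in \Cref{l:order_classical_groups} are slightly worse. The centralizer of a semisimple element in $\GU_n(q^{1/2})$ costs an extra factor $2^E$ by \Cref{l:order_centralizers}(ii). Here the natural module is $V=\F_q^n$ with $q$ a square, so the relevant field parameter for the unitary group is $q_0:=q^{1/2}$.

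\textbf{Step 1: bound $|C_G(g)|$.} Let $d$ be the dimension of the centralizer of $g$ in $\GL_n(K)$. Since $g$ is semisimple, \Cref{l:order_centralizers}(ii) gives $|C_{\GU_n(q_0)}(g)|<2^E q_0^{d}\le q_0^{d+E}$, where $E$ is the number of distinct irreducible factors of the characteristic polynomial. We have $E\le R+1$, where $R$ is the number of distinct nontrivial eigenvalues of $g$, because each irreducible factor contributes at least one eigenvalue. So $|C_G(g)|<q_0^{d+R+1}$.

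\textbf{Step 2: bound $d+R$.} The eigenspace dimensions $a_1=\dim C_V(g)\le cn$ and $a_2,\ldots$ (the nontrivial ones, each repeated at least $B$ times) form an element of the set $\ca A$ of \Cref{l:optimization}, with $A=cn$, and $d=\sum a_i^2$. \Cref{l:optimization} then gives the following.
\begin{itemize}
\item[(a)] If $c(B+1)\le 2$, then $d+R\le B+n^2/B$.
\item[(b)] Otherwise, $d+R\le B+c^2n^2+(1-c)^2n^2/B$.
\end{itemize}
The hypothesis $cn+B\le n$ is exactly what \Cref{l:optimization} needs.

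\textbf{Step 3: compare with the index.} We have $|G|>q_0^{n^2-2}=q^{(n^2-2)/2}$ by \Cref{l:order_classical_groups}, and $|M|\le q^C=q_0^{2C}$. So it suffices to show
\[
2C+3(d+R+1)\le n^2-2.
\]
In case (a) this reads $2C+3B+3n^2/B+5\le n^2$. Multiplying by $B$ gives $n^2(B-3)\ge B(2C+3B+5)$, which is \eqref{eq:condition_first_SU}. In case (b), multiplying by $B$ and expanding $(1-c)^2$ gives $n^2(B-3c^2B-3+6c-3c^2)\ge B(2C+3B+5)$, which is \eqref{eq:condition_second_SU}. Then $|M||C_G(g)|^3<|G|$, that is, $|C_G(g)|<|G:M|^{1/3}$.

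\textbf{Main obstacle.} The only delicate point is the bookkeeping of the extra $E$ term and the constant $+1$, including the $+3$ in $E\le R+1$ after cubing. Their sum accounts for the $3B+5$ on the right-hand side; I will check that no further slack is needed, for instance from $|\SU|$ versus $|\GU|$, since working in $\SU$ only helps. If $E\le R+1$ turns out to be too lossy, I would instead use $E_2$-type counting; but the stated constants suggest the crude bound suffices.
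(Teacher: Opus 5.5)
Your proof is correct and follows the paper's own argument: bound $|C_G(g)|<q_0^{d+E}$ via \Cref{l:order_centralizers}(ii), use that $E$ is at most the number of distinct eigenvalues (so $E\le R+1$), bound $d+R$ by \Cref{l:optimization} with $A=cn$, and compare with $|M|\le q_0^{2C}$ and $|G|>q_0^{n^2-2}$; this yields exactly the constant $3B+5$. In case \eqref{eq:condition_second_SU} you rightly take the second maximum of \Cref{l:optimization}, the version that includes $R$, which is what the paper's terse ``identical, using the relevant maximum'' amounts to.
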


	\begin{proof}
		Put $q_0=q^{1/2}$. By \Cref{l:order_centralizers}(ii) we have $|C_G(g)| \le q_0^{d+E}$ where $d$ is the dimension of the centralizer of $g$ in $\GL_n(K)$ and $E$ is the number of distinct irreducible factors of the characteristic polynomial. Clearly $E$ is at most the number of  distinct eigenvalues of $g$. Assume first \eqref{eq:condition_first_SU}, so by \Cref{l:optimization} (with $A=cn$) we have $d +E \le B+1+n^2/B$. Therefore, by \Cref{l:order_classical_groups} we get
		\[
		|M||C_G(g)|^3< q_0^{2C + 3B+3+3n^2/B} \le q_0^{n^2-2} < |G|.
		\]
		Case \eqref{eq:condition_second_SU} is identical, using the relevant maximum in \Cref{l:optimization_easier}.
	\end{proof}

	\begin{lemma}
		\label{l:general_parameters_Sp}
		Let  $M \le G:=\Sp_n(q)$, let $g\in G$ be semisimple and assume that the dimensions of the nontrivial eigenspaces of $g$ occur with multiplicity at least $B$. Assume $c,C$ satisfy \eqref{eq:condition_first_Sp} or \eqref{eq:condition_second_Sp}, below, and assume $|M| \le q^C$, $\dim(C_V(g))\le cn$, and $cn+B \le n$. Then $|C_G(g)| < |G:M|^{1/3}$.
		\begin{align}
			&cn(B+1)+ B \le 2n \text{ \, and \, } n^2(B- 3) \ge  B( -n + 2C+ 3B +2)  \label{eq:condition_first_Sp}\\
			&cn(B+1)+B \ge 2n \text{ \, and \, } n^2(B-3c^2B- 3 -3c^2 +6c) \ge  nB\left(3c-1 \right) + B(2C+3B + 2)  \label{eq:condition_second_Sp}
		\end{align}
	\end{lemma}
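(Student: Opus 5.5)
The proof follows the template of \Cref{l:general_parameters_SL,l:general_parameters_SU}, using the symplectic comparison of centralizer dimensions. Let $d$ be the dimension of the centralizer of $g$ in $\GL_n(K)$, and $d'$ that in $\Sp_n(K)$. Write $m=\dim(C_V(g))\le cn$. By \Cref{l:compare_dim_centralizer_GL_Sp}(i), when $g$ has no eigenvalue $-1$ (for $p$ odd) we get $d'=d/2+m/2$. By \Cref{l:order_centralizers}(ii), $|C_G(g)|<q^{d'+E_2}$, where $E_2$ is the number of irreducible factors of degree at least $2$ of the characteristic polynomial. We bound $E_2$ by the number $R$ of nontrivial eigenvalues, so $E_2\le R$.

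If $-1$ is an eigenvalue, its eigenspace also contributes a symplectic factor. The cleanest fix is to note that the multiplicity hypothesis still lets us treat $-1$ as one of the nontrivial eigenvalues, and then bound $\dim \Sp_{m_{-1}}$ crudely by $m_{-1}^2/2+m_{-1}/2$. Since $m_{-1}\le n/B$ (the $-1$-eigenspace dimension occurs among at least $B$ equal nontrivial dimensions), this extra term is absorbed into the slack $B$ appearing in the conditions.

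Next apply \Cref{l:optimization_orthogonal} with the sign $+$ and $A=cn$, to the vector $\underline a$ whose first coordinate is $m$ and whose remaining coordinates are the nontrivial eigenspace dimensions. This gives
\[
2(d'+E_2)\le d+m+2R\le 2\Bigl(R+\sum a_i^2+a_1\Bigr)-\sum a_i^2\le \ldots
\]
We will rather bound directly $d+m+2R \le 2\max(R+\sum a_i^2 + a_1)$. In case \eqref{eq:condition_first_Sp} the hypothesis $cn(B+1)+B\le 2n$ puts us in the second branch of \Cref{l:optimization_orthogonal}, so $d'+E_2\le B+n^2/B$ (up to the halving, which is handled as in \Cref{l:general_parameters_SU}). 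The precise bookkeeping should give
\[
d'+E_2\le \tfrac12\bigl(B+n^2/B\bigr)+\tfrac{B}{2}+1 .
\]
In case \eqref{eq:condition_second_Sp} we use instead the first branch, $B+c^2n^2+cn+(n-cn)^2/B$.

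Then we combine with \Cref{l:order_classical_groups}. We have $|G|>q^{(n^2+n)/2-1}$ and $|M|\le q^C$, so it suffices that
\[
C+3(d'+E_2)<\tfrac{n^2+n}{2}-1.
\]
Multiplying by $2B$ and substituting the two optimization values reproduces exactly the inequalities $n^2(B-3)\ge B(-n+2C+3B+2)$ and $n^2(B-3c^2B-3-3c^2+6c)\ge nB(3c-1)+B(2C+3B+2)$. Expanding $(n-cn)^2$ gives the $6c$ and $-3c^2$ terms, and the $3cn$ from the linear $a_1$ term together with the $-n$ from $|G|$ gives the $nB(3c-1)$ term. Then \Cref{l:fpr_enough} finishes the proof.

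The main obstacle is the constant bookkeeping: matching the exact additive constants $3B+2$, in particular handling the $E_2$ term and a possible $-1$-eigenspace, so that the stated conditions come out precisely. I would first do the computation assuming no eigenvalue $-1$, check that the constants match, and then verify that the $-1$ case only improves the bound, since $\Sp$ and $\GL$ pairing costs are comparable.
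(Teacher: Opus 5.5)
There is a genuine gap, and it is in how you handle $E_2$. You bound $E_2\le R$, so you then have to control $d+m+2R$, and that is not the quantity maximized in \Cref{l:optimization_orthogonal}. Your route $d+m+2R\le 2\max_{\underline a}(R+\sum a_i^2+a_1)$ only gives $d'+E_2\le B+n^2/B$, which loses the factor $\tfrac12$ on the leading term. Carried through, it produces a condition of the form $n^2(B-6)\ge B(-n+2C+6B+2)$, which is useless for $B=4$, the main case of application. Your alternative bound $d'+E_2\le\tfrac12(B+n^2/B)+\tfrac B2+1$ is only asserted. Even granting it, the final computation gives $n^2(B-3)\ge B(-n+2C+6B+8)$, not \eqref{eq:condition_first_Sp}. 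So the claim that your computation ``reproduces exactly'' the stated inequalities does not follow from the bounds you actually establish.

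The missing observation is $2E_2\le R$. Every irreducible factor of degree at least $2$ of the characteristic polynomial has at least two distinct roots, none equal to $\pm1$, and distinct irreducible factors have disjoint root sets. Take $\underline a=(m,\ \text{the nontrivial eigenspace dimensions},0,\dots,0)$. Then $2(d'+E_2)\le d+m+R=R(\underline a)+\sum_i a_i^2+a_1$, which is exactly the quantity in \Cref{l:optimization_orthogonal} with sign $+$ and $A=cn$. This gives $d'+E_2\le\tfrac12(B+n^2/B)$ under \eqref{eq:condition_first_Sp}, and $d'+E_2\le\tfrac12\bigl(B+c^2n^2+cn+(n-cn)^2/B\bigr)$ under \eqref{eq:condition_second_Sp}. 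Inserting these into $C+3(d'+E_2)\le n^2/2+n/2-1$ and multiplying by $2B$ yields precisely the two stated conditions; this is the paper's argument.

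Separately, your disposal of a $-1$-eigenspace is not correct. The stated conditions are exactly what the $-1$-free computation produces, so there is no room for the extra $m_{-1}/2$ in $d'$, which can be as large as $n/(2B)$. The paper also applies \Cref{l:compare_dim_centralizer_GL_Sp}(i) directly, so it tacitly assumes $g$ has no eigenvalue $-1$ when $p$ is odd. This is harmless because every element to which the lemma is applied has odd order, and that observation, not absorption into slack, is the right way to handle it.
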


	\begin{proof}
		Let $a$ be the dimension of the $1$-eigenspace of $g$, let $d$ be the dimension of the centralizer of $g$ in $\GL_n(K)$ and let $E_2$ be the number of distinct irreducible factors of degree at least $2$ of the characteristic polynomial of $g$. By \Cref{l:compare_dim_centralizer_GL_Sp,l:order_centralizers} we have $|C_G(g)| \le q^{d/2+a/2+E_2}$.  Next, note that $E_2$ is at most half the number of nontrivial distinct eigenvalues of $g$. Assume first \eqref{eq:condition_first_Sp}, so by \Cref{l:optimization_orthogonal} (with $A=cn$) we have $d+a+2E_2 \le B+ n^2/B$. Therefore, by \Cref{l:order_classical_groups} we get
		\[
		|M||C_G(g)|^3 < q^{C + 3n^2/(2B) + 3B/2} \le q^{n^2/2+n/2-1} < |G|.
		\]
		Case \eqref{eq:condition_second_Sp} is identical, using the relevant maximum in \Cref{l:optimization_easier}.
	\end{proof}

	\begin{lemma}
		\label{l:general_parameters_SO}
		Let  $M \le G:=\Omega^\varepsilon_n(q)$, let $g\in G$ be semisimple and assume that the dimensions of the nontrivial eigenspaces of $g$ occur with multiplicity  at least $B$. Assume $c,C$ satisfy \eqref{eq:condition_first_SO} or \eqref{eq:condition_second_SO}, below, and assume $|M| \le q^C$, $\dim(C_V(g))\le cn$, and $cn+B \le n$. Then $|C_G(g)| < |G:M|^{1/3}$.
		\begin{align}
			&cn(B+1)-B\le 2n \text{ \, and \, } n^2(B- 3) \ge  B(n + 2C+  3B +16) \label{eq:condition_first_SO}\\
			&cn(B+1)-B\ge 2n \text{ \, and \, } n^2(B-3c^2B- 3 -3c^2 +6c) \ge  nB\left(1 -3c\right)  +B(2C +3B+16) \label{eq:condition_second_SO}
		\end{align}
	\end{lemma}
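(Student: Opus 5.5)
The proof will mirror those of \Cref{l:general_parameters_Sp} and \Cref{l:general_parameters_SU}, adapted to the orthogonal setting. The two inputs for $|C_G(g)|$ are \Cref{l:compare_dim_centralizer_GL_Sp}(ii) and \Cref{l:order_centralizers}(ii). Let $a=\dim(C_V(g))$, let $d$ be the dimension of the centralizer of $g$ in $\GL_n(K)$, and let $E_2$ be the number of distinct irreducible factors of degree at least $2$ of the characteristic polynomial of $g$. Then \Cref{l:order_centralizers}(ii) gives $|C_G(g)|\le |C_{\Or^\varepsilon_n(q)}(g)| < q^{d'+E_2+2}$, where $d'$ is the centralizer dimension in $\Or_n(K)$.

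One complication is that \Cref{l:compare_dim_centralizer_GL_Sp}(ii) requires $g$ to have no eigenvalue $-1$ when $p$ is odd. I would handle this by treating the $-1$-eigenspace like the $1$-eigenspace, which only decreases the bound. Alternatively, I would observe that in the applications the nontrivial eigenspaces come in orbits of size at least $B$, so the $-1$-eigenspace (fixed under inversion) needs no separate care. In either case we get $d'\le d/2-a/2$. Since $E_2$ is at most half the number $R$ of distinct nontrivial eigenvalues, this yields
\[
|C_G(g)|< q^{(d-a+R)/2+2}.
\]

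Next I would apply \Cref{l:optimization_orthogonal} with the minus sign and $A=cn$. The eigenspace dimensions $(a,a_2,\dots)$ form a vector in the set $\ca A$, using the hypothesis $cn+B\le n$. This gives $d-a+R\le B+A^2-A+(n-A)^2/B$ when $cn(B+1)-B\ge 2n$, and $d-a+R\le B+n^2/B$ otherwise. These are exactly the two thresholds in \eqref{eq:condition_first_SO} and \eqref{eq:condition_second_SO}.

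Finally I combine this with $|M|\le q^C$ and the lower bound $|\Omega^\varepsilon_n(q)|> q^{n^2/2-n/2-2}$. The latter comes from \Cref{l:order_classical_groups}, after dividing $|\Or^\pm_n(q)|$ by at most $4$ and writing odd $n$ via $\SO_{2m+1}$. The crude estimate $q^{n^2/2-n/2-3}$ is also safe. In case \eqref{eq:condition_first_SO} we need
\[
C+\tfrac{3}{2}\left(B+\tfrac{n^2}{B}\right)+6 \le \tfrac{n^2}{2}-\tfrac{n}{2}-c_0,
\]
and multiplying by $2B$ should reproduce $n^2(B-3)\ge B(n+2C+3B+16)$ once the constant slack is accounted for. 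Case \eqref{eq:condition_second_SO} is the same algebra with $A=cn$ substituted into $A^2-A+(n-A)^2/B$. Expanding, the $n^2$ coefficients give $B-3c^2B-3-3c^2+6c$ and the linear term gives $nB(1-3c)$. The conclusion $|M||C_G(g)|^3<|G|$ then follows, which by \Cref{l:fpr_enough} is what is needed.

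The main obstacle is bookkeeping rather than any conceptual difficulty. Three things must be checked: that the constants ($+2$ from \Cref{l:order_centralizers}(ii), the $\tfrac12$ from $E_2\le R/2$, and the loss in $|\Omega|$ versus $|\Or|$) really add up to the $16$ appearing in the statement; that the $-1$ eigenvalue issue is handled without extra cost; and that the boundary cases $n$ small or $q=2$, where \Cref{l:order_classical_groups} has the $(n,q)\ne(1,2)$ caveat, do not break the estimate. I would first verify the constant accounting in case \eqref{eq:condition_first_SO} explicitly. If $16$ turns out to be too tight, I would sharpen the order bound for $\Omega^\varepsilon_n(q)$ used.
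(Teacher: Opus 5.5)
Your proposal is essentially the paper's proof. You bound $|C_G(g)|<q^{d/2-a/2+E_2+2}$ via \Cref{l:compare_dim_centralizer_GL_Sp,l:order_centralizers}; your remark that a $-1$-eigenspace only lowers $d'$ correctly fills a point the paper passes over. You then use $2E_2\le R$, apply \Cref{l:optimization_orthogonal} with the minus sign and $A=cn$, and compare with $|G|>q^{n^2/2-n/2-2}$. Multiplying out by $2B$ with this bound ($c_0=2$) reproduces the $16$ in \eqref{eq:condition_first_SO} and the coefficients in \eqref{eq:condition_second_SO} exactly, with no slack. So your aside that the cruder bound $q^{n^2/2-n/2-3}$ is ``also safe'' is wrong: it would require $18$ in place of $16$.
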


	\begin{proof}
		Let $a,d,E_2$ be as in the proof of \Cref{l:general_parameters_Sp}; again, $E_2$ is at most half the number of nontrivial distinct eigenvalues of $g$. By \Cref{l:compare_dim_centralizer_GL_Sp,l:order_centralizers} we have $|C_G(g)| \le q^{d/2-a/2 +E_2+2}$. Assume first \eqref{eq:condition_first_SO}, so by \Cref{l:optimization_orthogonal} (with $A=cn$) we have $d -a + 2E_2\le B+n^2/B$. Therefore, by \Cref{l:order_classical_groups} we get
		\[
		|M||C_G(g)|^3< q^{C+ 3n^2/(2B) + 3B/2+6} \le q^{n^2/2-n/2-2} < |G|.
		\]
		Case \eqref{eq:condition_second_SO} is identical, using the relevant maximum in \Cref{l:optimization_easier}.
	\end{proof}
	
	\subsection{The proof} 
	
	We now fix some notation. Recall that $G$ is a simple classical group with  natural module $V=\F_q^n$, where $q$ a power of the prime $p$. We set $K=\overline{\F_p}$. We have that $M\in \ca A$ is in class $\ca S$, so $M$ is almost simple; we denote by $S$ the socle of $M$. We will find it convenient, in the proofs, to replace $G$ by the quasisimple cover acting faithfully on $V$, namely $\SL_n(q)$, $\SU_n(q^{1/2})$ or $\Sp_n(q)$ or $\Omega^\varepsilon_n(q)$. We will still denote by $M$ the preimage in $\SL_n(q)$, while we will denote by $L\le \SL_n(q)$ a quasisimple cover of $S$ with $S=L/Z(L)$.  
	For our choice of the element $g\in L$, we will keep throughout the following notation:
	\begin{itemize}
		\item	$d$ denotes the dimension of the centralizer of $g$ in $\GL_n(K)$.
		\item $a$ denotes the dimension of $C_V(g)$. 
	\end{itemize}	
	From time to time, for the reader's convenience we will recall this notation. In some cases below we will make use of the software GAP, and specifically of the GAP Character Table Library \cite{GAP_character_table}. When the modular character table of a group $L$ is available, then for an element of $g$ of $L$ of order not divisible by $p$ (i.e., an element semisimple in $\SL_n(q)$), we can compute the dimension of all eigenspaces of $g$ on $V\otimes_{\F_q} K$, which gives the value of $d$. This allows us to accurately estimate $|C_G(g)|$
	and deduce, for a suitable choice of $g$, the inequality $\fp(g,G/M)<|G:M|^{1/3}$. In most cases the bound $\fp(g,G/M)\le |C_G(g)|$ will be enough; in some cases, for very small $n$, we will use the equality $\fp(g,G/M)= |C_G(g)||g^G\cap M|/|M|$ (\Cref{l:fpr_enough}).
	
	We start with the case where $L = A_m$ and $V$ is the fully deleted permutation module for $L$, defined as follows. Let $\F_q^m$ be the usual permutation module, with submodules $A = \{x_1,\ldots,x_m) : \sum x_i = 0\}$ and $B = \langle (1,\ldots, 1) \rangle$; then $V = A/A\cap B$, of dimension $n=m-1$ if $p\nmid m$, and $n=m-2$ if $p\mid m$.
	
	\begin{lemma}
		\label{l:fully_deleted}
		\Cref{t:main_simple} holds if $M\in \{A_m,S_m\}$ and $M\le G$ via the fully deleted permutation module.
	\end{lemma}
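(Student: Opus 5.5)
The plan is to pick an explicit permutation $g\in A_m$ whose action on the fully deleted module $V$ is easy to write down, and to bound $|C_G(g)|$ through the Jordan/eigenvalue structure of $g$ on $V\otimes_{\F_q}K$. Since $|M|\le m!$ is tiny compared with $|G|$ (which is roughly $q^{n^2/2}$ or $q^{n^2}$, with $n\in\{m-1,m-2\}$), it suffices to get $|M|\,|C_G(g)|^3<|G|$, and then \Cref{l:fpr_enough} finishes the argument. Small $m$ (say $m\le 8$ or so, together with the groups already covered by \Cref{l:small_groups}) would be handled separately using GAP and the modular character tables, where the eigenspace dimensions can be read off exactly.

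For $m$ large I would take $g$ to be an $m$-cycle if $m$ is odd and an $(m-1)$-cycle if $m$ is even, so that $g\in A_m$.

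If $p\nmid |g|$, then $g$ is semisimple. On the permutation module its eigenvalues are the $|g|$-th roots of unity, each with multiplicity $1$, plus possibly one extra eigenvalue $1$. Passing to $V$ removes at most two copies of eigenvalue $1$, so $g$ has distinct eigenvalues on $V\otimes K$ (at most one eigenvalue $1$ of multiplicity $\le 1$). Hence $g$ is regular in $\GL_n(K)$ and $d=n$. By \Cref{l:order_centralizers}(iv) and its $\SL$ analogue we get $|C_G(g)|<4^{2}(q+1)^{n}$ or so. Then $m!\,(q+1)^{3n}\cdot 2^{8}<|G|$ holds for $n\ge 7$ or so, because $|G|\ge q^{n^2/2-n/2-2}$ by \Cref{l:order_classical_groups} and $m!\le 2^{m\log_2 m}$.

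If $p$ divides $|g|$, I would write $g=su$ with $u$ the $p$-part. The $p$-part of a cycle of length $\ell=p^ek$ acts on the permutation module as $k$ Jordan blocks of size $p^e$ on each eigenspace of $s$. Alternatively, and more simply, I would switch to a different cycle type. For example, take a product of a $\ell$-cycle with $\ell$ prime to $p$ close to $m$ (e.g. $\ell\in\{m-1,m-2,m-3\}$ coprime to $p$ of the right parity, adjusting by a disjoint $2$- or $3$-cycle to stay in $A_m$). Then the eigenvalues on $V\otimes K$ have multiplicity at most $2$ or $3$ (only the eigenvalue $1$, and possibly $-1$ or cube roots of unity, repeat). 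This gives $d\le n+O(1)$, and again $|C_G(g)|\le q^{n+O(1)}$ by \Cref{l:order_centralizers}(ii), which suffices for $m$ beyond a small bound.

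The main obstacle is the bookkeeping for the classical type of $G$. The fully deleted module carries a symmetric form (orthogonal, or symplectic when $p=2$ and $n$ is even), so $G$ may be $\Omega_n^\varepsilon(q)$ or $\Sp_n(q)$ rather than $\SL_n(q)$. In those cases I would use \Cref{l:compare_dim_centralizer_GL_Sp} to halve $d$ and \Cref{l:order_centralizers} for the order. I also have to make sure the chosen $g$ avoids repeated eigenvalues $\pm1$ enough that the extra factors $2^{E_2+2}$ stay bounded.

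The genuinely delicate range is small $m$, especially $q=p$ tiny and $p\mid m$. There $|G|$ is not much bigger than $m!^{\,4}$ (indeed $M$ is sometimes not even maximal), so the crude inequality can fail. For those I would fall back to the exact formula $\fp(g,G/M)=|C_G(g)|\,|g^G\cap M|/|M|$ with the precise centralizer orders, computed case by case with GAP.
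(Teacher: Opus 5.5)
Your element is the same as the paper's: an $(m-\delta)$-cycle, with $\delta\in\{0,1\}$ chosen so that $g\in A_m$. Exploiting its regularity is also the right mechanism.

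You do not need to change cycle type when $p\mid |g|$. The paper simply uses that this $g$ is regular in $\SL_n(q)$ for every $p$. An $m$-cycle acts on the permutation module as a cyclic $K\langle g\rangle$-module, hence cyclically on every subquotient; for the $(m-1)$-cycle one checks regularity on $V$ directly. Your alternative cycle types would work, but they only cost extra dimensions.

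\textbf{The gap is in the quantitative step.} Your main tool is $\fp(g,G/M)\le |C_G(g)|$, i.e.\ $|M|\,|C_G(g)|^3<|G|$. This is too weak, and not only for finitely many small $(m,q)$.
\begin{itemize}
\item For $n=4$ one has $G=\Omega_4^-(p)\cong \PSL_2(p^2)$ and $M=A_5$ for infinitely many primes $p$. Every nontrivial $g\in A_5$ has $|C_G(g)|\ge (p^2-1)/2$, so $|M||C_G(g)|^3/|G|\ge 15(p^2-1)^2/(p^2(p^2+1))>1$ for all $p\ge 7$. Hence ``small cases by GAP'' is not a finite computation.
\item Your diagnosis that the delicate range is ``$q$ tiny'' is off. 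At small $n$ the obstruction is that $|C_G(g)|^3$ has the same order as $|G|$ for every $q$.
\item The crude bound also fails for $\Sp_8(2)\ge S_{10}$ with $g$ a $9$-cycle: $|C_G(g)|=27>|G:M|^{1/3}\approx 23.5$.
\item With your estimate $|C_G(g)|<16(q+1)^n$, the threshold for $q=2,3$ is far above $n\ge 7$. The correct bound from \Cref{l:order_centralizers}(iv) involves $(q+1)^{\lfloor n/2\rfloor}$, not $(q+1)^n$.
\end{itemize}

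The missing ingredient is to make \Cref{l:fpr_enough}(ii) the main tool rather than an occasional fallback. Comparing Jordan forms on $V\otimes K$ shows that $g^G\cap M$ consists of $(m-\delta)$-cycles, so it is a union of at most two $M$-classes. Each $C_M(g_i)$ contains $\langle g_i\rangle$ of order $m-\delta$, so $\fp(g,G/M)\le 2|C_G(g)|/(m-\delta)$.

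You should also record that $q=p$ and that $G$ is symplectic or orthogonal, not merely ``may be''. This is forced because $M\in\ca S$ preserves a form defined over $\F_p$.

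With these facts, all $n\ge 7$ go through uniformly with no computer check. The cases $n\le 6$ reduce to a short explicit list, handled by the exact formula as functions of $p$:
\begin{itemize}
\item $A_5<\Omega_4^-(p)$;
\item $A_6\le M\le S_6$ or $M=A_7$ in $\Omega_5(p)$;
\item $A_7$ in $\Omega_6^\pm(p)$.
\end{itemize}
For example, for $g$ of order $3$ in $A_5<\PSL_2(p^2)$ one gets $\fp(g,G/M)=(p^2-1)/6$.
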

	
	\begin{proof}
		Note that $M$ preserves a nondegenerate quadratic or bilinear form on $V$. Since $M\in \ca A$, it follows that $G$ is orthogonal or symplectic over $\F_p$; see for example \cite[p. 187]{kleidman1990liebeck} for the precise embeddings. We ignore the cases $G=\Sp_4(2)'\cong A_6$, $G=\Omega^-_4(2)\cong \SL_2(4)$, $G=\Omega_5(2)\cong \PSp_4(2)$, andqw $G=\Omega^\pm_6(2)\cong \PSL^\pm_4(2)$, which  have been handled in \Cref{l:small_groups}. Assume first $n=4$, so by \cite{bray2013holt_colva} we have $G=\Om^-_4(q)\cong \PSL_2(q^2)$ and $M=A_5$ with $q=p\ge 7$. Let $g\in M$ be of order $3$, so $|C_G(g)|=(q^2-1)/2$ and by \Cref{l:fpr_enough} we have $\fp(g,G/M) = (q^2-1)/6 < |G:M|^{1/3}$. Assume now $n=5$, so $G=\Omega_5(q)\cong \PSp_4(q)$ and either $A_6\le M \le S_6$ with $q=p\ge 5$ or $M=A_7$ with $q=p=7$. If $M=A_7$ we let $g\in M$ be of order $7$, so $g$ is regular unipotent and $|C_G(g)|=q^2$ and so $\fp(g,G/M) \le 2q^2/7 <|G:M|^{1/3}$. If $M\le S_6$ then we let $g\in M$ be of order $5$, so $g$ is regular in $\SL_n(q)$ and $|C_G(g)|\le (q+1)^2$ and $\fp(g,G/M)\le 2(q+1)^2/5 < |G:M|^{1/3}$. Assume then $n=6$, so $G=\Om^\pm_6(q)\cong e.\PSL^\pm_4(q)$ and $M=e\times A_7$ with $q=p\ge 11$ (here $e\in \{1,2\}$). Let $g\in M$ be of order $7$, so $|C_G(g)|\le  e(q+1)^3/2$; moreover $g^G$ splits into at most two $M$-classes and so by \Cref{l:fpr_enough} we have $\fp(g,G/M)\le (q+1)^3/7<|G:M|^{1/3}$. The case $n\ge 7$ is analogous. Specifically,  
		let $g\in A_m$ be an $(m-\delta)$-cycle, where $\delta=1$ if $m$ is even and $\delta=0$ if $m$ is odd.  In both cases, $g$ is regular in $\SL_n(q)$, and $g^G$ splits into at most two $M$-classes, so $\fp(g,G/M)\le 2 |C_G(g)|/(m-\delta)$, which is easily seen to be $<|G|^{1/3}/((2,q-1)\cdot m!)^{1/3}\le |G:M|^{1/3}$. For example, for $n=8$ and $G=\Sp_8(2)$ and $A_{10}\le M$, $g$ acts on $\F_2^8$ with  irreducible submodules of dimensions $2$ and $6$, so $|C_G(g)|=(2^3+1)(2+1)=27$ and 
		\[
		\fp(g,G/M)\le \frac{2|C_G(g)|}{m-\delta} = 6 < \left(\frac{|\Sp_8(2)|}{10!}\right)^{1/3}.
		\]
		This concludes the proof.
	\end{proof}

	\begin{lemma}
		\label{l:class_S_alternating}
		\Cref{t:main_simple} holds if $M$ is in class $\ca S$ with alternating socle $S=A_m$.
	\end{lemma}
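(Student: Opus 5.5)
By \Cref{l:fully_deleted} we may assume that $V$ is not the fully deleted permutation module. Throughout, $L$ is the quasisimple cover $A_m$ or $2.A_m$ acting absolutely irreducibly on $V$. The plan is to pick $g$ of prime order $\ell$, a single $\ell$-cycle with $\ell$ close to $m$, and to bound $|C_G(g)|$ by controlling the eigenspaces of $g$ on $V\otimes K$.

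\textbf{Small $m$.} For $m\le 10$ (and similarly up to about $m\le 13$), the possible $n$ are bounded. The Brauer characters in \cite{GAP_character_table} give the eigenspace dimensions of suitable elements (orders $5$, $7$, $9$, $11$). Direct computation with \Cref{l:fpr_enough}, including the exact formula $|C_G(g)||g^G\cap M|/|M|$ when $n$ is very small, should settle these cases.

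\textbf{Large $m$.} Use \Cref{l:generation_a_n_second}: for $m\ge 11$ there are primes $\ell\ge 7$ with $A_m=\gen{x,y}$, where $x,y$ are $\ell$-cycles. The same holds in $2.A_m$ after taking lifts of order $\ell$, since $\ell$ is odd. Set $g=x$.
\begin{itemize}
\item \emph{Fixed space.} Since $L$ is irreducible and generated by two conjugates of $g$, we get $a=\dim C_V(g)\le n/2$.
\item \emph{Nontrivial eigenspaces, $p\ne\ell$.} Here $g$ is semisimple. $N_{A_m}(\gen g)$ induces the full $\Aut(\gen g)\cong C_{\ell-1}$ on $\gen g$, because $N_{S_\ell}(\gen g)$ is $\AGL_1(\ell)$ and the odd elements can be corrected by a transposition on the remaining $m-\ell\ge 2$ points. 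By \Cref{l:prime_order_eigenvalues}, all $\ell-1\ge 6$ nontrivial eigenspaces then have equal dimension.
\item \emph{Conclusion in the semisimple case.} Apply \Cref{l:general_parameters_SL}, \Cref{l:general_parameters_SU}, \Cref{l:general_parameters_Sp} or \Cref{l:general_parameters_SO} with $c=1/2$, $B=\ell-1\ge 6$ and $C=m\log_q m$ (bounding $|M|\le m!$). This requires $n$ to be large compared with $m\log m$.
\end{itemize}

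\textbf{Unipotent case $p=\ell$.} Choose the prime from \Cref{l:generation_a_n_second} different from $p$ where possible; since two distinct primes are available, this can always be arranged. So the unipotent case is avoided entirely.

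\textbf{Main obstacle: small $n$.} The difficulty is ensuring $n$ is large enough for the parameter inequalities. For this I would use the known lower bounds on the dimensions of nontrivial irreducible modules of $A_m$ and $2.A_m$, excluding the fully deleted permutation module:
\begin{itemize}
\item for $A_m$, $n\ge m(m-5)/2$ (roughly), for $m$ beyond a small bound;
\item for $2.A_m$, $n\ge 2^{\lfloor (m-2)/2\rfloor}$ (basic spin modules).
\end{itemize}
With these, $n^2(B-3)$ dominates $B(C+O(B))$ once $m$ exceeds about $15$. The finitely many remaining pairs $(m,n)$, for example the basic spin modules for $m\le 16$ and the modules of dimension about $m^2/2$ for small $m$, would be handled individually. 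For these I would compute the exact eigenspace dimensions from Brauer characters and use the sharper bounds of \Cref{l:order_centralizers} and \Cref{l:compare_dim_centralizer_GL_Sp}.
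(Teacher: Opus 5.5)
Your outline is sound, but it handles small $n$ by a different route from the paper.

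\textbf{What the paper does.} It does not bound $|M|$ by $m!$. It uses the uniform bound $|M|\le q^{2n+5}$, which comes from \cite{liebeck1985orders} since $V$ is not the fully deleted module. It also uses only $B=5$, obtained from $|g^L\cap\gen g|\ge 5$ and \Cref{l:prime_order_eigenvalues}. With $c=1/2$ this works once $n\ge 24,47,60,45$ in the four families. Below these thresholds, the Hiss--Malle list \cite{hiss2001malle} forces $m\le 14$. What remains is Brauer-table computation plus one ad hoc case: $L=2.A_{14}$, $p=7$, $n=32$, $G=\Sp_{32}(q)$, whose table is not in \cite{GAP_character_table}.

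\textbf{What your route does.} You take $|M|\le|Z(G)|\,m!$ and import minimal-degree theorems for $A_m$ and $2.A_m$ (James; Wagner, Kleshchev--Tiep), which the paper never needs. This gives a far smaller $C$. In fact, granting those dimension bounds, the parameter lemmas alone dispose of every $m\ge 14$. For example, for $2.A_{14}$ with $n=32$ and $\ell=11$ you get $B=10$ and $C<15$, and \eqref{eq:condition_second_Sp} holds comfortably. So do not plan to treat leftover cases through Brauer characters; for $2.A_{14}$ these are unavailable. Only $m\le 13$ needs computation.

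\textbf{What your route costs.} The imported bounds must be stated correctly.
\begin{itemize}
\item Your bound $2^{\lfloor (m-2)/2\rfloor}$ for $2.A_m$ is too large when $p\mid m$. The case just mentioned has $n=32<2^6$.
\item The bound $m(m-5)/2$ fails for spin-type modules of $A_m$ in characteristic $2$ when $m$ is small.
\item For $11\le m\le 15$, \Cref{l:generation_a_n_second} is a computer check producing elements of order $\ell$, not necessarily $\ell$-cycles.
\item $B=\ell-1$ requires $\ell\le m-2$. When $\ell\in\{m-1,m\}$ you only get $B=(\ell-1)/2$.
\end{itemize}
What does hold in all the situations arising here is $|g^L\cap\gen g|\ge 5$, exactly as the paper uses. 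With your small $C$, that already suffices.
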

	
	\begin{proof}
		We assume $V$ is not the fully deleted permutation module in view of \Cref{l:fully_deleted}. If $m\le 10$ then the modular character tables of $L$ are available in \cite{GAP_character_table}, and the result follows as discussed before \Cref{l:fully_deleted}. Assume then $m\ge 11$, and let  $\ell$ and $s$ be distinct primes as in \Cref{l:generation_a_n_second}. Without loss of generality, assume $\ell \neq p$. As in \Cref{l:generation_a_n_second}, let $x$ and $y$ be two elements of order $\ell$ of $L$ such that $L=\gen{x,y}$. Since $L$ is irreducible, there exists $g\in \{x,y\}$ such that $a=\dim(C_V(g))\le n/2$. Note that either $\ell \ge 11$ and $g^M$ is the union of at most two $L$-classes, or $\ell=7$ and $g^M$ is an $L$-class. (In order to see this we may assume $L=A_m$ and $M\le S_n$.) In both cases we have $|g^L \cap \gen g|\ge 5$.
		By \Cref{l:prime_order_eigenvalues}, we deduce that $N_L(\gen g)/C_L(g)$ has all orbits of size $|g^L \cap \gen g| \ge 5$ on the nontrivial eigenspaces of $g$.  We now apply \Cref{l:general_parameters_SL,l:general_parameters_SU,l:general_parameters_Sp,l:general_parameters_SO} with $c=1/2$, $B=5$, $C=2n+5$ to deduce the following:
		
		\begin{itemize}
			\item If $G=\SL_n(q)$ and $n\ge 24$, then \eqref{eq:condition_second} is satisfied.
			\item If $G=\SU_n(q^{1/2})$ and $n\ge 47$, then \eqref{eq:condition_second_SU} is satisfied.
			\item If $G=\Sp_n(q)$ and $n\ge 60$, then \eqref{eq:condition_second_Sp} is satisfied.
			\item  If $G=\Omega^\varepsilon_n(q)$ and $n\ge 45$, then \eqref{eq:condition_second_SO} is satisfied.
		\end{itemize}
		
		If $n$ satisfies the above inequalities, then by \Cref{l:general_parameters_SL,l:general_parameters_SU,l:general_parameters_Sp,l:general_parameters_SO} we have $|C_G(g)|<|G:M|^{1/3}$, as desired. Assume then that $n$ is smaller than the above values. Then, by \cite{hiss2001malle}, we see that $m\le 14$. If $m\le 13$, then the modular character tables of  $L$ are available in \cite{GAP_character_table} and the result follows. If $m=14$, then the modular character tables are available unless $L=2.A_{14}$. By \cite{hiss2001malle}, we see that in this case the only option is $n=32$, $p=7$ and $G=\Sp_{32}(q)$. By \Cref{l:generation_a_n} we have that $L=A_{14}$ is generated by two conjugates of an element $g$ of order $11$. We deduce that either $a\le 12$ and each nontrivial eigenspace has dimension at most $2$, or $a\le 2$ and each nontrivial eigenspace has dimension at most $3$; it follows that  $d\le 184$, which implies $|C_G(g)|<|G:M|^{1/3}$. 
	\end{proof}

	Next we address the case where $S$ is not alternating. It is convenient to isolate the alternating and symmetric square for $S=\PSL_m(q)$.

	\begin{lemma}
		\label{l:alternating_square}
	\Cref{t:main_simple} holds if $S=\PSL_m(q)$ and $V$ is the alternating or symmetric square  of the natural module for $S$.
	\end{lemma}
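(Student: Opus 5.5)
The plan is to pick one explicit element $g$ in $L=\SL_m(q_0)$ and compute its centralizer in $G$ from its eigenvalues on $V$. Here $V=\Lambda^2 W$ or $S^2W$, with $W=\F_{q_0}^m$ the natural module for $L$, and the field of $V$ equals the field of $W$ in this setting. I would take $g\in L$ semisimple of order $\Phi^*_m(q_0)$ when that is nontrivial, i.e.\ an irreducible (Singer-type) element of $\SL_m(q_0)$. Its eigenvalues on $W\otimes K$ are $\lambda,\lambda^{q_0},\ldots,\lambda^{q_0^{m-1}}$, all distinct, so the eigenvalues on $V\otimes K$ are the products $\lambda^{q_0^i+q_0^j}$, with $i<j$ for $\Lambda^2$ and $i\le j$ for $S^2$. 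The Frobenius $\lambda\mapsto\lambda^{q_0}$ permutes these in orbits of length $m$, apart from one orbit of length $m/2$ when $m$ is even (the pairs $\{i,i+m/2\}$). Hence every eigenspace has dimension at most the number of coincidences $q_0^i+q_0^j\equiv q_0^k+q_0^l \pmod{|\lambda|}$. I expect these coincidences to be rare, so $d$ should be roughly $n^2/m$ at worst.

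The key steps run as follows.
\begin{itemize}
\item[(1)] Bound $d$, the dimension of the centralizer of $g$ in $\GL_n(K)$, by $O(n^2/m)$, i.e.\ $O(m^3)$. I would get this either by showing that the eigenvalues are distinct (then $g$ is regular and $d=n$), or at least that each eigenvalue has multiplicity at most $2$. The latter follows from the ppd property: $|\lambda|$ is a ppd of $q_0^m-1$, so $q_0^a\not\equiv 1$ for $0<a<m$.
\item[(2)] Convert to a group order via \Cref{l:order_centralizers}(ii), giving $|C_G(g)|\le q^{d+E}$ with $E\le n$. Here $G$ is $\SL_n(q)$, or a symplectic or orthogonal group when $V$ carries a form (this happens for $m$ small or in special situations), and I would use \Cref{l:compare_dim_centralizer_GL_Sp} in that case.
\item[(3)] Compare with $|G:M|$. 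Since $|M|\le |\Aut(S)|\le q^{m^2+O(\log q)}$ and $n\approx m^2/2$, \Cref{l:order_classical_groups} gives $|G:M|\ge q^{n^2-m^2-O(1)}$. So $3(d+E)<n^2-m^2-O(1)$ holds with room once $m\ge 4$.
\end{itemize}

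The small cases, namely $m=2,3$, $(m,q_0)=(6,2)$, and small $q$ where $\Phi^*_m$ is trivial, need separate treatment. For $m=2$, $S^2W$ is the adjoint $3$-dimensional module, which is orthogonal and gives $\Omega_3(q)\cong\PSL_2(q)$, so no new case arises; $\Lambda^2W$ is trivial. For $m=3$, $\Lambda^2W\cong W^*$ is not in $\ca S$, and $S^2W$ is $6$-dimensional. For these I would switch to $g$ of order $\Phi^*_{m-1}(q_0)$ (an irreducible element on a hyperplane), or compute directly with GAP or \Cref{l:fpr_enough}, using $|g^G\cap M|$ when $n$ is tiny. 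Note also that when $m=4$ the group $\Lambda^2W$ gives $\Omega^+_6$, which is a $\ca C$-class isomorphism; that case is excluded, since $M$ would not be in $\ca S$.

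I expect the main obstacle to be the precise control of eigenvalue coincidences in step (1) for $S^2W$ in even characteristic, where $S^2W$ is reducible and one must work with the correct irreducible composition factor (e.g.\ $W^{(2)}$ or a quotient of dimension $n-m$). Also delicate are small $q_0$ where $|\lambda|$ is small, e.g.\ $|\lambda|=2m+1$, and congruences like $q_0^i+q_0^j\equiv q_0^k+q_0^l$ can actually occur. In those cases I would fall back on the crude bound: each eigenspace has dimension at most the orbit-size multiplicity coming from \Cref{l:semiregular_eigenvalues} applied to $N_L(\gen g)$. By \Cref{l:coprime_schur} this gives orbits of size $m$ on the nontrivial eigenspaces, and with $\dim C_V(g)\le n/2$ one can feed $B=m$, $c=1/2$, $C=m^2+\log$ into \Cref{l:general_parameters_SL}, which succeeds once $m$ is moderately large. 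The remaining finitely many cases I would handle by direct computation.
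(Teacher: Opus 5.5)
\textbf{There is a genuine gap in your step (1), and the fallback you propose does not close it.}

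The ppd property of $|\lambda|$ does not bound eigenvalue multiplicities by $2$.
\begin{itemize}
\item For even $m$, every ppd of $q_0^m-1$ divides $q_0^{m/2}+1$. So each of the $m/2$ pairs $\{i,i+m/2\}$ (your ``orbit of length $m/2$'') gives the eigenvalue $1$, and $\dim C_{\Lambda^2W}(g)=m/2$.
\item When $\Phi^*_m(q_0)$ is small, the nontrivial eigenvalues also collide heavily. For $(m,q_0)=(10,2)$ we have $|\lambda|=11$, and every nontrivial eigenvalue of $g$ on the $45$-dimensional $\Lambda^2W$ has multiplicity $4$.
\item Even multiplicity at most $2$ would not suffice for $m=3$. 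There $S^2W$ has $n=6$, and $d\le 12$ only gives $|M||C_G(g)|^3<q^{9+36}$, which does not beat $|\SL_6(q)|<q^{35}$.
\end{itemize}
Your fallback uses Galois orbits of size $B=m$ and $c=1/2$ in Lemma~\ref{l:general_parameters_SL}. That lemma then requires $n^2(m-3)/4\ge m(C+2)$ with $C\approx m^2$. This fails for $m\le 6$ regardless of $q$; for instance $m=5$, $n=15$ gives $112.5<135$. So what is left is not finitely many cases but infinitely many groups $\SL_m(q)$ with $m\le 6$. You have not shown that, for fixed $m$, coincidences occur for only finitely many $q_0$ (the ppd property alone does not give this), and GAP cannot handle an infinite family. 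Separately, for $G=\SL_n(q)$ you should use $|C_G(g)|<q^d$. Lemma~\ref{l:order_centralizers} does not cover $\SL_n$, and the extra factor $q^E$ only weakens step (3).

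\textbf{The missing idea is to take an element of larger order.} The paper uses $g\in\SL_m(q)$ of order $(q^m-1)/(q-1)=1+q+\cdots+q^{m-1}$, the $(q-1)$-th power of a Singer cycle, rather than its ppd part. An elementary size and base-$q$ digit argument shows that this order never divides $q^i+q^j-q^k-q^\ell$ unless $\{i,j\}=\{k,\ell\}$. Hence $g$ has pairwise distinct eigenvalues on $S^2W$, and therefore also on $\Lambda^2W$. This holds uniformly for all $m\ge 3$ and all $q$, with no Zsigmondy exceptions such as $(6,2)$. It follows that $|C_G(g)|\le (q^n-1)/(q-1)$, and $|\GL_m(q)|\,|C_G(g)|^3<|\SL_n(q)|$ finishes immediately.

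\textbf{Your route can also be repaired}, at the cost of some casework.
\begin{itemize}
\item Count directly that $\dim C_V(g)\le m/2$ for the ppd element, since $\lambda^{q^i+q^j}=1$ forces $q^{j-i}\equiv -1$ modulo a ppd.
\item Use this in place of $c=1/2$: Lemma~\ref{l:optimization_easier} with $A=m/2$, $B=m$ gives $d\le n^2/m$, which suffices for every $m\ge 4$.
\item That leaves $S^2W$ with $m=3$, where you must prove that the six eigenvalues are distinct.
\end{itemize}
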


\begin{proof}
	Let $g\in \SL_m(q)$ be of order $(q^m-1)/(q-1)$. We claim that $g$ has distinct eigenvalues on $V$. In order to check this, it is enough to assume that $V$ is the symmetric square. If $\lambda$ is an eigenvalue of $g$ on the natural module, then the eigenvalues on $V$ are $\lambda^{q^i+q^j}$ for $i=j$ or $\{i,j\}\in \binom{[n]}{2}$. It is straightforward to see that $|g|$ does not divide $q^i + q^j - q^k - q^\ell$ for $\{i,j\}\neq \{k,\ell\}$, which is equivalent to saying that the eigenvalues  are pairwise distinct.  Note that for $m=2$ we have $M\in \ca C_8$ in view of the isomorphism $\PSL_2(q)\cong \Omega_3(q)$. For $m\ge 3$, using $|C_G(g)|\le (q^n-1)/(q-1)$ we see that $|\GL_m(q)||C_G(g)|^3 < |\SL_n(q)|$ and the proof is concluded. 
\end{proof}

Next we address the general case.  If $S$ is of Lie type, we denote by $\F_r$ the field of definition. We first exclude the cases $S= \PSL_2(r), \PSL^\pm_3(r)$ or  $|M|\ge q^{2n+4}$, which will be handled in subsequent lemmas.
	
	\begin{lemma}
		\label{l:class_S}
		\Cref{t:main_simple} holds if $M$ is in class $\ca S$ with $|M|\le q^{2n+4}$, and $S$ is sporadic or of Lie type with $S\neq \PSL_2(r),\PSL^\pm_3(r)$.
	\end{lemma}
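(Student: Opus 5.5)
We follow the strategy of \Cref{subsec:strategy}. Let $L\le \SL_n(q)$ be the quasisimple cover of $S$ acting absolutely irreducibly on $V$. We take $g$ to be the lift $y_1$ (or $y_2$) in $L$ of the element $x_i$ of \Cref{table:new_unique} or \Cref{table:sporadics}; this lift exists by \Cref{l:coprime_schur}(i). The proof then has three steps. First we bound $a=\dim C_V(g)$. Next we bound the multiplicities of the nontrivial eigenspace dimensions, or the Jordan structure when $p$ divides $|g|$. Finally we feed these bounds into \Cref{l:general_parameters_SL,l:general_parameters_SU,l:general_parameters_Sp,l:general_parameters_SO}, with $C=2n+4$, to obtain $|C_G(g)|<|G:M|^{1/3}$. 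A finite list of small exceptions is left over and handled separately.

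\emph{Step 1: bounding $a$.} If $S$ is sporadic, or $L=S$, we use the result of \cite{guralnick2012malle} that $S$ is generated by three conjugates of $x_1$ with product $1$. \Cref{l:scott} then gives $a\le n/3$. When $L\neq S$, the conjugates lift, but their product lifts only to a central element. For $S$ as in \eqref{eq:condition_invariable_generation} we therefore use \Cref{l:invariable generation} together with Gow's theorem from \cite{guralnick2015tiep_lifting}: this produces $y_1$-, $y_2$- and third-class elements with product $1$ in $L$ that generate $L$, and Scott's lemma gives $\dim C_V(z)\le n/3$ for one of the two chosen classes. For the groups in \eqref{eq:more_groups}, \Cref{l:generation_by_conjugates} shows that $L$ is generated by two conjugates of $g$, and irreducibility gives only $a\le n/2$. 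In every remaining case we also fall back on a two-generator bound $a\le n/2$.

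\emph{Step 2: the eigenspaces.} If $p\nmid |g|$, then $g$ is semisimple. By \Cref{l:coprime_schur}(v),(vi), $N_L(\gen g)/C_L(g)$ acts semiregularly of order $n_i$ on $\gen g\sm\{1\}$, so \Cref{l:semiregular_eigenvalues} shows that every nontrivial eigenspace dimension occurs with multiplicity a multiple of $n_i$, and we may take $B=n_i$. If $p\mid |g|$, \Cref{l:coprime_schur}(vii) shows that $\gen g$ contains a cyclic Sylow $p$-subgroup $P$ with $p\ge5$, and condition ($\star$) holds by (iv),(v). \Cref{l:green_correspondence} then applies. Most Jordan blocks of the unipotent part have size $|P|$, and at most $n_i$ summands have another common size. \Cref{l:unipotent_centralizer}, together with \Cref{l:order_centralizers}(i),(iii) and \Cref{l:compare_dim_centralizer_GL_Sp}, bounds $d$ in roughly the same way as the semisimple case.

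\emph{Step 3: verification and small cases.} With $c=1/3$ or $1/2$ and $B=n_i$, conditions \eqref{eq:condition_first}--\eqref{eq:condition_second_SO} hold as soon as $n_i$ and $n$ are moderately large, say $n_i\ge 5$ with an explicit lower bound on $n$. \Cref{l:landazuri_seitz} shows that if $n<10n_i$ then $S$ lies on an explicit list of small groups and sporadics. Since $V$ is irreducible, $n$ is at least the Landazuri--Seitz bound, so apart from this list $n\ge 10n_i$, which is ample. For the listed groups the modular character tables are available in \cite{GAP_character_table} (or \cite{hiss2001malle} gives the possible degrees), and we compute $d$ directly as described before \Cref{l:fully_deleted}. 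Where necessary we use the exact formula of \Cref{l:fpr_enough}; the alternating/symmetric squares are covered by \Cref{l:alternating_square}.

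The main obstacle is Step 1 when $L$ is a proper cover and \eqref{eq:condition_invariable_generation} fails (for example $\PSp_{2m}(r)$ with $r$ even, or $\POm^+_{2m}$ with $m$ even). There only $a\le n/2$ is available, and the parameter checks in Step 3 become tight. We would first try to compensate using the larger $B=n_i$ and the $R(\underline a)$-refined \Cref{l:optimization}, and resort to case-by-case character table computation for the residual small $n$.
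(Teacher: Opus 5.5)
Your outline follows the architecture of the paper's proof: Scott's lemma via \cite{guralnick2012malle} or via invariable generation plus Gow, two-conjugate generation giving $a\le n/2$, semiregular orbits on eigenspaces, Green correspondence, and then the parameter lemmas. However, Step 3 has a genuine gap. \Cref{l:landazuri_seitz} only concerns $S$ sporadic or in $\mathrm{Lie}(p')$. When $S\in\mathrm{Lie}(p)$ (where $g=x_i$ is automatically semisimple), nothing forces $n\ge 10n_i$. Exterior and symmetric powers, adjoint and spin modules, and twisted tensor products give infinite families over all $r$ with $n$ comparable to $n_i$, and there are no character tables to fall back on. Handling these is the bulk of the paper's proof. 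Below the thresholds where the parameter lemmas work ($n<11,22,18,25$ for $c=1/3$; $n<36,71,77,66$ for $c=1/2$, with $B=4$), the paper uses:
L\"ubeck's classification of small $p$-restricted modules; Steinberg's tensor product theorem with Schaffer's list for non-restricted $V$; module-by-module weight computations (e.g.\ $\PSL^\pm_4(r)$ and $\PSp_4(r)$ for all $n\le 76$, $\POm_7(r)$, $\POm^+_8(r)$, $\POm^+_{12}(r)$); sharper bounds than $|M|\le q^{2n+5}$, such as $|M|<q^{n/2+2}$; and exclusions where $M\notin\ca A$, such as $\,^2G_2(r)<G_2(r)$.
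Your proposed remedy, a larger $B=n_i$, is unavailable exactly where it is needed: $n_i=4$ for $\PSL^\pm_4(r)$ and $\PSp_4(r)$, and these groups only get $a\le n/2$. Even in cross characteristic, the claim that $n\ge 10n_i$ is ample is false for $c=1/2$, $B=4$; the unitary case needs $n\ge 71$. So you must go through Hiss--Malle up to those bounds, and $\PSU_4(4)$, $\PSp_4(7)$, $F_4(2)$ and $Co_1$ have no tables in the library and need separate arguments.

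Three further points need repair. First, in the non-semisimple case, \Cref{l:coprime_schur}(iv),(vii) require $g$ to be regular; these are what give ($\star$) and the Sylow $p$-subgroup inside $\gen g$. If Scott's lemma selects a non-regular $x_2$ (e.g.\ for $\PSp_{2m}(r)$ with $r$ odd, or $\POm_{2m+1}(r)$ with $m$ even) and $p$ divides $|x_2|$, then \Cref{l:green_correspondence} is not available. The paper avoids this by moving these pairs $(S,p)$, the $(\diamond)$ cases, into $\ca H_2$ and using $x_1$ with two-conjugate generation. Second, the Green-correspondence bound does not work out the same way as the semisimple case. When the at most $n_i$ exceptional summands have size $t\le 3$ they can fill a large part of $V$, and the paper needs a split into $n_i\le n/10$ versus $n_i>n/10$. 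This, not Step 3, is where \Cref{l:landazuri_seitz} legitimately enters, since $p\mid |g|$ forces cross characteristic. Third, Step 1 overstates \cite{guralnick2012malle}:
for sporadic $S$ with $L\neq S$, a product-one triple in $S$ only lifts to a triple with central product, so this route yields just $a\le n/2$;
the class used there is not always $x_1$ (for $\PSL_m(r)$ with $m$ even it has order $\Phi^*_{m-1}(r)$);
and for $\PSL^\pm_4$, $\POm^+_{2m}$, $E^\pm_6$, $E_7$ that class does not combine well with \Cref{l:semiregular_eigenvalues}, which is why the paper excludes them from $\ca H_1(1)$.
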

	
	\begin{proof}
		We may assume $S\neq \PSp_4(2)'\cong A_6, S\neq \PSL_4(2)\cong A_8$ in view of \Cref{l:class_S_alternating}. 
		As discussed before \Cref{l:fully_deleted}, we replace $G$ by its quasisimple cover acting faithfully on $V$; we have $L\trianglelefteq M\le G$ where $L$ is quasisimple with $L/Z(L) \cong S$. Since we have replaced $M$ by its preimage, the bound in the hypothesis now gives $|M| \le q^{2n+5}$.
		
		Next we partition the socles $S$ into two collections, $\ca H_1$ and  $\ca H_2$. See the paragraph after Collection $\ca H_2$ for comments on this choice.

		\begin{collection_1}
			The socles $S$ in $\ca H_1$ are those satisfying either (1) or (2) below.
			\begin{itemize}
				\item[(1)] $S\neq \POm^+_{2m}(r), \PSL^\pm_4(r), E_6^\pm(r)$, $E_7(r)$, and the Schur multiplier of $S$ is trivial.

				\item[(2)] Either $S= E^\pm_6(r)$, or $S=\POm^+_{2m}(r)$ with $m$ odd,		
				or the Schur multiplier of $S$ is not trivial and $S$ is one of the following:
				\begin{align*}
					&\PSL^\varepsilon_m(r) \text{ with $m\ge 5$ and $(\varepsilon,m)\neq (-,6)$}\\
					&\PSp_{2m}(r) \text{ with $m\ge 3$ and $(m,r)\neq (3,2)$}\\
					&\POm^-_{2m}(r) \\
					&\POm_{2m+1}(r) \text { with $m\ge 4$}
				\end{align*}
				In (2), we furthermore \textit{exclude} the following cases:
				\begin{itemize}
					\item[$\diamond$]
					$S=\PSU_m(r)$ with $m\equiv 2 \pmod 4$ and $p$ a ppd of $(r^2)^{m/2}-1$; $S=\PSU_m(r)$ with $m\equiv 3\pmod 4$ and $p$  a ppd of $(r^2)^{m-2}-1$; $S=\PSp_{2m}(r)$ with $p$ a ppd of $r^{2(m-1)}-1$; $S=\POm_{2m+1}(r)$ with $m$ even and $p$ a ppd of $r^m-1$.
				\end{itemize}
			\end{itemize}
		\end{collection_1}
		
		\begin{collection_2} The socles $S$ in $\ca H_2$ are those not belonging to $\ca H_1$. Specifically:
			\begin{align*}
				&\PSL_4^\pm(r)\\
				&\PSU_6(r) \text{ with $(6,r+1)\neq 1$}\\ 
				&\PSp_4(r) \text{ with $r$ odd}\\ &\POm_7(r)\\
				&\POm^+_{2m}(r) \text{ with $m$ even}\\
				&E_7(r)\\
				&\text{the groups in $\ca H_1$(2)($\diamond$) having nontrivial Schur multiplier} \\
				& G_2(3), G_2(4),\, ^2\!B_2(8),  F_4(2), \Sp_6(2)\\
				& M_{12}, M_{22}, J_2, J_3, HS, Suz, McL, Ru, O'N, Co_1, Fi_{22}, Fi'_{24}, B.
			\end{align*} 
		\end{collection_2}
		
		At this point we have defined $\ca H_1$ and $\ca H_2$. Next, for $S$ in $\ca H_1$ we will find an element $g\in L$ satisfying $a=\dim(C_V(g))\le n/3$, while for $S$ in $\ca H_2$ we will find an element $g\in L$ satisfying $a\le n/2$. In order to orient the reader, before giving the details let us briefly motivate the choice of $\ca H_1$ and $\ca H_2$, recalling also \Cref{subsec:strategy} for related remarks. For the groups in $\ca H_1(1)$ we will apply \cite{guralnick2012malle}. The exclusion of the groups in $\ca H_1(1)$ (e.g., $\PSL^\pm_4(r)$) is due to the fact that  in these cases the element $g$ given in \cite{guralnick2012malle} does not work well when applying \Cref{l:semiregular_eigenvalues}. For the groups in $\ca H_1(2)$ we will apply \Cref{l:invariable generation}. The exclusion of the groups in $\ca H_1(2)(\diamond)$ guarantees that if our chosen element $g$ is not semisimple in $\SL_n(q)$, then it is regular in $L$ (see \Cref{l:coprime_schur}(iii)), and so we can effectively apply \Cref{l:green_correspondence}. For the groups in $\ca H_2$ we will use either \cite{guralnick2012malle} or \Cref{l:generation_by_conjugates}. Now we address this in detail.

		Assume first $S$ is in $\ca H_1$. Suppose $S$ is in $\ca H_1$(1);
		then $L=S$. If $S$ is sporadic, let $g=x_1$ be as in \Cref{table:sporadics}. If $S$ is of Lie type and $S\neq \PSL^\varepsilon_m(r)$, let $g=x_1$ be as in \Cref{table:new_unique}. If $S=\PSL^\varepsilon_m(r)$, set $u=1$ if $\varepsilon =+$ and $u=2$ if $\varepsilon =-$, and let $g$ be an element of order $\Phi^*_{um}(r)$ if $m$ is odd, and of order $\Phi^*_{u(m-1)}(r)$ if $m$ is even.  Then, in all cases, \cite[Theorem 1.1, Props 3.4--3.13, Prop 4.5]{guralnick2012malle} shows that $L$ is generated by three conjugates of $g$ whose product is $1$. It follows from \Cref{l:scott} that $a\le n/3$, as desired.
		
		Assume next $S$ is in $\ca H_1$(2). Without changing notation, we let $x_1, x_2 \in L$ be lifts of the corresponding elements of $S$ in \Cref{table:new_unique}, having the same orders (see \Cref{l:coprime_schur}).  Then, by \Cref{l:invariable generation} we deduce that $L$ is invariably generated by $x_1$ and $x_2$. Moreover, in all cases $x_1$ and $x_2$ are regular semisimple (see \Cref{l:coprime_schur}(iii))
		and so by an extension of a theorem of Gow \cite{gow2000commutators} (see \cite[Lemma 5.1]{guralnick2015tiep_lifting}), we can write $x_2=x_1^yx_1^z$ for $y, z \in L$. Then $L=\gen{x_1^y, x_1^z, x_2^{-1}}$, and by \Cref{l:scott}, there exists $g\in \{x_1, x_2\}$ such that $a\le n/3$, as desired.
		
		Assume now $S$ is in $\ca H_2$. If $S=\PSU_m(r)$ with $m>4$, then let $g\in L$ be an element of order $\Phi^*_{2m}(r)$ if $m$ is odd, and of order $\Phi^*_{2(m-1)}(r)$ if $m$ is even. In the other cases, let $g=x_1\in L$ be a lift of the corresponding element in \Cref{table:new_unique} or \Cref{table:sporadics}, having the same order, unless $S=\PSU_4(q)$, in which case we let $g=x_2\in L$ be a lift of the corresponding element in \Cref{table:new_unique}. If $S\neq Co_1$ is sporadic, or $S\neq E_7(r)$ is exceptional, or $S=\POm_{2m+1}(r)$, or $S=\PSU_m(r)$ with $m>4$, or $S=\PSp_{2m}(r)$, then by \cite[Theorem 1.1, Propositions 3.4--3.13]{guralnick2012malle} we deduce that $L$ is generated by two conjugates of $g$. If $S=Co_1$, the same holds by \cite[Proposition 6.2]{guralnick2000kantor}.   
		If $S$ is $\PSL_4^\pm(r)$, or $\POm^+_{2m}(r)$ with $m$ even and $(m,r)\neq (4,2)$, or $E_7(r)$, the same holds from \Cref{l:generation_by_conjugates}. In conclusion, if $S\neq \POm^+_8(2)$ then $L$ is generated by two conjugates of $g$, and since $L$ acts irreducibly on $V$ we deduce  $a\le n/2$, as claimed. 
		
		In the case $S=\POm^+_8(2)$, the modular character tables are available on GAP Character Table Library \cite{GAP_character_table}, so we exclude this case from now on.
		
		At this point, for $S$ in $\ca H_1$ we have $a\le n/3$, and for $S$ in $\ca H_2$ we have $a\le n/2$. Next we divide into three cases.
		
		\textbf{Case 1}: $S$ is in $\ca H_1$ and $g$ is semisimple in $\SL_n(q)$. Then by \Cref{l:coprime_schur}(v) we have that $|N_L(\gen g)/C_L(g)|$ acts semiregularly on $\gen g \sm\{1\}$. In particular, by \Cref{l:semiregular_eigenvalues} we deduce that $|N_L(\gen g)/C_L(g)|$ acts semiregularly on the nontrivial eigenspaces of $g$. Note that if  $S=\PSU_6(r)$ then $S$ is in $\ca H_1(1)$ and $|g|=\Phi^*_{10}(r)$ (since $\PSU_6(r)$ does not belong to $\ca H_1(2)$). Then, by inspection of \Cref{table:new_unique,table:sporadics} we see that $|N_L(\gen g)/C_L(g)|\ge 4$. We then apply \Cref{l:general_parameters_SL,l:general_parameters_SU,l:general_parameters_Sp,l:general_parameters_SO} with $c=1/3$, $B=4$ and $C=2n+5$ to deduce the following:
		
		\begin{itemize}
			\item If $G=\SL_n(q)$ and $n\ge 11$, then \eqref{eq:condition_first} is satisfied.
			\item If $G=\SU_n(q^{1/2})$ and $n\ge 22$, then \eqref{eq:condition_first_SU} is satisfied.
			\item If $G=\Sp_n(q)$ and $n\ge 18$, then \eqref{eq:condition_first_Sp} is satisfied.
			\item  If $G=\Omega^\varepsilon_n(q)$ and $n\ge 25$, then \eqref{eq:condition_first_SO} is satisfied.
		\end{itemize}
		If $n$ satisfies the above inequalities, then by \Cref{l:general_parameters_SL,l:general_parameters_SU,l:general_parameters_Sp,l:general_parameters_SO} we have $|C_G(g)|<|G:M|^{1/3}$, as desired. Assume then that $n$ is smaller than the above values.
		If $S$ is in $\text{Lie}(p')$  or $S$ is sporadic, then by \cite{hiss2001malle} we see that all the possibilities for $S$ are included in \cite{GAP_character_table} and the result follows.
		
		Assume then $S$ is in $\text{Lie}(p)$. Suppose $V$ is $p$-restricted for $L$. Then the possibilities are listed in \cite{lubeck2001small}. Noting that for $n\ge 22$ 
		we have $G=\Omega^\varepsilon_n(q)$, so $V \downarrow L$ is self-dual, 
		we have the following possibilities:
		\begin{align*}
			&S=\PSL^\pm_5(r) \text{ with } n=10,15,23,24\\
			&S=\PSL^\pm_6(r) \text{ with } n=15,20,21\\
			&S=\PSL^\pm_7(r) \text{ with } n=21\\
			&S=\PSp_4(r) \text{ with } n=16 \text{ and $r$ even}\\
			&S=\PSp_6(r) \text{ with } n=8,13,14,21\\
			&S=\PSp_8(r) \text{ with } n=16 \text{ and $r$ even}\\
			&S=\POm_9(r) \text{ with } n=16\\
			&S=\,^2B_2(r) \text{ with } n=4,16\\
			&S=G_2(r) \text{ with } n=6,7,14\\
			& S=\,^2\!G_2(r) \text{ with } n=7 \\
			&S=\,^3D_4(r) \text{ with } n=8\\
			&S=\,\POm^\pm_{10}(r) \text{ with } n=16 \\
		\end{align*}
		We consider each case in turn. Assume first $S=\PSL^\varepsilon_5(r)$ with $\varepsilon \in \{+,-\}$, so $q\ge r$. For $n=10$ or $15$, up to quasi-equivalence $V$ is the alternating square or the symmetric square of the natural 5-dimensional module, so by \Cref{l:alternating_square} we may assume $\varepsilon=-$. For $n=10$, we choose $g$ of order $(r^5+1)/(r+1)$, and we calculate directly that $g$ has distinct eigenvalues on $V$. Assume now $n=15$, so by what just proved $g$ is regular on a $10$-dimensional subspace. Since $|N_L(\gen g)/C_L(g)|=5$, it follows that there are at most $5$ eigenspaces of dimension $2$, so 
		$d\le 25$, which implies $|C_G(g)|<|G:M|^{1/3}$. (Here we used \Cref{l:coprime_schur}(v) and \Cref{l:semiregular_eigenvalues}. In the course of the proof,  we will often apply these results  with no explicit mention.) Assume now $n=23,24$, so we have $|M|<q^{n+3}$. Choosing the original element $g$ and replacing $C=2n+5$ by $C=n+3$ in the above calculation we see that the result follows. (In many cases, below, we will abuse notation and write that $V$ is a certain module, when we really mean that $V$ is quasi-equivalent to that module. This will happen for example in the next paragraph.) 
		
		Assume next $S=\PSL^\varepsilon_6(r)$, so $q\ge r$. In the cases $n=15,21$, by \Cref{l:alternating_square} we may assume $\varepsilon=-$. We choose $g$ of order $(r^5+1)/(r+1)$. If $n=15$, it follows readily by what proved in the previous paragraph for $\PSU_5(r)$ (by restricting to a $10$-dimensional submodule) that $g$ has distinct eigenvalues. If $n=21$, then it similarly follows that $g$ is regular on an $11$-dimensional subspace, so $g$ has at most $5$ eigenspaces of dimension $2$ and $d\le 31$, which easily gives $|C_G(g)| <|G:M|^{1/3}$. For $n=20$, $V$ is the third alternating power. We have $a\le \binom{6}{2}/3 = 5$; since $|N_L(\gen g)/C_L(g)|\ge 5$, each nontrivial eigenspace has dimension at most $20/5=4$ and this is sufficient.
		
		Assume now $S=\PSL^\varepsilon_7(r)$ with $n=21$. Since $|N_L(\gen g)/C_L(g)|=7$, each eigenspace for $g$ has dimension at most $3$ and $d\le 63$ and this gives the result.

		Assume now $S=\PSp_4(r)$ with $n=16$. Then $a\le 4$ and so each eigenspace has dimension at most $4$, so $d=\dim(C_{\GL_n(K)}(g))\le 64$ and this is sufficient.

		In the case $S=\POm_9(r)$ or $\PSp_8(r)$ with $n=16$, or $S=\PSp_6(r)$ with $n=8$, then $V$ is the spin module, which is handled in \Cref{l:liebeck}, below. 
		
		Assume now $S=\PSp_6(r)$ with $n>8$. Note that $|N_L(\gen g)/C_L(g)|=6$. In particular, since $a=\dim(C_V(g))\le n/3$, for $n=13$ we deduce by \Cref{l:semiregular_eigenvalues} that $a=1$ and the nontrivial eigenspaces have dimension at most $2$, which gives $|C_G(g)|<|G:M|^{1/3}$. By the same argument, for $n=14$ (resp. $n=21$) we see that all 
		eigenspaces of $g$ have dimension at most $2$ (resp. at most $3$), and the result follows.
		
		Assume now $S=\,^2B_2(r)$. For $n=4$ we have $r=q>2$ and $G=\Sp_4(q)$ and $S=M$. An element $g$ of $M$ of order $q+\sqrt{2q}+1$ acts irreducibly on $V$, and moreover $g^G\cap M=g^M$, so $\fp(g,G/M) = |C_G(g)|/|C_M(g)| = q-\sqrt{2q}+1 < |G:M|^{1/3}$. For $n=16$, we have that $V$, as an $\F_q L$-module, is equivalent to a module that is not $p$-restricted (see \cite[Theorem, p. 207]{humphreys2006modular}), and so does not give rise to a maximal subgroup, that is, $M\not\in \ca A$.
		
		Next let $S=\,^2G_2(r)$. Then $n=7$, but in this case $S<G_2(r)<G$ and $M \not \in \ca A(G)$. 
		
		Assume now $S=G_2(r)$. If $n=6$ then $q=r$ is even, $G=\Sp_6(q)$ and $S=M$. An element $g \in M$ of order $(q^3+1)/(q+1)$ acts irreducibly on $V$, and $g^G\cap M=g^M$, so $\fp(g,G/M)=|C_G(g)|/|C_M(g)| = q+1 < |G:M|^{1/3}$. The case $n=7$ is entirely analogous; in this case $G=\Omega_7(q)$ and $g$ acts irreducibly on a hyperplane. Assume finally $n=14$; since $|N_G(\gen g)|/|C_G(g)|=6$ and $a\le n/3$, we deduce by \Cref{l:semiregular_eigenvalues} that all eigenspaces of $g$ have dimension at most $2$, which gives the result.
		
		Assume now $S=\,^3D_4(r)$ with $n=8$, so $q=r^3$ and $G=\Omega_8^+(q)$. Then $g$ is regular, with $|C_G(g)| \le (q^2+1)^2$ and  $|C_G(g)|<|G:M|^{1/3}$.

		Assume finally $S=\POm^\varepsilon_{10}(r)$ with $n=16$, so $V$ is the spin module. The case $\varepsilon =+$ is covered in 
	\Cref{l:liebeck}, below, and the case $\varepsilon =-$ can be handled in the same way, by taking $g$ of order $q^4+1$.
		This completes the argument when $V \downarrow L$ is $p$-restricted.

		If $V$ is not $p$-restricted, then by Steinberg's twisted tensor product theorem we have $n\ge h^2$ where $h$ is the smallest degree of a faithful representation of $L$. Since $h\ge 4$ and $n\le 24$, it must be $h=4$ and $n=16$. By \cite{schaffer1999twisted}, there is no case with $M\in \ca A(G)$ (note that $S$ is in $\ca H_1$, so if $S=\PSp_4(r)$ then $r$ is even, and also  $S\neq \PSL_4^\pm(r)$).

		\textbf{Case 2}: $S$ is in $\ca H_2$ and $g$ is semisimple in $\SL_n(q)$. By our choice of the element $g$, we have $|N_L(\gen g)/C_L(g)|\ge 4$ (recall that we chose $g=x_2$ for $S=\PSU_4(r)$). In particular, the overall argument of Case 1 goes through to give the following, except that we apply \Cref{l:general_parameters_SL,l:general_parameters_SU,l:general_parameters_Sp,l:general_parameters_SO} with $c=1/2$ (instead of $c=1/3$), $B=4$ and $C=2n+5$.
		\begin{itemize}
			\item If $G=\SL_n(q)$ and $n\ge 36$, then \eqref{eq:condition_second} is satisfied.
			\item If $G=\SU_n(q^{1/2})$ and $n\ge 71$, then \eqref{eq:condition_second_SU} is satisfied.
			\item If $G=\Sp_n(q)$ and $n\ge 77$, then \eqref{eq:condition_second_Sp} is satisfied.
			\item  If $G=\Omega^\varepsilon_n(q)$ and $n\ge 66$, then \eqref{eq:condition_second_SO} is satisfied.
		\end{itemize}
		If $n$ satisfies the above inequalities, then by \Cref{l:general_parameters_SL,l:general_parameters_SU,l:general_parameters_Sp,l:general_parameters_SO} we have $|C_G(g)|<|G:M|^{1/3}$, as desired. Assume then $n$ is smaller than above values. Suppose $S$ is in $\text{Lie}(p')$ or $S$ is sporadic. By \cite{hiss2001malle}, either the modular character tables of $S$ are available in \cite{GAP_character_table}, in which case the result follows, or we are in one of the following cases:
		\begin{align*}
			&S=\PSU_4(4) \text{ with } n=51,52\\
			&S=\PSp_4(7) \text{ with } n=24,25\\
			&S=F_4(2) \text{ with } n=52\\
			&S=Co_1 \text{ with } n=24.
		\end{align*}
		
		Assume first $S=\PSU_4(4)$. Then $|M|<3^{25}< q^{n/2}$, so we may replace $C=2n+5$ by $C=n/2$ in the above calculation and this gives the conclusion.
		
		Assume now $S=\PSp_4(7)$ with $n=24,25$; by \cite{hiss2001malle} we have $p\neq 7$ and $G=\SL^\pm_{24}(q)$. Let $g\in S$ be of order $7$ and with centralizer of order $4116$; there is only one such $S$-class and so $|g^S\cap \gen g| = 6$. We check with GAP that $S$ is generated by two conjugates of $g$. If $n=24$, then $a=\dim(C_V(x))\le n/2 =12$ and by \Cref{l:semiregular_eigenvalues,l:optimization_easier} we have that $d=\dim(C_{\GL_n(K)}(g))\le 168$, from which we see that $|C_G(x)|<|G:M|^{1/3}$. If $n=25$, then in fact we must have $a\le 7$, so $d\le 103$ and this is sufficient.
		
		Assume now $S=F_4(2)$ with $n=52$; we have $G=\Omega^\varepsilon_n(q)$. We have $|N_L(\gen g)|/|C_L(g)|=12$ and $a\le n/2=26$, so in fact $a\le 16$ and by \Cref{l:optimization_easier} we see that $d\le 364$, which gives $|C_G(g)|^3<|G:M|$.

		Assume finally $S=Co_1$ with $n=24$. Since $|N_L(\gen g)/C_L(g)|=11$ and $a\le n/2=12$, it follows that all eigenspaces of $g$ have dimension at most $2$, and this gives $|C_G(g)|^3<|G:M|$.

		Suppose now $S$ is in $\text{Lie}(p)$. The cases $S=G_2(3), G_2(4), ^2\!B_2(8), \Sp_6(2)$ appear in \cite{GAP_character_table} and the conclusion follows. Assume we are not in these cases. Suppose $V$ is $p$-restricted as $S$-module. Then the possibilities are listed in \cite{lubeck2001small}; these are as follows:
		\begin{align*}
			&S=\PSL^\pm_4(r) \text{ with } n\le 76\\ 
			&S=\PSU_6(r) \text{ with } n=15,20,21,34,35,50,56,70\\
			&S=\PSp_4(r)  \text{ with } n\le 76\\
			&S=\POm_7(r)  \text{ with } n=8,21,26,27,35,40,48,63,64\\
			&S=\POm^+_8(r)  \text{ with } n=26,28,35,48,56\\
			&S=\POm^+_{12}(r)  \text{ with } n=32,64,66,76\\
			&S=F_4(2)  \text{ with } n=26. \\
		\end{align*}
		(Note that the case $S=E_7(r)$ with $n=56$ is excluded in view of the assumption $|M|\le q^{2n+4}$.)
		Assume first $S=\PSU_6(r)$, so $q\ge r$; we have $|N_L(\gen g)|/|C_L(g)|=5$. If $n\ge 34$ then $|M|<q^{n+5}$; in the above calculation we may replace $C=2n+5$ by $n+5$ and $B=4$ by $5$ and we see that $|C_G(g)|<|G:M|^{1/3}$. The cases $n=15,20,21$ were addressed already in Case 1.

		Assume now $S=\POm_7(r)$, so $q=r$ and $|N_L(\gen g)|/|C_L(g)|=6$. If $n\ge 26$ then $|M|<q^{n-2}$; the usual calculation with $c=1/2$, $B=6$ and $C=n-2$ gives the result. If $n=8$ then $G=\Omega_8^+(q)$ and $S$ is conjugate in $\Aut(G/Z(G))$ to a reducible subgroup, so we can exclude this case. If $n=21$ then $V$ is the alternating square; we have $a=\dim(C_V(g))=3$, and so every eigenspace has dimension at most $3$, which implies $|C_G(g)|<|G:M|^{1/3}$.
		
		Assume now $S=\POm^+_{12}(r)$, so $q=r$. We  have $|N_L(\gen g)|/|C_L(g)|=6$; if $n\ge 64$ we have $|M| <q^{n+4}$ and the result follows with the usual calculation. If $n=32$ then $V$ is the spin module. The restriction of $V$ to $\text{Spin}^+_8(q)$ is the sum of four half-spins (two of each type). Letting $g$ be an element of order $q^4-1$, it follows that each eigenspace of $g$ on $V$ has dimension at most $4$, which is sufficient.
		
		Assume now $S=\POm^+_8(r)$, so $q=r$. We have $|N_L(\gen g)|/|C_L(g)|=6$. If $n\ge 48$ then $|M|<q^n$; the usual calculation with $B=6$ and $C=n$ gives the result. The remaining cases are $n=26,28,35$. If $n=26$ then $p=2$ and $V$ is a composition factor of the alternating square. We have $a=2$, so each nontrivial eigenspace has dimension at most $4$ and this is sufficient. If $n=28$, then $V$ is the alternating square, so $a=4$ and each nontrivial eigenspace has dimension at most $4$, and the result follows. If $n=35$ then $V$ is a composition factor of the symmetric square. We have $a= 5$, so all eigenspaces have dimension at most $5$, which gives the result. 
		
		Assume now $S=E_7(r)$ and $n=56$. In this case $|N_L(\gen g)|/|C_L(g)|=14$. Replacing $B=4$ by $B=14$ in the usual calculation gives the result.
		
		If $S=F_4(2)$, then $|N_L(\gen g)|/|C_L(g)|=12$, so in fact all eigenspaces have dimension at most $2$ and the result follows.
		
		Assume then $S=\PSL^\pm_4(r)$, so $q\ge r$. If $n\ge 32$, note first that by \cite{lubeck2001small}, either $n\ge 44$ or $G=\SL_n(q)$ or $\SU_n(q^{1/2})$. In any case we have $|M|<q^{n/2+2}$,  
		and the conclusion follows by the usual calculation with $B=4$ and $C=n/2+2$. The remaining cases are $n=6,10,14,15,16,19,20$. We address the case $S=\PSL_4^-(r)=\PSU_4(r)$; the case $S=\PSL_4(r)$ is nearly identical. If $n=6$ then $M$ is in class $\ca C_8$ in view of $\PSU_4(r)\cong \POm^-_6(r)$. If $n=10$ then $V$ is the symmetric square and $G=\SU_{10}(q^{1/2})$. Then $a=\dim(C_V(g))=2$, so every nontrivial eigenspace has dimension at most $2$, so $d=\dim (C_{\GL_n(K)}(g))\le 20$ and this implies $|C_G(g)|<|G:M|^{1/3}$. If $n=14$ then $p=2$ and $V$ is a composition factor of the adjoint module. We have $a=2$, so every nontrivial eigenspace has dimension at most $12/4=3$ and this gives the result. If $n=15$ then $p\neq 2$ and $V$ is the adjoint module; we have $a=3$ and every nontrivial eigenspace has dimension at most $3$ and we are done. If $n=16$ then $p=3$ and $V\cong S^3(W)/W$ where $W$ is the natural module. One calculates that $a\le 4$, so $d\le 64$ (obtained when  $a=0$) and the result follows. If $n=19$ then $a\le 9$; it follows that in fact $a\le 7$ and all other eigenspaces have dimension at most $3$, so $d\le 85$ and this is sufficient. If $n=20$ then it must be $a\le 8$ and $d\le 100$, so $|C_G(g)|<|G:M|^{1/3}$ also in this case.

		Assume now $S=\PSp_4(r)$, so $q=r$. If $n\ge 35$ then $|M| < q^{n/3+1}$; we conclude with the usual calculation with $B=4$ and $C=n/3+2$. The remaining cases are $n=10,12,13,14,16,20,24,25,30$. For $i=4,5$, let $W_i$ be the natural $i$ dimensional module for $\PSp_4(r)$ (recall $\PSp_4(r)\cong \Omega_5(r)$).	If $n=10$ then $V=S^2(W_4)$. We have $a=\dim(C_V(g))=2$ and so each eigenspace has dimension at most $2$, which gives $|C_G(g)|<|G:M|^{1/3}$. If $n=12$ then $p=5$ and the highest dominant weight is $(1,1)$; it follows that $V$ is a composition factor of $U:=W_4\otimes W_5$. Let $g\in L$ be an element of order $13$; then $g$ has eigenvalues $x,x^5,x^{-1},x^{-5}$ on $W_4$ and eigenvalues $x^3,x^2,x^{-3},x^{-2}$ on $W_5$.
		It follows from an easy calculation that $C_U(g)=0$ and each nontrivial eigenspace on $U$ has dimension at most $2$. 
		This implies that $d\le 24$, which gives $|C_G(g)|<|G:M|^{1/3}$. If $n=13$ (resp. $n=14$) then $p=5$ (resp. $p\neq 5$) and $V$ is a composition factor of $S^2(W_5)$. We have $a=1$ (resp. $a=2$) and each nontrivial eigenspace has dimension at most $12/4=3$, which is sufficient. If $n=16$ then the highest dominant weight is $(1,1)$; as in the case $n=12$ we have that $V$ is a composition factor of $U:=W_4\otimes W_5$. We easily calculate that $C_U(g)=0$, and in particular each nontrivial eigenspace on $U$ has dimension at most $4$; it follows that $d\le 64$, which gives the result. If $n=20$ then as in the $\PSL^\pm_4(r)$-case we see that $d\le 100$ and the result follows. If $n=25$ then
		$a\le 12$, so in fact $a\le 9$ and so $d\le 145$ and we conclude. If $n=24$ then $p=7$ and the highest dominant weight is $(1,2)$ (where the second weight corresponds to the short root); in particular $W$ is a composition factor of $U:=W_4\otimes A$ where $A$ is a $14$-dimensional composition factor of $S^2(W_5)$. Let $g$ be an element of order $25$. We calculate that $\dim(C_A(g))=2$ and each nontrivial eigenspace has dimension $1$; it follows immediately that each eigenspace on $U$ has dimension at most $4$, from which $d\le 96$ and  $|C_G(g)|<|G:M|^{1/3}$.
		
		Assume finally that $V$ is not $p$-restricted as $S$-module.  By \cite{schaffer1999twisted}, the possibilites for the embedding $S<G/Z(G)$ are as follows:
		\[
		\begin{array}{l}
			\PSL_m(q^2) < \PSL_{m^2}(q),\,m=4,5 \\
			\PSL_m(q) < \PSU_{m^2}(q^{1/2}),\,4\le m\le 8 \\
			\PSU_4(q^{3/2}) < \PSU_{64}(q^{1/2}) \\
			\PSp_4(q^3) < \PSp_{64}(q),\,q \hbox{ odd} \\
			\PSp_{2m}(q^2) < \POm^\varepsilon_{4m^2}(q),\,m=2,3,4,\, \varepsilon = (-1)^m  \\
			\POm^\varepsilon_{2m}(q^2) < \POm^\delta_{4m^2}(q),\,m=3,4 \\
		\end{array}
		\]
		If $S\neq \PSL^\pm_4(r), \PSp_4(r)$ then $n\ge 25$ and $V=W\otimes W^{(q)}$ where $W$ is the natural module for $S$. Since $g$ has distinct eigenvalues on $W$, it follows that each eigenspace of $g$ on $V$ has dimension at most $\dim(W) = n^{1/2}$, which is sufficient. Assume now $S=\PSL^\pm_4(r)$.
		If $n=16$, we see that $C_V(g)=0$ and we see that each eigenspace has dimension at most $3$ (in order to prove this, it is enough to check that at least one eigenspace has dimension at most $3$, since the eigenspaces are permuted in orbits of size $4$). This gives the result. If $n=64$ then we see similarly that each eigenspace of $g$ on $W\otimes W^{(q)}$ has dimension at most $3$, from which each eigenspace on $V=W\otimes W^{(q)}\otimes W^{(q^2)}$ has dimension at most $12$ and this is enough. The case $S=\PSp_4(r)$ can be handled by the same argument.
		
		\textbf{Case 3}: $g$ is not semisimple in $\SL_n(q)$. We want to apply  \Cref{l:green_correspondence}. Note that in all cases if $S$ is of Lie type then $g$ is regular in $S$. (The exclusion of the groups in $\ca H_1$(2)($\diamond$) is crucial for this. These groups are included in $\ca H_2$, in which case the element $g=x_1$ from \Cref{table:new_unique} is chosen, and this element is regular by \Cref{l:coprime_schur}(iii).)
		 Then, by \Cref{l:coprime_schur}(vii) we have that $\gen{g}$ contains a Sylow $p$-subgroup $P$ of $L$ with $|P|\ge 5$. Moreover, for every $1\neq P_0\le P$ we have $N_L(P_0)=N_L(P)=N_L(\gen g)$ by \Cref{l:coprime_schur}(v). Let now $K=C_L(g)$. \Cref{l:coprime_schur}(iv) implies that $K$ is abelian and $K\trianglelefteq N_L(P)$; so assumption ($\star$) in \Cref{l:green_correspondence} is satisfied. Therefore, by \Cref{l:green_correspondence} we have  $V\downarrow P = V_0\oplus U$ where all Jordan blocks on $U$ have size $|P|$, and $V_0$ is the sum of at most $|N_L(P):K|$ Jordan blocks of the same size, say $t$. Note that $|N_L(P):K|=|N_L(\gen g):C_L(g)|$ appears in \Cref{table:new_unique} or \Cref{table:sporadics} under $n_i$. Now, for convenience, we replace $\gen g$ by a power of it generating $P$, so now $g$ is unipotent. (This makes it more convenient to compute centralizers.) As always, we have $d=\dim(C_{\GL_n(K)}(g))$ and $a=\dim(C_V(g))$. 
		
		Assume first that $t\ge 4$, so all Jordan blocks of $g$ have size at least $\min\{|P|,t\}\ge 4$. Then by \Cref{l:unipotent_centralizer} with $C=0$ and $B=4$ we deduce $d \le n^2/4$. Moreover, denoting by $R$ the number of Jordan blocks of $g$, we have $R \le n/4$. Next, we can upper bound $|C_G(g)|$ as in \Cref{l:compare_dim_centralizer_GL_Sp,l:order_centralizers}.
		\begin{itemize}
			\item If $G=\SL_n(q)$ and $n\ge 11$, we have $|C_G(g)|<q^d$ and so
			\[
			|M| |C_G(g)|^3 < q^{2n+5 + 3n^2/4} \le q^{n^2-2} <|G|.
			\]
			\item If $G=\SU_n(q^{1/2})$ and $n\ge 22$, then setting $q_0=q^{1/2}$, we have $|C_G(g)|< q_0^{n/4 + n^2/4}$ and so
			\[
			|M| |C_G(g)|^3 < q_0^{4n+10 + 3n/4 + 3n^2/4} \le q_0^{n^2-2} <|G|.
			\]
			\item If $G=\Sp_n(q)$ and $n\ge 24$, then letting $d'$ be the dimension of the centralizer of $g$ in $\Sp_n(K)$, we have $d'\le d/2+R/2$ and $R\le n/4$, so  $|C_G(g)|< q^{d'+R}\le q^{n^2/8 + 3n/8}$ and 
			\[
			|M| |C_G(g)|^3 < q^{2n+5 + 9n/8 + 3n^2/8} \le q^{n^2/2+n/2-1} <|G|.
			\]
			\item If $G=\Omega^\varepsilon_n(q)$ and $n\ge 28$, then letting $d'$ be the dimension of the centralizer of $g$ in $\SO_n(K)$, we have $d'\le d/2$ and $R \le n/4$, so $|C_G(g)|< q^{d'+R}\le q^{n^2/8 + n/4}$ and 
			\[
			|M| |C_G(g)|^3 < q^{2n+5 + 3n/4 + 3n^2/8} \le q^{n^2/2-n/2-2} <|G|.
			\]
		\end{itemize}
		By \cite{hiss2001malle}, we see that for $n\le 27$, either the modular character tables of $S$ and its covers are available in \cite{GAP_character_table}, or $S=\PSp_4(7)$ (with $n=24,25$) or $S=Co_1$ (with $n=24$). If $S=\PSp_4(7)$ then $G=\SL^\pm_n(q)$, in which case we already have the result for $n\ge 22$. If $S=Co_1$, we have $p=q=23$ and $M=2.Co_1$ and $G=\Omega^\varepsilon_n(q)$. In this case it is convenient to change our choice of the element $g$. The restriction of $V$ to $H=Co_2$ has composition factors of degree $23$ and $1$. Now the modular character table of $Co_2$ is available in \cite{GAP_character_table}; we see that an element $g$ of order $11$ has $10$ nontrivial eigenspaces of dimension $2$ and fixed space of dimension $4$. Therefore $d= 56$ and $d'=d/2-4/2 = 26$, where $d'$ is the dimension of the centralizer of $g$ in $\SO_n(K)$, which implies $|M||C_G(g)|^3 < |G|$.

		Assume then $t\le 3$, so $\dim(V_0)  \le t|N_L(A):S| \le 3n_i=:3f$ (here as usual $n_i=|N_L(\gen g)/C_L(g)|$). 
		Applying \Cref{l:unipotent_centralizer} with $C=3f$ and $B=5$, we deduce
		\[
		d \le \frac{n^2}{5} + \frac{36f^2}{5}.
		\]
		Assume first $f\le n/10$, so $d\le 0.272 n^2$. Moreover, letting $R$ denote the number of Jordan blocks of $g$, we have $R\le 3f+ (n-3f)/5\le 11n/25$.  Since $f\ge 4$, we deduce that $n\ge 10f \ge 40$. Using $|M|\le q^{2n+5}$ and \Cref{l:compare_dim_centralizer_GL_Sp,l:order_centralizers} we calculate  similarly to above that $|M||C_G(g)|^3<|G|$, unless $G=\Omega^\varepsilon_n(q)$ with $40\le n\le 43$.
		In these exceptional cases, since $f\le n/10$ we have $f=4$. We have $\dim(V_0)\le 12$ and all Jordan blocks on $U$ have odd size $|g|\ge 5$. If the Jordan block on $V_0$ have size at least $2$, it is easy to calculate that $d\le n^2/5 + 96$. Using $R\le 11n/25$, we see that $|M||C_G(g)|^3<|G|$ for $n\ge 39$. The remaining case is where $g$ is trivial on $V_0$. Then by \cite[Theorem 3.1]{liebeck_seitz_2012unipotent} we have $d'=d/2 - R/2$, where $d'$ is the dimension of the centralizer of $g$ in $\SO_n(K)$, so $d'+R = d/2 +R/2$. Using as above $R\le 22n/50$ and $d\le 0.272n^2$, we see that $|M||C_G(g)|^3 < |G|$ for $n\ge 36$, and so this case is done.
		
		Assume now $f>n/10$. Then by \Cref{l:landazuri_seitz}, $S$ is one of the following  (recall we are excluding $\SL_4(2)\cong A_8$ and $\Sp_4(2)'\cong A_6$ and we already handled $\POm^+_8(2)$):
		\begin{align*}
			&\PSL_m(r) \text{ with }(m,r)=(4,3), (5,2) \\
			&\PSU_m(r) \text{ with }(m,r)=(4,2),(4,3),(5,2),(6,2),(7,2)  \\
			&\PSp_{2m}(r) \text{ with }(m,r)=(2,3),(2,4),(2,5),(2,7),(3,2),(3,3),(4,2),(4,3) \\
			&\POm^-_8(2), \POm_7(3) \\
			& ^2B_2(8), G_2(3), \,^3D_4(2), F_4(2) \\
			&\text{a sporadic group not } ON, He,Th,Fi_{23}, Fi'_{24}, B, M.
		\end{align*}

		In all these cases we have $f\le 22$, and therefore $n<10f \le 220$, and moreover $p$ divides the order of $g$. 
		For all the groups except for $\PSU_7(2),\PSp_4(7), \PSp_8(3), F_4(2), Co_1, Ly, J_4$, the modular character table is available in \cite{GAP_character_table}, in which case the result follows.
		
		Assume now $S$ is one of $\PSU_7(2),\PSp_4(7), \PSp_8(3), F_4(2), Co_1, Ly, J_4$. The group $S=Co_1$ was handled above in the case $t\ge 4$ (and we did not use this assumption). In order to handle the other groups, we keep the notation $t$, $V_0$ and $U$ as above. It is sufficient to prove that $t\ge 4$, as this reduces to a case already considered; also we may assume $f>n/10$ and in particular $n<220$. If $S=\PSU_7(2)$ then by \cite{hiss2001malle} we have $n=42,43$. We have $|g|=\Phi^*_{14}(2)=43$, and so $t\ge 4$ (since otherwise $\dim(V_0)\le 3f\le 21$; but all Jordan blocks on a complement have size at least $43$). If $S=\PSp_4(7)$ then by \cite{hiss2001malle} we have $n=24,25$ or $n\ge 126$. Since $f=4$ we may assume $n=24,25$. But $|g|=\Phi^*_4(7) =25$, so as in the previous case we have $t\ge 4$ and we are done.  Assume now $S=\PSp_8(3)$. Then by \cite{hiss2001malle} we have $n=40,41$. Since $|g|=\Phi^*_8(3)=41$ we deduce $t\ge 4$ exactly as above. Assume $S=F_4(2)$, so $n=52$ and $|g|=13$. We have $G=\Omega^\varepsilon_n(q)$. If $t\le 3$ then $\dim(V_0)\le 3f =36$. But all Jordan blocks on a complement have size $13$, so in fact there are at least two such blocks, and so $\dim(V_0)\le 26$. But also $a=\dim (C_V(g))\le n/2=26$ and all Jordan blocks on $V_0$ have the same size, so $d\le 2\cdot 15^2 + 11\cdot 2^2=494$ (attained if $g$ has two Jordan blocks of size $13$ and the others of size $2$), which implies $|C_G(g)|<|G:M|^{1/3}$.
		If $S=Ly$ or $J_4$, then since $p$ divides $|g|$, by \cite{hiss2001malle} we see that there are no cases with $n<220$. The proof is finally concluded. 
	\end{proof}
	
	The remaining cases are $S=\PSL_2(r), \PSL^\pm_3(r)$ or $|M|\ge q^{2n+4}$. We address now the case $|M|\ge q^{2n+4}$; we use the main result of \cite{liebeck1985orders} asserting that the possibilities for $M$ are rather restricted.
	
	\begin{lemma}
		\label{l:liebeck}
		\Cref{t:main_simple} holds if $M$ is in class $\ca S$ and $|M|\ge q^{2n+4}$.
	\end{lemma}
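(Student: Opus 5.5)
The plan is to use Liebeck's theorem on large subgroups \cite{liebeck1985orders}. It says that if $M\in\ca S$ satisfies $|M|\ge q^{2n+4}$, then, up to finitely many explicit small exceptions, the socle $S$ of $M$ is as follows.
\begin{itemize}
\item[(a)] $S=A_m$, with $V$ the fully deleted permutation module ($n=m-1$ or $m-2$).
\item[(b)] $S$ is a classical group in the natural characteristic $p$, and $V$ is one of a short list of modules: the alternating or symmetric square of the natural module of $\PSL_m(q)$ (or its unitary analogue), the spin module for $\Omega_7,\Omega_9,\Omega^\pm_{10}$ (or $\PSp_6$ with $p=2$), the $27$-dimensional module for $E_6(q)$, or the $56$-dimensional module for $E_7(q)$. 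Some of these may come with twisted tensor or field twists.
\end{itemize}
I will first list these cases explicitly from \cite{liebeck1985orders}, matching the paper's conventions. Case (a) is already done by \Cref{l:fully_deleted}. The alternating and symmetric squares for $\PSL_m(q)$ are done by \Cref{l:alternating_square}. The finitely many small exceptions (for instance $n\le 27$ with $S$ of small order) I will dispose of using the character tables in \cite{GAP_character_table}, as in the earlier lemmas.

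For the remaining cases I will write down an explicit element $g\in L$ and bound $d=\dim C_{\GL_n(K)}(g)$ directly from the weights of $V$. The target is $|C_G(g)|^3|M|<|G|$, and I will check it using \Cref{l:order_classical_groups,l:order_centralizers,l:compare_dim_centralizer_GL_Sp}.
\begin{itemize}
\item For the unitary squares ($S=\PSU_m(q^{1/2})$ on $\Lambda^2W$ or $S^2W$), take $g$ of order $(q^{m/2}+1)/(q^{1/2}+1)$ for $m$ odd, or an element irreducible on a hyperplane for $m$ even. Show, as in \Cref{l:alternating_square}, that $g$ has distinct eigenvalues, or only a bounded number of repeated ones, on $V$. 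Then $|C_G(g)|\le (q+1)^{n}$ roughly, and the inequality holds since $|M|\le q^{m^2}$ with $n\approx m^2/2$.
\item For the spin modules, take $g$ of order $q^{k}+1$ or $q^k-1$ acting irreducibly on a maximal nondegenerate subspace, or on totally singular complements, of the natural module of $S$. Its eigenvalues on the spin module are products $\prod \lambda_i^{\pm1/2}$. The $\pm$ patterns show that each eigenspace has dimension at most $2$, or at most $4$ in the $D_5$/$B_4$ cases. As in the $\POm^+_{12}$ argument above, I can restrict to a smaller spin group and use the known decomposition into half-spins.
\item For $E_6(q)<\SL_{27}(q)$ and $E_7(q)<\Sp_{56}(q)$, take $g=x_1$ from \Cref{table:new_unique}, with $|x_1|=\Phi^*_9(q)$, respectively $\Phi^*_{14}(q)$. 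The weights of $V$ are permuted by $\langle w\rangle$ for a Coxeter-type element $w$, in orbits of size $9$ or $14$ together with a few fixed zero weights. So the eigenspaces have dimension at most $3$ or $4$, and $d$ is $O(n)$.
\end{itemize}
In each case $|M|\le q^{\dim S+O(1)}$ with $\dim S\approx n^{1/2}\cdot$const or smaller. This is far below $|G|/q^{3d}$, since $d$ is linear in $n$.

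The main obstacle is getting the exact list from \cite{liebeck1985orders} right, including twisted versions and the small-$q$ and small-$n$ exceptions where the crude bound $q^{2n+4}$ is nearly attained. I also expect the eigenvalue-multiplicity computation for the spin modules in characteristic $2$ to be delicate, because unipotent elements may be forced there. In those cases I will fall back to a semisimple element of order a ppd of $q^{k}+1$ and compute the multiplicities by hand; if that fails for a few small $q$, I will use GAP.
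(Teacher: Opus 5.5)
\textbf{The gap: the $8$-dimensional spin module.} Your skeleton matches the paper's: \cite[Theorem 4.2]{liebeck1985orders}, then \Cref{l:fully_deleted,l:alternating_square}, then explicit elements for the spin, $E_6$, $E_7$ and sporadic cases. But your uniform target $|C_G(g)|^3|M|<|G|$ cannot be met in one case that is genuinely on the list. This is the $8$-dimensional spin module of $S=\POm_7(q)$ (equivalently $\Sp_6(q)$ with $q$ even) inside $G=\Omega^+_8(q)$.

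Here $|M|\approx q^{21}\ge q^{20}=q^{2n+4}$, so the case is in scope. However $|G:M|=q^3(q^4-1)/(2,q-1)\approx q^7$. Every element of the rank-$4$ group $\Omega^+_8(q)$ has centralizer of order at least roughly $q^4$. So $|C_G(g)|^3|M|$ is at least about $q^{33}$, against $|G|\approx q^{28}$. No eigenspace-dimension bound can make the estimate $\fp(g,G/M)\le |C_G(g)|$ work here, and since $q$ is unbounded, GAP cannot cover it either.

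The missing idea is the one the paper uses. $S$ is conjugate under a triality automorphism $\tau$ to the stabilizer of a nonsingular $1$-space, and $\fp(g,G/M)=\fp(g^\tau,G/M^\tau)$. So the class $\ca C_1$ argument applies: it gives $\fp\le q+1$, and $(q+1)^3<|G:M|$. Failing that, you would have to compute $|g^G\cap M|/|M|$ exactly via \Cref{l:fpr_enough}.

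\textbf{Further points.} The failure is hidden by your closing heuristic $\dim S\approx n^{1/2}\cdot$const, which is backwards. The hypothesis $|M|\ge q^{2n+4}$ forces $\dim S\gtrsim 2n$: compare $21$ with $8$, $78$ with $27$, and $133$ with $56$. The heuristic is harmless for $n=16,27,56$ only because $|G|$ is then of order $q^{n^2/2}$ or more.

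Liebeck's list also contains $M_{24}$ ($n=11$) and $Co_1<\Omega^+_{24}(2)$. For $Co_1$ the paper argues by hand rather than through the character table library: it takes $g$ of order $23$, with $a\le n/2$ and $B=11$. This is easy, but you should name the case rather than fold it into ``small exceptions''.

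For $E_6$ and $E_7$ your route genuinely differs from the paper's. The paper bounds $a=\dim C_V(g)$ by generation arguments: $a\le n/3$ via invariable generation, Gow and Scott for $E_6$, and $a\le n/2$ via two conjugates for $E_7$. It then feeds this into the optimization lemmas. Reading eigenvalues off the $\langle w\rangle$-orbits on weights is more direct and gives much stronger bounds. Note, though, that these modules are minuscule: there are no zero weights, only three orbits of size $9$, respectively four of size $14$. You must still show that no weight is trivial on $x_1$. Restricting to the overgroups $\SL_3(q^3)$, respectively $\SU_8(q)$ (with $V\downarrow A_7=\Lambda^2W\oplus\Lambda^2W^*$), shows $C_V(x_1)=0$.

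Finally, the unitary squares and twisted versions you include do not occur, since their orders are far below $q^{2n+4}$.
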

	
	\begin{proof}
		Let $S$ be the socle of $M$. By \cite[Theorem 4.2]{liebeck1985orders},  we deduce that we are in one of the following cases:
		\begin{itemize}
			\item[(i)] $S=A_m$  and $V$ is the fully deleted permutation module with $n=m-1$ or $m-2$;
			\item[(ii)] $S=\PSL_m(q)$ and $V$ is the alternating square of the natural module;
			\item[(iii)] $S=\POm_7(q)$, $\POm_9(q)$ or $\POm^+_{10}(q)$ and $V$ is a spin module of dimension $8$, $16$ or $16$ (here we allow $q$ even for $\POm_7(q)$ and $\POm_9(q)$);
			\item[(iv)] $S=E_6(q)$ or $E_7(q)$ and $n=27$ or $56$;
			\item[(v)] $S=M_{24}$ or $Co_1$ and $n=11$ or $24$.
		\end{itemize}
		We consider each case in turn, noting  that	(i) has been handled in \Cref{l:fully_deleted} and (ii) has been handled in \Cref{l:alternating_square}. As in the previous proof, we replace $G$ by its cover acting faithfully on $V$.

		(iii) For $n=8$ we have $G=\Om^+_8(q)$, and $S$ is conjugate in $\text{Aut}(G/Z(G))$ to the stabilizer of a nonsingular vector, which has been handled already. For $n=16$ and $S=\POm_9(q)$ or $\POm^+_{10}(q)$, the restriction of $V$ to $\text{Spin}^+_8(q)$ is the sum of the two half-spin modules of dimension $8$, so by letting $g$ be an element of order $q^4-1$, we have that each eigenspace of $g$ has dimension at most $2$, and the result follows.
		
		(iv) For $S=E_6(q)$ or $E_7(q)$, we choose $g$ as in the proof of \Cref{l:class_S}. If $S=E_6(q)$ with $n=27$, then $G=\SL_{27}(q)$. We have $|M|<q^{3n}$ and,  arguing exactly as in the proof of \Cref{l:class_S}, we apply \Cref{l:general_parameters_SL} with $c=1/3$, $B=8$, and $C=3n$; we see that \eqref{eq:condition_second} holds and the conclusion follows. If $S=E_7(q)$ with $n=56$, then $|M|<q^{3n}$, $G$ is symplectic or orthogonal and we apply \Cref{l:general_parameters_Sp} or \Cref{l:general_parameters_SO} with $c=1/2$, $B=14$, and $C=3n$; we see that \eqref{eq:condition_second_Sp} and \eqref{eq:condition_second_SO} are satisfied and the conclusion follows.
		
		(v) If $S=M_{24}$ we can use \cite{GAP_character_table}. If $S=Co_1$ then $q=2$ and $G$ is orthogonal. We choose $g$ as in \Cref{table:sporadics}; we argue as in the proof of \Cref{l:class_S} and we
		apply \Cref{l:general_parameters_SO} with $c=1/2$, $B=11$, and $C=3n$ to see that  \eqref{eq:condition_second_SO} holds and the proof is concluded.
	\end{proof}
	
	The remaining cases are $S=\PSL_2(r)$ or $\PSL^\pm_3(r)$.

	\begin{lemma}
		\label{l:SL2}
		
		\Cref{t:main_simple} holds if $M$ is in class $\ca S$ with $S=\PSL_2(r)$ or $\PSL^\pm_3(r)$.
	\end{lemma}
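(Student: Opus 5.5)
We plan to treat the quasisimple cover $L$ of $S=\PSL_2(r)$ or $\PSL^\pm_3(r)$ directly, using the explicit knowledge of its irreducible representations, rather than the generation arguments of \Cref{l:class_S}. Throughout we replace $G$ by its cover acting faithfully on $V$, keep the notation $d=\dim C_{\GL_n(K)}(g)$ and $a=\dim C_V(g)$, and aim to find $g\in L$ with $|M||C_G(g)|^3<|G|$. Where this fails, we will use the exact formula of \Cref{l:fpr_enough}. Small cases, namely $r$ small or $n$ small so that the Brauer character tables are in \cite{GAP_character_table}, are disposed of computationally as before \Cref{l:fully_deleted}. Cases where $M$ is in fact in a geometric class are excluded, for instance $\PSL_2(r)$ on the adjoint module, which lies in $\ca C_8$ via $\Omega_3$, and $\PSL_3(r)$ on the symmetric square, which \Cref{l:alternating_square} covers.

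\emph{Cross characteristic} ($r$ coprime to $p$). Here $n$ is bounded below by the Landazuri--Seitz bounds, roughly $(r-1)/2$ for $\PSL_2(r)$ and roughly $r^2$ for $\PSL^\pm_3(r)$, and above by $|L|$. Since $|M|\le r^{3}\log r$ (resp. $r^{8}\log r$), we get $|M|\le q^{C}$ with $C$ of order $\log_q$ of a small power of $n$. For $\PSL_2(r)$ we take $g$ of order $(r\pm1)/(2,r-1)$ in a cyclic torus, with the sign chosen so that $g$ is a $p'$-element, or else take $g$ unipotent of order $r$. The character values of $L$ on such $g$ are known and bounded by $2$ in absolute value, from the generic character table of $\SL_2(r)$. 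Hence every eigenspace of $g$ has dimension at most about $n/|g|+2$, so $d\lesssim n^2/|g|+O(n)$, which is far below $n^2/4$ once $r$ is moderately large. Applying \Cref{l:general_parameters_SL,l:general_parameters_SU,l:general_parameters_Sp,l:general_parameters_SO} with a large $B$ and small $C$ (or estimating $|C_G(g)|$ directly via \Cref{l:order_centralizers}) finishes these cases. For $\PSL^\pm_3(r)$ we do the same with $g$ of order $(r^2\pm r+1)/(3,r\mp1)$. Its centralizer is the torus, and its normalizer acts with orbits of size $3$. Character values are again uniformly bounded by the Simpson--Frame tables, so eigenspaces have dimension roughly $n/|g|$.

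\emph{Defining characteristic} ($r$ a power of $p$). By Steinberg's tensor product theorem, $V$ is a twisted tensor product of restricted modules. Using \cite{lubeck2001small} and \cite{schaffer1999twisted}, $\F_q$ is determined relative to $\F_r$. For $\PSL_2$, $V=\bigotimes_i L(a_i)^{(p^i)}$ with $0\le a_i\le p-1$. For $g$ we take a torus element of order $(r+1)/(2,r-1)$ (or $r-1$). Its eigenvalues on $L(a)$ are the powers $\zeta^{a},\zeta^{a-2},\dots,\zeta^{-a}$, so a direct multiplicity count on the tensor product gives eigenspaces of dimension at most about $n/(\text{number of distinct eigenvalues})$. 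Because $|M|\le q^{4}$ here while $|G|\ge q^{n^2/2-n/2-2}$, only $n\le 5$ or so requires care. In those cases $V$ is the natural module, $\Sym^2$, or a twisted tensor square $W\otimes W^{(q)}$ with $n=4$ (which embeds in $\Omega^-_4$, i.e. class $\ca C$), and we compute $\fp(g,G/M)=|C_G(g)||g^G\cap M|/|M|$ explicitly. For $\PSL^\pm_3$ the restricted modules of dimension below about $r$ times a constant are listed by L\"ubeck, and the same torus-eigenvalue count with $g$ of order $r^2\pm r+1$ applies.

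The main obstacle is the smallest dimensions, where $|M|$ is comparable to $|G:M|^{1/3}$. Examples are $\PSL_2(r)<\PSL_3(q)$ or $\PSU_3$ via $\Sym^2$ in odd characteristic, $\PSL_2(r)<\Sp_4(q)$ via $\Sym^3$, and small cross characteristic Weil-type representations such as $\PSL_2(r)$ in dimension $(r\pm1)/2$ with $r$ small. There crude centralizer bounds do not suffice. We will instead show that $g^G\cap M$ is a single $M$-class, or a bounded number of classes, by matching characteristic polynomials, and then use \Cref{l:fixed_points_precise_normal_subgroup}. We will fall back on GAP for finitely many residual $(r,q,n)$.
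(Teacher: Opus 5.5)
Your $\PSL_2(r)$ argument is a workable, if cruder, alternative to the paper's. In defining characteristic the paper uses a regular unipotent element, which acts as a single Jordan block on a restricted $L(a)$; in cross characteristic it applies Brauer's permutation lemma to the Borel subgroup $P\rtimes\gen{g}$ rather than character values. The $\PSL^\pm_3(r)$ part, however, has two genuine gaps.

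\textbf{Defining characteristic.} You only assert that ``the same torus-eigenvalue count'' applies, but it does not transfer. Your $\SL_2$ count relies on every restricted $L(a)$ having all weight multiplicities $1$. Restricted $A_2$-modules have weight multiplicities that grow with the highest weight: for $2x+2\le p$ the zero weight of $L(x,x)=V(x,x)$ has multiplicity $x+1$. Your plan also gives no bound at all on $a=\dim C_V(g)$.

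The margin here is tight. Suppose you know $a\le n/2$ and that $N_L(\gen{g})/C_L(g)$ has orbits of size $3$ on the nontrivial eigenspaces. This still allows $d=(n/2)^2+3(n/6)^2=n^2/3$, and then $|C_{\SL_n(q)}(g)|^3$ is about $q^{n^2}$, which exceeds $|G:M|$. The paper supplies exactly the missing inputs:
\begin{itemize}
\item $L$ is generated by two conjugates of $g$ \cite{guralnick2012malle}, so $a\le n/2$.
\item If $V$ is self-dual, the eigenspaces pair with their inverses, giving $B=6$ in \Cref{l:general_parameters_Sp,l:general_parameters_SO}.
\item If $V$ is not self-dual, Premet's theorem \cite{premet1988weights} shows that all weights of $(x-y,0)$, or of $(2,1)$, occur in $V$. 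Together with a direct check of the twisted tensor products, this shows $g$ has at least six distinct nontrivial eigenvalues, improving the bound to $d\le n^2/3-n+6$. The few small modules are then checked individually.
\end{itemize}
You need these, or an explicit computation with the characters of all restricted $A_2$-modules, which the proposal does not provide.

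\textbf{Cross characteristic.} Nothing prevents $p$ from dividing $r^2\pm r+1$. In that case your $g$ is not semisimple on $V$, its Brauer character value is undefined, and the eigenspace estimate gives nothing. For $\PSL_2$ you chose the torus to avoid this, but you did not do so here. The paper treats exactly this case with \Cref{l:green_correspondence}: $V\downarrow P$ is a sum of blocks of size $|P|\ge 7$ plus at most three blocks of a common size $t$, and \Cref{l:unipotent_centralizer} then bounds $d$. Alternatively, you could pass to a regular element of the torus of order $(r^2-1)/(3,r-1)$ (or its unitary analogue). Its order is prime to $p$ in this situation, but you would have to redo the estimates with normalizer orbits of size $2$.

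\textbf{Smaller points.}
\begin{itemize}
\item The bound $|\chi(g)|\le 2$ (or $3$) holds for ordinary characters. For the Brauer character of $V$ you must pass through an ordinary $\tilde V$ having $V$ as a composition factor. This gives eigenspaces of dimension at most $\dim\tilde V/|g|+c$ rather than $n/|g|+c$; that is still enough, but it must be said.
\item In the defining-characteristic $\PSL_2$ case, ``eigenspaces of dimension at most about $n/(\text{number of distinct eigenvalues})$'' is backwards: that quantity is a lower bound for the largest eigenspace.
\item The bound $|M|\le q^4$ fails for the twisted tensor embeddings $\PSL_2(q^f)<\PSp_{2^f}(q)$.
\end{itemize}
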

	
	\begin{proof}
		As usual, we replace $G$ by its cover acting faithfully on $V=\F_q^n$, and $L\trianglelefteq M\le G$ where $L$ is quasisimple with $L/Z(L) \cong S$.
		
		Assume first $S=\PSL_2(r)$. We may assume $r\ge 7$ and $r\neq 9$ in view of $\PSL_2(4)\cong \PSL_2(5)\cong A_5$ and $\PSL_2(9)\cong A_6$. 
		Suppose $p\mid r$, that is, $S\in \text{Lie}(p)$. If $V$ is $p$-restricted as $L$-module, then $r=q$, $V$ is a symmetric power of the natural 2-dimensional module, and a regular unipotent element $g$ of $L$ is regular in $\SL_n(q)$. Moreover, $g^G$ splits into at most $(2,q-1)$-classes, and by \cite{bray2013holt_colva} we have $n\ge 4$; it follows that $\fp(g,G/M)\le (2,q-1)|C_G(g)|/q <|G:M|^{1/3}$. For example, for $n=4$ we have $G=\Sp_4(q)$, $p\ge 5$, and $|C_G(g)|=(2,q-1)q^2$, from which $\fp(g,G/M) \le (2,q-1)^2 q < |G:M|^{1/3}$. If $V$ is not $p$-restricted, then  by \cite{schaffer1999twisted} the possibilities are $\PSL_2(q^f)\cong \PSp_2(q^f) < \PSp_{2^f}(q)$ with $qf$ odd, and $\PSL_2(q^f)\cong \Omega_3(q^f) < \Om_{3^f}(q)$ with $qf$ odd. We have $V=W\otimes W^{(q)} \otimes \cdots \otimes W^{(q^{f-1})}$ where $\dim(W)=2$ or $\dim(W)=3$ in the respective cases. We let $g\in L$ be of order $\Phi^*_2(q^f)$. 
		Assume first $n=8$. We see that $g$ has an eigenvalue generating $\F_{q^6}$, so $g$ has six distinct nontrivial eigenvalues and the conclusion follows. In all the other cases, we see that $g$ has distinct eigenvalues on $W\otimes W^{(q)}$. In particular, each eigenspace on $V$ has dimension at most $\dim(W)^{f-2}$, which is sufficient.

		Assume now $p\nmid r$, that is, $S\in \text{Lie}(p')$, and denote by $\ell$ the prime divisor of $r$. If $r=7,11,13$ we can use \cite{GAP_character_table}; assume then $r\ge 16$. Denoting $e=(2,r-1)$, the possibilities for $n$ are $(r\pm 1)/e$, $r\pm 1$ and $r$. The case $n=r+1$ is excluded since this representation is imprimitive. Let $g$ be a generator of a split torus of $L$, and let $P$ be a Sylow $\ell$-subgroup of $L$. By Brauer's permutation lemma, $\gen g$ permutes the nontrivial irreducible characters of $P$ in orbits of size $(r-1)/e$. It follows that if $n=(r-1)/e$ then $g$ is regular; if $n=(r+1)/e$ then $g$ is regular on a codimension $2/e$-subspace and $\dim(C_V(g))=4-e$; if $n=r-1$ with $r$ odd then $V$ is the sum of two cyclic $K\gen g$-modules of dimension $(r-1)/2$; if $n=r$ then $V=U\oplus W$ where $W$ is a one-dimensional trivial $K\gen g$-module, and $U$ is a cyclic $K\gen g$-module (for $r$ even), or the sum of two cyclic $K\gen g$-modules of dimension $(r-1)/2$  (for $r$ odd).

		Assume now $S=\PSL^\pm_3(r)$ with $r\ge 3$. We choose $|g|=\Phi^*_3(r)$ for $\PSL_3(r)$ and  $|g|=\Phi^*_6(r)$ for $\PSU_3(r)$, so $N:=N_L(\gen g)/C_L(g)$ acts with all orbits of size $3$ on the set of nontrivial eigenspaces of $g$ on $V$. Note also that by \cite[Propositions 3.11 and 3.13]{guralnick2012malle} $S$ is generated by two conjugates of $g$, so $a=\dim(C_V(g))\le n/2$.

		Assume first $p\mid r$. Let $\alpha_1$ and $\alpha_2$ be simple roots for the algebraic group $A_2$ corresponding to $S$, with corresponding fundamental dominant weights $\lambda_1$ and $\lambda_2$. For integers $x$ and $y$, we will denote the character  $x\lambda_1 + y\lambda_2$ by $(x,y)$. Let $\lambda=(x,y)$ be the highest dominant weight of $V$, where $x\ge y$.
		
		Suppose $x=y$, that is, $V$ is self-dual. Then, for each $N$-orbit $\ca O$ of nontrivial eigenspaces, there is another orbit whose eigenspaces have the same dimension (and whose corresponding eigenvalues are inverses of those in $\ca O$). Since $a=\dim(C_V(g))\le n/2$, we may apply \Cref{l:general_parameters_Sp,l:general_parameters_SO} with $c=1/2$ and $B=6$ to deduce:
		\begin{itemize}
			\item If $G=\Sp_n(q)$ and $n\ge 16$, then \eqref{eq:condition_second_Sp} is satisfied.
			\item  If $G=\Omega^\varepsilon_n(q)$ and $n\ge 12$, then \eqref{eq:condition_second_SO} is satisfied.
		\end{itemize}
		
		If $n$ is at least these values, then by \Cref{l:general_parameters_Sp,l:general_parameters_SO} we have $|C_G(g)|<|G:M|^{1/3}$. Assume then $n$ is smaller than these values; by \cite{lubeck2001small}, the only remaining possibility is the adjoint module for $S$, with $n=7$ or $8$ according to whether $p=3$ or $p\neq 3$. For $n=7$ we have $M\not\in A$ by \cite{bray2013holt_colva}. For $n=8$ we have $a=2$, and as above every nontrivial eigenspace of $g$ has dimension $1$. Moreover $G=\Omega^\pm_8(q)$; by \Cref{l:compare_dim_centralizer_GL_Sp}, the dimension of the centralizer of $g$ in $\SO_8(K)$ is $4$ and we see that $|C_G(g)|<|G:M|^{1/3}$. 
		
		Suppose now $x>y$, so $V$ is not self-dual. We claim that $g$ has at least six distinct nontrivial eigenvalues on $V$.
		
		Assume $V$ is $p$-restricted as $L$-module. It follows from Premet's theorem \cite[Theorem 1]{premet1988weights} that every dominant weight $\gamma=(c,d)$ with $\gamma\in X:=\lambda - \mathbb Z \alpha_1 - \mathbb Z \alpha_2$ is a weight of $V$. Suppose $x\ge y+2$. Since $(x-y,0) =\lambda - (y,y) =\lambda - y\alpha_1-y\alpha_2\in X$, again by Premet's theorem, $V$ contains all the weights corresponding to the representation with highest weight $(x-y,0)$, which is the $(x-y)$-th symmetric power. Since $x-y\ge 2$, and since $g$ has at least six distinct nontrivial eigenvalues on $(2,0)$, we deduce that $g$ has at least six distinct nontrivial eigenvalues on $V$, as claimed.
		Now suppose $x=y+1$. Then $(2,1) = \lambda - (y-1,y-1)\in X$, so $V$ contains all the weights corresponding to the representation with highest weight $(2,1)$. This has dimension $15$ and is a constituent of $S^2(W)\otimes W^*$ where $W$ is the $3$-dimensional natural module. We can then calculate that $g$ has at least six distinct nontrivial eigenvalues on $V$.
		
		Assume now $V$ is not $p$-restricted. The possible embeddings are of type $\PSL_3(q)<\PSU_9(q^{1/2})$, or  $\PSL_3(q^f) < \PSL_{3^f}(q)$, or $\PSU_3(q^{f/2}) < \PSU_{3^f}(q^{1/2})$ with $f$ odd. In all cases, it is easy to check that $g$ has at least six distinct nontrivial eigenvalues. For example, in the first case we see that $g$ has two eigenvalues both of which generate $\F_{q^3}$ and that are not $\F_q$-conjugate (and so we get six distinct nontrivial eigenvalues by taking $\F_q$-conjugates). In the latter two cases, we see that $g$ has an eigenvalue generating $\F_{q^{3f}}$. 
		Then, in all cases $g$ has at least six distinct nontrivial eigenvalues, as claimed.
		
		Denote by $E$ the number of distinct eigenvalues of $g$.  Since $a\le n/2$ and since nontrivial eigenspaces occur in orbits of size $3$, it can be seen that $d + E$ is at most
		\[
		7 + \left(\frac n2\right)^2 + 3\left((\frac{n}{2}-3)/3\right)^2 + 3 = \frac{n^2}{3} - n + 13
		\]
		(attained when $g$ has trivial eigenspace of dimension $n/2$, three eigenspaces of dimension $1$ and three eigenspaces of dimension $(n/2-3)/3$), from which it also follows that $d\le n^2/3 - n +6$. 
		If $G=\SL_n(q)$ then we deduce  $|C_G(g)|^3< q^{n^2 - 3n +18}$, which is $< |G:M|$ if $n\ge 15$. If $G=\SU_n(q^{1/2})$ then, setting $q_0=q^{1/2}$, by \Cref{l:order_centralizers} we deduce  $|C_G(g)|^3< q_0^{3(d+E)}\le q_0^{n^2-3n+39}$, which is $<|G:M|$ if $n\ge 24$.
		
		Let us address the remaining cases (i.e. the case $n\le 14$ for $G = \SL_n(q)$ and the case $n\le 23$ for $G = \SU_n(q^{1/2})$). Assume first $V$ is $p$-restricted; the options are $n=6,10$ for $G=\SL_n(q)$; and $n=6,10,15,18,21$ for $G=\SU_n(q^{1/2})$.  If $n=6$ then $V=S^2(W)$ (where $W$ is the natural 3-dimensional module), $g$ is regular and we have $|C_G(g)|<|G:M|^{1/3}$. If $n=10$ then $V=S^3(W)$; we see that $a=\dim(C_V(g))=1$ and so $|C_G(g)|<|G:M|^{1/3}$. If $n=15$ then it must be $a\le 6$, which implies $d\le 51$ and the result follows. If $n=18$ then $p=5$ and $V$ is a constituent of $S^3(W)\otimes W^*$. Since by the above the largest eigenspace on $S^3(W)$ has dimension at most $2$, it follows that $a \le 6$, which gives the conclusion. Finally, if $n=21$ then $a\le 9$, so $d\le 111 = 3\cdot 6^2 + 3$ and this is sufficient. Assume now $V$ is not $p$-restricted with $n\le 23$; by \cite{schaffer1999twisted}, the possible embeddings are of type $\SL_3(q^2)<\SL_9(q)$ and $\SL_3(q)<\SU_9(q^{1/2})$. In the first case, $a=3$ and every nontrivial eigenspace has dimension $1$, so $d=15$ and the result follows. In the second case, there are three nontrivial eigenspaces of dimension $1$, and three nontrivial eigenspaces of dimension $2$, so again $d=15$ and $|C_G(g)|<|G:M|^{1/3}$ also in this case.

		Assume finally $p\nmid r$. By \cite{hiss2001malle}, if $n\le 40$ then we have $r\le 5$ and we can use \cite{GAP_character_table}. From now on, assume $n\ge 41$. We first claim that $|M|< q^n$. This is the case if $r\le 4$, so assume $r\ge 5$. By \cite{landazuri1974minimal}, if $S=\PSL_3(r)$ (resp. $\PSU_3(r)$) then $n\ge r^2-1$ (resp. $n\ge r(r^{2}-1)/(r+1)$), and we deduce that $|M| \le q|\Aut(S)| < q^n$, as claimed. So from now on we have $n\ge 41$ and $|M|<q^n$.

		Suppose $g$ is semisimple. Note that every prime divisor of $|g|$ is at least $7$ and also we have $|N_L(\gen g)/C_L(g)|=3$. By \cite[Theorem 1.2]{tiep2008hall}, we deduce that $g$ has at least $5$ distinct eigenvalues. Since these are permuted in orbits of size $3$, we deduce that $g$ has at least $6$ distinct eigenvalues. In particular, by the same calculation as in the case $p\mid r$ we see that $d\le n^2/3 - n +6$ and $d+E \le n^2/3-n + 13$, where $E$ denotes the number of distinct eigenvalues of $g$. Denote also by $E_2$ the number of distinct irreducible factors of degree at least two of the characteristic polynomial of $g$. Next we upper bound $|C_G(g)|$ using \Cref{l:compare_dim_centralizer_GL_Sp,l:order_centralizers}.
		
		\begin{itemize}
			\item If $G=\SL_n(q)$, we have $|C_G(g)|<q^d$ and so
			\[
			|M| |C_G(g)|^3 < q^{n + n^2 - 3n + 18} \le q^{n^2-2} <|G|.
			\]
			\item If $G=\SU_n(q^{1/2})$, then setting $q_0=q^{1/2}$  we have $|C_G(g)|< q_0^{d+E}$ and so
			\[
			|M| |C_G(g)|^3 < q_0^{2n + n^2 - 3n + 39} \le q_0^{n^2-2} <|G|.
			\]
			\item If $G=\Sp_n(q)$, then $d/2+E_2+a/2 \le n^2/6 - 2n +26 +n/4$. By \Cref{l:compare_dim_centralizer_GL_Sp,l:order_centralizers} we have $|C_G(g)|< q^{d/2+E_2+a/2}$ and so
			\[
			|M| |C_G(g)|^3 < q^{n + n^2/2 - 6n + 78 +3n/4} \le q^{n^2/2+n/2-1} <|G|.
			\]
			\item If $G=\Omega^\varepsilon_n(q)$, then  $d/2+E_2 \le n^2/6 - 2n +26$. By \Cref{l:compare_dim_centralizer_GL_Sp,l:order_centralizers} we have $|C_G(g)|< q^{d/2+E_2+2}$ and so
			\[
			|M| |C_G(g)|^3 < q^{n + n^2/2 - 6n + 78 +6} \le q^{n^2/2-n/2-2} <|G|.
			\]
		\end{itemize}

		Suppose finally $g$ is not semisimple; we apply \Cref{l:green_correspondence}. Letting $P$ be a Sylow $p$-subgroup of $\gen g$, we have that $V\downarrow P = V_0\oplus U$ where all Jordan blocks on $V_0$ have size $|P|\ge 7$, and $U$ is the sum of at most $3$ indecomposable module of dimension $t$. If $t\ge 4$ then applying \Cref{l:unipotent_centralizer} with $C=0$ and $B=4$ we deduce $d\le n^2/4$. In particular, the exact same computation as in \Cref{l:class_S} reduces to the case $n\le 29$, for which we are done (as we are assuming $n\ge 41$).
		
		Assume then $t\le 3$; so $\dim(V_0)\le 9$. By \Cref{l:unipotent_centralizer} with $C=9$ and $B=7$ we see that $d\le n^2/7 + 70$. The number $R$ of Jordan blocks of $g$ is at most $9+(n-9)/7 = n/7+54/7$. Then we can calculate similarly to above that, since $n\ge 27$, the conclusion holds. The proof is now complete.
	\end{proof}

	%%%%%%%%%%%%%%%

    \section{Proof of \Cref{cor:polynomials}}
    \label{sec:polynomials}
    
    For completeness, we give a proof of the deduction of \Cref{cor:polynomials} from \Cref{t:main}, which is an immediate application of Frobenius density theorem.

    \begin{proof}[Proof of \Cref{cor:polynomials}]
    Let $L$ be the Galois closure of $\Q(\alpha)$ in $\overline{\Q}$ and let $G=\Gal(L/\Q)$ be the Galois group. Then $G$ acts faithfully and transitively on the roots of $f$ and $M:=\Gal(L/\Q(\alpha))$ is a point stabilizer. The assumption that $\Q(\alpha)/\Q$ is minimal is equivalent to the fact that the action is primitive, and the assumption that $\Q(\alpha)/\Q$ is not Galois is equivalent to the fact that the action is not regular.  In particular, by \Cref{t:main} there exists $g\in G$ with $1\le \fp(g)\le n^{1/3}$. By Frobenius density theorem (see for example \cite[Theorem p. 11]{lenstra}), the (natural or analytic) density of primes $p$ such that $f$ has at least one root and at most $n^{1/3}$ roots in $\F_p$ exists, and is equal to the proportion of elements of $G$ with at least one fixed point and at most $n^{1/3}$ fixed points. This proportion is at least $1/|G|>0$ and the statement follows.        
    \end{proof}

	%	\bibliography{references}

\begin{thebibliography}{99}
		
		\bibitem{AJL} H.H. Andersen, J. Jørgensen and P. Landrock, The projective indecomposable modules of $SL(2,p^n)$, {\it Proc. London Math. Soc.} {\bf 46} (1983), 38--52. 
		
		\bibitem{aschbacher1984maximal_subgroups} M. Aschbacher, On the maximal subgroups of the finite classical groups, {\it Invent. Math.} {\bf 76} (1984), 469--514.
		
		\bibitem{AS} M. Aschbacher and L. Scott, Maximal subgroups of finite groups, {\it J. Algebra} {\bf 92} (1985), 44--80.

\bibitem{Babai} L. Babai, On the order of uniprimitive permutation groups, {\it Ann. of Math.} {\bf 113} (1981), 553--568.
        
		\bibitem{Baddeley}  R.W. Baddeley, Primitive permutation groups with a regular nonabelian normal subgroup, {\it Proc. London Math. Soc.} {\bf 67} (1993), 547--595.
		
		\bibitem{benson1998representations} D.J. Benson, {\it Representations and cohomology, I. Basic representation theory of finite groups and associative algebras}, Second edition, Cambridge Studies in Advanced Mathematics, 30. Cambridge University Press, Cambridge, 1998. 
		
		\bibitem{bray2013holt_colva} J. N. Bray, D. F. Holt, and C. M. Roney-Dougal, {\it The maximal subgroups of the low-dimensional finite classical groups}, London Math. Soc. Lecture Note Series, vol. 407, Cambridge University Press, Cambridge, 2013. 
		
		\bibitem{GAP_character_table} T. Breuer, The \textsf{GAP} Character Table Library, 
		Version 1.3.11, http://www.math.rwth-aachen.de/\~{}Thomas.Breuer/ctbllib, May 2025.
		
		\bibitem{breuer2008probabilistic} T. Breuer, R.M. Guralnick and W.M. Kantor, Probabilistic generation of finite simple groups, II, {\it J. Algebra} {\bf 320} (2008), 443--494.
		
		%\bibitem{bursurv} T.C. Burness, Simple groups, fixed point ratios and applications, in: {\it Local Representation Theory and Simple Groups}, EMS Ser. Lect. Math., European Mathematical Society, Z\"urich (2018), 267--322.
		
		\bibitem{burness2022guralnick} T.C. Burness and R.M. Guralnick, Fixed point ratios for finite primitive groups and applications, {\it Adv. Math.} {\bf 411} (2022), Paper No. 108778, 90 pp.
		
		\bibitem{CG} A.M. Cohen and R.L. Griess, On finite simple subgroups of the complex Lie group of type $E_8$, The Arcata Conference on Representations of Finite Groups (Arcata, Calif., 1986), 367--405, {\it Proc. Sympos. Pure Math.} {\bf 47}, Part 2, Amer. Math. Soc., Providence, RI, 1987.
		
		\bibitem{atlas} J.H. Conway, R.T. Curtis, S.P. Norton, R.A. Parker, R.A. Wilson, {\it Atlas of Finite Groups}, Oxford Univ. Press, 1985. 
		
		\bibitem{Cralt} D.A. Craven, Alternating subgroups of exceptional groups of Lie type, {\it Proc. Lond. Math. Soc.} {\bf 115} (2017), 449--501.
		
		\bibitem{craven2023maximal} D. A. Craven, The maximal subgroups of the exceptional groups $F_4(q)$, $E_6(q)$ and $^2\!E_6(q)$ and related almost simple groups, {\it Invent. Math.} {\bf 234} (2023), 637--719.
		
		\bibitem{Crmem1} D. A. Craven, Maximal $PSL_2$ subgroups of exceptional groups of Lie type, {\it Mem. Amer. Math. Soc.} {\bf 276} (2022), no. 1355, v+155pp.
		
		\bibitem{Crmem2} D.A. Craven, On medium-rank Lie primitive and maximal subgroups of exceptional groups of Lie type, {\it Mem. Amer. Math. Soc.} {\bf 288} (2023), no. 1434, v+213 pp.
		
		\bibitem{craven2022E7} D. A. Craven, On the maximal subgroups of $E_7(q)$ and related almost simple groups, preprint (arXiv:2201.07081), 2022.
		
		\bibitem{CST} D. A. Craven, D. I. Stewart, and A. R. Thomas, A new maximal subgroup of $E_8$ in characteristic 3, {\it Proc. Amer. Math. Soc.} {\bf 150} (2022), 1435--1448.
		
		\bibitem{Deriz} D.I. Deriziotis, The centralizers of semisimple elements of the Chevalley groups $E_7$ and $E_8$,
		{\it Tokyo J. Math.} {\bf 6} (1983), 191--216.
		
		\bibitem{DL} D.I. Deriziotis and M.W. Liebeck, Centralizers of semisimple elements in finite twisted groups of Lie type, 
		{\it J. London Math. Soc.} {\bf 31} (1985), 48--54.
		
		\bibitem{DLP} H. Dietrich, M. Lee and T. Popiel, The maximal subgroups of the Monster, {\it Adv. Math.} {\bf 469} (2025), Paper No. 110214, 33 pp.

        \bibitem{DM} J.D. Dixon and B. Mortimer, {\it Permutation groups}, Graduate Texts in Mathematics, 163, Springer-Verlag, New York, 1996.
		
		\bibitem{el_bach} M. El Bachraoui, Primes in the interval $[2n,3n]$, {\it Int. J. Contemp. Math. Sci.} {\bf 1} (2006), 617--621.
		
		%\bibitem{FKS} B. Fein, W.M. Kantor and M. Schacher, Relative Brauer groups, II, {\it J. Reine Angew. Math.} {\bf 328} (1981), 39--57.
		
		%\bibitem{gorenstein2007book} 
		
		\bibitem{gow2000commutators} R. Gow, Commutators in finite simple groups of Lie type, {\it Bull. Lond. Math. Soc.} {\bf 32} (2000), 311--315.
		
		\bibitem{guralnick2000kantor}  R.M. Guralnick and W.M. Kantor, Probabilistic generation of finite simple groups, {\it J. Algebra} {\bf 234} (2000), 743--792.

        \bibitem{GM} R.M. Guralnick and K. Magaard, On the minimal degree of a primitive permutation group, {\it J. Algebra} {\bf 207} (1998), 127--145.
		
		\bibitem{guralnick2012malle} R.M. Guralnick and G. Malle, Products of conjugacy classes and fixed point spaces, {\it J. Amer. Math. Soc.} {\bf 25} (2012), 77--121.
		
		\bibitem{guralnick2012beauville} R.M. Guralnick and G. Malle, Simple groups admit Beauville structures, {\it J. Lond. Math. Soc.} {\bf 85} (2012), 694--721.
		
		\bibitem{guralnick2015tiep_lifting} R. M. Guralnick and P. H. Tiep, Lifting in Frattini covers and a characterization of finite solvable groups, {\it J. Reine Angew. Math.} {\bf 708} (2015), 49--72.
		
		\bibitem{guralnick2020larsen_tiep_character} R.M. Guralnick, M. Larsen and P.H. Tiep, Character levels and character bounds, {\it Forum Math. Pi} {\bf 8} (2020), e2, 81 pp.
		
		\bibitem{guralnick1999praeger} R. M. Guralnick, T. Penttila, C. E. Praeger and J. Saxl, Linear groups with orders having certain large prime divisors, {\it Proc. London Math. Soc.} {\bf 78} (1999), 167--214.
		
		\bibitem{hiss2001malle} G. Hiss and G. Malle, Low-dimensional representations of quasisimple groups, {\it LMS J. Comput. Math.} {\bf 4} (2001), 22--63.
		
		\bibitem{humphreys2006modular}  J.E. Humphreys, {\it Modular representations of finite groups of Lie type}, London Math. Soc. Lecture Note Series, Vol. 326, Cambridge University Press, 2006. 
		
		\bibitem{IKMM} I.M. Isaacs, T.M. Keller, U. Meierfrankenfeld and A. Moreto, Fixed point spaces, primitive character degrees and conjugacy class sizes, {\it Proc. Amer. Math. Soc.} {\bf 134} (2006), 3123--3130.
		
		\bibitem{atlas_breuer_characters} C. Jansen, K. Lux, R. Parker, and R. Wilson, {\it An atlas of Brauer characters}, Oxford Univ.  Press, 1995.
		
		\bibitem{kleidman1987maximal_omega8}  P.B. Kleidman, The maximal subgroups of the finite 
		8-dimensional orthogonal groups $P\Omega_8^+(q)$ and of their automorphism groups, {\it J. Algebra} {\bf 110} (1987), 173--242.
		
		\bibitem{kleidman1988maximal_3D4} P. B. Kleidman, The maximal subgroups of the Steinberg triality groups $^3\!D_4(q)$ and of their automorphism groups, {\it J. Algebra} {\bf 115} (1988), 182--199.
		
		\bibitem{kleidman1988maximalG_2} P. B. Kleidman, The maximal subgroups of the Chevalley groups $G_2(q)$ with $q$ odd, of the Ree groups $^2\!G_2(q)$, and of their automorphism groups, {\it J. Algebra} {\bf 117} (1988), 30--71.
		
		\bibitem{kleidman1990liebeck} P.B. Kleidman and M.W. Liebeck, {\it The Subgroup Structure of the Finite Classical Groups}, London Math. Soc. Lecture Note Series, Vol. 129, Cambridge Univ. Press, 1990.
		
		\bibitem{KW22} P.B. Kleidman and R.A. Wilson, The maximal subgroups of $Fi_{22}$,
		{\it Math. Proc. Cambridge Philos. Soc.} {\bf 102} (1987), 17--23.
		
		\bibitem{KWJ4} P.B. Kleidman and R.A. Wilson, The maximal subgroups of $J_4$, {\it Proc. London Math. Soc.} {\bf 56} (1988), 
		484--510.
		
		\bibitem{KW23} P.B. Kleidman, R.A. Parker and R.A. Wilson, The maximal subgroups of the Fischer group $Fi_{23}$, 
		{\it J. London Math. Soc.} {\bf 39} (1989), 89--101.
		
		\bibitem{KostTiep}  A.I. Kostrikin and P.H. Tiep, {\it Orthogonal decompositions and integral lattices}, De Gruyter Expositions in Mathematics, 15. Walter de Gruyter and Co., Berlin, 1994.
		
		\bibitem{landazuri1974minimal} V. Landazuri and G. M. Seitz, On the minimal degrees of projective representations of the finite Chevalley groups, {\it J. Algebra} {\bf 32} (1974), 418--443.
		
		\bibitem{Lawunip} R. Lawther, Unipotent classes in maximal subgroups of exceptional algebraic groups, {\it J. Algebra} {\bf 322}  (2009), 270--293.
		
		\bibitem{Law} R. Lawther, Sublattices generated by root differences, {\it J. Algebra} {\bf 412} (2014), 255--263.
		
		\bibitem{liebeck1985orders} M.W. Liebeck, On the orders of maximal subgroups of the finite classical groups, {\it Proc. Lond. Math. Soc.} {\bf 50} (1985), 426--446.
		
		\bibitem{LPS} M.W. Liebeck, C.E. Praeger and J. Saxl, On the O'Nan-Scott theorem for finite primitive permutation groups, {\it J. Aust. Math. Soc. A} {\bf 44} (1988), 389--396.
		
		\bibitem{liebeck1998subgroup} M.W. Liebeck and G.M. Seitz, On the subgroup structure of classical groups, {\it Invent. Math.} {\bf 134} (1998), 427--453. 
		
		\bibitem{LS1} M. W. Liebeck and G. M. Seitz, On the subgroup structure of exceptional groups of Lie type, {\it Trans. Amer. Math. Soc.} {\bf 350} (1998), 3409--3482.
		
		\bibitem{LS2} M. W. Liebeck and G. M. Seitz, On finite subgroups of exceptional algebraic groups, {\it J. Reine Angew. Math.} {\bf 515} (1999), 25--72.
		
		\bibitem{LS3} M.W. Liebeck and G.M. Seitz, A survey of maximal subgroups of exceptional groups of Lie type, {\it Groups, combinatorics and geometry} (Durham, 2001), 139--146, World Sci. Publ., River Edge, NJ, 2003.
		
		\bibitem{LSroot} M. W. Liebeck and G. M. Seitz, Subgroups generated by root elements in groups of Lie type, {\it Ann. of Math.}  {\bf 139} (1994), 293--361.
		
		\bibitem{LSGeomDed} M. W. Liebeck and G. M. Seitz, Maximal subgroups of exceptional groups of Lie type, finite and algebraic, {\it Geom. Dedicata} {\bf 35} (1990), 353--387.
		
		\bibitem{LSmem96} M. W. Liebeck and G. M. Seitz, Reductive subgroups of exceptional algebraic groups, {\it Mem. Amer. Math. Soc.} {\bf 121} (1996), no. 580, vi+111pp.
		
		\bibitem{liebeck_seitz_2012unipotent} M.W. Liebeck and G.M. Seitz, {\it Unipotent and nilpotent classes in simple algebraic groups and Lie algebras}, Mathematical Surveys and Monographs, vol. 180, Amer. Math. Soc., Providence, RI, 2012.
		
		\bibitem{LSS} M.W. Liebeck, J. Saxl, and G.M. Seitz, Subgroups of maximal rank in finite exceptional groups of Lie type, {\it Proc. London Math. Soc.} {\bf 65} (1992), 297--325.
		
		\bibitem{Linton} S.A. Linton, The maximal subgroups of the Thompson group, {\it J. London Math. Soc.} {\bf 39} (1989), 
		79--88.
		
		\bibitem{LW24} S.A. Linton and R.A. Wilson, The maximal subgroups of the Fischer groups $Fi_{24}$ and $Fi_{24}'$,
		{\it Proc. London Math. Soc.} {\bf 63} (1991), 113--164.
		
		\bibitem{Litterick} A.J. Litterick, On non-generic finite subgroups of exceptional algebraic groups, {\it Mem. Amer. Math. Soc.} {\bf  253} (2018), no. 1207, v+156pp. 
		
		\bibitem{lubeck2001small} F. L\"ubeck, Small degree representations of finite Chevalley groups in defining characteristic, {\it LMS J. Comput. Math.} {\bf 4} (2001), 135--169.
		
		\bibitem{malle1991maximal2F4} G. Malle, The maximal subgroups of $^2\!F_4(q^2)$, {\it J. Algebra} {\bf 139} (1991), 52--69.
		
		\bibitem{malle_testerman_2011linear}  G. Malle and D. Testerman, {\it Linear algebraic groups and finite groups of Lie type}, Cambridge Studies in Advanced Mathematics, 133. Cambridge University Press, Cambridge, 2011.
		
		\bibitem{maroti2002orders} A. Mar\'oti, On the orders of primitive groups, {\it J. Algebra} {\bf 258} (2002), 631--640.
		
		\bibitem{mizuno} K. Mizuno, The conjugate classes of Chevalley groups of type $E_6$, {\it J. Fac. Sci. Univ. Tokyo} {\bf 24} (1977), 525--63.
		
		\bibitem{nagura1952interval} J. Nagura, On the interval containing at least one prime number,
		{\it Proc. Japan Academy} {\bf 28} (1952), 177--181.
		
		\bibitem{neumann1966thesis} P.M. Neumann, A study of some finite permutation groups, DPhil Thesis, University of Oxford, 1966.
		
		
		
		\bibitem{premet1988weights} A.A. Premet, Weights of infinitesimally irreducible representations of Chevalley groups over a field of prime characteristic, {\it Math. USSR Sb.}  {\bf 61} (1988), no. 1, 167--183.
		
		
		\bibitem{robbins1955remark} H. Robbins, A remark on Stirling's formula, {\it Amer. Math. Monthly}  {\bf 62} (1955), no. 1, 26--29.
		
		\bibitem{schaffer1999twisted} M. Schaffer, Twisted tensor product subgroups of finite classical groups, {\it Comm. Algebra} {\bf 27} (1999), 5097--5166.
		
		\bibitem{scott1977matrices} L.L. Scott, Matrices and cohomology, {\it Ann. of Math.} {\bf 105} (1977), 473--492.
		
		\bibitem{SS} D. Segal and A. Shalev, On groups with bounded conjugacy classes, {\it Quart. J. Math.} {\bf 50} (1999), 505--516.
		
		\bibitem{shinoda1974conjugacy} K. Shinoda, The conjugacy classes of Chevalley groups of type ($F_4$) over finite fields of characteristic 2, {\it J. Fac. Sci. Univ. Tokyo} {\bf 21} (1974), 133--159.
		
		\bibitem{shinoda1974conjugacy_ree} K. Shinoda, The conjugacy classes of the finite Ree groups of type ($F_4$), {\it J. Fac. Sci. Univ. Tokyo} {\bf 22} (1975), 1--15. 
		
		\bibitem{shoji1974conjugacy} T. Shoji, The conjugacy classes of Chevalley groups of type ($F_4$) over finite fields of characteristic $p\ne 2$, {\it J. Fac. Sci. Univ. Tokyo} {\bf 21} (1974), 1--17.
		
		\bibitem{shult1965groups} E.E. Shult, On groups admitting fixed point free abelian operator groups, {\it Illinois J. Math.} {\bf 9} (1965), 701--720.
		
		\bibitem{Spalt} N. Spaltenstein, Caracteres unipotents de $^3\!D_4(q)$, {\it Comment. Math. Helv.} {\bf 57} (1982), 676--
		691.

        \bibitem{lenstra} P. Stevenhagen and H.W. Lenstra, Chebotar{\"e}v and his density theorem, {\it The Mathematical Intelligencer} {\bf 18} (1996), 26--37.

		\bibitem{suzuki1962class} M. Suzuki, On a class of doubly transitive groups, {\it Ann. of Math.} {\bf 75} (1962), 105--145.
		
		\bibitem{tiep2008hall} P.H. Tiep and A.E. Zalesski, Hall-Higman type theorems for semisimple elements of finite classical groups, {\it Proc. Lond. Math. Soc.} {\bf 97} (2008) 623--668.

        \bibitem{WilCo1} R.A. Wilson, On the 3-local subgroups of Conway's group $Co_1$, {\it J. Algebra} {\bf 113} (1988), 
        261--262.
		
		\bibitem{WilBM} R.A. Wilson, The maximal subgroups of the Baby Monster, I, {\it J. Algebra} {\bf 211} (1999), 1--14.
		
		\bibitem{Zs} K. Zsigmondy, Zur Theorie der Potenzreste, {\it Monatshefte Math. Phys.} {\bf 3} (1892), 265--284. 
		
		
		
		
		
		
		
		
		
		
		
	\end{thebibliography}
	%	\bibliographystyle{alpha}

\end{document}